\newcommand{\mX}{{\mathcal{X}}}
\newcommand{\mD}{{\mathcal{D}}}
\newcommand{\mU}{{\mathcal{U}}}
\newcommand{\mL}{{\mathcal{L}}}
\newcommand{\mW}{{\mathcal{W}}}
\newcommand{\mO}{{\mathcal{O}}}
\newcommand{\mI}{{\mathcal{I}}}
\newcommand{\ov}{\overline}
\newcommand{\fz}{{\mathfrak{z}}}
\newcommand{\cX}{{\mathcal{X}}}
\newcommand{\bC}{{\mathbb{C}}}
\newcommand{\cU}{{\mathcal{U}}}
\newcommand{\Ord}{{\rm Ord}}
\newcommand{\bD}{{\mathbb{B}}}
\newcommand{\cY}{{\mathcal{Y}}}
\newcommand{\cZ}{{\mathcal{Z}}}
\newcommand{\cI}{{\mathcal{I}}}
\newcommand{\cO}{{\mathcal{O}}}
\newcommand{\red}{{\rm red}}
\newcommand{\KS}{{\bf KS}}
\newcommand{\fT}{{\bf T}}
\newcommand{\cA}{{\mathcal{A}}}
\newcommand{\sddbar}{{\sqrt{-1}\partial\bar{\partial}}}
\newcommand{\bB}{{\mathbb{B}}}
\newcommand{\bP}{{\mathbb{P}}}
\newcommand{\cS}{{\mathcal{S}}}
\newcommand{\re}{{\rm Re}}
\newcommand{\cW}{\mathcal{W}}
\newcommand{\vphi}{\varphi}
\newcommand{\cK}{\mathcal{K}}
\newcommand{\bN}{\mathbb{N}}
\newtheorem{thm}{Theorem}[section]
\newtheorem{prop}[thm]{Proposition}
\newtheorem{defn}[thm]{Definition}
\newtheorem{cor}[thm]{Corollary}
\newtheorem{rem}[thm]{Remark}
\newtheorem{exmp}[thm]{Example}
\newtheorem{lem}[thm]{Lemma}
\newtheorem{dig}[thm]{Digression}
\begin{document}
\title{On sharp rates and analytic compactifications of asymptotically conical K\"{a}hler metrics}
\author{Chi Li}

\maketitle

\abstract{
Let $X$ be a complex manifold and $S\hookrightarrow X$ be an embedding of complex submanifold. Assuming that the embedding is $(k-1)$-linearizable or $(k-1)$-comfortably embedded, 
we construct via the deformation to the normal cone a diffeomorphism $F$ from a small neighborhood of the zero section in the normal bundle $N_{S}$ to a small neighborhood of $S$ in $X$ such that $F$ 
is in a precise sense holomorphic up to the $(k-1)$-th order. Using this $F$ we obtain optimal estimates on asymptotical rates for asymptotically conical Calabi-Yau metrics constructed by Tian-Yau. Furthermore, when $S$ is an ample divisor satisfying an appropriate cohomological condition, we relate the order of comfortable embedding to the weight of the deformation of the normal isolated cone singularity arising from the deformation to the normal cone. We also give an example showing that the condition of comfortable embedding depends on the splitting liftings. We then prove an analytic compactification result for the deformation of the complex structure on an affine cone that decays to any positive order 
at infinity. This can be seen as an analytic counterpart of Pinkham's result on deformations of cone singularities with negative weights. 

\tableofcontents{}

\section{Introduction and main results}

Our original motivation for this paper is to understand the optimal convergence rate of asymptotically conical Calabi-Yau K\"{a}hler metrics on non-compact K\"{a}hler manifolds. However it leads us to the study of embeddings of complex submanifolds and deformations of isolated normal singularities. We start the discussion with the embedding problem.

Let $S$ be a complex submanifold of an ambient complex manifold $X$. 
The comparison between neighborhoods of $S$ inside $X$ with neighborhoods of $S$ inside the normal bundle $N_S$ is a classical subject in complex geometry, which was studied in \cite{Grau, Grif, CaMo, CMS}.  It's clear that although that in general $N_S$ has a different holomorphic structure than that of any neighborhood of $S$ inside $X$, $N_S$ can be viewed as a first order approximation of a small neighborhood of $S$. More precisely, we will denote by $S(k)$ the ringed analytic space $(S, \mathcal{O}_X/\mathcal{I}_S^{k+1})$, which is called the $k$-th infinitesimal 
neighborhood of $S$ inside $X$. Recall the following definition.
\begin{defn}\label{def-linear}
$S$ is k-linearizable inside $X$ if its k-th infinitesimal neighbourhood $S(k)$ in X is isomorphic to its k-th infinitesimal
neighbourhood $S_N(k)$ in $N_S$. Here we identify S with the zero section $S_0$ of $N_S=:N$.
\end{defn}
Our first preliminary result is that there is a diffeomorphism 
from a neighborhood  
of $S\subset X$ to a neighborhood of $S_0\subset N_S$ that is in some sense the most holomorphic one.  Although the existence of such a diffeomorphism may be known to experts after the celebrated work of Grauert \cite{Grau} (cf. \cite{vanC1, ADL, CH2, HHN}), here we would like to give an almost explicit construction using the work of Abate-Bracci-Tovena \cite{ABT} together with the deformation to the normal cone construction.  
Let $\tilde{g}_0$ be a smooth Riemannian metric on a neighborhood $W_0$ of $S_0$ inside $N_S$. 
Denote by $\|\cdot\|_{\tilde{g}_0}$ the $C^0$-norms of tensors on $W_0$ with respect to $\tilde{g}_0$ and by $\tilde{r}$ the distance function to $S_0$ with respect to $\tilde{g}_0$.
\begin{prop}\label{prop-linmap}
Assume $S$ is a smooth submanifold of $X$. If $S\hookrightarrow X$ is $(k-1)$-linearizable, then there exist a small neighborhood $W_0$ of $S_0\hookrightarrow N_S$ and a diffeomorphism $F: W_0\rightarrow F(W_0)\subset W$ where $W$ is a small neighborhood of $S\subset X$, such that for any $j\ge 0$, there exists a constant $C_j>0$ and $F$ satisfies
\begin{equation}\label{pbJ2}
\|\nabla_{\tilde{g}_0}^j(F^*J-J_0)\|_{\tilde{g}_0}\le C_j \tilde{r}^{k-j} \text{ on } W_0.
\end{equation}
\end{prop}
Our next result deals with a special situation, that arises in Tian-Yau's construction of asymptotically conical (AC) Calabi-Yau (CY) metric on the complement of some divisor inside a Fano manifold.
To state the result, we need to use the notion of conical metrics on affine cones. In this paper, by an affine cone $C(D, L)$, we will mean the normal affine variety obtained by contracting the zero section of a negative line bundle $L^{-1}$ over a smooth projective manifold $D$. We will also consider the compactified cone $\bar{C}(D,L)=C(D, L)\cup D_{\infty}$ obtained by adding the divisor $D_\infty$ at infinity. These varieties can be expressed using pure algebras ($x$ has degree $1$ in the second graded ring):
\[
C:=C(D, L)={\rm Spec} \bigoplus_{m=0}^\infty H^0(D, mL), \quad \bar{C}:=\bar{C}(D, L)={\rm Proj}\bigoplus_{m=0}^{\infty} \left(\bigoplus_{r=0}^m H^0(D, L^{r})\cdot x^{m-r}\right).
\]
Now let $h$ be a Hermitian metric on the negative line bundle $L^{-1}\rightarrow D$ with negative Chern curvature. Since $C=C(D,L)$ is obtained from $L^{-1}$ by contracting the zero section, $h$ can be considered as a non-negative function on the cone $C$. For any $\delta>0$, there is a complete K\"{a}hler cone metric on $C(D,L)$ whose K\"{a}hler form on the regular part $C\setminus\{o\}$ is given by
\begin{equation}\label{eq-om0del}
\omega_0^{(\delta)}:=\sddbar h^{\delta}.
\end{equation}
It's easy to verify that the associated K\"{a}her metric tensor $g_0^{(\delta)}$ is indeed a Riemannian cone metric (see Section \ref{sec-conemetric}). 

In the following proposition, we need to use the notion of comfortable embedding, which is a property that appeared in the study of embeddings of complex submanifolds in \cite{Grau}. It refines the notion of linearizability in Definition \ref{def-linear} and was explicitly introduced in \cite{ABT}. We refer to Definition \ref{def-cft} for its definition. 
\begin{prop}\label{prop-cftmap}
Let $X$ be an $n$-dimensional projective manifold and $D$ be smooth divisor such that $N_D$ is ample over $D$.  Let $\omega_0=\omega_0^{(\delta)}$ be a conical metric on $C(D, N_D)$ as defined in \eqref{eq-om0del}. Assume that the embedding $D\hookrightarrow X$ is $(k-1)$-comfortable.
Then there exists a diffeomorphism away from compact sets 
 $F_K: C(D,N_D)\backslash B_R(\underline{o}) \rightarrow (X\backslash D)\backslash K$ such that 
\begin{equation}\label{lambda1}
 \|\nabla_{\omega_0}^{j}(F_K^*J-J_0)\|_{\omega_0}\le r^{-\frac{k}{\delta}-j} \text{ for any } j\ge 0,
\end{equation}
where $J$ (resp. $J_0$) denotes the complex structure on $X\setminus D$(resp. $C(D, N_D)\setminus\{o\}$).
\end{prop}
Note that the norm used in \eqref{pbJ2} is with respect 
to $\tilde{g}_0$ while the norms used in \eqref{lambda1} is with respect to the cone metric $\omega_0$ (or $g_0$) (see section \ref{sec-conemetric} for the comparison between these two K\"{a}hler metrics). This difference corresponds to the difference between the linearizable and comfortable embeddings. 

The next corollary follows from Proposition \ref{prop-cftmap} combined with the regularity theory developed by Conlon-Hein in \cite{CH1} (see \eqref{CHestlambda}). 
In many cases, Proposition \ref{prop-cftmap} improves the regularity in \cite{CH2} (see also \cite[Remark 1.2]{CH3}). 
\begin{cor}\label{cor-estTY}
With the same notations as Proposition \ref{prop-cftmap},  
assume that $X$ be an $n$-dimensional Fano manifold and assume $-K_X=\alpha D$ with $\alpha>1$. Denote $\delta=\frac{\alpha-1}{n}$. Suppose $D$ has a K\"{a}hler-Einstein metric and $D$ is $(k-1)$-comfortably embedded into $X$. Then the metric $\omega_{\rm TY}$ constructed by Tian-Yau (see section \ref{sec-TY}) satisfies:
\[
\|\nabla_{\omega_0}^{j}(F_K^*\omega_{\rm TY}- \omega_0)\|_{\omega_0}\le r^{-\min\{2, \frac{k}{\delta}\}-j} \mbox{ for any } j\ge 0.
\]
If moreover we assume that the K\"{a}hler class is contained in the compactly supported cohomology $H_c^2(X\backslash D)$, then we get:
\[
\|\nabla_{\omega_0}^{j}(F_K^*\omega_{\rm TY}- \omega_0)\|_{\omega_0}\le r^{-\min\{2n, \frac{k}{\delta}\}-j} \mbox{ for any } j\ge 0.
\]
\end{cor}
 The special number $\delta=\frac{\alpha-1}{n}$ in the above corollary is the exponent in the Calabi-ansatz for K\"{a}hler-Ricci flat cone metric (see \eqref{Calabicone} in Section \ref{sec-conemetric}). 

Under appropriate assumptions, our next result relates the order of embedding of $D\rightarrow X$ to the order and the weight of a deformation of $C(D, N_D)$. To construct the deformation that we
like to use, let $X$ be a projective manifold of dimension greater than 2 and $D$ a smooth ample divisor on $X$. Let $\cX$ denote the flat family that is obtained by first blowing up $D\times \{0\}$ inside $X\times \bC$ and then blowing down the strict transform of $X\times\{0\}$. Let $\mD$ be the strict transform of $D\times \bC$. It's easy to see that $\mD\cong D\times\bC$. Assume that the central fibre $\cX_0$ coincides with $\bar{C}(D, N_D)$ so that $\cX^\circ=\cX\setminus \mathcal{D}$ is a flat deformation $\cX^{\circ}\rightarrow \bC$ of $C(D, N_D)$.  We remark that this assumption is always satisfied when $X$ is Fano and $-K_X=\alpha D$ with $\alpha>1$.

Denote by $m(X,D)$ the maximum positive integer $m$ such that the embedding $D\hookrightarrow X$ is $(m-1)$-comfortably embedded. Let $\Ord(\cX^\circ)$ denote the order of deformation (Definition \ref{def-ord2}) and $w(\cX^\circ)$ the weight of the reduced Kodaira-Spencer class $\KS^\red_{\cX^\circ}$ (Definition \ref{def-app-KSred}).

\begin{thm}\label{thm-eqwt}
In the setting of the above paragraph, we have the identities:
\begin{equation}
m(X,D)=\Ord(\cX^\circ)=-w(\cX^\circ).
\end{equation} 
\end{thm}
Notice the integer $m(X, D)$ in the above theorem was considered in \cite[Remark 4.6]{ABT}. If ${\rm dim} D\ge 2$ and $D$ is ample, then, by remark \ref{psdiv}, $m(X,D)$ is also the maximal order of linearizability. In other words, $D\subset X$ is $(m(X,D)-1)$-linearizable but not $m(X,D)$-linearizable. 
When $\dim D=1$, we expect the conclusion of Theorem \ref{thm-eqwt} is also true. In fact, a parallel analytic result will be shown in Theorem \ref{thm-anacpt} without the restriction on dimension. On the other hand, we will calculate the example of diagonal embedding $\mathbb{P}^1\hookrightarrow \mathbb{P}^1\times\mathbb{P}^1$ explicitly to see some new phenomenon about the embedding of submanifolds in Proposition \ref{counterexample}. In particular, this example shows that the condition of comfortable embedding depends on the choice of splitting liftings, and thus answers a question by Abate-Bracci-Tovena negatively.

Combining Theorem \ref{thm-eqwt} with Proposition \ref{prop-cftmap}, we can give algebraic interpretations of ad hoc calculations in \cite{CH1} on the asymptotical rates of holomorphic volume forms. See Examples in Section \ref{sec-TY}.


Finally, we ask if any deformation of complex structure on $C$ that decays at infinity comes from this construction. We have a good understanding of the algebraic version of this problem thanks to the work of Pinkham. His results in particular implies that any (formal) deformation of $C$ with negative weight can be extended to a (formal) deformation of $\bar{C}$ (see Theorem \ref{Pink2}). For the application to the study of asymptotical conical K\"{a}hler metrics, we prove an analytic compactification result, which can be seen the analytic counterpart of Pinkham's result.
Note that a similar compactification result in the asymptotically cylindrical Calabi-Yau case has recently appeared in \cite{HHN}. See Remark \ref{cmpHHN} for some comparison.

To state this result in a general form, let $h$ be a Hermitian metric on any negative line bundle $L^{-1}\rightarrow D$ with negative Chern curvature and use the notation  $\omega_0:=\omega^{(\delta)}_0$ in \eqref{eq-om0del}.
Let $U_\epsilon$ denote a neighborhood of the infinity end of $C(D,L)$. Equivalently $U_\epsilon$ is a punctured neighborhood of the embedding $D=D_\infty \hookrightarrow \ov{C}(D,L)$. Denote $J_0$ the standard complex structure on $C(D,L)$, and $\ov{U}_\epsilon=U_{\epsilon}\cup D$
the compactification of $U_\epsilon$ in $\ov{C}(D, L)$.
\begin{thm}\label{thm-anacpt}
Assume that $J$ is a complex structure on $U_\epsilon=\ov{U_\epsilon} \backslash D$ such that there exists $\lambda>0$ such that 
\[
\|\nabla_{g_0}^k(J-J_0)\|_{\omega_0}\le r^{-\lambda-k}, \text{ for any } k\ge 0.
\]
Then the complex analytic structure on $U_\epsilon$ extends to a complex analytic structure on $\ov{U}_\epsilon$. Moreover, if we denote by $m=\lceil\delta\lambda\rceil$ the minimal integer which is bigger than or equal to $\delta\lambda$, then in the compactification $(\ov{U}_\epsilon, J)$ the divisor $D$ is $(m-1)$-comfortably embedded.
\end{thm}
This can be seen as a converse to the first part of Proposition \ref{prop-cftmap} and implies that the estimate in Proposition \ref{prop-cftmap} is sharp.
\begin{rem}
Because our proof uses only locally information near the divisor, the argument in the proof should apply in the more general orbifold case.  Actually Conlon-Hein \cite{CH3} has recently used the compactification obtained in Theorem \ref{thm-anacpt} to prove any AC CY metric with quasi-regular metric tangent cone at infinity comes from Tian-Yau's construction.
\end{rem}

\noindent

We end this introduction with the organization of this paper. More detailed summary of materials will be given at the beginning of each section. In Section \ref{KSdef}, we recall 
the standard Kodaira-Spencer theory of infinitesimal deformations and generalize it to a higher order setting. We 
also explain how the (higher order) abstract deformations and embedded deformations are related via Schlessinger's exact sequence.
In Section 3, we relate the order of embedding to the order of deformation of neighborhoods of complex submanifolds. This is achieved by
writing down explicitly a reduced Kodaira-Spencer class and relate it to obstructions to extension of embeddings (in Proposition \ref{embvsdef}).
 In Section 4, we treat the case when the submanifold is an ample divisor and prove Theorem \ref{thm-eqwt}. In Section 5, we apply the result in Section 4 to estimating the asymptotic rates of complex structures on asymptotic conical K\"{a}hler manifolds in order to prove Proposition \ref{prop-cftmap}. In Section 6, we adapt Newlander-Nirenberg's work to prove an analytic compactification result for asymptotically conical complex manifolds. In the appendices, we collect some background results, including Abate-Bracci-Tovena's work on embedding of submanifolds, and theory of infinitesimal deformations of normal affine varieties with isolated singularities.

{\bf Acknowledgement:} The author is partially supported by the NSF grant DMS-1405936. I am grateful to H.-J. Hein for his interest, criticism and helpful suggestions on the organization and several proofs in the paper. In particular, he communicated to me the result in Lemma \ref{normalem} and Lemma \ref{lem-ordF}, pointed out some gaps in the proof of results and brought the reference \cite{KS} to my attention. I would like to thank R. Conlon for stimulating discussion that helps me to realize the correct notion of reduced Kodaira-Spencer class is defined by using higher order deformations. I would like to thank the referees for careful readings, helpful comments and constructive suggestions and for bringing the reference \cite{GLS07} to my attention. I would also like to thank Professor J. Wahl for help with several technical points and C. Xu for bringing the reference \cite{Artin} to my attention. Some revisions of this paper was done when I visited MSRI in Spring 2016. I would like to thank the institute for its hospitality and financial support.

\section{Preliminaries on deformation theory}\label{KSdef}
Our primary object of interest will be a normal affine variety $Z$ with an isolated singularity $o$ and we would like to explain what it means for a deformation of $Z$ to be trivial up to a certain order and to classify the next order of deformations in terms of a Kodaira-Spencer class in ${\bf T}_Z^1$. This is done in section \ref{subsec-hod}, following Artin and Schlessinger relying on manipulations with defining equations. We will show that these concepts are ``identical" to certain analogous concepts in the deformation theory of the complex manifold $Z\backslash K$ where $K$ is a small pseudoconvex neighborhood of $o$. We will define such concepts in section \ref{subsec-ptrivial} following essentially Kodaira-Spencer. The desired identification is proved in Proposition \ref{prop-3eqwt}. For this purpose we will introduce a notation of ``p-trivial embeddings", which connects the two primary concepts to each other. We will be working in the category of analytic varieties.


\subsection{Infinitesimal deformations via coordinate changes and embedded deformations}\label{subsec-clKS}

In this subsection, we recall how to get first order Kodaira-Spencer class for an analytic family by using the variation of holomorphic coordinate changes (see \cite{Koda}) and its relation to embedded deformations. Suppose $\cY \rightarrow \bB$ is an analytic family  of complex manifolds over the unit disk $\bB=\{z\in \mathbb{C}; |z|<1\}$. 
\begin{defn}
An atlas covering $\cY_0$ is a collection of coordinate charts $\{\cU_\alpha, \Phi_\alpha=(z_\alpha, t)\}_{\alpha\in \cA}$ such that 
\begin{enumerate}
\item
For each $\alpha\in \cA$,
$\cU_\alpha\subset \cY$ is biholomorphic to polydisk $\bB^{n+1}$,  and $\cY_0\subset \bigcup_{\alpha} \cU_\alpha$ i.e. $\cY_0=\bigcup_\alpha (\cU_\alpha\cap \cY_0)$;
\item there is a biholomorphic map $\Phi_\alpha=(z_\alpha, t): \cU_\alpha\rightarrow \Phi_\alpha(\cU_\alpha)\subset \bC^n\times \bC$ such that $t$ is the coordinate on $\bB$. In particular, $U_\alpha:=\cY_0\cap \cU_\alpha=\{t=0\}$.
\end{enumerate}
\end{defn}
\begin{rem}
\begin{enumerate}
\item
Since we only care about the behavior near the central fibre $\cY_0$, the base $\bB$ is not very important. For example, we will frequently shrink $\bB$ to become $\bB_\epsilon=\{t\in \bC; |t|<\epsilon\}$ for any $0<\epsilon\ll 1$ in the following discussion.
\item
Since we can always shrink $\mU_\alpha$, the assumption that $\mU_\alpha$ is biholomorphic to polydisk $\bB^{n+1}$ is just for the simplicity of the argument.
\end{enumerate}
\end{rem}
We first recall two ways to get the first order Kodaira-Spencer class for an holomorphic family of complex manifolds by using the variation of holomorphic coordinate changes . 
\begin{enumerate}
\item (\v{C}ech cohomology) 
Suppose the coordinate changes are given by:
\begin{equation}\label{eq-transit}
z^i_\alpha=F^i_{\alpha\beta}(z_\beta,t), \quad t|_{\mU_\alpha}=t|_{\mU_\beta}.
\end{equation}
Then we can deduce:
\begin{eqnarray*}
F_{\alpha\beta}^i(F_{\beta\gamma}(z_\gamma,t),t)=F^i_{\alpha\gamma}(z_\gamma,t)  &\Longrightarrow & \left.\sum_{j=1}^n\frac{\partial F_{\alpha\beta}^i(z_\beta,t)}{\partial z_\beta^j}\frac{\partial F^j_{\beta\gamma}(z_\gamma,t)}{\partial t}+\frac{\partial F_{\alpha\beta}^{i}(z_\beta,t)}{\partial t}
\right|_{t=0}=\left.\frac{\partial F_{\alpha\gamma}^i(z_\gamma,t)}{\partial t}\right|_{t=0}. 
\end{eqnarray*}
So if we denote:
\begin{equation}\label{eq-theta}
\theta_{\beta\gamma}=\sum_{i=1}^n\left.\frac{\partial F^i_{\beta\gamma}(z_\gamma,t)}{\partial t}\right|_{t=0}\frac{\partial}{\partial z_\beta^{i}}=\sum_{i=1}^n\left. \frac{\partial z_\beta^i(z_\gamma,t)}{\partial t}\right|_{t=0}
\frac{\partial}{\partial z^i_\beta},
\end{equation}
then it satisfies the cocycle condition $\theta_{\beta\gamma}=\theta_{\alpha\gamma}-\theta_{\alpha\beta}$ so that $\{\theta_{\alpha\beta}\}\in \check{H}^1(\{U_\alpha\}, \Theta_{\cY_0})$ where $U_\alpha=\mU_\alpha\cap \cY_0$ and $\Theta_{\cY_0}$ is the tangent sheaf on $\cY_0$. The class defined by $\theta=\{\theta_{\alpha\beta}\}$ in $H^1(\cY_0, \Theta_{\cY_0})$ is the classical Kodaira-Spencer class associated to the analytic family $\cY\rightarrow \bD$. 

\item (Dolbeault cohomology)
It's well known that the above $\theta$ can be represented by using Dolbeault cohomology. For this purpose take $\{\rho_\alpha\}$ to be a partition of unity for the covering $\{\mU_\alpha\}$ and define 
\[
\xi_\alpha=\sum_{i=1}^n\sum_{\gamma}\rho_\gamma \left.\frac{\partial F^i_{\alpha\gamma}(z_\gamma,t)}{\partial t}\right|_{t=0}\frac{\partial}{\partial z_\alpha^i}.
\]
It's easy to verify that $\theta_{\alpha\beta}=\xi_\alpha-\xi_\beta$, so that $\bar{\partial}\xi_\alpha=\bar{\partial}\xi_\beta$ is a globally defined $\Theta_{\cY_0}$-valued closed (0,1)-form and it represents a cohomology class, still denoted by $\theta$,  in $H^{(0,1)}_{\bar{\partial}}(\cY_0, \Theta_{\cY_0})$. 
On the other hand, $\theta$ measures the first order variation of the complex structure. We can follow the method in 
Kodaira's book \cite[Section 2.3]{Koda} to define a differentiable vector field $\mathbb{V}$. First notice that by the chain rule
\[
\left(\frac{\partial }{\partial t}\right)_\beta=\sum_{i=1}^n\frac{\partial F^i_{\alpha\beta}(z_\beta,t)}{\partial t}\frac{\partial}{\partial z^i_\alpha}+\left(\frac{\partial}{\partial t}\right)_\alpha,\quad
\frac{\partial }{\partial z_\beta^{j}}=\sum_{i=1}^n\frac{\partial F_{\alpha\beta}^i (z_\beta,t) }{\partial z_\beta^{j}}\frac{\partial}{\partial z_\alpha^i}.
\]
We can define a differentiable vector field locally on $\mathcal{U}_\alpha$ for fixed $\alpha$ by:
\begin{eqnarray*}
\mathbb{V}&=&\sum_{\beta}\rho_\beta\left(\frac{\partial}{\partial t}\right)_\beta=\sum_\beta \rho_\beta \sum_{i=1}^n\frac{\partial F_{\alpha\beta}^i(z_\beta,t)}{\partial t}\frac{\partial}{\partial z_\alpha^i}+\left(\frac{\partial}{\partial t}\right)_\alpha\\
&=&\sum_{i=1}^n\left(\sum_\beta\rho_\beta\frac{\partial F^i_{\alpha\beta}(z_\beta,t)}{\partial t}\right)\frac{\partial}{\partial z_\alpha^i}+\left(\frac{\partial}{\partial t}\right)_\alpha.
\end{eqnarray*}
Then $\mathbb{V}$ is a globally defined vector field in a neighborhood of $\cY_0$. 
Let $\sigma(t)$ be the flow associated with $\mathbb{V}$ which exists for sufficiently small $t$. We have the identity:
\begin{eqnarray*}
\frac{d}{dt}(\sigma(t)^*J)=(\mathcal{L}_{\mathbb{V}}J) (\partial_{\bar{z}^j}) d\bar{z}^j=\bar{\partial}\mathbb{V}.
\end{eqnarray*} 
Notice that $\bar{\partial}\mathbb{V}|_{t=0}=\bar{\partial}\xi_{\alpha}=\theta\in H^{(0,1)}_{\bar{\partial}}(\cY_0, \Theta_{\cY_0})\cong H^1(\cY_0, \Theta_{\cY_0})$.
\end{enumerate}
Assume that an holomorphic family of complex manifolds $\cY\rightarrow \bB$ is embedded into $\bC^N\times \bB$. Then the Kodaira-Spencer class can also be obtained by using the relation between embedded deformations and abstract deformations. In the following discussion we assume $Y=\cY_0$ is smooth. 
First there is an exact sequence of sheaves:
\[
0\rightarrow \mI_{Y}/\mI_{Y}^2\rightarrow \left.\Omega^1_{\bC^N}\right|_Y\rightarrow \Omega^1_Y\rightarrow 0,
\]
where $\Omega^1$ denotes the cotangent sheaf. 
The dual of this sequence is given by:
\[
0\rightarrow \Theta_Y\rightarrow \left.\Theta_{\bC^N}\right|_Y\rightarrow N_Y\rightarrow 0,
\]
where $N_Y=N_{Y|\bC^N}$ is the normal sheaf of $Y$ as a complex submanifold of $\bC^N$.
Then there is a long exact sequence:
\begin{equation}\label{em2ab}
0\rightarrow H^0(Y, \Theta_Y)\rightarrow H^0(Y, \Theta_{\bC^N})\rightarrow H^0(Y, N_Y)\stackrel{\delta_Y}{\rightarrow} H^1(Y, \Theta_Y)\rightarrow H^1(Y, \Theta_{\bC^N}|_Y).
\end{equation}
Choose an atlas covering $\cY_0$, denoted by $\{\cU_\alpha, (z^i_\alpha, t)\}$,  such that the embedding $\cU_\alpha \rightarrow \bC^N\times \bD$ is given by holomorphic functions:
\[
w^b=w_\alpha^b(z_\alpha^i, t), \quad 1\le b\le N.
\]
Note that we will use $\{w^b; b=1, \cdots, N\}$ to denote the coordinates of $\bC^N$ and use $w^b_\alpha$ (i.e. depending on $\alpha$) to denote $w^b$ as functions of the coordinates $\{z^i_\alpha, t\}$. Then there is a locally defined section $v_\alpha\in H^0(U_\alpha, \Theta_{\bC^N}|_{U_\alpha})$ given by 
\[
v_\alpha=\sum_{b=1}^N \left.\frac{\partial w^b_\alpha}{\partial t}\right|_{t=0}\frac{\partial}{\partial w^b}.
\]
Let $[v_\alpha]\in H^0\left(U_\alpha, \left.N_Y\right|_{U_\alpha}\right)$ denote the induced local section under the natural projection $\Theta_{\bC^N}|_{\cY_0}\rightarrow N_{\cY_0}$.

\begin{lem}\label{lem-barv}
$\{[v_\alpha]\}$ can be glued together to become a global section $\frak{v}$ in $H^0(Y, N_Y)$. Moreover, $\delta_Y(\frak{v})=\theta$ where $\delta_Y$ is the connecting morphism in \eqref{em2ab} and $\theta$ is the classical Kodaira-Spencer class defined in \eqref{eq-theta}. 
\end{lem}
\begin{proof}
Notice that we have the relation:
\[
w^b=w_\beta^b(z_\beta, t)=w_\beta^b(z_\beta^i(z_\alpha^j, t), t)=w_\alpha^b(z^j_\alpha, t).
\]
Taking derivatives on both sides with respect to $t$ at $t=0$, we get:
\[
\sum_{b=1}^N \left.\frac{\partial w_\alpha^b}{\partial t}\right|_{t=0} \frac{\partial}{\partial w^b}=\sum_{b=1}^N\sum_{i=1}^n\frac{\partial w^b_\beta}{\partial z^i_\beta} \left.\frac{\partial z^i_\beta}{\partial t}\right|_{t=0}\frac{\partial}{\partial w^b}+\sum_{b=1}^N
\left.\frac{\partial w_\beta^b}{\partial t}\right|_{t=0}\frac{\partial}{\partial w^b}.
\]
Denote by $\iota_Y: Y\rightarrow \bC^N$ the induced embedding. Then the above equality is equivalent to:
\[
v_\alpha-v_\beta=\sum_{b=1}^N \left(\sum_{i=1}^n \left.\frac{\partial z^i_\beta(z_\alpha,t)}{\partial t}\right|_{t=0}\frac{\partial w^b}{\partial z_\beta^i}\right)\frac{\partial}{\partial w^b}=(\iota_Y)_{*}(\theta_{\beta\alpha}),
\]
where we used the identity \eqref{eq-theta}.
Since $\theta_{\beta\alpha}\in \Theta_{\cY_0}(U_\alpha\cap U_\beta)$, we get $[v_\alpha]=[v_\beta]$. By the definition of the connecting morphism $\delta_Y$ in \eqref{em2ab}, we indeed have $\delta_Y(\frak{v})=\theta$.
\end{proof}

\subsection{$p$-trivial atlas and $p$-trivial embeddings}\label{subsec-ptrivial}

We can generalize the above discussion to higher order deformations. 
Let us introduce a condition that will be important in the following discussion.
\begin{defn}\label{def-ptrivial}
Assume that there is an atlas $\cU=\{\cU_\alpha, \Phi_\alpha=(z_\alpha, t)\}$ covering $\cY_0$ with coordinate change functions $z^i_\alpha=F^i_{\alpha\beta}(z_\beta, t)$ on $\cU_\alpha\cap \cU_\beta$. We say that $\cU$ is $p$-trivial if 
$F^i_{\alpha\beta}(z_\beta,t)-F^i_{\alpha\beta}(z_\beta,0)$ vanishes up to order $p$ at $t=0$:
\[
\left.\frac{\partial^l (F_{\alpha\beta}^i(z_\beta,t)-F^i_{\alpha\beta}(z_\beta, 0))}{\partial t^l}\right|_{t=0}=0,  \text{ for } 0\le l\le p.
\]
Notice that, since $l=0$ case is automatically true, this $p$-trivial condition is equivalent to:
\begin{equation}\label{eq-p-vanish}
\left.\frac{\partial^l F_{\alpha\beta}^i(z_\beta,t)}{\partial t^l}\right|_{t=0}=0,  \text{ for } 1\le l\le p.
\end{equation}
If this is the case, we define the $(p+1)$-order Kodaira-Spencer (\v{C}ech) class, denoted by $\theta_{p+1}(\cU)$ or simply by $\theta_{p+1}$ if the atlas is clear, as the (\v{C}ech) cohomology defined by the cocycle:
\begin{equation}\label{redKSccl}
(\theta_{p+1})_{\alpha\beta}=\frac{1}{(p+1)!}\sum_{i=1}^n \left.\frac{\partial^{p+1} F^i_{\alpha\beta}(z_\beta,t)}{\partial t^{p+1}}\right|_{t=0}\frac{\partial}{\partial z_\alpha^i}\in H^0(\mathcal{U}_\alpha\cap \mathcal{U}_\beta\cap \cY_0, \Theta_{\cY_0}).
\end{equation}
\end{defn}

\begin{lem}\label{lem-thetap}
\begin{enumerate}
\item
$\theta_{p+1}:=\theta_{p+1}(\cU)$ is well-defined, i.e. $\theta_{p+1}=\{(\theta_{p+1})_{\alpha\beta}\}$ satisfies the cocycle condition
$
(\theta_{p+1})_{\beta\gamma}=(\theta_{p+1})_{\alpha\gamma}-(\theta_{p+1})_{\alpha\beta}.
$
\item If we have another $p$-trivial atlas $\tilde{\cU}=\{\tilde{\cU}_\alpha, \tilde{\Phi}_\alpha=(\tilde{z}_\alpha, t)\}$, then $\tilde{\theta}_{p+1}=\theta_{p+1}(\tilde{\cU})$ defines the same 
\v{C}ech cohomology class as $\theta_{p+1}$.
\item
Assume that there exists a $(p-1)$-trivial atlas covering $\cY_0$ and $\theta_p=0\in H^1(\cY_0, \Theta_{\cY_0})$. Then for any relatively compact open subset $\cK\Subset \cY$ such that $\cK_0=\pi^{-1}(0)\cap \cK$ is a relatively compact open set of $\cY_0=\pi^{-1}(0)$, there exists a $p$-trivial atlas covering $\cK_0$.
\end{enumerate}
\end{lem}
\begin{proof}
Using the cocycle condition of $\{F_{\alpha\beta}\}$ and the vanishing condition \eqref{eq-p-vanish}, we can take higher order derivatives with respect to $t$ to get:
\begin{eqnarray*}
&&F_{\alpha\beta}^i(F_{\beta\gamma}(z_\gamma,t),t)=F^i_{\alpha\gamma}(z_\gamma,t)  \Longrightarrow  \sum_{j=1}^n\frac{\partial F_{\alpha\beta}^i(z_\beta,t)}{\partial z_\beta^j}\frac{\partial F^j_{\beta\gamma}(z_\gamma,t)}{\partial t}+\frac{\partial F_{\alpha\beta}^{i}(z_\beta,t)}{\partial t}
=\frac{\partial F_{\alpha\gamma}^i(z_\gamma,t)}{\partial t}\nonumber\\
&\Longrightarrow& 
\sum_{j=1}^n\frac{\partial F^i_{\alpha\beta}(z_\beta,t)}{\partial z_\beta^j}\frac{\partial^{p+1} F^j_{\beta\gamma}(z_\gamma,t)}{\partial t^{p+1}}+O(t)+\frac{\partial^{p+1} F^i_{\alpha\beta}(z_\beta,t)}{\partial t^{p+1}}=\frac{\partial^{p+1} F^i_{\alpha\gamma}(z_\gamma,t)}{\partial t^{p+1}}.
\nonumber\\
\end{eqnarray*}
From this it's clear that $\theta_{p+1}=\{(\theta_{p+1})_{\beta\alpha}\}$ satisfies the cocycle condition. 

To prove the second item,  we first choose a common refinement of $\cU$ and $\tilde{\cU}$ and assume we have the same collection of open sets: $\cU_\alpha=\tilde{\cU}_\alpha$ for $\alpha\in \cA$. Suppose that the coordinate function $\tilde{\cU}_\alpha$ is denoted by $\tilde{\Phi}_\alpha=(\tilde{z}_\alpha, t)$. We then have the following relation on the composition of coordinate functions
\[
z_\alpha=z_\alpha(\tilde{z}_\alpha, t)=z_\alpha(\tilde{z}_\alpha(\tilde{z}_\beta, t), t)=z_\alpha(\tilde{z}_\alpha(\tilde{z}_\beta(z_\beta, t),t),t)=z_\alpha(z_\beta, t).
\]
Taking derivatives on both sides with respect to $t$ we get:
\begin{eqnarray}\label{eq-chain}
\frac{\partial z^i_\alpha(z_\beta, t)}{\partial t}&=&\sum_{j=1}^n \frac{\partial z^i_\alpha(\tilde{z}_\alpha, t)}{\partial \tilde{z}^j_\alpha}\left(\sum_{k=1}^n \frac{\partial \tilde{z}^j_\alpha(\tilde{z}_\beta, t)}{\partial \tilde{z}^k_\beta}\frac{\partial \tilde{z}^k_\beta(z_\beta, t) }{\partial t}+\frac{\partial \tilde{z}^j_\alpha(\tilde{z}_\beta, t)}{\partial t}\right)+\frac{\partial z^i_\alpha(\tilde{z}_\alpha, t)}{\partial t}
\end{eqnarray}
Note that we used the Einstein summation rule. On the other hand, we have
\begin{equation}\label{eq-chain2}
\tilde{z}_\beta=\tilde{z}_\beta(z_\beta(\tilde{z}_\beta, t), t)\Longrightarrow \sum_{j=1}^n\frac{\partial \tilde{z}^k_\beta(z_\beta, t)}{\partial z^j_\beta}\frac{\partial z^j_\beta(\tilde{z}_\beta, t)}{\partial t}+\frac{\partial \tilde{z}^k_\beta(z_\beta, t)}{\partial t}=0.
\end{equation}
Combining \eqref{eq-chain}-\eqref{eq-chain2} and chain rule, we get:
\begin{eqnarray}\label{eq-cbdry}
\sum_{i=1}^n\frac{\partial z^i_\alpha(z_\beta, t)}{\partial t}\frac{\partial}{\partial z^i_\alpha}-\sum_{j=1}^n\frac{\partial \tilde{z}^j_\alpha(\tilde{z}_\beta, t)}{\partial t} \frac{\partial}{\partial \tilde{z}^j_\alpha}&=& \sum_{i=1}^n\frac{\partial z^i_\alpha (\tilde{z}_\alpha, t)}{\partial t}\frac{\partial}{\partial z^i_\alpha}-\sum_{j=1}^n \frac{\partial z^j_\beta(\tilde{z}_\beta, t)}{\partial t}\frac{\partial}{\partial z^j_\beta}.
\end{eqnarray}
At $t=0$, this shows that $\theta_1-\tilde{\theta}_1$ is indeed a coboundary. For $p$-trivial atlases $\cU$ and $\tilde{\cU}$, we can take higher order Lie derivatives $\left(\mathcal{L}_{\partial_t}\right)^{p+1}$ on both sides of \eqref{eq-cbdry} at $t=0$ to get 
\begin{equation}
\left.\frac{\partial^{p+1} z^i_\alpha(z_\beta, t)}{\partial t^{p+1}}\right|_{t=0}\frac{\partial}{\partial z^i_\alpha}-\left.\frac{\partial^{p+1} \tilde{z}^j_\alpha(\tilde{z}_\beta, t)}{\partial t^{p+1}}\right|_{t=0}\frac{\partial}{\partial \tilde{z}^j_\alpha}=
\left.\sum_{i=1}^n \frac{\partial^{p+1} z^i_\alpha(\tilde{z}_\alpha, t)}{\partial t}\right|_{t=0}\frac{\partial }{\partial z^i_\alpha}-\left.\frac{\partial^{p+1} z^j_\beta(\tilde{z}_\beta, t)}{\partial t^{p+1}}\right|_{t=0}\frac{\partial}{\partial z^j_\beta}.
\end{equation}  
So, using the definition in \eqref{redKSccl}, $\theta_{p+1}-\tilde{\theta}_{p+1}$ is indeed a coboundary.

Finally we prove the 3rd item. Assume $\cU=\{\cU_\alpha, \Phi_\alpha=(z_\alpha, t)\}_{\alpha\in \cA}$ is a $(p-1)$-trivial atlas. Then by definition of $\theta_{p}$ and the assumption, we have
\begin{equation}\label{eq-thetacobdry}
\theta_{p}=\frac{1}{p!}\left.\sum_{i=1}^n\frac{\partial^p z^i_\alpha(z_\beta, t)}{\partial t^p}\right|_{t=0}\frac{\partial}{\partial z^i_\alpha}=\sum_{i=1}^n c_\alpha^i\frac{\partial}{\partial z^i_\alpha}-c_\beta^i\frac{\partial}{\partial z^i_\beta}.
\end{equation}
Define the new coordinate $\tilde{z}_\alpha^i=z^i_\alpha+t^p c^i_\alpha$ which are genuine coordinate charts on an open neighborhood of $\cK_0$ inside $\cY$, since $\cK\subset \cY$ and $\cK_0\subset \cY_0$ are relatively compact open subsets.
\[
\tilde{z}^i_\alpha=z^i_\alpha(z_\beta, t)+t^p c^i_\alpha=z^i_\alpha\left(\tilde{z}^j_\beta-t^p c^j_\beta,t\right)+t^p c^i_\alpha=\tilde{z}^i_\alpha(\tilde{z}_\beta, t).
\]
Taking $p$-order derivative with respect to $t$ on both sides, we get:
\[
\frac{1}{p!}\left.\frac{\partial^p \tilde{z}^i_\alpha(\tilde{z}_\beta, t)}{\partial t^p}\right|_{t=0}=-\left.\sum_{j=1}^n \frac{\partial z^i_\alpha}{\partial z^j_\beta} \cdot c^j_\beta\right|_{t=0}+\frac{1}{p!}\left.\frac{\partial^p z^i_\alpha(z_\beta, t)}{\partial t^p}\right|_{t=0}+c^i_\alpha.
\]
Notice that $\frac{\partial}{\partial \tilde{z}^i_\alpha}=\frac{\partial}{\partial z^i_\alpha}$ at $t=0$, so we get by \eqref{eq-thetacobdry} that
\begin{eqnarray*}
\frac{1}{p!}\left.\sum_{i=1}^n\frac{\partial^p \tilde{z}^i_\alpha}{\partial t^p }\right|_{t=0}\frac{\partial}{\partial \tilde{z}^i_\alpha}=-\sum_{j=1}^n c^j_\beta\frac{\partial}{\partial z^j_\beta}+\sum_{i=1}^n c^i_\alpha\frac{\partial}{\partial z^i_\alpha}+\frac{1}{p!}\sum_{i=1}^n
\left.\frac{\partial^p z^i_\alpha(z_\beta, t)}{\partial t^p}\right|_{t=0}\frac{\partial}{\partial z^i_\alpha}=0.
\end{eqnarray*}
So the new atlas $\{\cU_\alpha, \tilde{\Phi}=(\tilde{z}_\alpha, t)\}$ is a $p$-trivial atlas covering $\cK_0$.


\end{proof}
To make connection with embedded deformations, we introduce the following definition.
\begin{defn}
Let $\cY\rightarrow\bB$ be an holomorphic family of complex manifolds that can be embedded into $\bC^N\times\bB$. 
We say an embedding $\iota_{\cY}: \cY\rightarrow \bC^N\times\bB$ is $p$-trivial (along $\cY_0=:Y$), if there exists an atlas $\cU=\{\cU_\alpha, \Phi_\alpha=(z_\alpha^i, t)\}_{\alpha\in\cA}$ covering $\cY_0$ such that the following
condition is satisfied:
for each $\alpha\in \cA$, if the embedding $\cU_\alpha\rightarrow \bC^N\times\bB$ is represented by the functions $w^b=w_\alpha^b(z_\alpha, t)$ then the following vanishing conditions are satisfied:
\begin{equation}\label{eq-ptemb}
\left.\frac{\partial^l w_\alpha^b(z_\alpha, t)}{\partial t^{l}}\right|_{t=0}=0, \quad 1\le l\le p.
\end{equation}
In this case, we say that $\cU$ is an adapted atlas for the $p$-trivial embedding, or simply a $p$-adapted atlas. 
\end{defn}

To state the next result, we introduce additional notations. Let $\pi: \cY\rightarrow \bB$ be an holomorphic family of complex manifolds over the unit disk. For any $0<\epsilon<1$ and any subset $\cK\subseteq \cY$, denote $\bB_\epsilon=\{t\in \bB; |t|<\epsilon\}$ and
\begin{equation}\label{eq-Kepsilon}
\cY_\epsilon=\cY\times_{\bB}\bB_\epsilon=\pi^{-1}(\bB_\epsilon), \quad \cK_\epsilon=\pi^{-1}(\bB_\epsilon)\cap \cK.
\end{equation}

\begin{lem}\label{lem-2trivial}
With the above notations, 
if there exists a $p$-trivial embedding $\cY_\epsilon \hookrightarrow \bC^N\times\bB_\epsilon$ for some $0<\epsilon\ll 1$, then there exists a $p$-trivial atlas covering $\cY_0$. 

Conversely, assume that there exists a $p$-trivial atlas covering $\cY_0$. Then for any relatively compact open subset $\cK\Subset \cY$, there is a $p$-trivial embedding $\cK_\epsilon\hookrightarrow \bC^N\times \bB_\epsilon$ for  $0<\epsilon\ll 1$. More precisely, given an embedding $\cY\rightarrow \bC^N\times\bB$ and a relatively compact open set $\cK\Subset \cY$, there exist $0<\epsilon\ll 1$, a neighborhood $\cW_\epsilon$ of $\cK_\epsilon$ inside $\bC^N\times\bB$ and a biholomorphism $\Phi$ of the form $\Phi(w,t)=(\Psi_t(w), t)$, $\Psi_0={\rm Id}$, from $\cW_\epsilon$ onto its image in $\bC^N\times\bB$ such that $\Phi|_{\cK_\epsilon}$ is a $p$-trivial embedding.

\end{lem}
\begin{proof}
Assume that there is a $p$-trivial embedding with $p$-adapted atlas $\{\cU_\alpha, \Phi_\alpha=(z^i_\alpha, t)\}_{\alpha\in \cA}$. We prove that the $p$-adapted atlas is a $p$-trivial atlas defined in Definition \ref{def-ptrivial}. In other words, we want to show that:
\[
\left.\frac{\partial^l (z_\alpha(z_\beta, t)-z_\alpha(z_\beta, 0))}{\partial t^l}\right|_{t=0}=0, \text{ for } 0\le l\le p.
\] 
We prove this by induction. The case of $l=0$ is automatically true. Assume this is proved for the $(l-1)$-th order derivative for some $1\le l\le p$.  Then we take $l$-th order derivative on both sides of the following relation with respect to $t$ at $t=0$,
\[
w^b=w^b_\alpha(z_\alpha, t)=w^b_\alpha(z_\alpha(z_\beta, t), t)=w^b_\beta(z_\beta, t),
\] 
and use the $(l-1)$-trivial and $l$-adapted property to get:
\[
0=\left.\frac{\partial^l w^b_\beta(z_\beta, t)}{\partial t^l}\right|_{t=0}=\left.\sum_{i=1}^n\frac{\partial w^b_\alpha(z_\alpha, t)}{\partial z^i_\alpha}\frac{\partial^l z^i_\alpha(z_\beta, t)}{\partial t^l}\right|_{t=0}+\left.\frac{\partial^l w^b_\alpha(z_\alpha, t)}{\partial t^l}\right|_{t=0}=\left.\sum_{i=1}^n\frac{\partial w^b_\alpha(z_\alpha, t)}{\partial z^i_\alpha}\frac{\partial^l z^i_\alpha(z_\beta, t)}{\partial t^l}\right|_{t=0}.
\]
Because the $N\times n$ matrix 
\[
M_{b i}=\frac{\partial w^b_\alpha(z_\alpha, t)}{\partial z^i_\alpha}
\]
has rank $n$ and zero kernel, we get $\left.\frac{\partial^l z^i_\alpha(z_\beta, t)}{\partial t^l}\right|_{t=0}=0$. So the atlas is $l$-trivial. This completes the induction argument and shows that $p$-adapted atlas is indeed $p$-trivial.

Conversely, we choose a $p$-trivial atlas $\cU=\{\cU_\alpha, \Phi_\alpha=(z_\alpha,t)\}_{\alpha\in\cA}$ covering $\cY_0$ and an embedding which, for each $\alpha\in \cA$, is represented by $w^b=w^b_\alpha(z_\alpha, t)$. Then we have the relation:
\[
w^b=w^b_\beta(z_\beta, t)=w^b_\beta(z_\beta(z_\alpha, t), t)=w^b_\alpha(z_\alpha, t).
\]
Taking the derivative on both sides at $t=0$ and using the $p$-trivial condition of the atlas, we get:
\begin{equation*}
\left.\frac{\partial^{l} w^b_\alpha}{\partial t^{l}}\right|_{t=0}=\left.\sum_{i=1}^n \frac{\partial w^b_\beta }{\partial z^i_\beta}\frac{\partial^{l} z^i_\beta(z_\alpha, t)}{\partial t^{l}}\right|_{t=0}+\left.\frac{\partial^{l}w^b_\beta}{\partial t^{l}}\right|_{t=0}=\left.\frac{\partial^l w^b_\beta}{\partial t^l}\right|_{t=0}, \quad 1\le l\le p.
\end{equation*}
So we see that for each $1\le l\le p$, there is a globally defined vector field:
\[
v^{(l)}=\left.\sum_{b=1}^N \frac{\partial^l w^b_\beta}{\partial t^l}\right|_{t=0}\frac{\partial}{\partial w^b}\in H^0(Y, \Theta_{\bC^N}|_Y)
\]
We claim that the given embedding can be modified to become a $p$-trivial embedding on any relatively compact open subset. We do this by induction as follows. Assume that we already get an $(l-1)$-trivial embedding for some $1\le l\le p$. 
Let $\sigma^{(l)}(w, s)$ be the flow generated by an extension of holomorphic vector field $-v^{(l)}/l!$ to $\bC^N$. Note that $\sigma^{(l)}(w, s)$ exists on a relatively compact open subset for $|s|$ sufficiently small. 

Set $\Phi(w, t)=(\sigma^{(l)}(w, t^l), t)=:(\Psi_t(w), t)$. Then $\Phi$ is a biholomorphism defined on a relatively compact open neighborhood $\cW_\epsilon$ of $\cK_\epsilon$ when $\epsilon$ is sufficiently small.
Define a new embedding $\tilde{\iota}_{\cW_\epsilon}:=\Phi\circ \iota_\cY|_{\cW_\epsilon}$. Then there is a new representation 
$\tilde{w}^b=\tilde{w}^b(w_\alpha(z_\alpha, t), t)=\tilde{w}^b_\alpha(z_\alpha, t)$. We can then take derivative with respect to $t$ by using the $(l-1)$-trivial condition to see that  $\tilde{\iota}_\cY$ is indeed an $l$-trivial embedding:
\begin{eqnarray*}
\left.\sum_{b=1}^N\frac{\partial^l \tilde{w}^b(z_\alpha, t)}{\partial t^l}\right|_{t=0}\frac{\partial }{\partial w^b}&=&\left.
\sum_{b=1}^N \sum_{c=1}^N\frac{\partial \tilde{w}^b}{\partial w^c}\frac{\partial^l w^c(z_\alpha, t)}{\partial t^l}\right|_{t=0}\frac{\partial}{\partial w^b}+\left.\sum_{b=1}^N\frac{\partial^l \tilde{w}^b(w, t)}{\partial t^l}\right|_{t=0}\frac{\partial}{\partial w^b}\\
&=&\left.\sum_{c=1}^N\frac{\partial^l w^c_\alpha(z_\alpha, t)}{\partial t^l}\right|_{t=0}\frac{\partial}{\partial w^c}-v^{(l)}=0.
\end{eqnarray*}

\end{proof}

The first statement of following lemma generalizes Lemma \ref{lem-barv}.

\begin{lem}\label{lem-pkv1}
\begin{enumerate}
\item
If there is a $p$-trivial embedding $\iota_\cY: \cY\rightarrow \bC^N\times \bD$ with $p$-adapted atlas $\{U_\alpha, \Phi_\alpha=(z_\alpha, t)\}$, we can define a global section $\mathfrak{v}_{p+1}:=\mathfrak{v}_{p+1}(\iota_\cY, \Phi_\alpha)\in H^0(\cY_0, N_{\cY_0})$ such that 
\begin{equation}\label{eq-defvp}
\mathfrak{v}_{p+1}(U_\alpha)=\frac{1}{(p+1)!}\left[\left.\sum_{b=1}^N\frac{\partial^{p+1} w^b_\alpha(z_\alpha, t)}{\partial t^{p+1}}\right|_{t=0}\frac{\partial}{\partial w^b}\right] \in H^0(\cU_\alpha\cap \cY_0, N_{\cY_0})
\end{equation}
where we used the natural morphism $\Theta_{\bC^N}|_{\cY_0}\rightarrow N_{\cY_0}$ (remember that $(w^b)_{b=1}^N$ denotes the standard coordinates on $\bC^N$). Furthermore, $\delta_Y(\mathfrak{v}_{p+1})=\theta_{p+1}$ where $\delta_Y$ is the connecting morphism $\delta_Y: H^0(\cY_0, N_{\cY_0})\rightarrow H^1(\cY_0, \Theta_{\cY_0})$ introduced in \eqref{em2ab} and $\theta_{p+1}$ is the reduced Kodaira-Spencer cocycle associated to the $p$-adapted atlas. 
\item
Assume that there is another $p$-adapted atlas $\{\tilde{\cU}_\alpha, \tilde{\Phi}_\alpha=(\tilde{z}_\alpha^i, t)\}$ for the same embedding $\iota_\cY$. If we denote $\tilde{\mathfrak{v}}_{p+1}=\mathfrak{v}_{p+1}(\iota_\cY, \tilde{\Phi}_\alpha)$, then 
$\delta_Y(\mathfrak{v}_{p+1}-\tilde{\mathfrak{v}}_{p+1})=0$.
\end{enumerate}
\end{lem}
\begin{proof}
By the proof of Lemma \ref{lem-2trivial}, a $p$-adapted atlas is $p$-trivial. So we can use the $p$-trivial condition to take the $(p+1)$-th order derivative 
with respect to $t$ at $t=0$ on both sides of the identity:
\[
w^b=w^b_{\beta}(z_\beta, t)=w^b_\beta(z^i_\beta(z^j_\alpha, t), t)=w^b_\alpha(z_\alpha, t)
\]
to get
\begin{equation}\label{eq-v2ks}
\frac{\partial^{p+1} w^b_\alpha(z_\alpha, t)}{\partial t^{p+1}}=\sum_{i=1}^n \frac{\partial w^b_\beta}{\partial z^i_\beta}\frac{\partial^{p+1} z^i_\beta}{\partial t^{p+1}}+\frac{\partial^{p+1} w^b(z_\beta, t)}{\partial t^{p+1}}.
\end{equation}
If we define 
\[
v_\alpha=\frac{1}{(p+1)!}\left.\sum_{b=1}^N\frac{\partial^{p+1} w^b(z_\alpha, t)}{\partial t^{p+1}}\frac{\partial}{\partial w^b}\right|_{t=0}
\]
then $v_\alpha-v_\beta=\iota_{Y*}(\theta_{p+1})_{\beta\alpha}$. So $\{[v_\alpha]\}_{\alpha\in\cA}$ can be glued to become a global section $\mathfrak{v}_p\in H^0(Y, N_Y)$ using the fact that $N_Y=\Theta_{\bC^N}/\Theta_{Y}$.

For the second item. We use \eqref{eq-v2ks} to get the following identities:
\begin{eqnarray*}
\delta(\mathfrak{v}_{p+1}-\tilde{\mathfrak{v}}_{p+1})(U_\alpha\cap U_\beta)&=&\iota_{Y*}((\theta_{p+1})_{\beta\alpha})-\iota_{Y*}((\tilde{\theta}_{p+1})_{\beta\alpha})\\
&=&\iota_{Y*}\left((\theta_{p+1})_{\beta\alpha}-(\tilde{\theta}_{p+1})_{\beta\alpha}\right)
\end{eqnarray*}
By Lemma \ref{lem-thetap} item 2, more specifically identity \eqref{eq-cbdry}, we know that $\theta_{p+1}-\tilde{\theta}_{p+1}=0\in H^1(Y, \Theta_Y)$. So the proof is complete. 

\end{proof}

\begin{lem}\label{lem-chpte}

Assume that there exists a $(p-1)$-trivial embedding $\iota_\cY: \cY\rightarrow \bC^N\times\bB$ with $\mathfrak{v}_p(\iota_\cY)=0\in H^0(\cY_0, N_{\cY_0})$ (see Lemma \ref{lem-pkv1} for the definition of $\mathfrak{v}_p$). Then for any relatively compact open subset $\cK\Subset \cY$, there is a $p$-trivial embedding $\cK_\epsilon\hookrightarrow \bC^N\times \bB_\epsilon$ for  $0<\epsilon\ll 1$.

\end{lem}
\begin{proof}
We need to prove that there exists an atlas satisfying the condition \eqref{eq-ptemb}. By assumption there is an atlas 
$\cU=\{\cU_\alpha, \Phi_\alpha=(z_\alpha, t)\}_{\alpha\in \cA}$ covering $\cY_0$ such that the following condition is satisfied: for each $\alpha\in\cA$, 
if the embedding $\iota_\cY|_{\cU_\alpha}: \cU_\alpha\rightarrow \bC^N\times\bB$ is represented by the function $w^b=w^b_{\alpha}(z_\alpha, t)$, then we have:
$\left.\frac{\partial^l w^b_\alpha(z_\alpha, t)}{\partial t^l}\right|_{t=0}=0$ ($b=1,\dots, N$ and $1\le l\le p-1$), and (see \eqref{eq-defvp})
\begin{equation}\label{eq-ptemb2}
\frac{1}{p!}\sum_{b=1}^N \left.\frac{\partial^p w^b_\alpha(z_\alpha, t)}{\partial t^p}\right|_{t=0}\frac{\partial}{\partial w^b}\in \Theta_{\cY_0}(\cU_\alpha\cap \cY_0).
\end{equation}
So we get functions $d^i_\alpha(z_\alpha)$ satisfying
\begin{equation}\label{eq-ptemb4}
\sum_{b=1}^N \left.\frac{\partial^p w^b_\alpha(z_\alpha, t)}{\partial t^p}\right|_{t=0}\frac{\partial}{\partial w^b}=\sum_{i=1}^n d^i_\alpha \frac{\partial}{\partial z_\alpha^i}=\sum_{i=1}^n d^i_\alpha \frac{\partial w^b_\alpha(z_\alpha, 0)}{\partial z^i_\alpha}\frac{\partial}{\partial w^b}\in \Theta_{\cY_0}(\cU_\alpha\cap \cY_0).
\end{equation}
Define new functions $\tilde{z}^i_\alpha=z^i_\alpha+d^i_\alpha \frac{t^p}{p!}$ which are coordinates on $\cK_\epsilon$ for $0<\epsilon\ll 1$. Taking derivative all the way up to order $p$ on both sides of 
\[
w^b=w^b_\alpha(z_\alpha, t)=w^b_\alpha(z_\alpha(\tilde{z}_\alpha, t), t)=\tilde{w}^b_\alpha(\tilde{z}_\alpha, t)
\]
at $t=0$, we get:
\begin{eqnarray*}
\left.\frac{\partial^p w^b(\tilde{z}_\alpha, t)}{\partial t^p}\right|_{t=0}&=&\left.\sum_{i=1}^n \frac{\partial w^b_\alpha(z_\alpha, 0)}{\partial z_\alpha^i}\frac{\partial^p z_\alpha^i(\tilde{z}_\alpha, t)}{\partial t^p}+\frac{\partial^p w^b_\alpha(z_\alpha, t)}{\partial t^p}\right|_{t=0}\\
&=&-\sum_{i=1}^n d^i_\alpha \frac{\partial w^b_\alpha(z_\alpha, 0)}{\partial z^i_\alpha}+\left.\frac{\partial^p w^b_\alpha(z_\alpha, t)}{\partial t^p}\right|_{t=0}=0.
\end{eqnarray*}
So we see that the atlas $\{\cU_\alpha, (\tilde{z}_\alpha, t)\}$ is indeed a $p$-adapted atlas.

\end{proof}

\begin{lem}\label{lem-3trivial}
Let $\pi: \cY\rightarrow \bB$ be a holomorphic family of complex manifolds embedded into $\bC^N\times\bC$. Let $\cK\subset \cY$ be a relatively compact open set such that there exist a bounded open set $\cW\subset \bC^N\times\bC$ and $H_1, \dots, H_d\in \cO(\cW)$ satisfying:
\begin{equation}
\cK=\{(w,t)\in \cW: H_1(w, t)=\cdots=H_d(w,t)=0\}.
\end{equation}
Then for all $p\ge 1$, the following are equivalent (see \eqref{eq-Kepsilon} for notations):
\begin{enumerate}
\item[(1)]
There exists $0<\epsilon\ll 1$ such that there exists a 
$p$-trivial atlas on $\cK_\epsilon$. 
\item[(2)]
There exist $0<\epsilon\ll 1$ and a biholomorphism $\Phi$ of the form $\Phi(w,t)=(\Psi_t(w), t)$, $\Psi_0={\rm Id}$, from $\cW_\epsilon$ onto its image in $\bC^N\times\bC$ such that $\Phi|_{\cK_\epsilon}$ is a $p$-trivial embedding.

\item[(3)]
There exist $0<\epsilon\ll 1$ and a biholomorphism $\Phi$ of the form $\Phi(w,t)=(\Psi_t(w,t),t)$, $\Psi_0={\rm Id}$, from $\cW_\epsilon$ onto its image in $\bC^N\times\bC$ such that 
\[
\Phi(\cK_\epsilon)=\{(w,t)\in \Phi(\cW_\epsilon): F_1(w,t)=\cdots=F_d(w,t)=0\}
\]
for some holomorphic functions $F_1,\dots,F_d$ with $F_m(w,t)=F_m(w,0)+t^{p+1}G_m(w,t)$.

\end{enumerate}
\end{lem}

\begin{proof}
The equivalence of (1) and (2) has been proved in Lemma \ref{lem-2trivial}. We now prove that (2) implies (3). 
So assume that $\Phi|_{\cK_\epsilon}$ is a $p$-trivial embedding with  
a $p$-adapted atlas $\{\cU_\alpha, (z_\alpha,t)\}$.  
Set $F_m=H_m\circ \Phi^{-1}$. Then the ideal sheaf of $\Phi(\cK_\epsilon)$ is generated by $\{F_1(w,t), \dots, F_d(w,t)\}$. 
We will prove by induction that there exists a sequence of open sets 
$\cW=\cW^{(0)}\supseteq \cW^{(1)}\supseteq \dots \supseteq \cW^{(p+1)}$ and holomorphic functions $F^{(l)}_m(w,t)$ on $\cW^{(l)}$ such that 
\begin{enumerate}
\item
$\cY\cap \cW^{(l)}$ is generated by $F^{(l)}_m(w,t)$;
\item There exist holomorphic functions $G_{m,l}(w,t)$ on $\cW^{(l)}$ such that
\begin{equation}
F^{(l)}_m(w,t)=F^{(l)}_m(w,0)+t^l G_{m,l}(w,t)
\end{equation}
\end{enumerate} 
For $l=1$, let $\cW^{(1)}=\cW$, $F^{(1)}_m(w,t)=F_m(w,t)$ and $G_{m,1}(w,t)=\frac{1}{t}(F_m(w,t)-F_m(w,0))$. Then assume the statement is true for $1\le l\le p$. 
We have the identity:
\begin{equation}\label{eq-Flmexp}
F^{(l)}_m(w^1_\alpha(z_\alpha,t),\dots, w^N_\alpha(z_\alpha,t), t)\equiv 0.
\end{equation}
Taking derivative with respect to $t$ $l$ times and using the identity \eqref{eq-ptemb} and \eqref{eq-Flmexp} we get:
\begin{equation}
\left.G_{m,l}(w_\alpha(z_\alpha,t),t)\right|_{t=0}=0.
\end{equation}
Because the ideal sheaf of $\cK_0\cap \cW^{(l)}$ is generated by $\{F^{(l)}_1(w,0),\dots, F^{(l)}_d(w,0)\}$, there exists $h_{m,l,r}(w)$ such that
\begin{equation}
G_{m,l}(w,0)=\sum_{m=1}^d F^{(l)}_r(w,0)h_{m,l,r}(w).
\end{equation}
Now define:
\begin{eqnarray*}
F^{(l+1)}_m(w,t)&=&F^{(l)}_m(w,t)-t^{l}\sum_{r=1}^d F^{(l)}_r(w,t)h_{m,r,l}(w)\\
&=&F^{(l)}_m(w,0)+t^l G_{m,l}(w,t)-t^{l}\sum_{r=1}^d F^{(l)}_r(w,t)h_{m,r,l}(w).
\end{eqnarray*}
We then have:
\begin{eqnarray*}
\left.\frac{\partial^l F^{(l+1)}(w,t)}{\partial t^l}\right|_{t=0}=0.
\end{eqnarray*}
So we know that $F^{(l+1)}_{m,l}(w,t)$ has the following expansion:
\begin{equation}
F^{(l+1)}_m(w,t)=F^{(l+1)}_m(w,0)+t^{l+1} G_{m,l+1}(w,t)
\end{equation}
over an open subset $\cW^{(l+1)}$ of $\cW^{(l)}$. Note that $\{F^{(l+1)}_r\}$ generate the same ideal as the $\{F^{(l)}_r\}$. Indeed $\{F^{(l+1)}_r\}$ is obtained  by multiplying a holomorphic matrix of the form ${\rm Id}_{d\times d}+O(t^l)$ to $\{F^{(l)}_r\}$. Because $l\ge 1$, this matrix has a holomorphic inverse for $|t|\ll 1$. So  it is easy to see that $F^{(l+1)}_m$ satisfies the wanted properties.

Conversely, we assume (3) holds and consider the biholomorphism $\Phi$ of (3). 
Choose an arbitrary atlas $\cU=\{\cU_\alpha, \Phi_\alpha=(z_\alpha, t)\}$ covering $\cK_\epsilon$.
We want to use induction to prove that there exists an $l$-adapted atlas for the embedding $\Phi|_{\cK_\epsilon}$ for $1\le l\le p$. Assume that this has been proved for $l-1$. This is trivially true when $l=1$. Then note that:
\begin{equation}\label{eq-pde22}
(F_r+t^{p+1}G_r)(w^b(z_\alpha, t))=0 \text{ for } 1\le r\le d.
\end{equation}
Taking the $l$-th order derivative on both sides of \eqref{eq-pde22} and using the $(l-1)$-adapted property $\frac{\partial^j w^b_\alpha}{\partial t^j}|_{t=0}=0$ for $1\le j\le l-1$, we get:
\[
\left.\sum_{b=1}^N \frac{\partial F_r}{\partial w^b}\frac{\partial^{l}w^b_\alpha}{\partial t^{l}}\right|_{t=0}=0, \text{ for } 1\le r\le d.
\]
Since $\{F_r\}$ are defining functions of $\cK_0$, this means that the vector field 
$
\left.\sum_{b=1}^N \frac{\partial^{l}w_\alpha^b}{\partial t^{l}}\frac{\partial}{\partial w^b}\right|_{t=0}
$
is tangent to $\cK_0$. So there exists $c^i_\alpha=c^i_\alpha(z_\alpha)$ such that 
\begin{equation}\label{eq-plwba}
\left.\sum_{b=1}^N\frac{\partial^{l}w^b_\alpha}{\partial t^{l}}\frac{\partial}{\partial w^b}\right|_{t=0}=\sum_{i=1}^n c^i_\alpha\frac{\partial}{\partial z^i_\alpha}=\left.\sum_{b=1}^N\sum_{i=1}^n c^i_\alpha \frac{\partial w^b_\alpha}{\partial z^i_\alpha} \frac{\partial }{\partial w^b}\right|_{t=0}.
\end{equation}
Now define a new coordinate function:
\[
\tilde{z}^i_\alpha=z^i_\alpha+\frac{t^l}{l!} c^i_\alpha(z_\alpha).
\]
Then we get a new representation of the embedding on $\cU_\alpha$:
\[
\tilde{w}^b=w^b(z_\alpha, t)=w^b(z_\alpha(\tilde{z}_\alpha, t), t).
\]
Taking $l$-th order derivatives on both sides, by \eqref{eq-plwba} we get:
\[
\left.\frac{\partial^{l}\tilde{w}^b}{\partial t^{l}}\right|_{t=0}=\left.-\sum_{i=1}^n \frac{\partial w^b_\alpha}{\partial z_\alpha^i}c_\alpha^i+\frac{\partial^{l}w^b_\alpha}{\partial t^{l}}\right|_{t=0}=0.
\]
So by induction, we indeed get a $p$-adapted atlas on $\cK_\epsilon$ for $0<\epsilon\ll 1$. 


\end{proof}

\subsection{Higher order deformation of normal isolated singularity via the higher order deformation of regular part}\label{subsec-hod}

Let $Z\subset \bC^N$ be an affine algebraic variety with exactly one singularity $o\in Z$ and we can assume that this singularity is the origin $0\in \bC^N$. Assume there is a holomorphic family of complex analytic varieties $\cZ\rightarrow \bB$ which is a deformation of the analytic germ $(\cZ_0, o)=(Z,o)$. For any $k\ge 0$, this induces a deformation over the analytic space $\bB(k)=(\bB, \mO_\bB/\mI_0^{k+1})$ where $\mI_0=(t)$ is the ideal sheaf of the point $0\in \bB$. Indeed, we have the flat morphism $\cZ(k):=\cZ\times_{\bB}\bB(k)\rightarrow \bB(k)$. 
\begin{defn}\label{def-ord2}
The order of the deformation $(\cZ, (\cZ_0,o))\rightarrow (\bB, 0)$ is defined to be the natural number:
\[
{\rm Ord}((\cZ, (\cZ_0,o))/(\bB, 0))=\max\left\{k+1; \cZ(k)\rightarrow \bB(k) \text{ is trivial}\right \}.
\]
If the pointed base $(\bB, 0)$ and the point $o\in \cZ$ are clear, we shall just write ${\rm Ord}(\cZ)$ for ${\rm Ord}((\cZ, (\cZ_0,o))/(\bB,0))$.
\end{defn}
It's well known that the higher order deformation theory in the algebraic category (see \cite{Artin}, \cite[Theorem 10.1]{Har10}) can also be developed in the analytic category (cf. \cite[Proposition 1.29]{GLS07}). Given 
a deformation of certain order, the space of possible deformations to the next order is a principal homogeneous space under ${\bf T}_Z^1$ i.e. an affine space without preferred origin. More precisely, suppose that there is a flat family $\cZ(k)\rightarrow \bB(k)$ and an extension to $\cZ^*(k+1)\rightarrow \bB(k+1)$ of $\cZ(k)$ with $\cZ^*(k)=\cZ^*(k+1)\times_{\bB(k+1)}\bB(k)=\cZ(k)$. Then the set of $(k+1)$-th order deformations that extend the $k$-th order deformation $\cZ(k)\rightarrow\bB(k)$ can be identified with ${\bf T}^1_Z$. In the special case at hand, there is a preferred origin given by the trivial deformation and this allows us to define a reduced Kodaira-Spencer class.

\begin{defn}\label{def-app-KSred}
Suppose there is a flat family $\cZ\rightarrow \bB$ of complex analytic varieties with $(\cZ_0, o)=(Z, o)$. Assume $\cZ(k)\rightarrow \bB(k)$ is trivial for a fixed $k\ge 0$. If the trivial deformation $\cZ^*(k+1):=Z\times \bB(k+1)$ is used as the base point so that $\cZ^*(k)=Z\times \bB(k)$ coincides with $\cZ(k)$, the corresponding class representing $\cZ(k+1)$ in ${\bf T}_Z^1$ is defined to be the $(k+1)$-th order Kodaira-Spencer class of $\cZ\rightarrow \bB$ and is denoted by ${\bf KS}_{\cZ}^{(k+1)}$. If $p+1=\Ord(\cZ)$, then we define the reduced Kodaira-Spencer class as $\KS^{\rm red}_{\cZ}=\KS^{(p+1)}_{\cZ}$.
\end{defn}
\begin{lem}\label{lem-ordF}
With the same notations as above, if $\Ord(\cZ)\ge p+1$, then there exist a small neighborhood $\cW$ of $o\in \bC^N\times\bC$ and a biholomorphism $\Phi$ of the form $\Phi(w,t)=(\Psi_t(w), t)$, $\Psi_0={\rm Id}$, from $\cW$ onto its image in $\bC^N\times\bC$ such that the ideal sheaf of $\Phi(\cZ)$ in $\Phi(\cW)$ is 
 generated by $F_1(w,t),\dots,F_d(w,t)$ satisfying $F_m(w,t)=F_m(w,0)+t^{p+1}G_m(w,t)$ on $\cW$ with $G_m(w,t)$ analytic in $\cW$.
\end{lem}
\begin{proof}
By assumption, there exists an isomorphism of quotients of power series rings $$\phi: \bC\{w^1,\dots, w^N, t\}\}/(F_1(w,t),\dots, F_d(w,t), t^{p+1})\rightarrow \bC\{\hat{w}^1,\dots, \hat{w}^N, t\}/(f_1(\hat{w}),\dots, f_d(\hat{w}), t^{p+1}),$$
where $F_1(w,t), \dots, F_d(w,t)$ are defining equations of the germ $(\cZ, (o,0))\subset (\bC^N\times\bC, (o,0))$. We will change the embedding of $(\cZ, (o,0))$ several times during the proof but will continue to use $F_m(w,t)$ to denote the defining equations of $(\cZ, (o,0))$ in each step.

 Assume $\phi$ is represented by functions $w^b=B_b(\hat{w}^1,\dots, \hat{w}^N, t)$. Then we have:
\begin{equation}\label{eq-Fvsf}
F_r(B_b(\hat{w},t), t)= \sum_{l=1}^{d} f_l(\hat{w}) h_{r,l}(\hat{w},t)+t^{p+1}u_r(\hat{w},t), \quad r=1,\dots, d,
\end{equation}  
where $h_{r,l}$ and $u_r$ are holomorphic near $o\in \bC^N\times\bC$.
We can assume $B_b(\hat{w}_1,\dots, \hat{w}^N, 0)=\hat{w}^b$ and $F_r(B_b(\hat{w}, 0), 0)=F_r(\hat{w},0)=:f_r(\hat{\omega})$ so that $h_{r,l}(\hat{w}, 0)=\delta_{rl}$. Multiplying \eqref{eq-Fvsf} by the inverse matrix $(h_{r,l})^{-1}$ (which exists for $|t|$ sufficiently small) and replacing $F_r$, we can assume $h_{r,l}(\hat{w},t)=\delta_{rl}$ so that the following identities hold:
\begin{equation}\label{eq-Fvsf2}
F_r(B_b(\hat{w},t),t)=f_r(\hat{w})+t^{p+1}u_r(\hat{w},t).
\end{equation}
We will prove by induction that there exist a small open neighborhood $\cW$ of $(o,0)\in \bC^N\times\bC$ and a biholomorphism $\Phi$ of the form $\Phi(w,t)=(\Psi_t(w),t)$, $\Psi_0={\rm Id}$, from $\cW$ onto its image $\bC^N\times\bC$ such that $\Phi(\cZ\cap \cW)$ is defined by equations $F_r(w,t)=0$, where 
the following hold for any $0\le l\le p$:
\begin{eqnarray}\label{eq-dFdB}
\left.\frac{\partial^l (F_r(w,t)-f_r(w))}{\partial t^l}\right|_{t=0}=0 \quad \text{ and } \quad 
\left.\frac{\partial^{l} (B_b(\hat{\omega},t)-\hat{w})}{\partial t^{l}}\right|_{t=0}=0.
\end{eqnarray} 
This clearly implies the statement of the lemma. 

\eqref{eq-dFdB} holds for $l=0$. Assume \eqref{eq-dFdB} for $l-1$. Taking derivative for both sides of \eqref{eq-Fvsf2} with respect to $t$ $l$ times and evaluating at $t=0$, we get:
\begin{eqnarray}\label{eq-dFm}
\left.\sum_{b=1}^{N}\frac{\partial F_r(w,0)}{\partial w^b}\frac{\partial^l B_b(\hat{w},t)}{\partial t^l}+\frac{\partial^l F_r(w,t)}{\partial t^l}\right|_{t=0}=0.
\end{eqnarray}
Note that 
\begin{equation}\label{eq-highnormal}
v:=\left.\sum_{b=1}^Nv^b\frac{\partial}{\partial w^b}\right|_{\cZ_0}=-\sum_{b=1}^N\left.\frac{\partial^l B_b(\hat{w},t)}{\partial t^l}\right|_{t=0}\frac{\partial}{\partial w^b}\in H^0(\cZ_0, \Theta_{\bC^N}|_{\cZ_0})
\end{equation} 
is a globally defined vector field on $\cZ_0$. Let $\sigma(w, s)$ be the one parameter subgroup generated by a holomorphic extension of $v$. Then $\sigma(w,s)$ exists for $|s|$ sufficiently small on an open neighborhood of $o\in \cZ_0\subset \bC^N\times\{0\}$. Set 
\begin{equation}
\tilde{w}=\tilde{w}(w,t)=\sigma(w, t^l/l!), \quad \tilde{F}_r(\tilde{w}, t)=F_r(w(\tilde{w},t), t), \quad \tilde{B}(\hat{w},t)=\tilde{w}(w(\hat{w},t), t).
\end{equation} 
In particular, $\left.\frac{\partial^l \tilde{w}^b(w,t)}{\partial t^l}\right|_{t=0}=v^b=-\left.\frac{\partial^l w^b(\tilde{w},t)}{\partial t^l}\right|_{t=0}$ for $b=1,\dots, N$.
Then we get, since $l\ge 1$,
\begin{eqnarray*}
\left.\frac{\partial^l}{\partial t^{l} }(\tilde{F}_r(\tilde{w},t)-F_r(\tilde{w},0))\right|_{t=0}&=&\left. \frac{\partial^l}{\partial t^l}F_r(w(\tilde{w},t),t)\right|_{t=0}\\
&=&\left.\sum_{b=1}^N \frac{\partial F_r(w,0)}{\partial w^b}\frac{\partial^l w^b(\tilde{w},t)}{\partial t^l}+\frac{\partial^l F_r(w,t)}{\partial t^l}\right|_{t=0}\\
&=&\left.\sum_{b=1}^N\frac{\partial F_r(w,0)}{\partial w^b}\frac{\partial^l B_b(\hat{w},t)}{\partial t^l}+\frac{\partial^lF_r(w,t)}{\partial t^l}\right|_{t=0}=0. \quad (\text{ by } \eqref{eq-dFm})
\end{eqnarray*}
Moreover, we have the vanishing:
\begin{eqnarray*}
\left.\frac{\partial^l}{\partial t^l}\tilde{B}_b(\hat{w},t)\right|_{t=0}-\hat{w}&=&\left.\frac{\partial^l}{\partial t^l}\tilde{w}^b(B(\hat{w},t),t)\right|_{t=0}\\
&=&\left.\sum_{c=1}^N \frac{\partial \tilde{w}^b(w,0)}{\partial w^c} \frac{\partial^l}{\partial t^l}B_c(\hat{w},t)+\frac{\partial^l}{\partial t^l}\tilde{w}^b(w,t)\right|_{t=0}\\
&=&\left.\frac{\partial^l B_b(\hat{w},t)}{\partial t^l}\right|_{t=0}-\left.\frac{\partial^l B_b(\hat{w},t)}{\partial t^l}\right|_{t=0}=0.
\end{eqnarray*}
So the induction argument completes.

\end{proof}
If $\Ord(\cZ)\ge p+1$, then by Lemma \ref{lem-ordF}, after changing the embedding of $\cZ$, there exists a small open neighborhood $\cW$ of $(o, 0)\in \bC^N\times\bC$ such that  
$\mI_{\cZ}(\cW)$ is generated by $\{F_i(w,t)=F_i(w,0)+t^{p+1}G_i(w,t)\}$.
In particular $\mI_{\cZ_0}(\cW\cap \cZ_0)$ is generated by $\{f_1, \cdots, f_d\}$ where $f_i(w):=F_i(w,0)$ for $i=1,\dots,d$.
Set $g_i(w)=G_i(w,0)$. 
The flatness condition of $\cZ\rightarrow \bB$ implies that $\{g_i\}$ (and hence $\{G_i\}$) determines a well-defined morphism (\cite[Proposition II.1.25]{GLS07} and \cite[Section 6]{Artin})
\begin{equation}\label{eq-barg}
\bar{g}: \mI_Z\rightarrow \mO_{\bC^N}/\mI_Z, \quad \sum_{r=1}^d f_r h_r\mapsto \sum_{r=1}^d g_r h_r.
\end{equation}
We have $\bar{g}\in {\rm Hom}_{\mathcal{O}_{\bC^N}}(\mI_Z, \mO_{\bC^N}/\mI_Z)={\rm Hom}_{\mathcal{O}_Z}(\mI_Z/\mI_Z^2, \mO_Z)=H^0(Z, N_Z)$. 
So if $\Ord(\cZ)\ge p+1$ then there is a well-defined class
\begin{equation}\label{eq-KSpsi}
-\psi_Z(\bar{g}) \in \fT^1_Z,
\end{equation}
where $\psi_Z: H^0(Z, N_Z)\rightarrow \fT^1_Z$ was defined by Schlessinger (see \eqref{exact1}). This class is exactly the Kodaira-Spencer class
$\KS^{(p+1)}_{\cZ}$ of Definition \ref{def-app-KSred}.
Notice that here we are working in the analytic category as in \cite{Schl1} and \cite{GLS07}. 

From now on assume $Z$ has a normal isolated singularity at $o$ and denote $U=Z\setminus\{o\}$. Schlessinger showed in \cite{Schl1} that the (infinitesimal) embeddable deformations can be determined by deformations of $U$ and ${\bf T}_Z^1$ is a subspace of $H^0(U, \Theta_U)$ (see Proposition \ref{prop-Sch} and \eqref{injexact}).
More precisely there are two exact sequences:
\begin{equation}\label{eq-2SchExact}
H^0(U, \Theta_{\bC^N}|_U)\rightarrow H^0(U, N_U)\stackrel{\psi_U}{\rightarrow} \fT^1_Z\rightarrow 0, \quad 0\rightarrow \fT^1_Z\stackrel{\tau_U}{\longrightarrow} H^1(U, \Theta_U)\rightarrow H^1(U, \Theta_{\bC^N}|_U).
\end{equation}
Fix an embedding $o\in Z\hookrightarrow \bC^N$ and let $\{w_i\}_{i=1}^N$ be the standard coordinates of $\bC^N$ with $w_i(o)=0$. Choose a smooth strictly pluri-subharmonic function $\vphi$ on $\bC^N$ such that the following conditions are satisfied:
\begin{enumerate}
\item $\left.\vphi\right|_U>0$ is a strict plurisubharmonic function on $U=Z\backslash o$;
\item for any $\epsilon>0$ and $c>0$ the subset $\{p\in U; \epsilon<\vphi(p)<c\}$ is relatively compact in $U$;
\item for $c>0$, the subset $K_c:=\{p\in U; \vphi(p)\le c\}$ satisfies that $\partial K$ is compact and strongly pseudo-convex. 
\end{enumerate} 
Now assume that $(Z,o)$ is the germ of the vertex of an affine cone $Z=C(D, L)$ and $\cZ$ is an $\bC^*$-equivariant deformation of $Z$. We can then assume that the embedding of $\cZ$ into $\bC^N\times\bC$ is $\bC^*$-equivariant and the morphisms 
in the sequences \eqref{eq-2SchExact} are $\bC^*$-equivariant. Moreover we can choose $\vphi$ to be an $S^1$-invariant function so that the compact set $K_c$ becomes $S^1$-invariant. 
Fix $0<c_1\ll c_2<+\infty$. 
\begin{lem}
With the same notations as in the above paragraph, set $\mathscr{F}=\Theta_U$ or $N_U$. Then for $i\ge 1$, 
the natural morphism $R: H^i(Z\backslash K_{c_1}, \mathscr{F})\rightarrow H^i((Z\backslash K_{c_1})\cap \mathring{K}_{c_2}, \mathscr{F})$ induced by the inclusion is an isomorphism. 
\end{lem}
\begin{proof}
Since we are working with \v{C}ech cohomology, we first construct coverings by $S^1$-invariant Stein open sets in the following way. Let $\pi: Z\setminus\{o\}\rightarrow D$ be the natural projection realizing $Z\setminus \{o\}$ as a $\bC^*$-bundle over $D$. Choose a Stein covering $\{U^{D}_\alpha\}$ of $D$ and set $U_\alpha=\pi^{-1}(U^D_\alpha)\cap (Z\backslash K_{c_1})$. Similarly we get an $S^1$-invariant Stein covering $\{U_\alpha'\}$ of $(Z\backslash K_{c_1})\cap \mathring{K}_{c_2}$.

We first argue that $R$ is injective.
Represent the cohomology classes by \v{C}ech cocycles with respect to the above $S^1$-invariant Stein coverings. 
If $[\xi]=[\{\xi_{\alpha_{1}\dots\alpha_{i}}\}] \in H^i(Z\backslash K_{c_1}, \mathscr{F})$ satisfies $R([\xi])=0\in H^i((Z\backslash K_{c_1})\cap \mathring{K}_{c_2}, \Theta_Z)$. Then $\xi=\delta (\eta)$
is a coboundary where $\eta=\{\eta_{\alpha_1\alpha_2\dots \alpha_{i-1}}\}$ is a cochain (over $(Z\backslash K_{c_1})\cap \mathring{K}_{c_2}$). 

By using the result in \cite{Joh71}, we can 
decompose each component of $\eta$ into weight pieces. More precisely, we can write $\eta=\sum_k \eta_k$ where $\eta_k=\{(\eta_k)_{\alpha_1\dots\alpha_{i-1}}\}$ has weight $k$ under the $S^1$-action. 
Note that $\mathscr{F}$ is associated to a $\bC^*$-equivariant vector bundle over $\pi^{-1}(U^D_\alpha)$. So each $(\eta_k)_{\alpha_1\dots\alpha_{i-1}}$ is represented by holomorphic functions over $U'_{\alpha_1}\cap \cdots\cap U'_{\alpha_{i-1}}$ with respect to a $\bC^*$-equivariant trivialization of $\mathscr{F}$. Since homogeneous holomorphic functions on an annulus in $\bC$ uniquely extend to holomorphic functions on $\bC^*$, it is easy to see that $\eta$ extends uniquely to a holomorphic cochain of $\mathscr{F}$ with respect to the covering $\{U_\alpha\}$ such that $\xi=\delta(\eta)$ also holds on $Z\setminus K_{c_1}$. So $\xi$ is also a coboundary over $Z\setminus K_{c_1}$ and hence represents zero in $H^i(Z\setminus K_{c_1}, \mathscr{F})$.

By using exactly the same argument, which again depends on the weight decomposition (using \cite{Joh71}) and the holomorphicity of cochains, we also prove that each cocycle over $(Z\setminus K_{c_1})\cap \mathring{K}_{c_2}$ extends to a cocycle over $Z\setminus K_{c_1}$. So the surjectivity of the morphism $R$ is also true.
\end{proof}
With the same notations as in the above discussion, set $Y:=(Z\backslash K_{c_1})\cap \mathring{K}_{c_2}$ and $Y':=Z\backslash K_{c_1}$. 
By \cite[Theorem 15]{AnGr}, for any locally free sheaf $\mathscr{F}$ (whose depth is always $n$),
the natural restriction morphism $H^0(U, \mathscr{F})\rightarrow H^0(Y', \mathscr{F}|_{Y'})$ is an isomorphism and $H^1(U, \mathscr{F})\rightarrow H^1(Y', \mathscr{F}|_{Y'})$ is injective (since $n\ge 2$). 
Combining this with the above lemma, we get that the restriction morphism $\mu_0: H^0(U, \mathscr{F}|_Y)\rightarrow H^0(Y, \mathscr{F}|_Y)$ is an isomorphism and $\mu_1: H^1(U, \mathscr{F})\rightarrow H^1(Y, \mathscr{F}|_Y)$ is injective.

Now we have the following commutative diagram: 
\begin{equation}\label{eq-3morph}
\xymatrix{
& H^0(U, N_U)\ar[r]^{\mu_0}_{\cong}  \ar[d]^{\delta_U}  \ar@{->>}[ld]_{\psi_U} &H^0(Y, N_Y) \ar[d]^{\delta_Y}  \\
\fT^1_Z \ar@^{^{(}->}[r]^{\tau_U} & H^1(U, \Theta_U) \ar@^{{(}->}[r]^{\mu_1} & H^1(Y, \Theta_Y)
}
\end{equation}
Note that $\psi_U$ and $\tau_U$ are defined via Schlessinger's result in Proposition \ref{prop-Sch}. $\delta_U$ and $\delta_Y$ are connecting morphism as in \eqref{em2ab} (see also \eqref{preinj}).

\begin{prop}\label{prop-3eqwt}
With the above notations, let $\cY\rightarrow \bD$ be the holomorphic family of complex manifolds that is induced by $\cZ\rightarrow \bD$. 
The following conditions are equivalent:
\begin{enumerate}
\item[(1)] $\Ord(\cZ)\ge p+1$ and hence there is a well-defined $\KS^{(p+1)}_{\cZ}\in \fT^1_Z$.
\item[(2)] There is a $p$-trivial embedding of $\cY$ and hence there is a well-defined $\mathfrak{v}_{p+1}\in H^0(Y, N_Y)$.
\item[(3)] There is a $p$-trivial atlas covering $\cY_0$ and hence there is a well-defined $\theta_{p+1}\in H^1(Y, \Theta_Y)$.
\end{enumerate}
If one of the above conditions holds true, then we have the following identities:
\begin{equation}\label{eq-commu}
\delta_Y(\mathfrak{v}_{p+1})=\theta_{p+1}=\mu_1\circ \tau_U(\KS^{(p+1)}_{\cZ}) 
\quad \text{ and } \quad
\KS^{(p+1)}_{\cZ}=\psi_U\circ \mu_0^{-1}(\mathfrak{v}_{p+1}).
\end{equation}

\end{prop}
\begin{proof}
Notice the equivalence of (2) and (3) was already proved in Lemma \ref{lem-2trivial}. So we only need to prove the equivalence of (1) and (2).

Assume $\Ord(\cZ)\ge p+1$. Then by Lemma \ref{lem-ordF}, after changing the embedding of $\cZ$, we can choose an open neighborhood $\cW$ of $(o, 0)\in \bC^N\times\bC$ such that $\cI_{\cZ}(\cW)$ is generated by $\{F_1(w,t)=F_1(w,0)+t^{p+1}G_1(w,t), \cdots, F_d(w,t)=F_d(w,0)+t^{p+1} G_d(w,t)\}$. 
By Lemma \ref{lem-3trivial}, the condition (2) holds, i.e. we get a $p$-trivial embedding and a $p$-adapted atlas.

Now we verify the identities in \eqref{eq-commu} by using this $p$-adapted atlas. Set $f_r(w)=F_r(w,0)$ and $g_r(w)=G_r(w,0)$.
Taking $(p+1)$-th derivatives with respect to $t$ on both sides of the equation:
\[
(f_r+t^{p+1}G_r)(w^b(z_\alpha, t))=0,
\]
we get:
\[
\sum_{b=1}^N \frac{\partial f_r}{\partial w^b}\left.\frac{1}{(p+1)!}\frac{\partial^{p+1} w^b}{\partial t^{p+1}}\right|_{t=0}+ g_r=0.
\]
Comparing with the definition of $\mathfrak{v}_{p+1}$ in \eqref{eq-defvp} and the definition of $\bar{g}$ in equation \eqref{eq-barg}, this says $-\bar{g}|_Y=\mathfrak{v}_{p+1}\in H^0(Y, N_Y)$. It's clear that $\mathfrak{v}_{p+1}=-\mu_0(\bar{g}|_U)$ so that $-\bar{g}|_U=\mu_0^{-1}(\mathfrak{v}_{p+1})$ since $\mu_0$ is an isomorphism. On the other hand, we have $-\psi_U(\bar{g}|_U)=\KS^{(p+1)}_{\cZ}$. So we get:
\[
\psi_U\circ \mu_0^{-1}(\mathfrak{v}_{p+1})=\KS^{(p+1)}_{\cZ}.
\]
The identity $\delta_Y(\mathfrak{v}_{p+1})=\theta_{p+1}$ was proved in Lemma \ref{lem-pkv1}. The other identity is a consequence now:
\begin{eqnarray*}
\mu_1\circ \tau_U(\KS^{(p+1)}_{\cZ})&=&\mu_1\circ \tau_U\circ \psi_U\circ \mu_0^{-1}(\mathfrak{v}_{p+1})\\
&=& \mu_1\circ \delta_U\circ \mu_0^{-1}(\mathfrak{v}_{p+1})=\delta_Y(\mathfrak{v}_{p+1})=\theta_{p+1}.
\end{eqnarray*}

We are left to prove (2) implies (1). 
Now assume (2) holds but on the contrary $\Ord(\cZ/\bB)=l+1$ with $l<p$. Then by using the defining functions $\{F_r(w,0)+t^{l+1}G_r(w,t)\}$ from Lemma \ref{lem-ordF} we have
$\psi_U(\bar{g})=-\KS^{(l+1)}_{\cZ} \neq 0\in \fT^1_Z$. So $\delta_U(\bar{g})=\tau_U\circ \psi_U(\bar{g})\neq 0\in H^1(U, \Theta_U)$ since $\tau_U$ is injective. 
By the discussion before Proposition \ref{prop-3eqwt}, $\mu_1$ is injective. So $\mu_1\circ \delta_U(\bar{g})\neq 0$. Hence 
\[
\theta_{l+1}=\delta_Y(\mathfrak{v}_{l+1})=-\mu_1\circ \delta_U(\bar{g})\neq 0.
\] 
On the other hand, we assumed that there is a $p$-trivial embedding $\tilde{\iota}_{\cY}$ with $p>l$. So by choosing $p$-adapted atlas the corresponding class $\tilde{\mathfrak{v}}_{l+1}:=\mathfrak{v}_{l+1}(\tilde{\iota}_{\cY})=0$. So 
$\delta_Y(\tilde{\mathfrak{v}}_{l+1})=0$. By the first item of Lemma \ref{lem-pkv1} and the second item of Lemma \ref{lem-thetap}, $\delta_Y(\mathfrak{v}_{l+1})=\delta_Y(\tilde{\mathfrak{v}}_{l+1}) \in H^1(Y, \Theta_Y)$. So we get a contradiction. 


\end{proof}

\section{Embeddings of submanifolds and deformations}

In the first subsection \ref{consdiff}, we will construct the ``most holomorphic" diffeomorphism between a neighborhood of a complex submanifold to a neighborhood of the zero section of its normal bundle. In particular, this allows us to get Proposition \ref{prop-linmap}. We do this by first using the ``deformation to normal cone" to construct a ``holomorphic family of neighborhoods" as the deformation of a neighborhood of the zero section of the normal bundle. We also construct a $(k-2)$-trivial (resp. $(k-1)$-trivial) atlas on this family under the assumption that the embedding is $(k-1)$-linearizable (resp. $(k-1)$-comfortable). Then we use the similar method as that in section \ref{subsec-clKS} to get the wanted diffeomorphism. Our main goal in this section is a technical Proposition \ref{embvsdef} which relates the reduced Kodaira-Spencer class of the ``holomorphic family of neighborhoods" to the obstructions to splitting embedding and comfortable embedding.

\subsection{Construction of comparison diffeomorphism and $(k-1)$-trivial atlas}\label{consdiff}

As mentioned above, the construction of diffeomorphism $F$ in Proposition \ref{prop-linmap} and \ref{prop-cftmap} uses a construction in algebraic geometry called deformation to the normal cone (see \cite[Chapter 5]{Fult}). This is a way to degenerate a neighborhood of $S\hookrightarrow X$ to a neighborhood of $S\hookrightarrow N_S$. The construction is simply to blow-up the submanifold $S\times\{0\}\subset X\times\bC$ which gives a total family  $\tilde{\mX}=Bl_{S\times\{0\}}(X\times\bC)$ with the projection $\pi: \tilde{\mX}\rightarrow\mathbb{C}$. The central fibre $\tilde{\mX}_0=Bl_SX\cup E$ is the union of two components. The exceptional divisor $E=\mathbb{P}(N_S\oplus\mathbb{C})$ is the projective compactification of the normal bundle $N_S$ of $S\subset X$. In this way we can view $S\hookrightarrow X$ as an analytic deformation of $S_0\hookrightarrow N_S$. More precisely, we will construct an analytic family $\mW$ as an open neighborhood of $\mathcal{S}\cong S\times\bC \hookrightarrow \tilde{\cX}$. In other words, $\mW$ is considered as a deformation of a neighborhood of $S\rightarrow X$.

The main result of this subsection is the following proposition which contains the statement of proposition \ref{prop-linmap}. 
For the construction in its proof, 
we refer to the appendix section \ref{linearizable} for preliminary results from \cite{ABT} that will enable us to read out the precise order of holomorphicity of the diffeomorphism constructed.
\begin{prop}\label{prop-goodmap}
Assume that $S$ is a smooth submanifold of $X$. If $S\hookrightarrow X$ is $(k-1)$-linearizable, then the following statements are true:
\begin{enumerate}
\item[(1)] there is an holomorphic family of complex manifolds $\cW$ such that $\cW_0$ is a neighborhood of $S_0\hookrightarrow N_S$ and $\cW_1=:W$ is a small neighborhood of $S\subset X$,
and there is a $(k-2)$-trivial atlas covering $\cW_0$.
\item[(2)]
 there is a diffeomorphism $F: \cW_0\rightarrow F(\cW_0)\subset W$ where $W=\cW_1$ , such that for any $j\ge 0$, there exists a constant $C_j$ such that $F$ satisfies
\begin{equation}\label{pbJ3}
\|\nabla_{\tilde{g}_0}^j(F^*J-J_0)\|_{\tilde{g}_0}\le C_j \tilde{r}^{k-j} \text{ on } W_0;
\end{equation}
\end{enumerate}
If $S\hookrightarrow X$ is furthermore $(k-1)$-comfortable, then the above properties can be improved as follows:
\begin{enumerate}
\item[(3)]
There is a $(k-1)$-trivial atlas cvering $\cW_0$;
\item[(4)]
There is a local decomposition of $\Phi:=F^*J-J_0$ into four types of components (see \eqref{eq-hvdecomp}):
\[
\Phi=\Phi_{v}^{h}+\Phi_{h}^{v}+\Phi_{v}^{v}+\Phi_{h}^{h},
\]
such that, for any $j\ge 0$, the following estimates hold over $W_0$ for a uniform constant $C_j$:
\begin{equation}\label{eq-cftimprove}
\|\nabla_{\tilde{g}_0}^j \Phi_h^v\|_{\tilde{\omega}_0}\le C_j \tilde{r}^{k+1-j}, \; \|\nabla_{\tilde{g}_0}^j\Phi_v^v\|_{\tilde{\omega}_0}\le C_j \tilde{r}^{k+1-j}, \; \|\nabla_{\tilde{g}_0}^j \Phi_{h}^h\|_{\tilde{\omega}_0}\le C_j \tilde{r}^{k-j}, \; \|\nabla_{\tilde{g}_0}^j \Phi_v^h\|_{\tilde{\omega}_0}\le C_j \tilde{r}^{k-j}.
\end{equation}
\end{enumerate}
\end{prop}
The improved estimates \eqref{eq-cftimprove} will be used to prove Proposition \ref{prop-cftmap} in section \ref{sec-ACK}.
\begin{proof}

Assume that the embedding $S\hookrightarrow X$ is $(k-1)$-linearizable.
By Theorem \ref{crilinear} in appendix \ref{linearizable} we can find coordinate charts $\{ V_\alpha, (z_\alpha)\}$ of $X$ near the submanifold $S$ such that $S\cap V_\alpha=\{z_\alpha^1=\dots=z_\alpha^m=0\}$ and the transition functions on $V_\alpha\cap V_\beta$ are given by:
\begin{equation}\label{linearcoord}
\left\{
\begin{array}{ccll}
z_\beta^r&=&\sum_{s=1}^m (a_{\beta\alpha})^r_s(z''_\alpha)z_\alpha^s+R^r_{k}, & \mbox{ for } r=1,\dots, m, \\
&&&\\
z_\beta^p&=&\phi_{\beta\alpha}^p(z''_\alpha)+R^p_{k}, & \mbox{ for } p=m+1,\dots, n;
\end{array}
\right.
\end{equation}
where we have denoted by $z''=(z^{m+1}_\alpha, \cdots, z^n_\alpha)$ the tangent variables, which can also serve as coordinates on $S$.
Here $R^{r}_{k}, R^{p}_{k}\in \mathcal{I}_S^{k}$.  We also consider coordinate charts $\{V_\alpha\times\mathbb{C},(z_\alpha,t)\}$ on $X\times\bC$ so that $S\times\{0\}= \{z_\alpha^1=\dots=z_\alpha^m=t=0\}$.

Consider the blow up $\pi: \tilde{\mX}:=Bl_{S\times \{0\}}(X\times\bC)\rightarrow X\times\bC$ with the exceptional divisor $E=\mathbb{P}(N_S\oplus\mathbb{C})$. $E$ is the projective compactification of the normal bundle $N_S\rightarrow S$ and $S_0$ sits inside $N_S\subset E\subset \tilde{\mX}_0\subset \tilde{\mX}$ as the zero section of $N_S\rightarrow S$. The subset $\pi^{-1}(V_\alpha\times \mathbb{C})\subset \tilde{\mX}$ is defined as the following subvariety of $V_\alpha\times\mathbb{C}\times\mathbb{P}^{m}$:
\begin{eqnarray*}
&&\left\{(z_\alpha^r, z_\alpha^{p}, t, [Z_\alpha^r, T]); (z_\alpha^r, z_\alpha^p)\in V_\alpha, t\in \mathbb{C}, z_\alpha^r Z_\alpha^s-z_\alpha^s Z_\alpha^r=0, \right.\\
&& \hskip 2cm \left. z_\alpha^r\cdot T- t \cdot Z_\alpha^r=0; \mbox{ for } r,s=1,\cdots, m; p=m+1,\cdots, n\right\},
\end{eqnarray*}
where $[Z_\alpha^r, T]$ are homogenous coordinates on $\mathbb{P}^{m}$.
Near $S_0$, the coordinate $T\neq 0$, and so we can define new coordinate charts $\{w_\alpha, t\}$ such that the map $\pi$ is given by:
\[
z_\alpha^1=t w_\alpha^1,\dots, z_\alpha^m=t w_\alpha^m;\quad z_\alpha^{m+1}=w_\alpha^{m+1}, \dots, z_\alpha^{n}=w_\alpha^n; \quad t=t.
\]
Without loss of generality we can assume $V_\alpha=\{z_\alpha; |z_\alpha|< \epsilon\}$ for sufficiently small $\epsilon>0$. Then if we denote the polydisc on the total space:
\[
\mathcal{U}_\alpha=\{(t,w_\alpha); |t|< 2, |w_\alpha|< \epsilon\},
\]
then $\pi(\mathcal{U}_\alpha)\subset V_\alpha\times \mathbb{C}$, and when $t\neq 0$ satisfies $|t|<2$, 
\[
\pi(\mU_\alpha)\cap X_t\cong \left\{z_\alpha; |z^r_\alpha|< 2 \epsilon t, |z^p_\alpha|< \epsilon; \mbox{ for } r=1,\dots,m; p=m+1, \dots, n\right\}.
\] 
Denote by $\mathcal{S}$ the strict transform of $S\times\mathbb{C}$ on $\tilde{\mX}$. Let $\pi_1$ be the composition $\tilde{\cX}\rightarrow X\times \bC\rightarrow \bC$. For any $a>0\in \mathbb{R}$, denote $\cS_{|t|<a}=\pi^{-1}\left(\{t; |t|<a\}\right)$. Then the collection of open sets $\{\mU_\alpha\}$ is a covering of $\mathcal{S}_{|t|<1}$ inside the total space $\tilde{\mX}$ and on $\mathcal{U}_\alpha$ the ideal sheaf $\mI_{\mathcal{S}}$ is generated by $w_\alpha^1, \cdots, w^m_\alpha$. Denote $\mU=\bigcup_{\alpha}\mU_\alpha$. We can find a small neighborhood $\mW$ of $\mathcal{S}_{|t|<1}\subset \tilde{\mX}$ such that $
\mW\subset\subset \mU$. 
Denote $w'_\alpha=(w^1_\alpha, \dots, w^m_\alpha)$, $w''_\alpha=(w^{m+1}_\alpha, \dots, w^{n}_\alpha)$ and define 
\[
\tilde{R}^r_{k}(t; w'_\alpha, w''_\alpha)=t^{-k} R^r_k(t w'_\alpha, w''_\alpha),\quad \tilde{R}^p_{k}(t; w'_\alpha, w''_\alpha)=t^{-k} R^p_k(t w'_\alpha, w''_\alpha).
\]
Then $\tilde{R}^r_{k}\in \mathcal{I}_{\mathcal{S}}^{k}$, $\tilde{R}^p_{k}\in\mI_{\mathcal{S}}^{k}$. Note that $\{ \mU_\alpha\cap \mW_t, w_\alpha\}_\alpha$ form an atlas covering $\mW_t:=\pi^{-1}(X_t)\cap \mW$ for $|t|<1$. The transition function on $(\mU_\alpha\cap\mW_t)\cap(\mU_\beta\cap\mW_t)$ is given by:
\begin{equation}\label{linearblowup}
\left\{
\begin{array}{ccll}
w_\beta^r&=&\sum_{s=1}^m (a_{\beta\alpha})^r_s(w''_\alpha)w_\alpha^s+t^{k-1} \tilde{R}_{k}^{r}, & \mbox{ for } r=1,\dots, m, \\
&&&\\
w_\beta^p&=&\phi_{\beta\alpha}^p(w''_\alpha)+t^{k} \tilde{R}_{k}^{p}, & \mbox{ for } p=m+1,\dots, n.
\end{array}
\right.
\end{equation}
So we get a $(k-2)$-trivial atlas covering $\mW_0$ in the sense of Definition \ref{def-ptrivial}. Next we can construct the diffeomorphism that we want. Choose a partition of unity $\{\rho_\alpha,\tilde{\rho}\}$ subordinate to the covering $\{\mathcal{U}_\alpha, \tilde{\mX}\backslash\ov{\mathcal{W}}\}$. In particular, ${\rm Supp}(\rho_\alpha)\subset \mathcal{U}_\alpha, {\rm Supp} (\tilde{\rho})\cap \mathcal{W}=\emptyset$. As in Section \ref{KSdef}, define the differentiable vector field in the small neighborhood $\mW$ of $\mathcal{S}_{|t|<1}\subset \tilde{\mX}$:
\begin{eqnarray*}
\mathbb{V}&=&\sum_{\alpha}\rho_\alpha\left(\frac{\partial}{\partial t}\right)_\alpha=\sum_{i=1}^n\left(\sum_\alpha\rho_\alpha\frac{\partial f^i_{\beta\alpha}(w_\alpha,t)}{\partial t}\right)\frac{\partial}{\partial w_\beta^i}+\left(\frac{\partial}{\partial t}\right)_\beta\\
&=&\sum_{r=1}^m \sum_{\alpha} \rho_\alpha \partial_t(t^{k-1}\tilde{R}^r_{k})\frac{\partial}{\partial w^r_\beta}
+\sum_{p=m+1}^n\sum_{\alpha}\rho_\alpha \partial_t(t^{k}\tilde{R}_{k}^{p})\frac{\partial}{\partial w_\beta^p}+\left(\frac{\partial}{\partial t}\right)_\beta.
\end{eqnarray*}

 Let $\{\sigma(s); s\in (-\epsilon, \epsilon)\}$ be the one-parameter sugroup generated by $\re(\mathbb{V})$ which exists when $\epsilon$ is sufficiently small. Then we get a map $\sigma(s): \mathcal{W}\cap \tilde{\cX}_0\rightarrow \cU\cap \tilde{\cX}_s$ which gives a diffeomorphism to its image. 
 
 Note that the vector field $\mathbb{V}$ is tangent to $\mathcal{S}$ so that $\sigma(s)$ preserves $\mathcal{S}$. Denote $\mathcal{J}$ the complex structure on the total space $\tilde{\mX}$ of blow up. Denote
\[
\Phi(s)=\sigma(s)^*\mathcal{J}-\mathcal{J}.
\]
Then we can calculate:
\begin{eqnarray*}
\dot{\Phi}(s)&=&\frac{d}{ds}(\sigma(s)^*\mathcal{J})=\mathcal{L}_{\re(\mathbb{V})}\mathcal{J}=\bar{\partial}\mathbb{V}+\overline{\bar{\partial}\mathbb{V}} \\
&=&\left.\sum_{r=1}^m \sum_\alpha [\partial_t(t^{k-1}\tilde{R}^r_{k})](\bar{\partial}\rho_\alpha)\otimes\frac{\partial}{\partial w_\beta^r}+
\sum_{p=m+1}^n \sum_\alpha [\partial_t(t^{k}\tilde{R}^p_{k})](\bar{\partial}\rho_\alpha)\otimes\frac{\partial}{\partial w_\beta^p}\right|_{t=s}\\
&&+\text{ complex conjugates}.
\end{eqnarray*}
Assume $\tilde{\omega}_0$ is a smooth K\"{a}hler metric on the open set $\mW$. Because both $\tilde{R}^r_{k}, \tilde{R}^p_{k} \in \mathcal{I}_{\mathcal{S}}^{k}$, we get:
\[
|\dot{\Phi}|_{\tilde{\omega}_0}\le C s^{\max\{0,k-2\}} |w'|^{k}.
\]
So we can integrate to get:
\begin{equation}\label{Jlinear}
|\Phi(s)|_{\tilde{\omega}_0}=|\sigma(s)^*\mathcal{J}-\mathcal{J}|_{\tilde{\omega}_0}=\left|\int_0^s \sigma(s)^*(\mathcal{L}_{\mathbb{V}}\mathcal{J}) ds \right|_{\tilde{\omega}_0}\le C s^{k-1}|w'|^{k}.
\end{equation}
If $\tilde{r}$ is the distance function to $S_0$ with respect to $\tilde{g}_0$, then $\tilde{r}$ is comparable to the norm $|w'|$. So the above estimate proves the inequality \eqref{pbJ2} for $j=0$. The higher order estimates of $\Phi$ can be proved in the same way by taking higher order Lie derivative of $\mathbb{V}$.

Next we show that if $S\hookrightarrow X$ is $(k-1)$-comfortable, the estimates of of some components of $\Phi$ can be improved. In this case, by Theorem \ref{coordinate}, we can choose the coordinate charts such that the following holds:
\begin{equation}\label{cmftcoord}
\left\{
\begin{array}{ccll}
z_\beta^r&=&\sum_{s=1}^m (a_{\beta\alpha})^r_s(z''_\alpha)z_\alpha^s+R^r_{k+1}, & \mbox{ for } r=1,\dots, m, \\
&&&\\
z_\beta^p&=&\phi_{\beta\alpha}^p(z''_\alpha)+R^p_{k}, & \mbox{ for } p=m+1,\dots, n.
\end{array}
\right.
\end{equation}
where $R^r_{k+1}\in \mathcal{I}_{S}^{k+1}$, $R^p_{k}\in \mathcal{I}_S^{k}$. Similarly as before, denote $\tilde{R}_{k+1}^r(t; w'_\alpha, w''_\alpha)=t^{-(k+1)}R_{k+1}^r(t w'_\alpha, w''_\alpha)$ and $\tilde{R}_{k}^p(t; w'_\alpha, w''_\alpha)=t^{-k}R_{k}^p(t w'_\alpha, w''_\alpha)$. Then
$\tilde{R}_{k+1}^r\in \mathcal{I}_{\mathcal{S}}^{k+1}$ and $\tilde{R}_{k}^p\in \mathcal{I}_{\mathcal{S}}^{k}$.
On the total space of the deformation to the normal cone, we have 
\begin{equation}\label{cmftblowup}
\left\{
\begin{array}{ccll}
w_\beta^r&=&\sum_{s=1}^m (a_{\beta\alpha})^r_s(w''_\alpha)w_\alpha^s+t^{k} \tilde{R}_{k+1}^r, & \mbox{ for } r=1,\dots, m, \\
&&&\\
w_\beta^p&=&\phi_{\beta\alpha}^p(w''_\alpha)+t^{k}\tilde{R}_{k}^p, & \mbox{ for } p=m+1,\dots, n.
\end{array}
\right.
\end{equation}
Notice that this is a $(k-1)$-trivial atlas covering $\mW_0$ in the sense of definition \ref{def-ptrivial}.

Similarly as before the differentiable vector field $\mathbb{V}$ (see Section \ref{KSdef}) becomes 
\begin{eqnarray}\label{Vectorfield}
\mathbb{V}
&=&\sum_{i=1}^n\left(\sum_\alpha\rho_\alpha\frac{\partial f^i_{\beta\alpha}(w_\alpha,t)}{\partial t}\right)\frac{\partial}{\partial w_\beta^i}+\left(\frac{\partial}{\partial t}\right)_\beta\nonumber\\
&=&\sum_{r=1}^{m}\sum_{\alpha} \rho_\alpha[\partial_t (t^{k}\tilde{R}^r_{k+1})]\otimes \frac{\partial}{\partial w_\beta^r}+\sum_{p=m+1}^n\sum_{\alpha} 
\rho_\alpha[\partial_t (t^{k}\tilde{R}^p_{k})]\otimes\frac{\partial}{\partial w_\beta^p}+\left(\frac{\partial}{\partial t}\right)_\beta.
\end{eqnarray}
Use the same notations $\sigma(s)$, $\mathcal{J}$, $\Phi(s)$ and $\dot{\Phi}(s)$ as before. We have:
\begin{eqnarray*}
\dot{\Phi}(s)&=&\frac{d}{ds}(\sigma(s)^*\mathcal{J})=\mathcal{L}_{\re(\mathbb{V})}\mathcal{J}=\bar{\partial}\mathbb{V}+\overline{\bar{\partial}\mathbb{V}}\\
&=&\left.\sum_{r=1}^m \sum_\alpha [\partial_t(t^{k}\tilde{R}^r_{k+1})](\bar{\partial}\rho_\alpha)\otimes\frac{\partial}{\partial w_\beta^r}+
\sum_{p=m+1}^n \sum_\alpha [\partial_t(t^{k}\tilde{R}^p_{k})](\bar{\partial}\rho_\alpha)\otimes\frac{\partial}{\partial w_\beta^p}\right|_{t=s}\\
&&+\text{ complex conjugates }.
\end{eqnarray*}
We assume the index $v\in \{1,\dots, m, \overline{1}, \dots, \overline{m}\}$, $h\in \{m+1,\dots, n, \overline{m+1},\cdots, \overline{n}\}$ and decompose $\Phi$ into four types of components:
\begin{equation}\label{eq-hvdecomp}
\Phi=\Phi_{v}^{h}+\Phi_{h}^{v}+\Phi_{v}^{v}+\Phi_{h}^{h}:=\phi_{v}^h d w^{v}\otimes \partial_{w^h}+\phi_{h}^{v}dw^h\otimes \partial_{w^v}+\phi_{v}^v dw^v\otimes \partial_{w^v}+\phi_{h}^{h}dw^h\otimes \partial_{w^h}.
\end{equation}
Again we assume $\tilde{\omega}_0$ is a smooth K\"{a}hler metric on $\mathcal{W}$.

Since $\tilde{R}^r_{k+1}\in \mathcal{I}_{\mathcal{S}}^{k+1}$, $\tilde{R}_{k}^p\in \mathcal{I}_{\mathcal{S}}^{k}$, it's easy to see that:
\[
|\dot{\phi}_h^v|\le C s^{k-1} |w'|^{k+1}, |\dot{\phi}_v^v|\le C s^{k-1} |w'|^{k+1}, |\dot{\phi}_{h}^h|\le Cs^{k-1} |w'|^{k}, |\dot{\phi}_v^h|\le Cs^{k-1}|w'|^{k}.
\]
Integrating these, we get:
\begin{equation}\label{eq-improvePhi}
|\Phi_h^v|_{\tilde{\omega}_0}\le C s^{k}|w'|^{k+1}, |\Phi_v^v|_{\tilde{\omega}_0}\le Cs^{k} |w'|^{k+1}, |\Phi_{h}^h|_{\tilde{\omega}_0}\le Cs^{k} |w'|^{k}, |\Phi_v^h|_{\tilde{\omega}_0}\le Cs^{k} |w'|^{k}.
\end{equation}
When  $|s|<\epsilon$ with $\epsilon$ sufficiently small, 
since $\tilde{r}$ is comparable to $|w'|$, we get the estimates that improve the estimates in \eqref{eq-cftimprove} for $j=0$. The higher order estimates can be proved similarly by taking higher order Lie derivatives of $J$ with respect to $\mathbb{V}$. 
\end{proof}

\subsection{Order of embedding via deformation to the normal cone}

Let $S$ be a smooth submanifold of a complex manifold $X$. We will denote by $\pi_S: N_S\rightarrow S$ the normal bundle of $S$ inside $X$ and by $\Theta_{N_S}$ the tangent sheaf on the total space of $N_S$. The natural $\mathbb{C}^*$ action on $N_S$ induces $\mathbb{C}^*$ actions on various cohomology groups. Since we will use various \v{C}ech cohomology groups frequently, we choose a Stein covering $\{\hat{U}_\alpha\}$ of $N_S$ by first choosing a Stein covering $\{U_\alpha\}$ of $S$ and then defining $\hat{U}_S=\pi_S^{-1}(U_\alpha)$. In particular, $\hat{U}_\alpha$ is invariant under the natural $\mathbb{C}^*$ action. On each $\hat{U}_\alpha$, choose a coordinate system $w_\alpha=\{w_\alpha', w_\alpha''\}=\{w^r_\alpha, w^p_\alpha \; | \; r=1, \dots, m; p=m+1, \dots, n\}$ such that $w^r_\alpha$ are fiber variables and $w^p_\alpha$ are base variables. Then the $\bC^*$-action is given by
\[
t\cdot \{w'_\alpha, w''_\alpha\}=\left\{t^{-1} w'_\alpha, w''_\alpha\right\}.
\]
The transition function on $\hat{U}_\alpha\cap \hat{U}_\beta$ is of the form:
\begin{equation}
\left\{
\begin{array}{ccll}
w_\beta^r&=&\sum_{s=1}^m (a_{\beta\alpha})^r_s(w''_\alpha)w_\alpha^s, & \mbox{ for } r=1,\dots, m, \\
&&&\\
w_\beta^p&=&\phi_{\beta\alpha}^p(w''_\alpha), & \mbox{ for } p=m+1,\dots, n.
\end{array}
\right.
\end{equation}
Let $\mathbf{V}$ be a \v{C}ech cohomology space $H^i(X, \mathscr{F})$ where $X$ is an analytic space with a $\bC^*$-action and $\mathscr{F}$ is the coherent sheaf associated to a $\bC^*$-equivariant vector bundle $F\rightarrow X$. The space of cocycles of $\mathscr{F}$ with respect to a $\bC^*$-invariant Stein covering has a continuous $S^1$-action. By the result from \cite{Joh71}, this space can be written as the closure of the algebraic direct sum of eigenspaces. This induces a weight decomposition of the cohomology space $\mathbf{V}=H^i(X, \mathscr{F})$. We will denote by $\mathbf{V}(-k)$ the subspace of elements of weight $-k$.
\begin{lem}\label{basexact}
For $k\ge 0$, we have the following commutative diagram of exact sequences
\begin{equation}
\xymatrix 
{
H^{1}(N_S, \Theta_{N_S}\otimes\mathcal{I}_S^{k+1})(-k) \ar[d]^{\cong}_{\mathfrak{R}_k} \ar[r]^{\mathfrak{N}_k'} & H^1(N_S, \Theta_{N_S}\otimes\mathcal{I}_S^k)(-k) \ar[r]^{\mathfrak{T}_k'} \ar[d]^{\cong}_{\mathfrak{I}_k}& H^1(S, \Theta_S\otimes\mathcal{I}_S^k/\mathcal{I}_S^{k+1}) \ar@{=}[d] \\
H^{1}(S, N_S\otimes \mathcal{I}_S^{k+1}/\mathcal{I}_S^{k+2}) \ar[r]^{\mathfrak{N}_k} &H^1(N_S, \Theta_{N_S})(-k) \ar[r]^{\mathfrak{T}_k} & H^1(S, \Theta_S\otimes\mathcal{I}_S^k/\mathcal{I}_S^{k+1}) 
}
\end{equation}
where the morphisms are given as follows:
\begin{enumerate}
\item $\frak{I}_k, \frak{N}'_k$ are induced by inclusion of sheaves;
\item $\frak{T}'_k$, $\frak{R}_k$ will be defined in the proof, and $\frak{R}_k$ is an isomorphism;
\item $\frak{N}_k=\mathfrak{I}_k\circ \mathfrak{N}_k'\circ \mathfrak{R}_k^{-1}$ and $\frak{T}_k=\frak{T}'_k\circ \frak{I}_k^{-1}$ are defined by using the commutativity of the diagram. 
\end{enumerate}

\end{lem}
Note that in the above diagram the sheaf $\cI^k_S/\cI^{k+1}_S$ is a sheaf supported on $S$. The Kodaira-Spencer class $\theta_k$ of the atlas constructed in the proof of Proposition \ref{prop-goodmap} lives in 
$H^1(N_S, \Theta_{N_S})(-k)$ and the bottom exact sequence will serve to compare $\theta_k$ to Abate-Bracci-Tovena's obstruction in Proposition \ref{embvsdef}.
\begin{proof}
We first notice that $\mathfrak{T}_k'$ is well defined as the composition of maps:
\[
H^1(N_S, \Theta_{N_S}\otimes \mathcal{I}_S^k)\rightarrow H^1(S, \Theta_{N_S}|_S\otimes \mathcal{I}_S^{k}/\mathcal{I}_S^{k+1})\rightarrow H^1(S, \Theta_{S}\otimes \mathcal{I}_S^{k}/\mathcal{I}_S^{k+1}).
\]
In the last map, we used the holomorphic splitting $\left.\Theta_{N_S}\right|_{S}=\Theta_S\oplus N_S$. Similarly $\mathfrak{R}_k$ is well defined as the composition of maps:
\[
H^1(N_S, \Theta_{N_S}\otimes \mathcal{I}_S^{k+1})\rightarrow H^1(S, \Theta_{N_S}|_S\otimes \mathcal{I}_S^{k+1}/\mathcal{I}_S^{k+2})\rightarrow H^1(S, N_{S}\otimes \mathcal{I}_S^{k+1}/\mathcal{I}_S^{k+2}).
\]
Let's first show that the first row of sequence is exact. Assume that $\theta_k\in H^1(N_S, \Theta_{N_S}\otimes\mathcal{I}_S^k)(-k)$.
Let $\theta_k$ be represented by a cocycle $\{\theta_{\alpha\beta}\}$ with respect to a $\bC^*$-invariant covering of $N_S$. Then by \cite{Joh71}, we can write $\theta_{\alpha\beta}$ as a convergent series
$\theta_{\alpha\beta}=\sum_\ell \theta_{\alpha\beta,\ell}$
where $\theta_{\alpha\beta,\ell}$ has weight $\ell$. Because $\delta$ commutes with the $\bC^*$-action, we know that $\theta_{\alpha\beta,\ell}$ is also a cocycle.
Because $\theta_k=[\{\theta_{\alpha\beta}\}]$ has weight $-k$ and the weight decomposition of cohomology is induced by the weight decomposition on the space of cocycles, we know that $[\{\theta_{\alpha\beta,\ell}\}]=0$ if $\ell\neq k$. So we can assume that $\theta_k$ is represented by a weight $(-k)$ cocycle:
\[
(\theta_k)_{\beta\alpha}=\sum_{r=1}^m b^r_{\beta\alpha}(w)\frac{\partial}{\partial w_\beta^r}+\sum_{p=m+1}^n c^p_{\beta\alpha}(w)\frac{\partial}{\partial w_\beta^p},
\]
where $b_{\beta\alpha}^r, c_{\beta\alpha}^p \in \mathcal{I}_S^k$. Since $\frac{\partial}{\partial w_\beta^r}$ (resp. $\frac{\partial}{\partial w_\beta^p}$) has weight $1$ (resp. $0$), we know that $b^r_{\beta\alpha}$ (resp. $c^p_{\beta\alpha}$) is homogeneous of degree
$(k+1)$ (resp. $k$) in $w'=\{w_\beta^r\}$. 
Then 
\[
\left(\mathfrak{T}'_k(\theta_k)\right)_{\beta\alpha}=\sum_{p=m+1}^n [c^p_{\beta\alpha}(w)]_{k+1}\frac{\partial}{\partial w_\beta^p}.
\]
If $\mathfrak{T}'_k(\theta_k)=0$, then we can write:
\begin{eqnarray*}
\sum_{p=m+1}^n [c^p_{\beta\alpha}(w)]_{k+1}\frac{\partial}{\partial w_\beta^p}&=&\sum_{p=m+1}^n [d^p_\beta]_{k+1}\frac{\partial}{\partial w_\beta^p}-\sum_{p=m+1}^n [d^q_\alpha]_{k+1}\frac{\partial}{\partial w_\alpha^q} \mbox{ over } \hat{U}_\alpha\cap \hat{U}_\beta.
\end{eqnarray*}
We can assume $d_\beta^p$ and $d_\beta^q$ are homogeneous of degree $k$. Then it's easy to see that $c^p_{\beta\alpha}=d^p_\beta-d^q_\alpha \frac{\partial w_\beta^p}{\partial w_\alpha^q}$. 
So if we define
\begin{eqnarray*}
(\tilde{\theta}_k)_{\beta\alpha}&=&(\theta_k)_{\beta\alpha}-\sum_{p=m+1}^n d_\beta^p\frac{\partial}{\partial w_\beta^p}+\sum_{q=m+1}^n d^q_\alpha \frac{\partial}{\partial w^q_\alpha} 
\end{eqnarray*}
then it's easy to see that $(\tilde{\theta}_k)_{\beta\alpha}\in H^0(\hat{U}_\alpha\cap \hat{U}_\beta, \Theta_{N_S}\otimes\mathcal{I}_S^{k+1})(-k)$ and we have $\theta_k=\mathfrak{N}_k'(\tilde{\theta}_k)$.

To show $\mathfrak{R}_k$ is an isomorphism, we will construct its inverse. Assume $\mathfrak{h}\in H^1(S, N_S\otimes\mathcal{I}_S^{k+1}/\mathcal{I}_S^{k+2})$, we
can represents it as a cocycle:
\begin{equation}\label{eq-repkh}
\mathfrak{h}_{\beta\alpha}=\sum_{r=1}^m [b_{\beta\alpha}^r]_{k+2}\frac{\partial}{\partial w_\beta^r}.
\end{equation}
We can assume $b_{\beta\alpha}^r$ is homogeneous of degree $k+1$ in $w'_\beta=\{w^r_\beta\}$. Then because of homogeneity the cocycle condition of $\{\mathfrak{h}_{\beta\alpha}\}$ becomes:
\begin{equation}\label{eq-homcocycle}
\sum_{r=1}^m \left(b^r_{\beta\alpha}(w_\beta)\frac{\partial}{\partial w^r_\beta}+b^r_{\alpha\gamma}\frac{\partial}{\partial w^r_\alpha}+b^r_{\gamma\beta}\frac{\partial}{\partial w^r_\gamma}\right)=0.
\end{equation}
So if we define 
\[
\mathfrak{h}'_{\beta\alpha}:=\mathfrak{R}_k^{-1}(\mathfrak{h}_{\beta\alpha})=\sum_{r=1}^m b_{\beta\alpha}^r\frac{\partial}{\partial w_\beta^r} \in H^0(\hat{U}_\alpha\cap \hat{U}_\beta, \Theta_{N_S}\otimes\mathcal{I}_S^{k+1})(-k),
\]
where $\frac{\partial}{\partial w^r_\beta}$ etc. are considered as tangent vectors along the fibres of $N_S\rightarrow S$,
then by \eqref{eq-homcocycle} $\{\mathfrak{h}'_{\beta\alpha}\}$ satisfies the cocycle condition and hence represents a cohomology class in $H^1(N_S, \Theta_{N_S}\otimes\mathcal{I}_S^{k+1})$ of weight $-k$.
Now we can define $\mathfrak{N}_k$. Choose $\mathfrak{h}\in H^1(S, N_S\otimes \mI_S^{k+1}/\mI_S^{k+2})$ represented by the cocycle as in \eqref{eq-repkh} such that $b^p_{\beta\alpha}$ is homogeneous of degree $k+1$ in $w'_\beta=\{w^r_\beta\}$. Then we define:
\[
\mathfrak{N}_k(\mathfrak{h}_{\beta\alpha})=\mathfrak{I}_k\circ\mathfrak{N}'_k \circ\mathfrak{R}_k^{-1}(\mathfrak{h}_{\beta\alpha})=\sum_{r=1}^m b^r_{\beta\alpha}\frac{\partial}{\partial w^r_\beta}\in H^0(\hat{U}_\alpha\cap \hat{U}_\beta, \Theta_{N_S})(-k).
\]

Using similar homogeneity argument, one can also construct an inverse of $\mathfrak{I}_k$ showing that it is an isomorphism. 
Indeed, for any $\theta\in H^1(N_S, \Theta_{N_S})$ of weight $(-k)$, we can choose a $\mathbb{C}^*$-equivariant \v{C}ech cocycle $\{\theta_{\beta\alpha}\}$ of weight $(-k)$ representing $\theta$. On $\hat{U}_{\alpha}\cap \hat{U}_{\beta}$, we can write:
\[
\theta_{\beta\alpha}=\sum_{r=1}^m a^r_{\beta\alpha}(w_\alpha) \frac{\partial}{\partial w^r_\beta}+\sum_{p=m+1}^{n}b^p_{\beta\alpha}(w_\alpha)\frac{\partial}{\partial w^p_\beta}.
\]
Since $\frac{\partial}{\partial w_\beta^r}$ (resp. $\frac{\partial}{\partial w^p_\beta}$) has weight 1 (resp. 0), we see that $a_\alpha^r(w_\alpha)$ (resp. $b_\alpha^p$) is homogeneous of degree $(k+1)$ (resp. $k$) in $w_\alpha^r$. In particular, $a_{\beta\alpha}^r\in \mathcal{I}_D^{k+1}$ and $b_{\beta\alpha}^p\in \mathcal{I}_D^{k}$. So $\theta_{\beta\alpha}\in H^0(\hat{U}_\alpha\cap \hat{U}_\beta, \Theta_{N_S}\otimes\mathcal{I}_S^{k})$ and $\{\theta_{\beta\alpha}\}$ represents a cohomology class in $H^1(N_S, \Theta_{N_S}\otimes\mathcal{I}_S^{k})$ of weight $(-k)$.

\end{proof}

Our main result in this subsection is the following technical proposition which under appropriate assumption re-interprets the obstructions to splitting and comfortable embeddings via the deformation to normal cone construction:
\begin{prop}\label{embvsdef}
Assume that $S$ is $(k-1)$-comfortably-embedded submanifold of $X$ for some $k\ge 1$ and let $(\rho_{k-1}, \boldsymbol{\nu}_{k-1})$ be a $(k-1)$-comfortable pair. Then for the holomorphic family of complex manifolds $\mW$ from Proposition \ref{prop-goodmap}, the associated $k$-order Kodaira-Spencer class $\theta_k\in H^1(\cW_0, \Theta_{\cW_0})$ extends uniquely to a class in $H^1(N_S, \Theta_{N_S})$. This extension lies in the $(-k)$-weight space and will still be denoted by $\theta_k$. Moreover $\theta_k$ satisfies the following properties under the exact sequence from Lemma \ref{basexact}:
\begin{enumerate}
\item
$\mathfrak{T}_k(\theta_k)=\mathfrak{g}^{\rho_{k-1}}_k\in H^1(S, \Theta_S\otimes\mathcal{I}_S^{k}/\mathcal{I}_S^{k+1})$ is the obstruction to $k$-splitting relative to $\rho_{k-1}$. As a consequence, if $S$ is not $k$-splitting relative to $\rho_{k-1}$, then $\theta_k\in H^1(N_S, \Theta_{N_S})(-k)$ is non zero. 
\item
If $S$ is $k$-splitting relative to $\rho_{k-1}$, i.e. we have a $k$-th order lifting $\rho_{k}$ such that $\phi_{k,k-1}\circ\rho_k=\rho_{k-1}$, then $\theta_k=\mathfrak{N}_k
(\mathfrak{h}_k^{\rho_k})$ where $\mathfrak{h}_k^{\rho_{k}}\in H^1(S, N_S\otimes \mathcal{I}_S^{k+1}/\mathcal{I}_S^{k+2})$ is the obstruction to $k$-comfortably-embedding with respect to $\rho_k$.
\end{enumerate}

\end{prop}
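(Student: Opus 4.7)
The plan is to extract $\theta_k$ directly as the reduced Kodaira–Spencer cocycle of the family $\pi: \tilde{\mX} \to \mathbb{C}$ at $t=0$, read off from the explicit transition functions \eqref{cmftblowup}, and then match the resulting formulas cocycle-by-cocycle with the ABT obstructions $\mathfrak{g}^{\rho_{k-1}}_k$ and $\mathfrak{h}^{\rho_k}_k$ via the diagram in Lemma \ref{basexact}.

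First I would fix coordinates $\{w_\alpha\}$ on the Stein covering $\{\hat{U}_\alpha\}$ of $N_S$ so that, on the deformation to the normal cone, \eqref{cmftblowup} holds. Since the $t$-dependent corrections are of order $t^k$, the ordinary Kodaira–Spencer class $\partial_t f_{\beta\alpha}|_{t=0}$ vanishes (for $k\ge 2$). Following \eqref{redKS}, the reduced Kodaira–Spencer class at order $k$ is obtained by applying $\tfrac{1}{k!}\partial_t^k|_{t=0}$ to the transition functions. Using $\tilde{R}^r_{k+1}(0;w',w'') = [R^r_{k+1}]_{k+1}(w',w'')$ (the degree $(k+1)$ homogeneous part in $w'$) and analogously $\tilde{R}^p_k(0;w',w'') = [R^p_k]_k$, one obtains the cocycle
\begin{equation*}
(\theta_k)_{\beta\alpha} \;=\; \sum_{r=1}^{m}[R^r_{k+1}]_{k+1}\frac{\partial}{\partial w^r_\beta} \;+\; \sum_{p=m+1}^{n}[R^p_k]_k \frac{\partial}{\partial w^p_\beta}.
\end{equation*}
A quick check on the $\mathbb{C}^*$-weights (namely $w^r$ has weight $1$, $w^p$ weight $0$, and the derivatives the opposite weights) shows $(\theta_k)_{\beta\alpha}$ has weight $-k$, so $\theta_k \in H^1(N_S,\Theta_{N_S})(-k)$. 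The cocycle condition follows from composing the three-fold transition functions on $\hat{U}_\alpha\cap\hat{U}_\beta\cap\hat{U}_\gamma$ in \eqref{cmftblowup} and truncating at the appropriate order in $w'$; this is essentially the same verification that makes the $R^r_{k+1},R^p_k$ descend to cohomology in the ABT framework.

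For part (1), $\mathfrak{T}_k$ projects onto the $\Theta_S\otimes \mathcal{I}_S^k/\mathcal{I}_S^{k+1}$-component: it retains only the horizontal-derivative part and takes its image modulo $\mathcal{I}_S^{k+1}$. Applied to our cocycle this yields exactly $\{[R^p_k]_k \partial_{w^p_\beta}\}$, which by the ABT description of the obstruction to $k$-splitting relative to the $(k-1)$-lifting $\rho_{k-1}$ is $\mathfrak{g}^{\rho_{k-1}}_k$ (see the discussion recalled in Appendix \ref{linearizable}). For part (2), if $S$ is $k$-splitting relative to $\rho_{k-1}$ then by Theorem \ref{coordinate} (or its $k$-splitting refinement) one can further adjust coordinates so that $R^p_k \in \mathcal{I}_S^{k+1}$, hence $[R^p_k]_k = 0$. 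The cocycle $\theta_k$ then comes from $H^1(N_S,\Theta_{N_S}\otimes \mathcal{I}_S^{k+1})(-k)$ via $\mathfrak{N}_k'$, and tracing through the isomorphism $\mathfrak{R}_k$ of Lemma \ref{basexact} identifies its preimage with $\{[R^r_{k+1}]_{k+1} \partial_{w^r_\beta}\} \in H^1(S, N_S\otimes \mathcal{I}_S^{k+1}/\mathcal{I}_S^{k+2})$, which is precisely $\mathfrak{h}^{\rho_k}_k$. Commutativity of the diagram in Lemma \ref{basexact} gives $\theta_k = \mathfrak{N}_k(\mathfrak{h}^{\rho_k}_k)$.

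The main obstacle is the last dictionary step: identifying the explicit cocycles $\{[R^p_k]_k \partial_{w^p_\beta}\}$ and $\{[R^r_{k+1}]_{k+1} \partial_{w^r_\beta}\}$ with the intrinsically defined ABT obstructions $\mathfrak{g}^{\rho_{k-1}}_k$ and $\mathfrak{h}^{\rho_k}_k$. This requires unwinding the definition of the lifting $\rho_{k-1}$ (and $\rho_k$) in terms of the coordinate splittings provided by \eqref{cmftcoord}, and verifying that the $(k-1)$-comfortable assumption does produce coordinates in which $\rho_{k-1}$ is exactly the identification $S(k-1) \cong S_N(k-1)$ realized by setting $w^r_\alpha=0$; a parallel check is needed at level $k$ after the $k$-splitting is imposed. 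Once these coordinate-level identifications are pinned down, the rest is formal.
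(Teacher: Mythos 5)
Your proposal follows essentially the same route as the paper: extract the reduced Kodaira--Spencer cocycle by applying $\tfrac{1}{k!}\partial_t^k|_{t=0}$ to the transition functions \eqref{cmftblowup} of the deformation to the normal cone, observe that the leading terms $[R^r_{k+1}]_{k+1}$ and $[R^p_k]_k$ extend to the $\mathbb{C}^*$-invariant sets $\hat{U}_\alpha\cap\hat{U}_\beta$ to give a weight $(-k)$ class, and then match the horizontal (resp.\ vertical) components with $\mathfrak{g}^{\rho_{k-1}}_k$ (resp.\ $\mathfrak{h}^{\rho_k}_k$) using the comfortable/splitting atlases of Theorem \ref{coordinate} and the explicit cocycle formulas in Theorems \ref{obsplit} and \ref{comfortob}. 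The ``dictionary step'' you flag as the main remaining obstacle is exactly what the paper resolves by quoting those ABT cocycle representatives, so the argument is correct and structurally identical.
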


\begin{proof}[Proof of Proposition \ref{embvsdef}] 
Suppose that the embedding $S\hookrightarrow X$ is $(k-1)$-comfortably embedded.  
As shown in \eqref{cmftblowup}, we can choose a $(k-1)$-comfortable atlas adapted to $(\rho_{k-1}, \nu_{k-1})$ such that we have induced atlas on the blow up with coordinate changes given by:
\begin{equation}\label{cmftbl2}
\left\{
\begin{array}{ccll}
w_\beta^r&=&\sum_{s=1}^m (a_{\beta\alpha})^r_s(w''_\alpha)w_\alpha^s+t^{k} \tilde{R}_{k+1}^{r}, & \mbox{ for } r=1,\dots, m, \\
&&&\\
w_\beta^p&=&\phi_{\beta\alpha}^p(w''_\alpha)+t^{k} \tilde{R}_{k}^{p}, & \mbox{ for } p=m+1,\dots, n.
\end{array}
\right.
\end{equation}
We can substitute the transition function in \eqref{cmftbl2} into \eqref{redKSccl} above to get:
\begin{equation}\label{kobs}
(\theta_k)_{\beta\alpha}=\frac{1}{k!}\sum_{i=1}^n\left.\frac{\partial^k f^i_{\beta\alpha}(w_\alpha,t)}{\partial t^{k}}\right|_{t=0}\frac{\partial}{\partial w_\beta^i}=\sum_{r=1}^m \tilde{R}^r_{k+1}(0; w_\alpha) \frac{\partial}{\partial w_\beta^r}+\sum_{p=m+1}^n \tilde{R}^p_{k}(0; w_\alpha)\frac{\partial}{\partial w_\beta^p},
\end{equation}
where in the last expression, $w_\alpha$ and $w_\beta$ are related by the following relation on $\tilde{\mX}_0$ near $S_0\cong S$:
\begin{equation}
\left\{
\begin{array}{ccll}
w_\beta^r&=&\sum_{s=1}^m (a_{\beta\alpha})^r_s(w''_\alpha)w_\alpha^s, & \mbox{ for } r=1,\dots, m, \\
&&&\\
w_\beta^p&=&\phi_{\beta\alpha}^p(w''_\alpha), & \mbox{ for } p=m+1,\dots, n;
\end{array}
\right.
\end{equation}
which is nothing but the transition function on $N_S$.
Recall that $\tilde{R}^r_{k+1}(t; w_\alpha', w_\alpha'')=t^{-(k+1)}R_{k+1}(t w'_\alpha, w''_\alpha)$ and $\tilde{R}^p_k(t; w'_\alpha, w_\alpha'')=t^{-k}R^p_k(t w'_\alpha, w''_\alpha)$. 
So $\tilde{R}^r_{k+1}(0; w_\alpha)$ (resp. $\tilde{R}^p_{k}(0; w_\alpha)$) is nothing but the $(k+1)$-th (resp. $k$-th) order leading term of $R^r_{k+1}(w_\alpha)$ (resp. $R^p_k(w_\alpha)$) in its Taylor expansion with respect to $w'_\alpha$. 

Since $w_\alpha'$ are global coordinates on the whole $\hat{U}_\alpha\subset N_S$, we see that $(\theta_k)_{\beta\alpha}$ is actually defined over $\hat{U}_\alpha\cap \hat{U}_\beta\subset N_S$. This shows the statement that $\theta_k\in H^1(\cW_0, \Theta_{\cW_0})$ extends uniquely to a class in $H^1(N_S, \Theta_{N_S})$ which will still be denoted by $\theta_k$.

So if we denote by $\pi_S: N_S\rightarrow S$ the natural projection of the normal bundle to its base, and by $\hat{U}_\alpha=\pi_S^{-1}(\mathcal{U}_\alpha\cap \tilde{\mX}_0\cap S_0)$ the $\mathbb{C}^*$-invariant open set on $N_S$, then we have:
\[
(\theta_k)_{\beta\alpha}\in H^0\left(\hat{U}_\alpha\cap \hat{U}_\beta, \Theta_{N_S}\otimes \mathcal{I}_S^{k}\right).
\]
So we get a \v{C}ech cohomology class:
\[
\theta'_k:=\{(\theta_k)_{\beta\alpha}\}\in \check{H}^1(N_S, \Theta_{N_S}\otimes \mathcal{I}_{S}^k).
\] 
From \eqref{kobs} and homogeneity of $\tilde{R}^r_{k+1}$, $\tilde{R}^p_k$ in $w'_\alpha$, we see that $\theta'_k$ has weight $(-k)$ under the natural $\mathbb{C}^*$-action on $N_S$.
When we restrict to $S_0=S\subset N_S$ and mod-out by $\mathcal{I}_{S_0}^{k+1}$, we get:
\begin{eqnarray}\label{tthetak}
(\mathfrak{g}_k)_{\beta\alpha}:=\left.(\theta_k)_{\beta\alpha}\right|_{S_0}&=&\sum_{r=1}^m[\tilde{R}^{r}_{k+1}(0; w'_\alpha,w''_\alpha)]_{(k)}\frac{\partial}{\partial w^r_\beta}+\sum_{p=m+1}^n[\tilde{R}^p_{k}(0; w'_\alpha,w''_\alpha)]_{(k)}
\frac{\partial}{\partial w^p_\beta}\nonumber\\
&=&\sum_{p=m+1}^n[\tilde{R}^p_k(0; w'_\alpha, w''_{\alpha})]_{(k)}\frac{\partial}{\partial w_\beta^p},
\end{eqnarray}
which form a cocycle 
\[
\{(\mathfrak{g}_k)_{\beta\alpha}\}\in \check{H}^1(\{U_\alpha\}, \Theta_{N_S}|_{S_0}\otimes\mathcal{I}_{S_0}^{k}/\mathcal{I}_{S_0}^{k+1})=
\check{H}^1(\{U_\alpha\}, N_{S_0}\otimes \mathcal{I}_{S_0}^{k}/\mathcal{I}_{S_0}^{k+1})\oplus \check{H}^1(\{U_\alpha\}, \Theta_{S_0}\otimes \mathcal{I}_{S_0}^{k}/\mathcal{I}_{S_0}^{k+1}).
\]
In the last equality, we used the holomorphic splitting $\Theta_{N_S}|_{S_0}=\Theta_{S_0}\oplus N_{S_0}$. 
Because we assumed that $S$ is $(k-1)$-comfortably-embedded, the component in the first summand is $0$ as seen in \eqref{tthetak}. So using the notation in Lemma \ref{basexact}, we can write $\mathfrak{g}_k=\mathfrak{T}_k(\theta_k)$.
By Proposition \ref{obsplit} we see that 
$\mathfrak{g}_k=\{(\mathfrak{g}_k)_{\beta\alpha}\}$ is the obstruction to the existence of $\rho_k$ satisfying $\phi_{k, k-1}\circ\rho_{k}=\rho_{k-1}$. In other words, $\mathfrak{g}_k^{\rho_{k-1}}:=\mathfrak{g}_k$ is the obstruction to $k$-splitting relative to $\rho_{k-1}$. So we get the first part of Proposition \ref{embvsdef}.

Now if we assume that the obstruction to $k$-splitting vanishes, i.e. the above $\mathfrak{g}_k^{\rho_{k-1}}$  vanishes, then by Theorem \ref{crilinear} 
the transition functions in \eqref{cmftbl2} can be improved to
\begin{equation}\label{lnrzbl2}
\left\{
\begin{array}{ccll}
w_\beta^r&=&\sum_{s=1}^m (a_{\beta\alpha})^r_s(w''_\alpha)w_\alpha^s+t^{k} \tilde{R}_{k+1}^{r}, & \mbox{ for } r=1,\dots, m, \\
&&&\\
w_\beta^p&=&\phi_{\beta\alpha}^p(w''_\alpha)+t^{k+1} \tilde{R}_{k+1}^{p}, & \mbox{ for } p=m+1,\dots, n.
\end{array}
\right.
\end{equation}
Substituting this into \eqref{kobs}, $(\theta_k)_{\beta\alpha}$ now becomes:
\begin{equation}\label{kobs2}
(\theta_k)_{\beta\alpha}=\frac{1}{k!}\sum_{i=1}^n\left.\frac{\partial^k f^i_{\beta\alpha}(w_\alpha,t)}{\partial t^k}\right|_{t=0}\frac{\partial}{\partial w_\beta^i}=\sum_{r=1}^m \tilde{R}^r_{k+1}(0; w_\alpha) \frac{\partial}{\partial w_\beta^r}.
\end{equation}
So we see that in this case $(\theta_k)_{\beta\alpha}\in H^0(\hat{U}_\alpha\cap \hat{U}_\beta, \Theta_{N_S}\otimes\mathcal{I}_S^{k+1})$. Again we get a weight $(-k)$ \v{C}ech cohomology class:
\[
\theta''_k:=\{(\theta_k)_{\beta\alpha}\}\in \check{H}^1(\{\hat{U}_\alpha\}, \Theta_{N_S}\otimes\mathcal{I}_S^{k+1})(-k),
\]
which satisfies $\mathfrak{N}_k'(\theta''_k)=\theta_k$. When we restrict to $S_0$ and mod out by $\mathcal{I}_{S_0}^{k+2}$, we get:
\begin{equation}\label{hccl}
(\mathfrak{h}_k)_{\beta\alpha}:=\left.(\theta_k)_{\beta\alpha}\right|_{S_0}=\sum_{r=1}^m [\tilde{R}^r_{k+1}(0; w'_\alpha,w''_\alpha)]_{(k+1)} \frac{\partial}{\partial w_\beta^r}\in
 H^0(\hat{U}_\alpha\cap \hat{U}_\beta\cap S_0, N_{S_0}\otimes \mI_{S_0}^{k+1}/\mI_{S_0}^{k+2}).
\end{equation}
Comparing with \eqref{spltccl}, we see that 
$\mathfrak{h}_k:=\{(\mathfrak{h}_k)_{\beta\alpha}\}
$ is nothing but the obstruction $\mathfrak{h}_k^{\rho_k}$ to $k$-comfortable embedding with respect to the $k$-splitting $\rho_k$. 
By Lemma \ref{basexact}, we can write $\theta_k''=\mathfrak{R}_k^{-1}(\mathfrak{h}_k)$. 
\end{proof}
\section{Special case: $S=D$ is an ample divisor}
One of the main goals of this section is to prove Theorem \ref{thm-eqwt}. The proof is essentially based on the construction in Section \ref{consdiff} and Proposition \ref{embvsdef}. Roughly speaking,
under the assumption that $D\rightarrow X$ is $(m-1)$-comfortable, we get $(m-1)$-trivial atlas by the construction in Section \ref{consdiff} and hence a reduced 
Kodaira-Spencer class defined as a class in $H^1(U, \Theta_U)$. Then Proposition \ref{embvsdef} is also used to show that this reduced Kodaira-Spencer is non-trivial if the embedding $D\rightarrow X$ is not $m$-comfortable (and $n\ge 3$). 
Finally by Proposition \ref{prop-3eqwt}, the reduced Kodaira-Spencer class near the ``infinity" divisor via coordinate changes coincides with the reduced Kodaira-Spencer class for the deformation of the cone defined in Definition \ref{def-app-KSred}. This allows us to complete the proof. 

\subsection{Degeneration to the projective cone}

\begin{figure}[h]\label{presolution}
\centering{
\includegraphics[height=2.8cm]{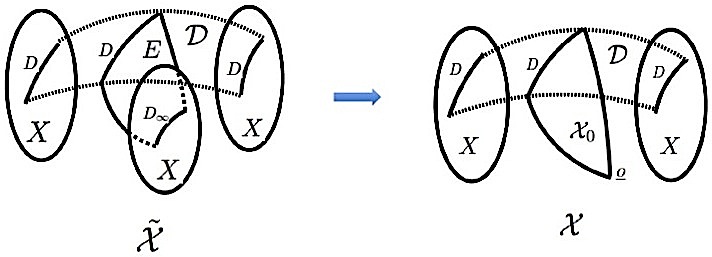}
\hskip 3mm
\hskip 3mm
\label{resolution}
\caption{$\mu: \tilde{\mX}\longrightarrow\mX$}
}
\end{figure}
From now on, we assume that $S=D$ is a smooth ample divisor in $X$. Then we can further modify the deformation to the normal cone construction. 
Recall that from the above section $\tilde{\mX}=Bl_{D\times\{0\}}(X\times\bC)$ and $\tilde{\mX}_0=(Bl_D X)\cup E=X\cup E$ where $E=\mathbb{P}(N_D\oplus\mathbb{C})$. Denote by $L:=L_D$ the holomorphic line bundle associated to the divisor $D$. Since $D$ is an ample divisor, one can verify that the line bundle $\tilde{\mL}=\pi_1^*L-E$ is $\pi_2$-relatively semi-ample where $\pi_1$ is the composition $\tilde{\mX}\rightarrow X\times \bC\rightarrow X$ and $\pi_2$ is the composition $\tilde{\mX}\rightarrow X\times \bC \rightarrow \bC$. Moreover, the strict transform of $X$ under the blow up becomes exceptional and can be blown down so that we get $\mX$ under the morphism associated to $\tilde{\mathcal{L}}$.
Then the canonical morphism $\pi: \mathcal{X}\rightarrow \mathbb{C}={\rm Spec} (\mathbb{C}[t])$ gives a flat family of projective varieties, satisfying that
$\cX_t\cong X$ for $t\neq 0$ and $\cX_0$ is obtained from $E$ by contracting the infinity section $D_\infty$. 
$\mX_0$ thus obtained is very close to being the projective cone $\ov{C}(D, L)$. One delicate point here is that $\mX_0$ may not be normal.

\begin{lem}\label{normalem}
The central fibre $\mX_0$ coincides with $\bar{C}(D, L)$ if the restriction map $\psi_m: H^0(X, mL)\rightarrow H^0(D, mL|_D)$ is surjective for any $m\ge 1$.
\end{lem}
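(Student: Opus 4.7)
The plan is to realize $\mX$ as a relative Proj over $\mathbb{A}^1$ and then compute its central fibre algebraically by reducing the section ring modulo $t$. Setting $\mathcal{R}_m=(\pi_t)_*(m\tilde{\mL})$ with $\pi_t:\tilde{\mX}\to\mathbb{A}^1$ the family map and $\tilde{\mL}=\mu^*L-E$, one has $\mX=\mathrm{Proj}_{\mathbb{A}^1}\bigoplus_m\mathcal{R}_m$, so $\mX_0=\mathrm{Proj}(\mathcal{R}/t\mathcal{R})$. The projection formula combined with the blow-up identity $\mu_*\mO(-mE)=\mI_{D\times\{0\}}^m$ yields
\[
\mathcal{R}_m \;=\; p_{2,*}\bigl(p_1^*(mL)\otimes \mI_{D\times\{0\}}^m\bigr)\;\subset\; H^0(X,mL)\otimes_{\mathbb{C}}\mathbb{C}[t].
\]

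Using $\mI_{D\times\{0\}}=\mI_D+(t)$ together with $\mI_D\cong L^{-1}$ (embedded via the canonical section $s_D$), the binomial expansion $\mI_{D\times\{0\}}^m=\sum_{k=0}^{m}\mI_D^{m-k}(t)^k$ gives the explicit description
\[
\mathcal{R}_m \;=\; \sum_{k=0}^{m} s_D^{m-k}H^0(X,kL)\cdot t^k\cdot\mathbb{C}[t]
\]
as a $\mathbb{C}[t]$-submodule of $H^0(X,mL)[t]$. Writing $V_k:=s_D^{m-k}H^0(X,kL)$, one has the inclusions $V_0\subset V_1\subset\cdots\subset V_m$, and a straightforward $t$-adic filtration argument shows
\[
\mathcal{R}_m/t\mathcal{R}_m \;\cong\; \bigoplus_{k=0}^m V_k/V_{k-1}.
\]
Since multiplication by $s_D$ is injective on sections (as $X$ is irreducible), dividing out the common factor $s_D^{m-k}$ gives the canonical identification $V_k/V_{k-1}\cong H^0(X,kL)/s_D\cdot H^0(X,(k-1)L)$.

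The restriction exact sequence $0\to H^0(X,(k-1)L)\xrightarrow{\cdot s_D} H^0(X,kL)\xrightarrow{\psi_k} H^0(D,kN_D)$ identifies each $V_k/V_{k-1}$ with $\mathrm{Image}(\psi_k)$, and the standing hypothesis that $\psi_k$ is surjective for every $k\ge 0$ upgrades this to the full $H^0(D,kN_D)$. Finally, the multiplicativity $\psi_{k+k'}(\alpha\alpha')=\psi_k(\alpha)\psi_{k'}(\alpha')$ shows that the tensor-product multiplication on $\mathcal{R}$ descends to the natural multiplication on $\bigoplus_m\bigoplus_{r=0}^m H^0(D,rN_D)\cdot x^{m-r}$ under the correspondence $\overline{s_D^{m-k}\alpha\cdot t^k}\leftrightarrow \psi_k(\alpha)\cdot x^{m-k}$. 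Taking Proj then concludes $\mX_0=\oC(D,N_D)$. The main technical hurdle I expect is the Rees-algebra bookkeeping needed to pin down $\mathcal{R}_m$ and identify $\mathcal{R}_m/t\mathcal{R}_m$ transparently; once that is clean, the surjectivity hypothesis enters in exactly one place, precisely to turn each $\mathrm{Image}(\psi_k)$ into the full $H^0(D,kN_D)$ required to recover the projective cone.
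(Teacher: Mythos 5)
Your proof is correct and follows essentially the same route as the paper: both identify $\mX_0$ with ${\rm Proj}$ of the graded ring built from the images $H^0(X,kL)|_D$ of the restriction maps, so that surjectivity of every $\psi_k$ is exactly what makes this ring agree with the section ring defining $\overline{C}(D,L)$. The only difference is that you actually derive this description of the central fibre via the Rees-algebra computation $\mathcal{R}_m/t\mathcal{R}_m\cong\bigoplus_k V_k/V_{k-1}$, whereas the paper simply asserts the corresponding ${\rm Proj}$ formula for $\mX_0$; your filtration argument is a valid (and welcome) justification of that step.
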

\begin{proof}
We first describe the above construction of $\mX$ in the algebraic category (see \cite[Chapter 5]{Fult}). Let $\mathcal{I}_D$ denote the ideal sheaf of $D$ as a subvariety of $X$. Then $\tilde{\mX}$ is the blow up of the ideal sheaf $\mathcal{I}_D+(t)$ on $X\times \mathbb{C}$:
\[
\tilde{\mX}={\rm Proj}_{X\times \bC}\left( \bigoplus_{k=0}^{+\infty} (\mathcal{I}_D+(t))^k\right).
\]
Moreover $\mX={\rm Proj}_{\mathbb{C}[t]}\mathcal{R}$ where $\mathcal{R}$ is the following finitely generated graded algebra over $\mathbb{C}[t]$:
\begin{equation}\label{eq-mR}
\mathcal{R}=\bigoplus_{k=0}^{+\infty} H^0(\mathbb{C}, (\pi_2)_* (k \tilde{\mL}))=\bigoplus_{k=0}^{+\infty} H^0( \tilde{\mX}, k \tilde{\mL})=\bigoplus_{k=0}^{+\infty} \mathcal{R}_k,
\end{equation}
where $\tilde{\mL}=\pi_1^*L-E$.
The graded pieces $\mathcal{R}_k$ can be calculated in the same way as in \cite[Section 4]{RT06}:
\begin{eqnarray}\label{eq-mRk}
\mathcal{R}_k &=& H^0(\tilde{X}, k (\pi_1^*L-E))=H^0(X\times \bC, L^k\otimes (\mathcal{I}_D+(t))^k )\nonumber \\
&=&\bigoplus_{j=0}^{k-1} t^j H^0(X, L^k\otimes \mathcal{I}_D^{k-j}) \oplus t^k \mathbb{C}[t] H^0(X, L^k)
\nonumber\\
&=&\bigoplus_{j=0}^{k-1} t^j H^0(X, L^j)\oplus t^k \mathbb{C}[t] H^0(X, L^k).
\end{eqnarray}
In the last identity, we used $\mathcal{I}_D=\mathcal{O}_X(-D)\cong L^{-1}$. The central fibre is thus equal to:
\begin{equation}\label{eq-cX0}
\cX_0={\rm Proj}_{\bC}\left(\mathcal{R}/(t)\mathcal{R}\right)={\rm Proj}_{\bC}\left( \bigoplus_{k=0}^{+\infty} \mathcal{R}_k/(t)\mathcal{R}_k\right).
\end{equation}
From \eqref{eq-mRk}, we see directly that:
\begin{equation}\label{eq-cRk0}
\mathcal{R}_k/ (t)\mathcal{R}_k=\mathbb{C}\oplus \bigoplus_{j=1}^{k} t^j \frac{H^0(X, L^j)}{H^0(X, L^{j-1})}=\bigoplus_{j=0}^{k} t^j H^0(X, L^j)|_D.
\end{equation}
Here $H^0(X, L^j)|_D$ denotes the image of the restriction map $H^0(X, L^r)\rightarrow H^0(D, L^j|_D)$ for any $j\ge 0$. To see the the last identity, we 
consider the exact sequence of ideal sheaves:
\begin{equation}
0\longrightarrow L^{j-1}=L^j\otimes \mathcal{O}(-D) \longrightarrow L^j \longrightarrow L^j|_D \longrightarrow 0,
\end{equation}
and the corresponding long exact sequence:
\begin{equation}
0\longrightarrow H^0(X, L^{j-1})\longrightarrow H^0(X, L^j) \longrightarrow H^0(D, L^j|_D)\longrightarrow H^1(X, L^{j-1}).
\end{equation}
So indeed $H^0(X, L^j)/H^0(X, L^{j-1})=H^0(X, L^j)|_D$ for $j\ge 1$.

On the other hand, we have:
\[
\bar{C}(D, L)={\rm Proj}\bigoplus_{k=0}^\infty\left(\bigoplus_{j=0}^k  H^0(D, L^j|_D)\cdot t^{k-j}\right).
\]
Combining this with \eqref{eq-cX0} and \eqref{eq-cRk0}, we see that $\mX_0\cong \bar{C}(D, L)$ if $H^0(X, L^j)|_D=H^0(D, L^j|_D)$ for any $j\ge 1$ ($j=0$ case is automatic).
\end{proof}


For example, let $X$ be any Riemann surface of genus $\ge 1$ and $D=\{p\}$ be any point. Then $D$ is ample. In this special case, the central fibre $\mX_0$ is a singular curve whose normalization is $\mathbb{P}^1$.
Here the map $\psi_0=id: H^0(X, \mathcal{O}_X)\rightarrow H^0(p, \mathcal{O}_p)$. But $\psi_1=0: H^0(X, L_p)=\mathbb{C}\rightarrow H^0(\{p\},L_p|_{\{p\}})=\mathbb{C}$ because $\psi_1$ factors through the inverse of isomorphism $H^0(X, \mathcal{O}_X)=\mathbb{C}\stackrel{\cdot s_{\{p\}}}{\longrightarrow} H^0(X, L_p)$ by the assumption that $g(X)\ge 1$. In particular, $\psi_1$ is not surjective.
\begin{rem}
The above lemma was communicated to me by H-J. Hein. 
One referee provided an even more explicit example to me: if $X$ is an elliptic curve and $p$ is a Weierstrass point, then by using the Weierstrass form, one can verify that the total space has a singularity of type $\tilde{E}_8$.
\end{rem}

On the other hand, from the exact sequence:
\[
0\rightarrow H^0(X, (m-1)L_D)\rightarrow H^0(X, mL_D)\rightarrow H^0(D, mL|_D)\rightarrow H^1(X, (m-1)L)\rightarrow \cdots,
\]
we see that $\psi_m$ is surjective if $H^1(X, (m-1)L)=0$ for all $m\ge 1$. In particular, this is satisfied in the Tian-Yau setting. Indeed, if $X$ is Fano and $m\ge 1$ then $H^1(X, (m-1)L)=H^1(X, \Omega_X^n\otimes\mathcal{O}_X(-K_X+(m-1)L))=0$ by the Nakano-Kodaira vanishing theorem. 

\subsection{Proof of Theorem \ref{thm-eqwt}}

From now on, we assume that we are in the situation that the above central fiber $\mX_0$, i.e. the strict transform of the exceptional divisor of the blow up, is normal and hence coincides with $\bar{C}(D, L)=:\bar{C}$. 
Let $\mD$ be the strict transform of $D\times\bC$ and $\mX^\circ=\mX\setminus \mD$. Because $\mD$ is a relatively ample divisor over $\bC$, we know that $\mX^\circ$ is a flat family of affine varieties.  In particular, we can define $\Ord_{\cX^\circ}$ as in Definition \ref{def-ord2}. Notice that $\cX^\circ_0=C(D, L)=:C$ and we can define the reduced Kodaira-Spencer class $\KS^\red_{\cX^\circ}\in \fT^1_C$. Since there is a natural $\bC^*$-action on $\fT^1_C$ , we can talk about the weight of $\KS^\red_{\cX^\circ}\in \fT^1_C$ and denote it by $w(\cX^\circ)$ (see Appendix \ref{App-cone}).
With these notations and combining the calculations from the previous subsection, we can derive the following
\begin{prop}\label{propamp}
Let $\cX\rightarrow \bB$ be the flat family constructed in the above section and assume $\cX_0=\bar{C}(D, N_D)$.
Let $D\hookrightarrow X$ be a $(k-1)$-comfortably embedded and $(\rho_{k-1}, \boldsymbol{\nu}_{k-1})$ be a $(k-1)$-comfortable pair. If $D$ is not $k$-splitting relative to $\rho_{k-1}$, then $\Ord(\cX^\circ)=k=-w(\cX^\circ)$. In particular, if $D$ is $(k-1)$-comfortably embedded and not $k$-splitting, then $\Ord(\cX^\circ)=k=-w(\cX^\circ)$.
\end{prop}
\begin{proof}

Let $(\rho_{k-1}, \boldsymbol{\nu}_{k-1})$ be a $(k-1)$-comfortable pair. Then by the proof of Proposition \ref{embvsdef}, we have a $(k-1)$-trivial atlas covering $\mW_0$. Without loss of generality, we can assume $\mW_0=\bar{C}\setminus K$ where $K$ is a strongly pseudo convex neighborhood of the vertex $o\in \bar{C}$. Then we also have a $(k-1)$-trivial atlas covering $\mW_0\setminus D=C\setminus K$. In particular, this atlas covers the annulus $Y=(C\setminus \overline{K}_{c_1})\cap \mathring{K}_{c_2}$. By Proposition \ref{prop-3eqwt}, we get $\Ord(\cX^\circ)\ge k$. 
Moreover from Proposition \ref{embvsdef}, we get a cohomology class $\theta_k\in H^1(L, \Theta_L)$ with weight $-k$, which is represented by a cocycle $\{(\theta_k)_{\beta\alpha}\}$. Proposition \ref{prop-3eqwt} yields that
\[
(\mu_1\circ \tau_U)(\KS^{(k)}_{\cX^\circ})=\theta_k|_Y
\]
where $Y$ again denotes the annulus and $\mu_1$, $\tau_U$ are from the diagram \eqref{eq-3morph}. But $\theta_k|_Y=\mu_1(\theta_k|_U)$. So thanks to the injectivity of both $\tau_U$ and $\mu_1$, we may reduce to proving that the class
$\vartheta_{k}:=\theta_k|_U\in H^1(U,\Theta_U)$ is not zero.

By Proposition \ref{embvsdef}, we know that $\mathfrak{T}_k(\theta_k)=\mathfrak{g}_k$ is the obstruction to $k$-splitting relative to $\rho_{k-1}$. So if the embedding is not $k$-splitting with respect to
$\rho_{k-1}$, then $\theta_k$ is non zero. Now the claim follows from the Lemma \ref{negeq}. 

\end{proof}

\begin{cor}\label{thmcor}
Assume ${\rm dim} D=n-1\ge 2$. If $D$ is $(k-1)$-comfortably embedded and not $k$-comfortably embedded, then the following holds:
\begin{enumerate}
\item
$\Ord(\cX^\circ)=k=-w(\cX^\circ)$, i.e. Theorem \ref{thm-eqwt} is true;
\item
For any $l< k$ and any $(l-1)$-th order lifting $\rho_{l-1}: \mathcal{O}_D\rightarrow \mathcal{O}_X/\mathcal{I}_D^{l}$, there exists a $(k-1)$-th order lifting $\rho_{k-1}: \mathcal{O}_D\rightarrow \mathcal{O}_X/\mathcal{I}_D^{k}$ such that $\phi_{k-1.l-1}(\rho_{k-1})=\rho_{l-1}$, where $\phi_{k-1,l-1}: \mathcal{O}_X/\mathcal{I}_D^{k}\rightarrow\mathcal{O}_X/\mathcal{I}_D^{l}$ is the natural map.
\end{enumerate}
\end{cor}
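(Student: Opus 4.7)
I would prove the two parts in turn, building on Propositions \ref{embvsdef} and \ref{propamp} together with a case split according to the $k$-splitting obstruction.

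For Part 1, the strategy is a dichotomy. If some $(k-1)$-comfortable pair $(\rho_{k-1}, \nu_{k-1})$ fails to be $k$-splitting, Proposition \ref{propamp} applies directly to give $w(X,D) = -k$. Otherwise every $(k-1)$-comfortable pair is $k$-splitting; pick any such pair and extend $\rho_{k-1}$ to a $k$-splitting $\rho_k$. Since $D$ is not $k$-comfortably embedded, the $k$-comfortable obstruction $\mathfrak{h}_k^{\rho_k} \in H^1(D, N_D \otimes \mI_D^{k+1}/\mI_D^{k+2})$ cannot be made zero for any choice of $\rho_k$. By Proposition \ref{embvsdef}(2), the reduced Kodaira--Spencer class satisfies $\theta_k = \mathfrak{N}_k(\mathfrak{h}_k^{\rho_k}) \in H^1(N_D, \Theta_{N_D})(-k)$. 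To conclude $\theta_k \neq 0$, the plan is to identify the kernel of $\mathfrak{N}_k$ (which, via the connecting map in the long exact sequence behind Lemma \ref{basexact}, comes from $H^0(D, \Theta_D \otimes N_D^{-k})$) with the image of the ABT gauge action on $\mathfrak{h}_k^{\rho_k}$ as $\rho_k$ varies over all valid $k$-splittings extending some $(k-1)$-comfortable pair. Since ``not $k$-comfortably embedded'' is precisely the statement that the gauge orbit of $\mathfrak{h}_k^{\rho_k}$ avoids $0$, this identification yields $\mathfrak{N}_k(\mathfrak{h}_k^{\rho_k}) \neq 0$ and hence $\theta_k \neq 0$ in $H^1(N_D, \Theta_{N_D})(-k)$. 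Finally, Lemma \ref{negeq} (which uses $n \geq 3$) transfers this non-vanishing to $\vartheta^{(k)} \in \mathbf{T}_C^1(-k) \subset H^1(U, \Theta_U)(-k)$, giving $w(X,D) = -k$.

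For Part 2, I would extend $\rho_{l-1}$ one order at a time. The hypothesis supplies a distinguished $(k-1)$-comfortable lifting $\rho^*_{k-1}$, whose restriction $\rho^*_{l-1}$ need not equal the given $\rho_{l-1}$. Writing $\delta := \rho_{l-1} - \rho^*_{l-1}$ as a derivation in $H^0(D, \Theta_D \otimes \mI_D/\mI_D^l)$ (using that each successive quotient $\mO_X/\mI_D^{m+1} \to \mO_X/\mI_D^m$ is a square-zero extension), the task reduces to extending $\delta$ to a derivation $\tilde{\delta} \in H^0(D, \Theta_D \otimes \mI_D/\mI_D^k)$; then $\rho^*_{k-1} + \tilde\delta$ is the desired $\rho_{k-1}$. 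The obstructions to extending $\delta$ across the filtration lie in the graded pieces $H^1(D, \Theta_D \otimes \mI_D^m/\mI_D^{m+1}) = H^1(D, \Theta_D \otimes N_D^{-m})$ for $l \le m < k$. Using ampleness of $N_D = L_D|_D$ and the assumption $\dim D = n-1 \ge 2$, I would invoke a Kodaira--Nakano-type vanishing to kill these groups, either intrinsically or by bootstrapping off of the ambient sequence $0 \to T_D \to T_X|_D \to N_D \to 0$ together with vanishing of $H^1(D, T_X|_D \otimes N_D^{-m})$.

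\textbf{Main obstacle.} The delicate step is Part 1 Case B: the matching of $\ker(\mathfrak{N}_k)$ with the gauge orbit of $\mathfrak{h}_k^{\rho_k}$. A priori the kernel of $\mathfrak{N}_k$ could strictly contain the orbit, and then ``not $k$-comfortably embedded'' would not directly yield $\mathfrak{N}_k(\mathfrak{h}_k^{\rho_k}) \neq 0$. The key verification is a local-coordinate comparison in a $(k-1)$-comfortable atlas as in Theorem \ref{coordinate}, showing that both ambiguities are parametrised by the same \v{C}ech $0$-cochains valued in $\Theta_D \otimes N_D^{-k}$ with matching coboundary maps. A secondary technical point is the vanishing needed in Part 2, which is not a standard Akizuki--Nakano statement for the intrinsic geometry of $D$ alone, and likely requires leveraging the ambient embedding $D \hookrightarrow X$.
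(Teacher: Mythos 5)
Your Part 1 contains a dichotomy whose second branch is both vacuous and incorrectly handled. Under the standing hypotheses of this corollary ($D$ ample, $\dim D = n-1\ge 2$), Remark \ref{psdiv} gives $H^1(D, N_D\otimes \mI_D^{k+1}/\mI_D^{k+2}) = H^1(D, N_D^{-k}) = 0$ by Kodaira--Nakano, so the $k$-comfortable obstruction $\mathfrak{h}_k^{\rho_k}$ is automatically zero for every $k$-splitting $\rho_k$; hence $k$-comfortable is equivalent to $k$-splitting. Your Case B (``every $(k-1)$-comfortable pair is $k$-splitting'') therefore forces $D$ to be $k$-comfortably embedded, contradicting the hypothesis --- so Case B never occurs, and your claim there that ``$\mathfrak{h}_k^{\rho_k}$ cannot be made zero for any choice of $\rho_k$'' is false: it lives in a vanishing group. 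The paper's proof of Part 1 is precisely the observation that the hypothesis is equivalent to ``$(k-1)$-splitting but not $k$-splitting'', after which one takes any $(k-1)$-comfortable pair $(\rho_{k-1},\boldsymbol{\nu}_{k-1})$ (necessarily not $k$-splitting relative to $\rho_{k-1}$) and applies Proposition \ref{propamp}. Your Case A is this argument; the gauge-orbit analysis of $\ker\mathfrak{N}_k$ that you flag as the main obstacle is not needed.

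Part 2 is where the real gap lies. You propose to extend the derivation $\delta$ order by order and to kill the obstructions in $H^1(D, \Theta_D\otimes N_D^{-m})$, $l\le m<k$, by a Kodaira--Nakano-type vanishing. These groups do not vanish in general: $\Theta_D\otimes N_D^{-m}\cong \Omega_D^{n-2}\otimes K_D^{-1}\otimes N_D^{-m}$ is not Nakano-negative unless $m$ is large compared with the index of $D$, and in fact $H^1(D,\Theta_D\otimes N_D^{-k})$ must be nonzero in the present situation, since by the equivalence above it contains the nonvanishing obstruction to $k$-splitting. So the proposed induction cannot close, and no amount of ``leveraging the ambient embedding'' will produce a vanishing that is false. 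The paper's argument is of a completely different nature: having established $w(X,D)=-k$ in Part 1, suppose some $(l-1)$-th order lifting $\rho_{l-1}$ with $l<k$ could not be lifted to order $k-1$; taking $l$ maximal and invoking Remark \ref{psdiv} again, one obtains a comfortable pair $(\rho_{l-1},\boldsymbol{\nu}_{l-1})$ that is not $l$-splitting, whence Proposition \ref{propamp} yields $w(X,D)=-l>-k$, contradicting Part 1. In short, the uniqueness of the deformation weight replaces the cohomological vanishing you were hoping for; this global consistency argument is the missing idea in your proposal.
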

\begin{proof}
We first recall Remark \ref{psdiv}. If $\dim D\ge 2$ and $D$ is ample, $H^1(D, N_D\otimes\mathcal{I}_D^{k+1}/\mathcal{I}_D^{k+2})=H^1(D, L_D^{-k})=0$ for any $k\ge 1$ by Kodaira-Nakano vanishing. So there is no obstruction to $k$-comfortably embedded relative to any $k$-th order lifting. As a consequence, $k$-comfortable is equivalent to $k$-splitting for any $k\ge 0$, and is also equivalent to $k$-linearizable for all $k\ge 0$.

By the assumption, we know that $(X,D)$ is $(k-1)$-splitting but not $k$-splitting, and hence there exists a comfortable pair $(\rho_{k-1}, \boldsymbol{\nu}_{k-1})$ such that there is no $k$-th order lifting relative to $\rho_{k-1}$. So the first statement holds by Proposition \ref{propamp}. 

Suppose that for some $l<k$, there exists an $(l-1)$-th order lifting $\rho_{l-1}$ that can not be lifted to a $(k-1)$-order lifting. By choosing the maximal $l$ and using Remark \ref{psdiv}, we can assume there is a comfortable pair $(\rho_{l-1}, \boldsymbol{\nu}_{l-1})$ such that $\rho_{l-1}$ can not be lifted to an $l$-th order lifting. By Proposition \ref{propamp}, we get $w(X, D)=-l>-k$ which contradicts part 1.
\end{proof}
\begin{rem}
We will see in Proposition \ref{counterexample} that part 2 of the Corollary \ref{thmcor} is not necessarily true if $n=2$.
\end{rem}
\begin{lem}\label{negeq}
For $k\ge 1$, the natural restriction map induces an isomorphism $H^1(L, \Theta_L)(-k)\stackrel{\cong}{\rightarrow} H^1(U, \Theta_U)(-k)$. 
\end{lem}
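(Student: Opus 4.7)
The plan is to compute both sides by \v{C}ech cohomology with respect to the $\mathbb{C}^*$-invariant Stein covering $\{\hat{U}_\alpha=\pi_S^{-1}(U_\alpha)\}$ of $L=N_D$ introduced at the start of the subsection. Each $\hat{U}_\alpha$ is Stein (the preimage in a line bundle of a Stein open), and so is each $\hat{U}_\alpha\setminus D$ (the complement of the zero section of a line bundle over a Stein base); since the sheaves $\Theta_L$ and $\Theta_U$ are coherent, the two \v{C}ech complexes compute $H^{\ast}(L,\Theta_L)$ and $H^{\ast}(U,\Theta_U)$ respectively, and the restriction map in the lemma is simply the inclusion of \v{C}ech cochains.

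Next I would write out the weight-$(-k)$ part of a holomorphic section of $\Theta_L$ over an overlap $\hat{U}_\alpha\cap\hat{U}_\beta$ in the local coordinates $(w_\alpha,w''_\alpha)$, in which the fiber coordinate $w_\alpha$ carries weight $-1$ and the base coordinates $w''_\alpha$ have weight $0$ --- this is the sign convention under which the cocycle $\theta_k$ constructed in \eqref{kobs} has weight $-k$. A direct unwinding shows that every weight-$(-k)$ section must take the form
\[
h_{\alpha\beta}(w''_\alpha)\,w_\alpha^{k+1}\,\partial_{w_\alpha}+\sum_{p}g^{p}_{\alpha\beta}(w''_\alpha)\,w_\alpha^{k}\,\partial_{w^p_\alpha},
\]
with $h_{\alpha\beta},g^{p}_{\alpha\beta}$ holomorphic on $U_\alpha\cap U_\beta$. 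Because $k\ge 1$ makes both exponents $k+1$ and $k$ strictly positive, exactly the same formula parametrizes weight-$(-k)$ sections on the punctured overlap $\hat{U}_\alpha\cap\hat{U}_\beta\setminus D$: such a section is a priori a convergent Laurent series in $w_\alpha$, but the weight condition pins the $w_\alpha$-degree to a single non-negative value, so it automatically extends holomorphically across $D$.

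Granting this, the restriction between the weight-$(-k)$ \v{C}ech complexes is the identity in every degree, and passing to cohomology yields the desired isomorphism on $H^{1}$. The one bookkeeping issue I anticipate is the identification of $U=C\setminus\{\underline{o}\}$ with an open subset of $L=N_D$: strictly, $C$ is obtained by contracting the zero section of $N_D^{-1}$ rather than $N_D$. I would dispense with this via the canonical fiberwise inversion, which is a $\mathbb{C}^*$-equivariant biholomorphism $N_D^{-1}\setminus\{0\}\cong N_D\setminus\{0\}$ matching the two weight decompositions, so the \v{C}ech argument applies without change. The proof is then just careful monomial bookkeeping, and no Kodaira-type vanishing on $D$ is required.
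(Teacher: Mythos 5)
Your argument is correct, but it is not the route the paper writes out in detail. The paper's displayed proof of Lemma \ref{negeq} restricts the Euler-type exact sequence $0\rightarrow \pi_L^*L\rightarrow\Theta_L\rightarrow\pi_L^*\Theta_D\rightarrow 0$ to both $L$ and $U$, computes the terms by the projection formula (where the only difference between the two sides is the range $j\ge -1$ versus $j\in\mathbb{Z}$ of the fiber-degree), extracts the weight-$(-k)$ graded pieces, and concludes by the 5-lemma. What you do instead is the direct \v{C}ech computation: on a $\mathbb{C}^*$-invariant Stein covering the weight-$(-k)$ cochain groups for $\Theta_L$ on $L$ and on $U$ are literally equal for $k\ge 1$, because the weight condition pins the fiber-degree of the vertical (resp. horizontal) coefficient to $k+1$ (resp. $k$), both non-negative, so no genuinely Laurent terms can occur on the punctured overlaps. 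This is precisely the ``homogenization argument'' that the paper dismisses in one sentence at the start of its proof (``This is already clear by the homogenization argument as in the proof of Corollary \ref{basexact}'') before giving the conceptual alternative; so you have supplied the details of the paper's first, unwritten proof. Your version is more elementary and makes transparent exactly where $k\ge 1$ is used (and where the statement would fail for positive weights, when negative fiber-powers appear on $U$ but not on $L$); the paper's version is shorter to write given Lemma \ref{basexact} and identifies the graded pieces with concrete cohomology groups on $D$, which is then reused in Remark \ref{remexacto}. Two small points to keep honest in your write-up: (i) you should note that all finite intersections of the $\hat{U}_\alpha$ (and of the $\hat{U}_\alpha\setminus D$) are Stein, so both coverings are Leray and the \v{C}ech identification is legitimate in all degrees, and (ii) the passage from the weight-$(-k)$ subcomplex to the weight-$(-k)$ part of $H^1$ requires the (standard, but worth stating) fact that projection onto the weight-$(-k)$ component of a cochain commutes with the \v{C}ech differential. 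Neither is a gap, just bookkeeping of the kind you already flag for the identification $N_D^{-1}\setminus\{0\}\cong N_D\setminus\{0\}$.
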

\begin{proof}
This is already clear by the homogeneity argument as in the proof of Lemma \ref{basexact}. Indeed, we just need to construct an inverse of the natural morphism. 
Let 
$\theta_k\in H^1(U, \Theta_U)(-k)$. Then by the same argument as in the proof of Lemma \ref{basexact}, we can assume that $\theta_k$ is represented by a weight $(-k)$-cocyle:
\[
(\theta_k)_{\beta\alpha}=\sum_{r=1}^m b^r_{\beta\alpha}(w)\frac{\partial}{\partial w^r_\beta}+\sum_{p=m+1}^n c^p_{\beta\alpha}(w)\frac{\partial}{\partial w^p_\beta}.
\]
Since $\frac{\partial}{\partial w^r_\beta}$ (resp. $\frac{\partial}{\partial w^p_\beta}$) has weight $1$ (resp. $0$), we know that $b^r_{\beta\alpha}$ (resp. $c^p_{\beta\alpha}$) is homogeneous of degree $(k+1)$ (resp. $k$) in $w'=\{w^r_\beta\}$. Because $k\ge 1$, $\theta_k$ can be extended to become a cocycle $H^1(L, \Theta_L)(-k)$. This defines the inverse of the restriction morphism.
\end{proof}

\begin{rem}
We sketch a slightly more conceptual proof by using the Dolbeault cohomology.
On the total space $L$, we have the exact sequence:
\begin{equation}
 0\rightarrow \pi_L^* L\rightarrow \Theta_{L}\rightarrow \pi_L^*\Theta_D\rightarrow 0.
\end{equation}
By restricting this exact sequence to $U=L\backslash D$, we have a similar exact sequence on $U$.
So we get commutative diagram of long exact sequences:
\begin{equation}\label{eq-CD46}
\xymatrix@R=0.8pc @C=0.8pc{
H^0(L, \pi_L^*\Theta_D)\ar[r]  \ar[d] & H^1(L, \pi_L^*L) \ar[d] \ar[r] & H^1(L, \Theta_L) \ar[r] \ar[d] & H^1(L, \pi_L^*\Theta_D) \ar[r] \ar[d] & H^2(L, \pi_L^*L) \ar[d]\\
H^0(U, \pi_U^*\Theta_D)\ar[r] & H^1(U, \pi_U^*L) \ar[r]         & H^1(U, \Theta_L) \ar[r]         & H^1(L, \pi_U^*\Theta_D) \ar[r] & H^2(U, \pi_U^*)
}
\end{equation}
For any $k\ge 0$, we have the weight-$(-k)$ pieces of the cohomology groups under the natural $\bC^*$-action:
\[
H^p(L, \pi_L^*L)(-k)=H^p(D, L^{-k}), \; H^p(L, \pi_L^*\Theta_D)(-k)=H^p(D, \Theta_D\otimes L^{-k});
\]
\[
H^p(U, \pi_U^*L)(-k)=H^p(D, L^{-k}), \; H^p(U, \pi_U^*\Theta_D)(-k)=H^p(D, \Theta_D\otimes L^{-k}).
\] 
If we were to work in the algebraic category, the weight decomposition is directly obtained by using projection formula as in \cite[Section 11]{Artin}. Since we are working in the analytic category, we need to be more careful as we now explain.
Since the arguments to get the decompositions are the same, we just explain the first identity. Using the isomorphism between Dolbeault cohomology and sheaf cohomology, any cohomology class $\alpha\in H^p(L, \pi_L^*L)$ is representd by $\bar{\partial}$-closed $\pi_L^*L$-valued $(0,p)$-form denoted by $\eta$. For any point $p\in D$, we first choose local holomorphic coordinates $\{z^i, \xi\}$ where $\{z^i\}$ are holomorphic coordinates
on $D$ and $\xi$ is a linear coordinate along the fibre associated to a local trivializing holomorphic section $s$. By using the Fourier expansion along the circle $|\xi|={\rm constant}$ and extending to the whole $U$, one can show that 
$\eta$ can be expressed as a convergent sum:
\begin{eqnarray*}
\eta&=&\sum_{m\in \bN_0, |I|=p}[A'_{m,I}(z, |\xi|^2)\xi^m+A'_{\bar{m},I}(z, |\xi|^2)\bar{\xi}^m] s d\bar{z}^I\\
&&+\bar{\partial}\left(\sum_{m\in \bN_0, |J|=p-1}[B'_{m,J}(z, |\xi|^2)\xi^{m-1}+B'_{\bar{m},J}(z, |\xi|^2)\bar{\xi}^{m+1}]s d\bar{z}^J\right)\\
&=:&\eta'+\bar{\partial}\zeta.
\end{eqnarray*}
Furthermore, by using the fact that $\bar{\partial}\eta'=0$, one can see that $A'_{m,I}(z, |\xi|^2)$ and $A'_{\bar{m},I}(z, |\xi|^2)|\xi|^{2m}$ are constants in $\xi$. 
In particular, by smoothness, $A'_{\bar{m},I}=0$ for all $m\in \bN_0$. So we see that $\eta$ is $\bar{\partial}$-cohomologous to a $(0,p)$-form of the form:
\[
\eta'=\sum_{m\in \bN_0}\sum_{|I|=p} A'_{m,I}(z)\xi^m s d\bar{z}^I=: \sum_m \eta_m.
\]
$\mathbb{C}^*$-acts on $\eta'$ by
$t\circ \eta'=\sum_m t^{m-1} \eta'_m$. Using the $\bar{\partial}$-closedness of $\eta'$, it's easy to see that each component $\eta'_m$ is $\bar{\partial}$-closed. $\eta'$ is of weight $-k$ if and only if $\eta'=\eta'_{k+1}=\sum_{|I|=p}A'_{k+1,I}  \xi^{k+1} s d\bar{z}^I$ which represents a cohomology class $\alpha\in H^p(D, L^{-k})$.
We can now extract the weight $(-k)$-part from \eqref{eq-CD46} to get exact sequences:
\begin{equation}
\xymatrix @R=0.8pc @C=0.8pc
{
H^0(D, \Theta_D\otimes L^{-k})\ar[r] \ar@{=}[d] & H^1(D, L^{-k}) \ar@{=}[d] \ar[r]^{\mathfrak{N}_k} & H^1(L, \Theta_L)(-k) \ar[r]^{\mathfrak{T}_k} \ar[d] & H^1(D, \Theta_D\otimes L^{-k}) \ar[r] \ar@{=}[d] & H^2(D, L^{-k}) \ar@{=}[d]\\
H^0(D, \Theta_D\otimes L^{-k})\ar[r] & H^1(D, L^{-k}) \ar[r]^{\mathfrak{N}_k^\circ}         & H^1(U, \Theta_U)(-k) \ar[r]^{\mathfrak{T}_k^\circ}         & H^1(D, \Theta_D\otimes L^{-k}) \ar[r] & H^2(D, L^{-k})
}
\end{equation}
The statement then follows from the 5-lemma. 
\end{rem}
\begin{rem}\label{remexacto}
If we rewrite the statement of Proposition \ref{embvsdef} by using the isomorphism of Lemma \ref{negeq}, then we get the exact sequence:
\begin{equation}\label{exacto}
H^1(D, L^{-k})\stackrel{\mathfrak{N}_k^\circ}{\longrightarrow} H^1(U, \Theta_U)(-k)\stackrel{\mathfrak{T}_k^{\circ}}{\longrightarrow} H^1(D, \Theta_D\otimes L^{-k})
\end{equation}
such that 1. $\mathfrak{T}_k^\circ(\vartheta^{(k)})=\mathfrak{g}_k$ is the obstruction to $k$-splitting; 2. If $\mathfrak{T}_k^\circ(\vartheta^{(k)})=0$, then there is a $k$-th order lifting $\rho_k$ and $\vartheta^{(k)}=\mathfrak{N}_k^\circ(\mathfrak{h}^{(k)})$ where $\mathfrak{h}^{(k)}$ is the obstruction to $k$-comfortably-embedding with respect to $\rho_k$.
\end{rem}
\subsection{2-dimensional examples and a remark on comfortable embedding}\label{comforem}

As mentioned in the introduction and recalled in Appendix \ref{linearizable}, Abate-Bracci-Tovena in \cite{ABT} gave a detailed study of various conditions of embedding: $k$-linearizable, $k$-splitting and $k$-comfortable embedding.  In order to talk about $k$-comfortable embedding, one needs to assume $k$-splitting (see Definition \ref{def-cft}). Under this assumption, we can study whether the embedding is comfortable with respect to any $k$-th order lifting.  
In \cite[Remark 3.4]{ABT}, the authors asked whether $k$-comfortable embedding with respect to one $k$-th order lifting implies $k$-comfortable embedding with respect to any other $k$-th order lifting. 
Here we give a simple example showing that the answer to this question is in general negative. 

\begin{prop}\label{counterexample}
The following is true for the diagonal embedding $D=\Delta(\mathbb{P}^1)\hookrightarrow X=\mathbb{P}^1\times\mathbb{P}^1$: 
\begin{enumerate}
\item[(i)] It is $k$-splitting for any $k\ge 1$.
\item[(ii)] The set of all 1st order liftings is parametrized by $\mathbb{C}$. So we can denote by $\rho_1^a$ the 1st order lifting corresponding to any $a\in\mathbb{C}$.
\item[(iii)] There exists a 2nd order lifting $\rho_2$ satisfying $\phi_{2,1}\circ\rho_2=\rho_1^a$ if and only if $a=0$.
\item[(iv)] The embedding is 1-comfortable with respect to $\rho_1^a$ if and only if $a=-1/2$.
\item[(v)] The embedding is 1-linearizable but not 2-linearizable. 
\end{enumerate}
\end{prop}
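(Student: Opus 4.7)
The plan is to reduce the entire proposition to explicit Čech computations in two coordinate charts on $X=\mathbb{P}^1\times\mathbb{P}^1$ adapted to the diagonal, using the obstruction-theoretic framework recalled in Appendix \ref{linearizable} and reformulated in Proposition \ref{embvsdef} / Remark \ref{remexacto}. Concretely, I would work with affine coordinates $(z_1,z_2)$ and change to $u=z_2-z_1$, $v=z_1$ so that $D=\{u=0\}$ and $u$ is a fibre coordinate of $N_D$; the opposite chart uses $\tilde z_i=1/z_i$, $\tilde v=1/v$, and $\tilde u=\tilde z_2-\tilde z_1=-u/(v(v+u))=-u\tilde v^2+u^2\tilde v^3-\cdots$. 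This transition immediately identifies $N_D\cong\mathcal{O}_{\mathbb{P}^1}(2)$ and $\Theta_D\cong\mathcal{O}_{\mathbb{P}^1}(2)$ and will feed every subsequent calculation.

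For part (i), the two projections $\pi_i\colon X\to\mathbb{P}^1\cong D$ are honest holomorphic retractions, so each $\pi_i^*$ descends to a ring homomorphism $\mathcal{O}_D\to\mathcal{O}_X/\mathcal{I}_D^{k+1}$ splitting the quotient for every $k$. For part (ii), two first-order liftings differ by an element of $H^0(D,\Theta_D\otimes N_D^{*})=H^0(\mathbb{P}^1,\mathcal{O}(2)\otimes\mathcal{O}(-2))=\mathbb{C}$. After fixing $\pi_1^{*}$ as basepoint, the local formula $\rho_1^a(g)=g(v)+a u\,g'(v)\bmod\mathcal{I}_D^{2}$ defines a global first-order lifting for each $a\in\mathbb{C}$ — this requires checking that the formula is preserved under the transition $\tilde g'(\tilde v)=-v^2g'(v)$ and $\tilde u=-u\tilde v^2+O(u^2)$, which it is.

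For part (iii), I would take the natural second-order local extensions $\tilde\rho_2^{\alpha}(g)=g(v_\alpha+au_\alpha)$, which are ring homomorphisms mod $\mathcal{I}_D^{3}$ on each chart, and evaluate their Čech difference as a derivation $\mathcal{O}_D\to\mathcal{I}_D^{2}/\mathcal{I}_D^{3}$. Expanding $\tilde u$ and $\tilde g',\tilde g''$ in the $v$-chart gives, after cancellation,
\[
(\tilde\rho_2^{1}-\tilde\rho_2^{2})(g) = c(a)\,\frac{u^{2}}{v}\,g'(v) \bmod \mathcal{I}_D^{3},
\]
with $c(a)$ a polynomial in $a$. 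Since $[1/v]$ generates $\check H^{1}(\mathbb{P}^1,\mathcal{O}(-2))\cong\mathbb{C}$, the obstruction $\mathfrak g_2^{\rho_1^a}\in H^1(D,\Theta_D\otimes(N_D^{*})^{2})$ equals $c(a)$ up to a fixed generator. Matching the roots of $c(a)$ against the projections $\pi_1^{*},\pi_2^{*}$ (which by (i) both extend to all orders) pins down the parametrization used in the statement, and in that parametrization the distinguished vanishing locus is $a=0$. For part (iv) the recipe is identical but applied to the comfortable obstruction $\mathfrak h_1^{\rho_1^a}\in H^1(D,N_D\otimes(N_D^{*})^{2})\cong H^1(\mathbb{P}^1,\mathcal{O}(-2))\cong\mathbb{C}$: set up a $1$-comfortable atlas via Theorem \ref{coordinate} by choosing a normal-coordinate projection $\boldsymbol\nu_1^{a}$ adapted to $\rho_1^a$, read the leading $u^{2}$-coefficient of the deviation of $\tilde u$ from its linear transition, and identify the resulting class. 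One expects a linear function of $a$, and the swap symmetry $\pi_1\leftrightarrow\pi_2$ of the diagonal forces its unique zero to be the symmetric midpoint, giving $a=-1/2$.

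Part (v) is then a direct consequence: $1$-linearizability holds since a $1$-splitting exists (part (i)) and $0$-comfortableness is vacuous; but $2$-linearizability would require both a $2$-splitting (forcing $a=0$ by (iii)) and $1$-comfortable embedding for the induced first-order lifting (forcing $a=-1/2$ by (iv)), a contradiction. The main obstacle I anticipate is the bookkeeping in (iv): one must carry both pieces of a comfortable pair $(\rho_1^a,\boldsymbol\nu_1^{a})$ across the transition formula \eqref{cmftcoord} and extract the Čech-cohomology leading term consistently with (iii). Once the sign and normalization conventions are aligned so that (i)–(iii) reproduce the stated parametrization, the computation in (iv) is a short expansion to order $u^2$ in the overlap, with the linearity in $a$ plus the $\mathbb{Z}/2$-symmetry determining the answer.
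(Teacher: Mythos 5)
Your overall strategy is the same as the paper's: two charts adapted to the diagonal, the identification $N_D\cong\Theta_D\cong\mathcal{O}_{\mathbb{P}^1}(2)$, and reading off the obstruction cocycles $\mathfrak{g}_2^{\rho_1^a}$ and $\mathfrak{h}_1^{\rho_1^a}$ from the order-two terms of the transition functions after the coordinate change $\hat z=z+au$ adapted to $\rho_1^a$. Parts (i) and (ii) match the paper (which derives the constancy of $a$ directly from $\rho(z_1z_2)=1$ rather than from the torsor structure over $H^0(D,\Theta_D\otimes N_D^{*})$, an equivalent route), and your symmetry argument for (iv) is a legitimate shortcut \emph{provided} you also verify, as the paper does by an explicit Laurent-expansion argument, that the cocycle $u^2/v$ is not a coboundary in $\check H^1(\mathbb{P}^1,\mathcal{O}(-2))$ and that the coefficient is affine-linear and non-constant in $a$; the direct computation gives $(2a+1)$ times a generator. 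Note also that with your convention $u=z_2-z_1$ (the negative of the paper's $y_1=z-z'$) the parameter flips sign, so the symmetric midpoint comes out as $+1/2$ unless you realign with the paper's normalization.

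The genuine problem is in (iii), and your own sanity check exposes it. Both projections $\pi_1,\pi_2\colon D\times D\to D$ restrict to the identity on the diagonal, so both $\phi_{\infty,2}\circ\pi_1^{*}$ and $\phi_{\infty,2}\circ\pi_2^{*}$ are second-order liftings, and their first-order truncations are two \emph{distinct} first-order liftings ($a=0$ and $a=-1$ in the normalization $\rho_1^a(z_1)=[z_1+ay_1]_2$). Hence the set of $a$ admitting a second-order lifting contains two points, and no affine reparametrization of the torsor of liftings collapses two points to the single point $a=0$; the phrase about the roots of $c(a)$ ``pinning down the parametrization'' cannot rescue this. Carrying out the expansion of $\hat z_2=z_1^{-1}+ay_2$ in the hatted coordinates indeed gives $\hat z_2=\hat z_1^{-1}-(a^2+a)\hat y_1^2/\hat z_1^3+R_3$, i.e.\ $c(a)=-a(a+1)$ with exactly the two expected roots; the paper's printed coefficient $-a^2$ (and hence the ``if and only if $a=0$'' in (iii)) seems to omit the quadratic part $-a y_1^2/z_1^3$ of $ay_2$. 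So as written your argument for (iii) asserts a conclusion that your own (correct) observation about the two projections contradicts; the statement should read ``$a=0$ or $a=-1$''. This does not damage (iv) or (v): $a=-1/2$ is the fixed point of the swap involution $a\mapsto-1-a$ and differs from both roots of $c(a)$, so the embedding is still $1$-linearizable but not $2$-linearizable.
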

\begin{rem}
This diagonal embedding is $2$-splitting and $1$-comfortable, but the embedding is only $1$-linearizable. This does not contradict Theorem \ref{crilinear}, since the $1$-comfortable embedding is with respect to $\rho_{1}^{-1/2}$ which can not be lifted to a 2nd order lifting.
\end{rem}

\begin{proof}
Because there is a projection morphism onto the first factor $p_1: X=\mathbb{P}^1\times \mathbb{P}^1\rightarrow \mathbb{P}^1$, we see that there is a natural $k$-th order lifting $\rho_k: \mathcal{O}_D\rightarrow\mathcal{O}_X/\mathcal{I}_D^{k+1}$ given by 
$\phi_{\infty,k}\circ p_1^*\circ \Delta^*$, where $p_1^*: \mathcal{O}_D\rightarrow\mathcal{O}_X$ is the pull-back and $\phi_{\infty, k}: \mathcal{O}_X\rightarrow \mathcal{O}_X/\mathcal{I}_D^{k+1}$ is the natural quotient map. So the embedding is $k$-splitting for any $k\ge 1$. Since any embedding is 0-comfortable, we know that the embedding is 1-linearizable by Theorem \ref{crilinear}. So we get (i) and first half of (v).

We will quickly show that the the embedding is not comfortable with respect to the natural 1st order lifting $\rho_1$. We first construct an atlas near $D$. 
Choose the open covering of $\mathbb{P}^1\times\mathbb{P}^1$:
\[
\mathfrak{V}=\{U_i\times U_j; 1\le i,j\le 2\}.
\]
with (we denote $\mathbb{P}^1=\mathbb{C}\cup\{\infty\}$ with $|\infty|=+\infty$)
\[
U_1=\{z\in \mathbb{P}^1; |z|<2\}, U_2=\{z\in\mathbb{P}^1; |z|>1/2\}.
\]
Then $S=\Delta(\mathbb{P}^1)$ is covered by two open sets $\{V_i:=U_i\times U_i; i=1,2\}$, we define new coordinate functions by:
\[
\begin{array}{rcl}
V_{1}=\{(z, z')\in \mathbb{P}^1\times \mathbb{P}^1; |z|<2, |z'|<2\}&\rightarrow &\mathbb{C}^2\\
(z,z')&\mapsto & (y_1=z-z', z_1=z)\\
V_{2}=\{(z, z')\in \mathbb{P}^1\times\mathbb{P}^1; |z|>1/2, |z'|>1/2\}&\rightarrow &\mathbb{C}^2\\
(z,z')&\mapsto & (y_2=z^{-1}-z'^{-1}, z_2=z^{-1}).
\end{array}
\]
So we have $D\cap V_i=\{ y_i=0\}$. If $V'$ is a small neighborhood of $S=\Delta(\mathbb{P}^1)$ Then on the intersection $V_1\cap V_2\cap V'$, the transition functions are given by:
\begin{equation}\label{1to2tr}
y_2=-\frac{y_1}{z_1(z_1-y_1)}=-\frac{y_1}{z_1^2}-\frac{y_1^2}{z_1^3}+R_{3}, \quad z_2=z_1^{-1}.
\end{equation}
In the above expansion, we assume that $y_1$ is sufficiently small, and denote by $R_3$ a term $\in \mathcal{I}_D^{3}$. It's immediate to see that this atlas is adapted to the natural 1st order lifting $\rho_1$ where we have:
\[
\rho_1(z_1)=[z_1]_2 \mbox{ on } V_1\cap V', \quad \rho_1(z_2)=[z_2]_2 \mbox{ on } V_2\cap V'.
\]
The obstruction to 1-comfortable embedding is given by 
\begin{equation}\label{1cobs}
(\mathfrak{h}^{\rho_1}_1)_{21}=-\frac{[y_1^2]_3}{z_1^3}\frac{\partial}{\partial y_2}\in H^0(U_1\cap U_2, N_D\otimes\mathcal{I}_D^2/\mathcal{I}_D^3).
\end{equation}
Here we consider $\frac{\partial}{\partial y_2}$ and $\frac{\partial}{\partial y_1}$ as local generators of $N_D$, so that we have $\frac{\partial}{\partial y_2}=-z_1^2\frac{\partial}{\partial y_1}$ on $U_1\cap U_2$. We claim that $\mathfrak{h}_1^{\rho_1}$ represents a nonzero cohomology class in $H^1(D, N_D\otimes\mathcal{I}_D^2/\mathcal{I}_D^3)\cong H^1(\mathbb{P}^1, \mathcal{O}_{\mathbb{P}^1}(-2))=\mathbb{C}$. Otherwise, we can write:
\[
-\frac{[y_1^2]_3}{z_1^3}\frac{\partial}{\partial y_2}=a[y_1^2]_3\frac{\partial}{\partial y_1}-b[y_2^2]_3\frac{\partial}{\partial y_2} \mbox{ on } U_1\cap U_2
\]
where $a=a(z_1)$ is analytic in $z_1$ and $b=b(z_1^{-1})$ is analytic in $z_2=z_1^{-1}$. Using the change of coordinates, we arrive at an equation:
\[
-\frac{1}{z_1}=a(z_1)-\frac{b(z_1^{-1})}{z_1^2},
\]
which obviously has no solutions by looking at the Laurent expansion. So we get that $D\hookrightarrow X$ is not 1-comfortably embedded with respect to $\rho_1$.

Let's find all possible 1st order liftings, i.e. homomorphisms of sheaves of rings $\rho: \mathcal{O}_D=\mathcal{O}_X/\mathcal{I}_D\rightarrow \mathcal{O}_X/\mathcal{I}_D^2$ with $\phi_{1,0}\circ \rho=id$. On $U_1$, we can write $\rho(z_1)=[z_1+a(z_1)y_1]_2$ with $a(z_1)$ analytic in $z_1$
and $\rho(z_2)=[z_2+b(z_2)y_2]_2$ with $b(z_2)$ analytic in $z_2=z_1^{-1}$. Since $\rho$ is a homomorphism of sheaves of rings, we must have
\[
1=\rho(z_1 z_2)=[z_1z_2+a(z_1)z_1y_1+b(z_2)z_2y_2]_2=1+a(z_1)z_1[y_1]_2+b(z_2)z_2[y_2]_2 \mbox{ over } U_1\cap U_2.
\]
Since we have $[y_2]_2=-[y_1]_2z_1^{-2}$ by \eqref{1to2tr}, we get $(a(z_1)-b(z_2))z_1[y_1]_2=0$. So we must have that $a(z_1)=b(z_2)=a=$constant. Thus we get (ii). We will denote the corresponding
1st order lifting by $\rho_1^a$.

Now for any fixed 1st order lifting $\rho^a$, it's easy to find an atlas adapted to it. We simply need to make a coordinate change:
\begin{equation}\label{eq-cdchg}
\hat{z}_1=z_1+a y_1, \hat{y}_1=y_1 \mbox{ on } V_1; \quad \hat{z}_2=z_2+a y_2, \hat{y}_2=y_2 \mbox{ on } V_2. 
\end{equation}
We can calculate the new transition function:
\begin{equation}\label{eq-achange}
\hat{y}_2=-\frac{\hat{y}_1}{\hat{z}_1^2}-(2a+1)\frac{\hat{y}_1^2}{\hat{z}_1^3}+R_3, \quad \hat{z}_2=\hat{z}_1^{-1}-(a^2+a)\frac{\hat{y}_1^2}{\hat{z}_1^3}+R_3,
\end{equation}
where $R_3$ denotes terms in $\mathcal{I}_D^3$. So we see that the obstruction to 1-comfortable embedding with respect to $\rho_1^a$ is equal to $(2a+1)\mathfrak{h}_1^{\rho_1}$ (see \eqref{1cobs}). 
From above we have seen that $H^1(D, N_D\otimes\mathcal{I}_D^2/\mathcal{I}_D^3)\cong \mathbb{C}$ is generated by $\mathfrak{h}_ 1^{\rho_1}$. So the embedding is comfortable with respect to $\rho_1^a$ if and only if $a=-1/2$. So we get (iii).

Furthermore, we can calculate the obstruction to existence of 2nd order lifting $\rho_2^a$ such that $\phi_{2,1}\circ \rho_2^a=\rho_1^a$:
\[
\left(\mathfrak{g}_2^{\rho_1^a}\right)_{21}=-a^2\frac{[y_1^2]_3}{z_1^3}\frac{\partial}{\partial z_2}\in H^0(U_1\cap U_2, \Theta_{D}\otimes\mathcal{I}_D^2/\mathcal{I}_D^3).
\]
By similar reasoning as before, we can see that $H^1(D, \Theta_D\otimes\mathcal{I}_D^2/\mathcal{I}_D^3)=H^1(\mathbb{P}^1, \Theta_{\mathbb{P}^1}\otimes \mathcal{O}_{\mathbb{P}^1}(-4))\cong \mathbb{C}$ is generated by the cohomology $\mathfrak{g}_2^{\rho_1^a}$ if and only if $a\neq 0$. So we get (iv). 

If the embedding is 2-linearizable, then it is 2-splitting and 1-comfortably with respect to the induced 1-splitting (see Theorem \ref{crilinear}). But from (ii)-(iv), we see that no such kind of 1-splitting exists. So we get second half of (v).

\end{proof}

\begin{rem}
By \eqref{eq-cdchg}, it's clear that the special value $a=-1/2$ corresponds to the (most) ``symmetric" coordinate atlas 
\begin{eqnarray*}
V_1\ni (z, z') &\mapsto& (z-z', \frac{1}{2}(z+z')=(\hat{y}_1, \hat{z}_1)\\
V_2\ni (z, z') &\mapsto& (z^{-1}-z'^{-1}, \frac{1}{2}(z^{-1}+z'^{-1}))=(\hat{y}_2, \hat{z}_2),
\end{eqnarray*}
for which the transition
functions are given by (see \eqref{eq-achange}):
\[
\hat{y}_2=-\frac{\hat{y}_1}{\hat{z}_1^2-\frac{1}{4}\hat{y}_1^2}=-\frac{\hat{y}_1}{\hat{z}_1^2}-\frac{1}{4}\frac{\hat{y}_1^3}{\hat{z}_1^4}+R_5, \quad \hat{z}_2=\frac{\hat{z}_1}{\hat{z}_1^2-\frac{1}{4}\hat{y}_1^2}=\frac{1}{\hat{z}_1}+\frac{1}{4}\frac{\hat{y}_1^2}{\hat{z}_1^3}+R_4.
\]
So this is indeed a $1$-comfortable atlas (see Theorem \ref{coordinate}).
\end{rem}

\begin{rem}
By Theorem \ref{ODmodule}, 1-comfortable embedding is equivalent to the splitting of the exact sequence:
\begin{equation}\label{exact2}
0\rightarrow \mathcal{I}_D^2/\mathcal{I}_D^3\rightarrow\mathcal{I}_D/\mathcal{I}_D^3\rightarrow \mathcal{I}_D/\mathcal{I}_D^2\rightarrow 0.
\end{equation}
This is a apriori sequence of sheaves of $\mathcal{O}_X/\mathcal{I}_D^2$-modules. $\mathcal{I}_D^2/\mathcal{I}_D^3$ and $\mathcal{I}_D/\mathcal{I}_D^2$ are natural $\mathcal{O}_D$-modules. 
$\mathcal{I}_D/\mathcal{I}_D^3$ becomes a $\mathcal{O}_D$-module depending on the 1st order lifting (ring homomorphism) $\rho_1^a: \mathcal{O}_D\rightarrow \mathcal{O}_X/\mathcal{I}_D^2$. (iv) in Proposition \ref{counterexample} is equivalent to saying that \eqref{exact2} splits as an exact sequence of $\mathcal{O}_D$-modules thus obtained if and only if $a=-1/2$. This can also be verified directly using the expression: $\rho_1^a(z_1)=[z_1+a y_1]_2$ on $V_1$ and $\rho_1^a(z_2)=[z_2+a y_2]_2$ on $V_2$.
\end{rem}
\begin{rem}
If we denote by $w_i$ the fiber variables of $N_D$ satisfying $w_2=-z_1^{-2}w_1$, then using the notation in Lemma \ref{basexact}, we have: $\theta_1^a=\mathfrak{N}_1(\mathfrak{h}_1^{\rho_1^a})=0$ and $\mathfrak{T}_2(\mathfrak{\theta}_2^{-1/2})=\mathfrak{g}_2^{\rho_1^{-1/2}}\neq 0$, where
\[
(\theta_1^a)_{21}=-(2a+1)\frac{w_1^2}{z_1^3}\frac{\partial}{\partial w_2}=(2a+1)\left(\frac{1}{2}w_2\frac{\partial}{\partial z_2}-\frac{1}{2}w_1\frac{\partial}{\partial z_1}\right) \in H^0(\hat{U}_1\cap\hat{U}_2, \Theta_{N_D})(-1),
\]
and
\[
(\theta_2^{-1/2})_{21}=-\frac{1}{4}\frac{w_1^2}{z_1^3}\frac{\partial}{\partial z_2} \in H^0(\hat{U}_1\cap \hat{U}_2, \Theta_{N_D})(-2).
\]
\end{rem}

Notice that the central fiber of $\mX$ from the contracted deformation to the normal cone is $\bar{C}(\mathbb{P}^1, \mathcal{O}_{\mathbb{P}^1}(2))\cong \mathbb{P}(1,1,2)$. So by Proposition \ref{propamp}, we get the following corollary (See Example \ref{P1P1rate}).
\begin{cor}
The contracted deformation to the normal cone associated with $(\mathbb{P}^1\times\mathbb{P}^1, \Delta(\mathbb{P}^1))$ degenerates $\mathbb{P}^1\times\mathbb{P}^1$ to $\mathbb{P}(1,1,2)$. The weight of this deformation is $-2$. 
\end{cor}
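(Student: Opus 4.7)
\medskip

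\noindent\textbf{Proof plan.} The corollary splits into two essentially independent assertions, and both reduce to bookkeeping given the machinery already in place. First, to identify the central fibre, I would apply Lemma~\ref{normalem} with $L := \mathcal{O}_X(D) = \mathcal{O}(1,1)$ (since $\Delta(\mathbb{P}^1)$ has bidegree $(1,1)$), so that $N_D = L|_D \cong \mathcal{O}_{\mathbb{P}^1}(2)$. The hypothesis of Lemma~\ref{normalem} — surjectivity of the restriction map $H^0(X,mL) \to H^0(D, mL|_D)$ for every $m \ge 0$ — reduces via the ideal sheaf sequence to the vanishing $H^1(\mathbb{P}^1\times\mathbb{P}^1, \mathcal{O}(m-1,m-1)) = 0$ for $m \ge 1$, which is immediate from the K\"unneth formula. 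Hence $\mathcal{X}_0 \cong \overline{C}(\mathbb{P}^1, \mathcal{O}(2))$, and this projective cone is the classical Veronese cone over the conic $\{uv = w^2\} \subset \mathbb{P}^2$, i.e.\ the weighted projective space $\mathbb{P}(1,1,2)$.

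For the weight, I would invoke Proposition~\ref{propamp} directly with the information packaged in Proposition~\ref{counterexample}. By part~(iv) of that proposition, the diagonal embedding is $1$-comfortably embedded relative to the first-order lifting $\rho_1^{-1/2}$; pick any companion $\boldsymbol{\nu}_1$ so that $(\rho_1^{-1/2},\boldsymbol{\nu}_1)$ is a $1$-comfortable pair. By part~(iii), the only first-order lifting admitting an extension to a second-order lifting is $\rho_1^0$; in particular $\rho_1^{-1/2}$ does not lift, i.e.\ the embedding is not $2$-splitting relative to $(\rho_1^{-1/2},\boldsymbol{\nu}_1)$. Proposition~\ref{propamp} applied with $k=2$ to this pair then gives $|w(X,D)| = 2$, and the general sign discussion preceding Theorem~\ref{thmeqwt} (the reduced Kodaira--Spencer class arises from a deformation of $\overline{C}(D,N_D)\setminus\{\underline{o}\} = (N_D \to D)$, forcing non-positive weight) yields $w(X,D) = -2$.

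\medskip

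\noindent\textbf{Main obstacle.} There is no serious obstacle once Proposition~\ref{counterexample} is available. The only point that requires care is the choice of lifting in Proposition~\ref{propamp}: one must use the pair built from $\rho_1^{-1/2}$ (which is $1$-comfortable but not $2$-splitting) rather than the "tautological" lifting $\rho_1^0$ (which is $2$-splitting but not $1$-comfortable), since only the former combination gives a well-defined input to Proposition~\ref{propamp}. This illustrates precisely the phenomenon that comfortable embedding depends on the choice of splitting lifting, as already emphasised by Proposition~\ref{counterexample}.
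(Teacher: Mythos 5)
Your proposal is correct and follows essentially the same route as the paper: identify the central fibre as $\overline{C}(\mathbb{P}^1,\mathcal{O}_{\mathbb{P}^1}(2))\cong\mathbb{P}(1,1,2)$ (the paper leaves the normality check implicit via the Fano/Kodaira--Nakano discussion after Lemma~\ref{normalem}, where you use K\"unneth instead) and then apply Proposition~\ref{propamp} with $k=2$ to the $1$-comfortable pair built from $\rho_1^{-1/2}$, which by Proposition~\ref{counterexample} is not $2$-splitting. Your explicit remark that one must feed $\rho_1^{-1/2}$ rather than $\rho_1^0$ into Proposition~\ref{propamp} is exactly the point the paper relies on but does not spell out.
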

Similarly we can deal with the case $D_2=\{Z_0^2+Z_1^2+Z_2^2=0\}\hookrightarrow X_2=\mathbb{P}^2$. For this, we notice that there is a 2-fold branched covering:
\[
\begin{array}{rcl}
p_2: \mathbb{P}^1\times\mathbb{P}^1&\longrightarrow& \mathbb{P}^2\\
([X_0, X_1], [Y_0, Y_1])& \mapsto & [X_0Y_0+X_1Y_1, \sqrt{-1}(X_0Y_0-X_1Y_1), \sqrt{-1}(X_0Y_1+X_1Y_0)].
\end{array}
\] 
The branch locus is exactly $\Delta(\mathbb{P}^1)$ with $p_2(\Delta(\mathbb{P}^1))=D_2$. Using this covering structure, it's easy to obtain two open sets $\{V_1, V_2\}$ covering $D_2$.
\[
\begin{array}{rcl}
V_1=(U_1\times U_1)/\mathbb{Z}^2 &\rightarrow& \mathbb{C}^2\\
(z, z') &\mapsto& \left(y_1=\frac{1}{4}(z-z')^2, z_1=\frac{1}{2}(z+z')\right)\\
V_2=(U_2\times U_2)/\mathbb{Z}^2 &\rightarrow& \mathbb{C}^2\\
(z, z') &\mapsto&\left(y_2=\frac{1}{4}(z^{-1}-z'^{-1})^2, z_2=\frac{1}{2}(z^{-1}+z'^{-1})\right)
\end{array}
\]
The transition function over $V_1\cap V_2$ is given by:
\[
y_2=\frac{y_1}{(z_1^2-y_1)^2}=\frac{y_1}{z_1^4}+\frac{2y_1^2}{z_1^6}+R_3, \quad z_2=\frac{z_1}{z_1^2-y_1}=\frac{1}{z_1}+\frac{y_1}{z_1^3}+R_2.
\]
So this atlas is a $0$-comfortable one. The associated $\theta_1\in H^1(D_2, N_{D_2})(-1)$ is represented by
\[
(\theta_1)_{21}=\frac{2w_1^2}{z_1^6}\frac{\partial}{\partial w_2}+\frac{w_1}{z_1^3}\frac{\partial}{\partial z_2} \in H^0(\hat{U}_1\cap \hat{U}_2, \Theta_{N_{D_2}})
\]
where $w_i$ are fiber variables of $N_{D_2}\cong \mathcal{O}_{\mathbb{P}^1}(4)$ satisfying $w_2=z_1^{-4}w_1$. So we have 
\[
(\mathfrak{g}_1)_{21}=\left(\mathfrak{T}_1(\theta_1)\right)_{21}=\frac{[w_1]_2}{z_1^3}\frac{\partial}{\partial z_2}\in H^0(U_1\cap U_2, \Theta_{D_2}\otimes\mathcal{I}_{D_2}/\mathcal{I}_{D_2}^2).
\]
In the \v{C}ech cohomology $\check{H}^1(\{U_1, U_2\}, \Theta_{D_2}\otimes\mathcal{I}_{D_2}/\mathcal{I}_{D_2}^2)$, any coboundary can be represented by
\[
a(z_1)[w_1]_2\frac{\partial}{\partial z_1}-b(z_2)[w_2]_2\frac{\partial}{\partial z_2}=\left(\frac{-a(z_1)}{z_1^2}-\frac{b(z_1^{-1})}{z_1^4}\right)[w_1]_2\frac{\partial}{\partial z_2}.
\]
Since $a(z_1)$ (resp. $b(z_1^{-1})$) is analytic in $z_1$ (resp. $z_1^{-1}$), the term in the bracket of the right hand side can not contain any $z_1^{-3}$-term.
So we see that $H^1(D_2, \Theta_{D_2}\otimes \mathcal{I}_{D_2}/\mathcal{I}_{D_2}^2)\cong H^1(\mathbb{P}^1, \mathcal{O}_{\mathbb{P}^1}(-2))\cong\mathbb{C}$ is generated by
$\mathfrak{g}_1\neq 0$. Because $\mathfrak{g}_1$ is the obstruction to 1-splitting (Proposition \ref{obsplit}), we obtain that the embedding is not even 1-splitting and hence not 1-linearizable. In this case, $\mX_0=\bar{C}(\mathbb{P}^1, \mathcal{O}_{\mathbb{P}^1}(4))\cong\mathbb{P}(1,1,4)$. So by Proposition 
\ref{propamp}, we obtain the following
result.
\begin{prop}\label{P2D2}
$D_2=\{Z_0^2+Z_1^2+Z_2^2=0\} \hookrightarrow \mathbb{P}^2$ is 0-linearizable. The contracted deformation to normal cone associated to $(\mathbb{P}^2, D_2)$ degenerates $\mathbb{P}^2$ to $\mathbb{P}(1,1,4)$. The deformation weight $w(X, D)$ is equal to $-1$. 
\end{prop}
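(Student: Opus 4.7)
The plan is to mimic the analysis of the diagonal embedding $\Delta(\mathbb{P}^1)\hookrightarrow \mathbb{P}^1\times\mathbb{P}^1$ already carried out in this subsection, using the $2$-fold branched covering $p_2: \mathbb{P}^1\times\mathbb{P}^1\to \mathbb{P}^2$ with branch divisor $\Delta(\mathbb{P}^1)\mapsto D_2$ to transport that computation to the present situation. Every smooth submanifold is tautologically $0$-linearizable (the $0$-th infinitesimal neighborhood is just $D_2$), so that statement is immediate. The core of the proposition is the identification $\mX_0\cong \mathbb{P}(1,1,4)$ and the equality $w(X,D)=-1$.

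First I would establish the identification of the central fiber. Since $D_2$ is a smooth conic, $D_2\cong \mathbb{P}^1$ and $N_{D_2}=\mathcal{O}_{\mathbb{P}^2}(D_2)|_{D_2}=\mathcal{O}_{\mathbb{P}^2}(2)|_{D_2}$ has degree $2\cdot 2=4$, so $N_{D_2}\cong \mathcal{O}_{\mathbb{P}^1}(4)$ and $\overline{C}(D_2, N_{D_2})=\mathbb{P}(1,1,4)$. The normality of $\mX_0$ from Lemma \ref{normalem} holds because $H^1(\mathbb{P}^2,\mathcal{O}(2(m-1)))=0$ for every $m\ge 1$ by Kodaira vanishing on the Fano manifold $\mathbb{P}^2$, so the contracted deformation-to-the-normal-cone degenerates $\mathbb{P}^2$ to $\mathbb{P}(1,1,4)$.

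The key step is to compute the reduced Kodaira–Spencer class and verify its nonvanishing. Using the $\mathbb{Z}_2$-quotient atlas $\{V_1,V_2\}$ constructed in the excerpt, the transition functions read $y_2=y_1/z_1^4+2y_1^2/z_1^6+R_3$ and $z_2=1/z_1+y_1/z_1^3+R_2$, so the atlas is manifestly of the $(k-1)=0$-comfortable form \eqref{cmftcoord}. Reading off $\theta_1\in H^1(N_{D_2},\Theta_{N_{D_2}})(-1)$ from \eqref{kobs}, then applying the map $\mathfrak{T}_1$ of Lemma \ref{basexact}, which restricts to $D_2$ and projects onto the $\Theta_{D_2}$-summand of the split sequence $\Theta_{N_{D_2}}|_{D_2}=\Theta_{D_2}\oplus N_{D_2}$, kills the $\partial_{w_2}$ component and yields the representative
\[
(\mathfrak{g}_1)_{21}=\frac{[w_1]_2}{z_1^3}\,\frac{\partial}{\partial z_2}\in H^0(U_1\cap U_2,\Theta_{D_2}\otimes \mathcal{I}_{D_2}/\mathcal{I}_{D_2}^2).
\]
By Theorem \ref{obsplit}, $\mathfrak{g}_1$ is precisely the obstruction to $1$-splitting with respect to the lifting $\rho_0$ induced by our atlas.

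The main obstacle, and last calculation, is then to show $\mathfrak{g}_1\neq 0$ in the relevant \v{C}ech cohomology group, which is identified with $H^1(\mathbb{P}^1,\Theta_{\mathbb{P}^1}\otimes \mathcal{O}_{\mathbb{P}^1}(-4))=H^1(\mathbb{P}^1,\mathcal{O}(-2))\cong\mathbb{C}$. Any coboundary in the cover $\{U_1,U_2\}$ has the form $a(z_1)[w_1]_2\,\partial_{z_1}-b(z_1^{-1})[w_2]_2\,\partial_{z_2}$ with $a,b$ analytic in their arguments; converting to the $\partial_{z_2}$ basis via $\partial_{z_1}=-z_1^{-2}\partial_{z_2}$ and $[w_2]_2=z_1^{-4}[w_1]_2$, the resulting scalar is a Laurent series in $z_1$ with no $z_1^{-3}$ term, whereas $(\mathfrak{g}_1)_{21}$ has coefficient $z_1^{-3}$. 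Hence $\mathfrak{g}_1$ generates $H^1(D_2,\Theta_{D_2}\otimes\mathcal{I}_{D_2}/\mathcal{I}_{D_2}^2)$ and the embedding is not $1$-splitting, hence not $1$-linearizable. By Proposition \ref{propamp} (or equivalently the exact sequence of Remark \ref{remexacto}, since $\mathfrak{T}_1^\circ(\vartheta^{(1)})=\mathfrak{g}_1\neq 0$), the reduced Kodaira–Spencer class $\vartheta^{(1)}\in \mathbf{T}^1_C(-1)$ is nonzero, giving $w(X,D)=-1$.
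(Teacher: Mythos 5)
Your proposal is correct and follows essentially the same route as the paper: the $\mathbb{Z}_2$-quotient atlas from the branched covering $p_2$, the identification of the $0$-comfortable transition functions, the extraction of $\mathfrak{g}_1=\mathfrak{T}_1(\theta_1)$, the Laurent-expansion argument showing no coboundary can produce the $z_1^{-3}$ term, and the final appeal to Proposition \ref{propamp}. The additional justifications you supply (the degree count $N_{D_2}\cong\mathcal{O}_{\mathbb{P}^1}(4)$ and normality of $\mX_0$ via Lemma \ref{normalem} and vanishing of $H^1(\mathbb{P}^2,\mathcal{O}(2(m-1)))$) are correct and consistent with the paper's general discussion in Section \ref{secamp}.
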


\section{Applications to AC K\"{a}hler metrics}\label{sec-ACK}

In the first subsection, we explicitly compute the data of rotationally symmetric K\"{a}hler cone metrics on the affine cone. We also compare the norms with respect to smooth metric (living on the projective cone) and norms with respect to the cone metric near the infinity divisor. This allows us to get the estimate in Proposition \ref{prop-cftmap}. In the second subsection, we combine this estimate with Conlon-Hein's estimates in \eqref{CHestlambda} to get Corollary \ref{cor-estTY}. We then calculate several examples to illustrate our results. In particular, we can indeed recover numerical quantities in examples of \cite{CH1}.

\subsection{Proof of Proposition \ref{prop-cftmap}}\label{sec-conemetric}
First we review the K\"{a}hler cone metric on $C(D,L)$ given by the special Calabi ansatz $
\omega_{0}=\sddbar h^{\delta}$. 
Then $\omega_0$ is a Riemannian cone metric on $C(D,L)$: 
\[
g=dr^2+r^2 g_Y,
\]
where $Y$ is the associated circle bundle over $D$. To see this, we consider the coordinate chart on $\mathbb{P}(L^{-1}\oplus \mathbb{C})$. Away from the infinity section $D_\infty$, we have coordinate chart given
by $(z,[\zeta_\alpha e_\alpha, 1])=(z, [e_\alpha, \zeta_\alpha^{-1}])=(z, [e_\alpha, \xi_\alpha])$.
Let 
$h=|e_\alpha|_h^2|\zeta_\alpha|^2=a_{\alpha-}(z)|\zeta_\alpha |^2=(a_{\alpha+}(z)|\xi_\alpha|^2)^{-1}$. For simplicity, we will denote $\zeta=\zeta_\alpha$, $\xi=\xi_\alpha$, $a=a_{\alpha-}=a_{\alpha+}^{-1}$. Then we can calculate:
\begin{equation}\label{}
\omega_{0}=\sddbar h^{\delta}=\delta h^{\delta}\omega_D+\delta^2 h^{\delta}\frac{\nabla\zeta\wedge\overline{\nabla\zeta}}{|\zeta|^2}
=\delta h^\delta\omega_D+\delta^2 h^{\delta}\frac{\nabla\xi\wedge \overline{\nabla\xi}}{|\xi|^2},
\end{equation}
where $\omega_D=\sddbar \log h$ is a smooth K\"{a}hler metric on $D$, and 
we have used vertical and horizontal frames:
\[
dz^i, \nabla \zeta=d\zeta+\zeta a^{-1}\partial a\quad \stackrel{dual}{\Longleftrightarrow}\quad \nabla_{z^i}=\frac{\partial}{\partial z^i}-a^{-1}\frac{\partial a}{\partial z^i}\zeta\frac{\partial}{\partial \zeta}, \frac{\partial}{\partial \zeta}.
\]
Under the $\{z,\xi\}$ coordinate, we have similarly:
\[
dz^i, \nabla\xi=d\xi-\xi a^{-1}\partial a=-\zeta^{-2}\nabla\zeta\quad \stackrel{dual}{\Longleftrightarrow} \quad \nabla_{z^i}=\frac{\partial}{\partial z^i}+ a^{-1}\frac{\partial a}{\partial z^i}\xi\frac{\partial}{\partial \xi}, \frac{\partial}{\partial\xi}=-\zeta^2\frac{\partial}{\partial\zeta}.
\]
To write the metric into a metric cone, we write $\zeta=\tilde{\rho} e^{i\theta}$. Then 
\[
\nabla\zeta=d\zeta+\zeta a^{-1}\partial a= e^{i\theta} (d\tilde{\rho}+i\tilde{\rho} d\theta+\tilde{\rho} a^{-1}\partial a)=e^{i\theta}(d\tilde{\rho}+i \tilde{\rho} (d\theta-i a^{-1}\partial a)).
\]
So if we let $r=h^{\delta/2}=(a(z)|\zeta|^2)^{\delta/2}$ and $\nabla\theta=d\theta-J a^{-1}d a$, then it's easy to verify that the corresponding metric tensor is given by:
\[
g_{\omega_0}=dr^2+ r^2 (\delta g_{\omega_D}+\delta^2 \nabla\theta\otimes \nabla\theta).
\]
Note that $\nabla\theta$ is nothing but the connection form on the unit $S^1$-bundle in $L^{-1}$. 
Now we compare the norm of tensors on $U=L\backslash D$ with respect to two metrics $\omega_0$ and $\tilde{\omega}_0$, where $\tilde{\omega}_0$ is any smooth K\"{a}hler metric on a neighborhood of $D$ in $L$. For example, we can take 
\[
\tilde{\omega}_0=\pi_L^*\omega_D+\epsilon \sddbar (a_{+}(z)|\xi|^2)
\] 
for small $\epsilon>0$. Suppose
$\Phi$ is a tensor of type $(p=p_h+p_v,q=q_h+q_v)$, i.e.
 \[
 \Phi\in (T_h^*X)^{\otimes p_h}\otimes (T_v^*X)^{\otimes p_v}\otimes (T_hX)^{\otimes q_h}\otimes (T_vX)^{\otimes q_v}.
 \]
 Then, by noticing $h^{\delta/2}\sim |\xi|^{-\delta}$, we have
\begin{equation}\label{difscaling}
\frac{|\Phi|_{\omega_0}}{|\Phi |_{\tilde{\omega}_0}}\sim |\xi|^{\delta p_h+ (\delta+1) p_v-\delta q_h-(\delta+1)q_v}.
\end{equation}
In particular, we get :
\begin{lem}\label{lem-tilomega}
If $\Phi$ is tensor of type $(1,1)$,  then 
\[
|\Phi^{h}_{v}|_{\omega_0}\sim |\Phi^{h}_v|_{\tilde{\omega}_0} |\xi|, \quad
|\Phi^{v}_{h}|_{\omega_0}\sim |\Phi^{v}_h|_{\tilde{\omega}_0} |\xi|^{-1}, \quad
|\Phi^{v}_{v}|_{\omega_0}\sim |\Phi^{v}_v|_{\tilde{\omega}_0}, \quad |\Phi^{h}_h|_{\omega_0}=|\Phi^{h}_h|_{\tilde{\omega}_0}.
\]
\end{lem}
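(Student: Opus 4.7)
The plan is to derive this as the $(p,q)=(1,1)$ specialization of the scaling formula \eqref{difscaling}, which itself reduces to understanding how $\omega_0$ and $\tilde{\omega}_0$ differ when decomposed into their horizontal and vertical pieces in the natural holomorphic framing near $\{\xi=0\}$. The key observation is that on the horizontal block $\omega_0$ is conformally larger than $\tilde{\omega}_0$ by the factor $h^\delta\sim |\xi|^{-2\delta}$, whereas on the vertical block it is larger by $h^\delta/|\xi|^2\sim |\xi|^{-2(\delta+1)}$; this extra $|\xi|^{-2}$ is exactly what breaks the isotropy between horizontal and vertical directions.

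Concretely, I would first record the basic scalings of frame (co)vectors. Using the explicit form $\omega_0=\delta h^\delta \omega_D+\delta^2 h^\delta\, \nabla\xi\wedge\overline{\nabla\xi}/|\xi|^2$ together with $h=a(z)|\xi|^{-2}$, and recalling that $\tilde{\omega}_0=\pi_L^*\omega_D+\epsilon\, \sddbar(a_+(z)|\xi|^2)$ is smooth and non-degenerate at $\xi=0$, taking square roots of the inverse metric one obtains
\begin{align*}
|dz^i|_{\omega_0} &\sim |\xi|^{\delta}\,|dz^i|_{\tilde{\omega}_0}, & |\partial_{z^i}|_{\omega_0} &\sim |\xi|^{-\delta}\,|\partial_{z^i}|_{\tilde{\omega}_0}, \\
|d\xi|_{\omega_0} &\sim |\xi|^{\delta+1}\,|d\xi|_{\tilde{\omega}_0}, & |\partial_{\xi}|_{\omega_0} &\sim |\xi|^{-(\delta+1)}\,|\partial_\xi|_{\tilde{\omega}_0}.
\end{align*}
Here one uses $\nabla\xi$ as the vertical coframe; the difference between $d\xi$ and $\nabla\xi$ is a horizontal correction of lower order as $\xi\to 0$, so up to bounded factors the two coframes are interchangeable.

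To finish, note that any $(1,1)$-tensor splits as $\Phi=\Phi^h_v+\Phi^v_h+\Phi^v_v+\Phi^h_h$ with each summand of the form $\phi^a_b\, dw^b\otimes\partial_{w^a}$, so its norm in either metric factors as the modulus of the coefficient times $|dw^b|$ times $|\partial_{w^a}|$. The ratio of the two norms on each summand is therefore the product of the two basic ratios just listed: for $\Phi^h_v$ we get $|\xi|^{\delta+1}\cdot|\xi|^{-\delta}=|\xi|$; for $\Phi^v_h$ we get $|\xi|^{\delta}\cdot|\xi|^{-(\delta+1)}=|\xi|^{-1}$; for $\Phi^v_v$ we get $|\xi|^{\delta+1}\cdot|\xi|^{-(\delta+1)}=1$; and for $\Phi^h_h$ we get $|\xi|^{\delta}\cdot|\xi|^{-\delta}=1$. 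There is no genuine analytic obstacle; the only care needed is keeping the index bookkeeping straight (tangent vs.\ cotangent, horizontal vs.\ vertical), and the equalities in the $\Phi^h_h$ and $\Phi^v_v$ cases reflect the fact that pure horizontal and pure vertical endomorphisms are scale-invariant under the anisotropic conformal change from $\tilde{\omega}_0$ to $\omega_0$.
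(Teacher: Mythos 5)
Your proof is correct and is essentially the paper's own argument: the paper obtains the lemma as the $(p_h,p_v,q_h,q_v)$ specialization of the scaling relation \eqref{difscaling}, which encodes exactly the frame-element ratios $|dz^i|_{\omega_0}\sim|\xi|^{\delta}$, $|d\xi|_{\omega_0}\sim|\xi|^{\delta+1}$ and their duals that you compute from the block-conformal factors $h^{\delta}$ and $h^{\delta}/|\xi|^2$. Your four products match the paper's exponents exactly, so there is nothing to add.
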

As a consequence, under the assumption that the embedding $D\hookrightarrow X$ is $(k-1)$-comfortable, we combine Lemma \ref{lem-tilomega} with estimates \eqref{eq-improvePhi} to get:
\begin{equation}\label{eq-C0Phi}
|\Phi|_{\omega_0}\le C_0 |\xi|^{k}\sim C_0 r^{-\frac{k}{\delta}}. 
\end{equation}

Next we compare the Christoffel symbols of the two metrics, which will be useful for converting the estimate of covariant derivatives with respect to $\omega_0$ to that with respect to $\tilde{\omega}_0$. See \eqref{covtopar}-\eqref{compdecay}. To simplify the calculation, we can choose the coordinate $\{z_\alpha^i\}$ on $D$ and holomorphic frame such that
\[
g^D_{i\bar{j}}(0)=\omega_D(\partial_{z_\alpha^i},\partial_{z_\alpha^j})(0)=\delta_{ij}, (\partial_{z_\alpha^k} g^D_{i\bar{j}})(0)=0; \quad (\partial_{z_\alpha^i} a)(0)=0, (\partial_{z_\alpha^i}\partial_{z_\alpha^j}a)(0)=0.
\]
Denote by the index 0 the coordinate corresponding to $\xi=\xi_\alpha$. Then the components of the metric tensor associated with $\omega_0$ are given by:
\[
g_{i\bar{j}}=\delta a^{\delta} |\xi|^{-2\delta}\delta_{ij}, g_{0\bar{0}}=\delta^2 a^{\delta}|\xi|^{-2(\delta+1)}, g_{0\bar{j}}=g_{j\bar{0}}=0. 
\]
So it's easy to calculate that:
\[
|dz_{\alpha}^i|_{\omega_0}=\delta^{-1/2}a^{-\delta/2}|\xi|^{\delta}\sim \frac{1}{|\xi|^{-\delta}},\quad |d\xi|_{\omega_0}=\delta^{-1}a^{-\delta/2}|\xi|^{(\delta+1)}\sim \frac{|\xi|}{|\xi|^{-\delta}}.
\]
\[
\Gamma_{ij}^k=\Gamma_{ij}^0=\Gamma_{i0}^0=\Gamma_{00}^i=0, \quad \Gamma_{i0}^j=-\frac{\delta}{\xi}\delta_{ij},\quad \Gamma_{00}^0=-\frac{\delta+1}{\xi}.
\]
In other words,
\[
\nabla \partial_{z_\alpha^i}=-\frac{\delta}{\xi}d\xi \otimes \partial_{z_\alpha^i}, \nabla \partial_\xi=-\frac{\delta+1}{\xi}dz_\alpha^i\otimes\partial_{z_\alpha^i}-\frac{\delta+1}{\xi}d\xi\otimes\partial_\xi.
\]
\[
\nabla dz_\alpha^i=-\frac{\delta}{\xi}(d\xi\otimes dz_\alpha^i+dz_\alpha^i\otimes d\xi),\quad \nabla d\xi=-\frac{\delta+1}{\xi}d\xi\otimes d\xi.
\]
So we see that
\begin{equation}\label{dernorm1}
|\nabla_{\omega_0} \partial_{z_\alpha^i}|_{\omega_0}\le C\sim \frac{|\partial_{z_\alpha^i}|_{\omega_0}}{|\xi|^{-\delta}}\sim \frac{|\partial_{z_\alpha^i}|_{\omega_0}}{r},\quad |\nabla_{\omega_0}\partial_{\xi}|_{\omega_0}\le C|\xi|^{-1}\sim \frac{|\partial_\xi|_{\omega_0}}{|\xi|^{-\delta}}\sim \frac{|\partial_{\xi}|_{\omega_0}}{r}.
\end{equation}
\begin{equation}\label{dernorm2}
|\nabla_{\omega_0} dz_\alpha^i|_{\omega_0}\le C|\xi|^{2\delta}\sim \frac{|dz_\alpha^i|_{\omega_0}}{|\xi|^{-\delta}}\sim \frac{|dz^i_\alpha|_{\omega_0}}{r},\quad |\nabla_{\omega_0} d\xi|_{\omega_0}\le C|\xi|^{1+2\delta}\sim \frac{|d\xi|_{\omega_0}}{|\xi|^{-\delta}}\sim \frac{|d\xi|_{\omega_0}}{r}.
\end{equation}
The above estimates imply that each time we take a covariant derivative with respect to $\omega_0$, we get an extra decay factor $|\xi|^{\delta}\sim r^{-1}$. So by induction which starts from \eqref{eq-C0Phi}, we get the wanted estimate in Proposition \ref{prop-cftmap}:
\begin{equation}
|\nabla^j_{\omega_0}\Phi|_{\omega_0}\le C_j |\xi|^{k+j\delta}\sim C_j r^{-\frac{k}{\delta}-j} \text{ for any } j\ge 0.
\end{equation}

\subsection{Asymptotical rates of Tian-Yau's Ricci-flat metrics}\label{sec-TY}
In this subsection, we explain how to get Corollary \ref{cor-estTY}. 
First we recall the Calabi-Yau cone metric on $C:=C(D, L)$ in the case when $K_D^{-1}=\mu L$ for $\mu>0$ and $D$ has a K\"{a}hler-Einstein metric $\omega_D=\omega_D^{\rm KE}$ such that 
$Ric(\omega_D^{{\rm KE}})=\mu\cdot \omega_D^{{\rm KE}}$. In this case, note that the Hermitian metric $h$ satisfies $\sddbar\log h=\omega_D^{{\rm KE}}$.
To find the Calabi-Yau cone metric, it's straightforward to calculate that:
\[
Ric(\omega_0)=-\sddbar\log\omega_0^n=(-n \delta+\mu)\pi_L^*\omega_D^{{\rm KE}}, 
\]
where $n=\dim D+1$. So we get the exponent for the Calabi-Yau cone metric:
\begin{equation}\label{Calabicone}
-K_D=\mu N_D\Longrightarrow \delta=\frac{\mu}{\dim D+1}.
\end{equation}
Now assume that $X$ is a Fano manifold of dimension $n$ 
and $D$ is a smooth divisor such that $-K_X\sim \alpha D$ with $\mathbb{Q}\ni\alpha>1$. By adjunction formula, we get $-K_D=-K_X|_D-[D]=(\alpha-1)[D]=(1-\alpha^{-1})K_X^{-1}$ is still ample, and so $D$ is also a Fano manifold. Assuming that $D$ has a K\"{a}hler-Einstein metric, Tian-Yau \cite{TiYa} constructed an Asymptotical Conical (AC) Calabi-Yau K\"{a}hler metric $\omega_{\rm TY}$ on $X\backslash D$ whose metric tangent cone at infinity is the conical Calabi-Yau metric on $C(D,N_D)$ discussed above with the exponent $\delta=\frac{\alpha-1}{n}$. 
More precisely, there is a diffeomorphism $\phi_K: C(D, N_D)\backslash B_R(\underline{o})  \rightarrow (X\backslash D)\backslash K$ such that
\begin{equation}\label{eq-TYdecay}
\|\nabla_{\omega_0}^j(\phi_K^*(\omega_{\rm TY})-\omega_0)\|_{C^{0}}\le C r^{-\lambda-j} \mbox{ for } j\ge 0.
\end{equation}
Here $K$ is a compact set in the noncompact manifold $M:=X\backslash D$ and $B_R(\underline{o})$ is the ball of radius $R$ around the vertex $\underline{o}$ of the metric cone.  
 
 A natural problem is to determine the optimal order (i.e. the number $\lambda$ in \eqref{eq-TYdecay}) of such AC Calabi-Yau metric. This issue was studied in detail in Cheeger-Tian \cite{ChTi} and Conlon-Hein (\cite{CH1}, \cite{CH2}). 
Conlon-Hein \cite{CH1} studied the estimates on solutions to the corresponding complex Monge-Amp\`{e}re equation for Calabi-Yau metrics. If we denote by $\mathfrak{k}$ is K\"{a}hler class represented by $\omega_{\rm TY}$, then their estimate of the optimal rate is as follows (see \cite{CH1}, and \cite[Remark 1.2]{CH3}):
\begin{equation}\label{CHestlambda}
\lambda_{\max}\ge 
\left\{
\begin{array}{ll}
\min (2n, \lambda_1), &\mbox{ if } \mathfrak{k}\in H^2_c(M); \\
\min (2, \lambda_1), &\mbox{ if } \mathfrak{k}\in H^2(M).
\end{array}
\right.
\end{equation}
Here $\lambda_1$ is any number satisfying the following condition: there exists a diffeomorphism $F_K: C(D, N_D)\backslash B_R(\underline{o})\rightarrow M\backslash K$ such that 
\begin{equation}\label{pbO1}
\|\nabla_{\omega_0}^{j}(F_K^*\Omega-\Omega_0)\|_{\omega_{0}}\le C r^{-\lambda_1-j} \mbox{ for any } j\ge 0,
\end{equation}
where $\Omega$ (resp. $\Omega_0$) is the multi-valued meromorphic volume form on $X$ (resp. $\ov{C}(D, N_D)$) that is non-vanishing holomorphic on $M=X\backslash D$ (resp. $C(D, N_D)$) and has pole of order $\alpha$ along $D$. Conlon-Hein \cite{CH1} also showed that the condition \eqref{pbO1} is equivalent to the following condition:
 \begin{equation}\label{pbJ1}
\|\nabla_{\omega_0}^{j}(F_K^*J-J_0)\|_{\omega_{0}}\le C r^{-\lambda_1-j} \mbox{ for any } j\ge 0,
\end{equation}
where $J$ (resp. $J_0$) is the complex structure on $M$ (resp. $C(D, N_D)$). So we see that $\lambda_1$ essentially measures the difference between the complex
structure of $M\backslash K$ and $C(D, N_D)\backslash B_R(\underline{o})$. Equivalently we are indeed comparing the complex structure on the (punctured) neighborhood of $D$ inside $X$ and the complex structure of (punctured) neighborhood of $D$ inside $N_D$. 

Now assuming $D$ is $(k-1)$-comfortably embedded, the diffeomorphism from Proposition \ref{prop-cftmap} (constructed in Section \ref{consdiff}) satisfies \eqref{pbJ1} with $\lambda_1=\frac{k}{\delta}$.
By the above discussion, we indeed get Corollary \ref{cor-estTY} by using the estimates of Conlon-Hein.
 
\begin{exmp}\label{P1P1rate}
\begin{enumerate}
\item
$(X, D)\cong (\mathbb{P}^1\times\mathbb{P}^1, \Delta(\mathbb{P}^1))$. In this case, $\omega_{\rm TY}$ coincides with the Eguchi-Hanson metric. $\alpha=2$, $n=2$, $\delta=(\alpha-1)/n=1/2$. By Proposition \ref{counterexample}, $D$ is $1$-comfortably embedded (and $1$-linearizable) so that $k=2$. So $\lambda=\frac{k}{\delta}=4$.
\item
$(X, D)\cong (\mathbb{P}^2, \{Z_0^2+Z_1^2+Z_2^2=0\})\cong (\bP^1\times\bP^1, \Delta(\bP^1))/\mathbb{Z}_2$. In this case, $\omega_{\rm TY}$ is the Euguchi-Hanson metric$/\mathbb{Z}_2$. $\alpha=\frac{3}{2}$, $n=2$, $\delta=(\alpha-1)/n=1/4$. By Proposition \ref{P2D2}, $D$ is $0$-comfortably embeded (and $0$-linearizable) so that $k=1$. So $\lambda=\frac{k}{\delta}=4$.
\end{enumerate}
\end{exmp}

\begin{exmp}\label{calexmp}
We consider Pinkham's construction of sweeping out the cone, see \cite[page 46]{Pink}.
Assume $D^{n-1}\subset \mathbb{P}^{N-1}$ is a smooth complete intersection:
\[
D=\bigcap_{i=1}^m\{F_i(Z_1,\cdots, Z_N)=0\}\subset\mathbb{P}^{N-1},
\]
where $m=N-n$ and $F_i(Z_1,\cdots, Z_{N})$ is a (generic) homogeneous polynomial of degree $d_i>0$. Denote the affine cone over $D$ in $\mathbb{C}^N$ and projective cone over $D$ inside $\mathbb{P}^N$ by
\[
C(D,H)=\bigcap_{i=1}^m \{F_i(z_1,\cdots, z_N)=0\}\subset \mathbb{C}^{N}.
\] 
\[
\ov{C}(D,H)=\bigcap_{i=1}^m \{F_i(Z_1,\cdots,Z_N)\}\subset\mathbb{P}^N.
\]
Notice that since we have assumed that $D$ is a complete intersection, 
it's then known that $D$ is projectively normal in $\mathbb{P}^{N-1}$ which implies that its projective cone inside $\mathbb{P}^N$ is normal and hence coincides with its normalization $\bar{C}(D, H)$. 

Now assume $G_i(Z_0, Z_1, \cdots, Z_N)$ is a generic homogeneous polynomial of degree $e_i$ with $e_i<d_i$ for each $i=1, \cdots, m$. In particular $G_i(1, z_1, \cdots, z_N)$ a polynomial of degree $e_i$. We construct a degeneration:
\begin{equation}\label{eq-exmX}
\mX=\bigcap_{i=1}^m\{F_i(Z_1,\cdots, Z_{N})+(t Z_{0})^{d_i-\deg G_i}G_i(t Z_0, Z_1,\cdots, Z_{N}) =0\}\subset \mathbb{P}^N\times \mathbb{C}.
\end{equation}
By the ``generic" assumption, $X=\mX_1$ is smooth. This degenerates the variety $X=\mX_1\subset \mathbb{P}^N$ to $\ov{C}(D,H)$. 
In fact, $\mX$ is a degeneration of $\mX_1$ generated by the one parameter subgroup of projective transformations:
\[
[Z_0,Z_1,\cdots, Z_N]\rightarrow [t^{-1} Z_0, Z_1,\cdots, Z_N].
\]
Away from $\{Z_0=0\}$, we have the deformation of $C(D,H)$:
\begin{equation}\label{eq-eqci}
\mX^{\circ}=\bigcap_{i=1}^m \{F_i(z_1,\cdots, z_N)+t^{d_i-\deg G_i} G_i(t, z_1,\cdots, z_N)=0\}\subset \mathbb{C}^N\times\mathbb{C}.
\end{equation}
From the Digression \ref{dig-DNC}, the degeneration $\mX$ coincides with the family obtained by first blowing $D\times \{0\}$ and $X\times \bC$ and then blowing down the strict transform of $X\times\{0\}$ as in the introduction. 
Now using the representation of $\cX$ in \eqref{eq-exmX},  we see that $\cX$ can be obtained by applying the above construction to the case $X=\cX_1$, $Y'=\bar{C}(X, H)\subset \bar{C}(\bP^{N}, H)=\bP^{N+1}$, and $D=\{Z_0=0\}\cap X$.
The coincidence of $\bar{C}(D, H)$ with the central fibre from the contracted deformation to the normal cone can also be verified directly by using Lemma \ref{normalem} and the projective normality of $D$. 

By adjunction formula, we know that $-K_{\mX_1}=(N+1-\sum_{i=1}^m d_i)H$ and $-K_{D}=(N-\sum_{i=1}^m d_i)H$.  Consider the hyperplane section $D=\mathcal{D}_1=\mX_1\cap \{Z_0=0\}\subset\mX_1$. Then if we assume $\sum_{i=1}^m d_i\le  N-1$, we are in the above Tian-Yau's setting with $\alpha:=N+1-\sum_{i=1}^m d_i\ge 2$. 


By Appendix \ref{appcn}, $\mathbf{T}^1_C$ can be calculated as a quotient ring. 
As in Example \ref{exmpclint}, consider the class
\[
[\mathcal{G}]:=\left[\sum_i G_i(1, z_1, \cdots, z_n)\right]\in \bigoplus_{i=1}^m \fT^1_C(-(d_i-e_i)),
\]
where $[\cdot]$ denotes the quotient morphism (see \eqref{eq-T1Cj}):
\[
H^0(U, N_U)\rightarrow \fT^1_C=\frac{H^0(U, N_U)}{H^0(U, \Theta_{\bC^N}|_U)}=\bigoplus_{j=-\infty}^{+\infty}\frac{\bigoplus_{i=1}^{m} H^0(D, (d_i+j) H)}{{\rm Jac}(H^0(D, (j+1)H)^{\oplus N})}.
\]
Notice the right-hand-side is actually finite dimensional (see \cite{Schl1, Artin}). Now if we assume that $[\mathcal{G}]$ in $\mathbf{T}^1_C$ is nonzero, then the reduced Kodaira-Spencer class $\KS^{\red}_{\cX^\circ}$ is the maximal weight piece of $[\mathcal{G}]$ 
and the weight of deformation $w(\cX^\circ)$ of ${\bf KS}^{\rm red}_{\mX^{\circ}/\bB}$ is equal to the weight of $[\mathcal{G}]$.

Without loss of generality we can assume $e_1>e_2>\cdots>e_m$ so that $\min_{i=1}^m\{d_i-e_i\}=d_1-e_1$. Then in general, $w:=w(\cX^\circ)\le -(d_1-e_1)$ which could be a strict inequality (see example item 3 of ordinary double point below). The equality holds if $[G_1]\neq 0\in \fT^1_C(-(d_1-e_1))$. 
If we assume furthermore that $n\ge 3$, then by Theorem \ref{thm-eqwt}, we know that the divisor $D$ is $(|w|-1)$-comfortably embedded into $X$ (but not $|w|$-comfortably embedded into $X$).

So by the above calculation, we see that the asymptotic rate 
of holomorphic form is given by 
\[
\lambda=\frac{|w|}{\delta}=\frac{n |w|}{\alpha-1}.
\]
If furthermore $e_i\le d_i-2$, then
\[
\lambda=\frac{|w|}{\delta}=\frac{n |w|}{\alpha-1}=\frac{n\cdot \min_{i=1}^m\{ d_i-e_i\} }{N-\sum_{i=1}^m d_i}.
\]
In this way, we can indeed give an algebraic interpretation of the corresponding calculations in \cite{CH1}.
\begin{enumerate}
\item (\cite[Example 1]{CH1}). Smoothing of the cubic cone:
\[
C=\left\{z\in\mathbb{C}^4; \sum_{i=1}^{4}z_i^3=0 \right\} \leadsto M=\left\{z\in\mathbb{C}^4; \sum_{i=1}^4 z_i^3=\sum_{i,j} a_{ij} z_iz_j+\sum_{k} a_k z_k+\epsilon\right\}.
\]
where $a_{ij}$, $a_i$, $\epsilon$ are small (generic) constants. We have 
\[
\mathbf{T}_C^1=\frac{\mathbb{C}[z_1,\cdots, z_4]}{\langle z_1^2,\cdots, z_4^2\rangle}=\bigoplus_{\nu=-3}^{1}{\bf T}^1_C(\nu).
\]
With the earlier notation, $G(Z_0, \cdots, Z_4)=\sum_{i,j}a_{ij}Z_iZ_j+\sum_{k}a_k Z_k Z_0+\epsilon Z_0^2$ with 
\[
[\mathcal{G}]=[\sum_{ij}a_{ij} z_iz_j+\sum_k a_k z_k+\epsilon]\in
\mathbf{T}^1_C(-1)+\mathbf{T}^1_C(-2)+\mathbf{T}^1_C(-3).
\]
Note that we assume $a_{ij}$, $a_k$ are generic if they are not zero. So we get
\[
\def\arraystretch{1.4}
\begin{array}{l|c|c|c}
&\KS^\red_{\cX^\circ}& w(\cX^\circ) & \lambda \\ 
\hline
a_{ij}=a_k=0 & [\sum_{i,j} a_{ij}z_iz_j] & -3 & \frac{3\cdot 3}{4-3}=9 \\
\hline
a_{ij}=0, a_k\neq 0 & [\sum_k a_k z_k] & -2 & \frac{3\cdot 2}{4-3}=6\\
\hline
a_{ij}\neq 0 & [\epsilon] & -1 & \frac{3\cdot 1}{4-3}=3
\end{array}
\]

\item (\cite[Example 2]{CH1}). Smoothing of the complete intersection:
\[
C=\left\{ z\in\mathbb{C}^5; f_1=\sum_{i=1}^5 z_i^2=0, f_2=\sum_{i=1}^5\eta_i z_i^2=0\right\}\leadsto M=\left\{z\in\mathbb{C}^5; f_1(z)=f_2(z)=\epsilon\right\}.
\]
Here $\eta_i$ are distinct complex numbers. We have:
\[
\mathbf{T}^1_C=\frac{\mathbb{C}[z_1,\cdots,z_5]^{\oplus 2}}{{\rm Im}
\left(\begin{array}{ccc}
z_1&\cdots &z_5\\
\eta_1 z_1&\cdots&\eta_5 z_5
\end{array}\right)
}={\bf T}_C^1(-2).
\]
Because the images of $\mathcal{G}=(-\epsilon,-\epsilon)$ is not zero inside $\mathbf{T}^1_C$, we have $
\lambda=\frac{3\cdot 2}{5-2-2}=6.
$
\item (\cite[Example 3]{CH1}). Smoothing of the ordinary double point:
\[
C=\left\{z\in\mathbb{C}^{n+1}; \sum_{i=1}^{n+1}z_i^2=0 \right\} \leadsto M=\left\{z\in\mathbb{C}^{n+1}; \sum_{i=1}^{n+1} z_i^2=\sum_{i=1}^{n+1}a_i z_i +\epsilon\right\}.
\]
\[
\mathbf{T}^1_C=\frac{\mathbb{C}[z_1,\cdots,z_{n+1}]}{\langle z_1,\cdots, z_{n+1}\rangle}={\bf T}^1_C(-2).
\]
$G(Z_0,\cdots, Z_{n+1})=\sum_{i=1}^{n+1} a_i Z_i+ \epsilon Z_0$. So $[G(1,z_1,\cdots, z_n)]=[\sum_{i=1}^{n+1}a_iz_i+\epsilon]=[\epsilon]$ is of weight $-2$. So we have $
\lambda=\frac{n\cdot 2}{n+1-2}=\frac{2n}{n-1}$. 

Note that if $n=2$, then $D\hookrightarrow X$ is isomorphic to $\Delta(\mathbb{P}^1)\hookrightarrow \mathbb{P}^1\times \mathbb{P}^1$ where $\Delta: \mathbb{P}^1\rightarrow \mathbb{P}^1\times\mathbb{P}^1$ is the diagonal embedding which was studied in Section \ref{comforem}. The identification is easily constructed:
\begin{eqnarray*}
(\mathbb{P}^1\times\mathbb{P}^1, \Delta(\mathbb{P}^1))&\longrightarrow& (X, D)=\left(\{Z_0^2+Z_1^2+Z_2^2+Z_3^2=0\}, \{Z_0=0\}\cap X\right)\\
([X_0,X_1], [Y_0, Y_1])&\mapsto& \scriptstyle{[X_0Y_1-X_1Y_0, \sqrt{-1}(X_0Y_1+X_1Y_0), (X_0Y_0+X_1Y_1),\sqrt{-1}(X_0Y_0-X_1Y_1)]}.
\end{eqnarray*}

\end{enumerate}
\end{exmp}

\begin{dig}\label{dig-DNC}
Here we recall an equivalent description of deformation to normal cone by using MacPherson's graph construction. Let $s_D$ denote the canonical holomorphic section of $L=L_D$ with $D=\{s_D=0\}$. We can identify $X$ with the graph of $s_D$ as a subvariety of $Y=\mathbb{P}(L\oplus \mathbb{C})$: $\mathcal{X}_1=\{(p, [s_D(p), 1]); p\in X\}$. We then use the natural $\mathbb{C}^*$-action on $Y$ to get a family of subvarieties of $Y$: $\mathcal{X}_t=\{p, [t^{-1} s_D(p), 1]; p\in X\}$. For $t\neq 0$, $\mX_t\cong X$. As $t\rightarrow 0$, $\mX_t$ converges to a subscheme $\tilde{\mX}_0$ of $Y$ which is nothing but the union of 
$X$ with $E$. Alternatively, there is a rational map
\begin{eqnarray*}
\Psi: X\times \bC \dashrightarrow \bP(L\oplus \bC), \quad (p, t)\mapsto (p, [t^{-1} s(p), 1])=(p, [s(p), t]).
\end{eqnarray*}
Notice the indeterminacy locus of $\Psi$ is exactly $D\times\{0\}=\{s=0\}\times \{0\}$. So $\tilde{\cX}=Bl_{S\times\{0\}}(X\times\bC)$ is the graph $\Gamma_\Psi$ of $\Psi$, i.e. the closure of the graph of $\Psi:( X\times \bC)\setminus (D\times \{0\})\rightarrow \bP(L\oplus\bC)$. 

Figure \ref{DNC} is an illustration of deformation to the normal cone using the graph construction ($S=D$). Notice that the two pairs of opposite sides of the boundary in the figure are glued according to the direction of arrows and the total space $\tilde{\cX}$ should be taken as the disjoint union of $\cX_t$ in the figure. See also \cite[Remark 5.1.1, Section 5.1]{Fult}. 
\begin{figure}[h]
\centering
\includegraphics[height=4.2cm]{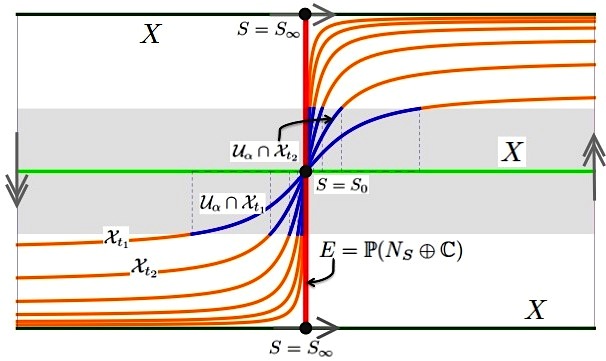}
\caption{Deformation to the normal cone: graph construction}\label{DNC}
\end{figure}
To get $\cX$ from $\tilde{\cX}$, we can use the similar construction, just by replacing $Y=\mathbb{P}(L\oplus\mathbb{C})$ by the projective cone $\bar{C}(X, L)=:Y'$ which is obtained from $Y$ by contracting the infinity divisor $X_\infty$. 

\end{dig}

\begin{rem} 
As pointed out by the referee, for the above examples of complete intersections the result of Theorem \ref{thm-eqwt} may not be surprising since we have explicit expressions:
\[
\mX_1=\bigcap_{i=1}^m \{F_i(Z_1,\cdots, Z_N)+Z_0^{d_i-e_i} G_i(Z_0, Z_1, \cdots, Z_N)=0\}\subset\mathbb{P}^N.
\] 
Noting that $|w|=\min_{i=1}^m\{d_i-e_i\}$, it's immediate that 
\[
\mathcal{O}_{\mX_1}/\mathcal{I}_D^{|w|}\cong\mathcal{O}_{\mX_0}/\mathcal{I}_D^{|w|},
\]
using the fact that $\mathcal{I}_D\left(U_{\{Z_i\neq 0\}\cap \mX_1}\right)=\left(\left\langle \frac{Z_0}{Z_i}\right\rangle+\mathcal{I}_{\mX_1}\right)/\mathcal{I}_{\mX_1}$. In other words, $(\mX_1, D)$ is $(|w|-1)$-linearizable. Then by Remark \ref{psdiv}, when $n\ge 3$, we know that $D$ is $(|w|-1)$-comfortably embedded. So we get $m(X, D)\ge |w|$. Note that the conclusion in Theorem \ref{thm-eqwt} is however stronger, saying that this is an equality for the more general case without using such explicit defining equations.
\end{rem}

\section{Analytic compactification }

In this section, we will prove Theorem \ref{thm-anacpt}. We will first sketch a proof following the strategy of the classical work of Newlander-Nirenberg in \cite{NeNi} that is modified to adapt to the setting of weighted spaces. Then we will write down the detailed estimates by imitating the corresponding estimates in \cite{NeNi}.

\subsection{Reduction of Theorem \ref{thm-anacpt} to Proposition \ref{prop-coordinates}}\label{sec-thmred}
We refer to section \ref{sec-conemetric} for the background. Denote $U=L\backslash D$. Denote the standard complex structure on $U$ by $J_0$. Assume that we have a complex structure $J$ on some neighborhood $U_\epsilon$ of $D$. Denote $\Phi=J-J_0$. 
We assume the index $v\in \{1,\overline{1}\}$ associates to the fiber variable $\xi=z^1_\alpha$, $h\in \{2,\dots, n, \overline{2}, \cdots, \overline{n}\}$ associates to the base variables $\{z^2_\alpha, \cdots, z^{n}_\alpha\}$. By abuse of notations, we decompose $\Phi$ into four types of components:
\begin{equation}\label{eq-J-J0}
\Phi=\Phi_{v}^{h}+\Phi_{h}^{v}+\Phi_{v}^{v}+\Phi_{h}^{h}=\phi_{v}^h d z^v\otimes \partial_{z^h}+\phi_{h}^{v}dz^h\otimes \partial_{z^v}+\phi_{v}^v dz^v\otimes \partial_{z^v}+\phi_{h}^{h}dz^h\otimes\partial_{z^h}.
\end{equation}
We assume $\Phi$ satisfies $|\nabla^j\Phi |_{\omega_0}\le C |r|^{-\lambda-j}\sim |\xi|^{\delta(\lambda+j)}$.  We first need to transform this estimate to the corresponding estimate with respect to $\tilde{\omega}_0$. For this, note that we know the basic tensors satisfy \eqref{dernorm1} and \eqref{dernorm2}. So we can equivalently assume $\Phi$ satisfies:
\begin{equation}\label{covtopar}
|(\partial_{z^v}^{j_1}\partial_{z^h}^{j_2}\Phi)\otimes (dz^v)^{\otimes j_1}\otimes (dz^h)^{\otimes j_2}|_{\omega_0}\le C|r|^{-\lambda-j}=C|\xi|^{\delta(\lambda+j)}.
\end{equation}
Recall the norm in Section \ref{sec-conemetric}:
\[
|dz^v|_{\omega_0}\le C|\xi|^{\delta+1}, |dz^h|_{\omega_0}\le C|\xi|^{\delta}\Longrightarrow |(dz^v)^{\otimes j_1}\otimes dz^h)^{\otimes j_2}|_{\omega_0}\le |\xi|^{j_1(\delta+1)+j_2 \delta}=|\xi|^{\delta j+j_1}.
\]
Also we have:
\[
|dz^v\otimes \partial_{z^h}|_{\omega_0}\le C|\xi|, |dz^h\otimes \partial_{z^v}|_{\omega_0}\le C|\xi|^{-1}, |dz^v\otimes \partial_{z^v}|_{\omega_0}\le C, |dz^h\otimes \partial_{z^h}|_{\omega_0}\le C.
\]
By these inequalities, it's easy to see that:
\begin{equation}\label{compdecay}
|\partial_{z^v}^{j_1}\partial_{z^h}^{j_2}\phi_v^h|\lesssim |\xi|^{\lambda \delta-1-j_1}, |\partial_{z^v}^{j_1}\partial_{z^h}^{j_2}\phi_h^v|\lesssim |\xi|^{\lambda\delta+1-j_1}, 
|\partial_{z^v}^{j_1}\partial_{z^h}^{j_2}\phi_v^v|\lesssim |\xi|^{\lambda \delta-j_1}, |\partial_{z^v}^{j_1}\partial_{z^h}^{j_2}\phi_h^h|\lesssim |\xi|^{\lambda\delta-j_1}.
\end{equation}

\begin{prop}\label{prop-coordinates}
Fix $\eta\in \mathbb{R}_{>0}\setminus\mathbb{N}$. Let $J_0$ denote the standard complex structure on $\bB^*\times \bB^{n-1}$. Assume that $J$ is an integrable almost complex structure on $\bB^*\times \bB^{n-1}$ and the tensor $\Phi=J-J_0$ is decomposed into four types of components:
\begin{equation}\label{eq-J-J0}
\Phi=J-J_0=\Phi_{v}^{h}+\Phi_{h}^{v}+\Phi_{v}^{v}+\Phi_{h}^{h}=\phi_{v}^h d z^v\otimes \partial_{z^h}+\phi_{h}^{v}dz^h\otimes \partial_{z^v}+\phi_{v}^v dz^v\otimes \partial_{z^v}+\phi_{h}^{h}dz^h\otimes\partial_{z^h},
\end{equation}
where the index $v\in \{1,\overline{1}\}$ is associated to the first variable $z^1$, and $h\in \{2,\dots, n, \overline{2}, \cdots, \overline{n}\}$ is associated to the variables $\{z^2, \cdots, z^{n}\}$.
Assume that there exists a constant $C$ such that
for any $j_1+j_2\le  2n+1$ 
and all $(z_1, z_2, \cdots, z_n)\in \bB^*\times\bB^{n-1}$ it holds:
\begin{equation}\label{eq-Phidecay}
|\partial_{z^v}^{j_1}\partial_{z^h}^{j_2}\phi_v^h|\le C |\xi|^{\eta-1-j_1}, |\partial_{z^v}^{j_1}\partial_{z^h}^{j_2}\phi_h^v|\le C |\xi|^{\eta+1-j_1}, 
|\partial_{z^v}^{j_1}\partial_{z^h}^{j_2}\phi_v^v|\le C |\xi|^{\eta-j_1}, |\partial_{z^v}^{j_1}\partial_{z^h}^{j_2}\phi_h^h|\le C |\xi|^{\eta-j_1}.
\end{equation}
Denote $m=\lceil \eta\rceil$. Then for sufficiently small $R>0$, there exist $J$-holomorphic coordinates $\zeta=(\zeta_1, \zeta_2, \cdots, \zeta_n): \bB^*_R\times \bB^{n-1}_R\rightarrow \bB^*_{2R}\times \bB^{n-1}_{2R}$ and a constant $C'$
such that for any $j_1+j_2\le 2n+1$ it holds:
\[
\left|\partial^{j_1}_{z^v}\partial^{j_2}_{z^h}\left( \zeta^1-z^1(\zeta)\right)\right|\le C' |\zeta^1|^{m+1-j_1}; \quad \left|\partial_{z^v}^{j_1}\partial_{z^h}^{j_2}(\zeta^k-z^k(\zeta))\right|\le C' |\zeta^1|^{m-j_1}, 2\le k\le n.
\]

\end{prop}

\begin{rem}\label{cmpHHN}
The result obtained here is a counterpart of \cite[Theorem 3.1]{HHN} in our different asymptotically conical setting.  
In the proof of \cite[Theorem 3.1]{HHN}, the authors used gauge fixing and used result of Nijenhuis-Woolf \cite{NiWo}. See \cite{CH3} for a different proof following similar argument as in \cite{HHN}.
We aim to give a more direct proof by following the fundamental work of Newlander-Nirenberg. One should also be able to adapt the work of Nijenhuis-Woolf \cite{NiWo}, Malgrange \cite{Malg} to the current setting to prove the compactification (extension) of the complex structures considered here. \end{rem}

\begin{rem}\label{rem-NNreg}
If we assume $\eta>1$, then the existence of such coordinates follows from the work of \cite{HiTa}. However even in this case, Proposition \ref{prop-coordinates} provides more information (weighted estimates), which is needed to read out the embedding order of the divisor at infinity. 
\end{rem}

In the remainder of Section \ref{sec-thmred}, we will sketch the proof of Proposition \ref{prop-coordinates} and show how Theorem \ref{thm-anacpt} follows from it. Section \ref{sec-NNproof} contains the technical details of the proof of Proposition \ref{prop-coordinates}.

The $(0,1)$ vector under the new complex structure $J$ is given by 
\[
\frac{1}{2}(1+\sqrt{-1} J)\frac{\partial}{\partial \ov{z}^i}=\frac{\partial }{\partial \ov{z}^i}+\frac{\sqrt{-1}}{2}\phi_{\ov{i}}^{\bar{j}}\frac{\partial}{\partial\ov{z}^j}+\frac{\sqrt{-1}}{2}\phi_{\ov{i}}^{k}\frac{\partial}{\partial z^k}.
\]
Denote $\eta=\lambda\delta$ and $\rho=|\xi|=|z^1|$. Then from \eqref{compdecay}, we can write:
\begin{equation}\label{phiord}
\left(\phi_{\ov{i}}^{\ov{j}}\right)= \left(
\begin{array}{cc}
O(\rho^{\eta})_{1\times 1}& O(\rho^{\eta+1})_{1\times (n-1)}\\
O(\rho^{\eta-1})_{(n-1)\times 1}& O(\rho^{\eta})_{(n-1)\times (n-1)}
\end{array}
\right).
\end{equation}
We have the same order estimates for $\left(\phi^k_{\ov{i}}\right)$.
When $\rho$ is sufficiently small, the matrix $\left(\delta_{\ov{i}}^{\ov{j}}+\frac{\sqrt{-1}}{2}\phi^{\ov{j}}_{\ov{i}}\right)$ is invertible. It's easy to get the order estimates: 
\begin{equation}\label{invertible}
\left(a^{k}_{\ov{i}}\right):=-\left(\delta_{\ov{i}}^{\ov{j}}+\frac{\sqrt{-1}}{2}\phi^{\ov{j}}_{\ov{i}}\right)^{-1}\left(\frac{\sqrt{-1}}{2}\phi^{k}_{\ov{j}}\right)= \left(
\begin{array}{cc}
O(\rho^{\eta})_{1\times 1}& O(\rho^{\eta+1})_{1\times (n-1)}\\
O(\rho^{\eta-1})_{(n-1)\times 1}& O(\rho^{\eta})_{(n-1)\times (n-1)}
\end{array}
\right).
\end{equation}

To get an analytic compactification of the complex structure $J$, we want to solve for a map $z: \bB_R^n\rightarrow \bB_{2R}^n\subset\mathbb{C}^n$ where $\bB_R^n=\{(\zeta^1,\cdots, \zeta^n)\in\mathbb{C}^n; |\zeta^j|\le R\}$, such that $z$ is a homeomorphism onto the image and is holomorphic with respect to $J_0$ and $J$.  For the map $z$ to be holomorphic, $dz(\partial/\partial \bar{\zeta}^l)$ should be
a $(0,1)$-vector for any $l\ge 1$. It's easy to see that $z^i=z^i(\zeta)$ must solve the following equations:
\begin{equation}\label{zzetaeq}
\frac{\partial z^i}{\partial \ov{\zeta}^l}+\sum_{p=1}^n a^{i}_{\ov{p}}(z)\frac{\partial \ov{z}^p}{\partial\ov{\zeta}^l}=0,\quad i, l=1,\dots, n.
\end{equation}

We first recall the important homotopy operator in \cite{NeNi}. For a vector of $n$ complex-valued functions $F=(f_{\ov{1}},\cdots, f_{\ov{n}})$, denote (\cite[(2.5)]{NeNi}):
\[
\mathbb{T}F=\sum_{s=0}^{n-1}\frac{(-1)^s}{(s+1)!}\sum{}'\;  T^{j_1}\ov{\partial}_{j_1}\dots T^{j_s}\ov{\partial}_{j_s}\cdot T^k f_{\ov{k}}.
\]
where $\sum{}'$ denote the summation over all $(s+1)$-tuples with $j_1,\cdots, j_s, k$ distinct, and
\begin{eqnarray*}
T^{1}f(\zeta)&=&\frac{1}{2\pi i}\iint_{0<|\tau|<R}\frac{f(\tau,\zeta^2,\cdots,\zeta^n)}{\zeta^1-\tau}d\tau d\bar{\tau},\\
T^{j} f(\zeta)&=&\frac{1}{2\pi i}\iint_{|\tau|<R}\frac{f(\zeta^1,\cdots, \zeta^{j-1}, \tau, \zeta^j, \cdots, \zeta^n)}{\zeta^j-\tau}d\tau d\bar{\tau}, \mbox{ for } j\ge 2.
\end{eqnarray*}
For fit our setting, we need to modify $T^1$. First choose $N=\lceil \eta\rceil$. Then we define (see \eqref{eq-tildeT} and Lemma \ref{toresch}):
\begin{eqnarray*}
\tilde{T}^1 f(\zeta)&=&T^1 f(\zeta^1,\zeta^2,\cdots,\zeta^n)-T^1 f(0,\zeta^2,\cdots, \zeta^n)-\sum_{k=1}^{N-1} (T^1f)^{(k)}(0, \zeta^2, \cdots, \zeta^n)\frac{\zeta^k}{k!},\\
\tilde{T}^j f(\zeta)&=&T^jf(\zeta), \mbox{ if } j\ge 2.
\end{eqnarray*}

Then by Lemma \ref{toresch} and Lemma \ref{potential}, these operators are well defined for functions $f$ such that $f\sim O(|\zeta^1|^{\eta-1})$ and satisfy (see \cite[page 775]{Chern}) the following identities on $\bB_R^*\times\bB_R^{n-1}$:
\begin{equation}\label{commute}
\ov{\partial}_j \tilde{T}^j f= f, j=1, \cdots, n; \quad \mbox{ and } \quad \ov{\partial}_j\tilde{T}^k f=\tilde{T}^k\ov{\partial}_j f, \mbox{ for } j\neq k.
\end{equation}
Then we define 
\[
 \widetilde{\mathbb{T}}F(\zeta)=\sum_{s=0}^{n-1}\frac{(-1)^s}{(s+1)!}\sum{}'\;  \tilde{T}^{j_1}\ov{\partial}_{j_1}\dots \tilde{T}^{j_s}\ov{\partial}_{j_s}\cdot \tilde{T}^k f_{\ov{k}}.
\]
Then using relation \eqref{commute} to manipulate, we can easily get the following formula which is a variation of the formula in cf. \cite[2.6]{NeNi} by replacing the operator $T^j$ by $\tilde{T}^j$.
\begin{equation}\label{partialrel}
\ov{\partial}_{j}\tilde{\mathbb{T}}F-f_{\ov{j}}=\sum_{s=0}^{n-2}\frac{(-1)^s}{(s+2)!}\sum{}^{j} \; \tilde{T}^{j_1}\ov{\partial}_{j_1}\cdots \tilde{T}^{j_s}\ov{\partial}_{j_s}\cdot \tilde{T}^k(\ov{\partial}_j f_{\ov{k}}-\ov{\partial}_k f_{\ov{j}}).
\end{equation}
where $\sum{}^j$ denotes the summation over all $(s+1)$-tuples with $j_1, \dots, j_s, k$ distinct and different from $j$. From \eqref{zzetaeq}, we will denote
\begin{equation}\label{dbaright}
f^{i}_{\ov{l}}=-\sum_{p=1}^n a^i_{\ov{p}}(z)\frac{\partial\ov{z}^p}{\partial\ov{\zeta}^l}, \quad F^i=(f^i_{\ov{1}}, f^i_{\ov{2}},\dots, f^i_{\ov{n}})=\sum_{l=1}^n f^i_{\ov{l}}d\ov{\zeta}^l.
\end{equation}
Denote also $\mathfrak{z}^i(\zeta)=z^i(\zeta)-\zeta^i$. We then want to transform equations \eqref{zzetaeq} into:
\begin{equation}\label{fixedpteq}
z^i=\zeta^i+\widetilde{\mathbb{T}}(F^i(z))\Longleftrightarrow \mathfrak{z}^i=\widetilde{\mathbb{T}}(F^i(\zeta+\mathfrak{z})) \Longleftrightarrow \mathfrak{z}=\mathfrak{J}[\mathfrak{z}].
\end{equation}
We will show in Lemma \ref{actsol} that the solution to this equation with the appropriate control is indeed the solution to \eqref{zzetaeq}. To get solutions to the system \eqref{zzetaeq} with required order estimates, we would like to prescribe asymptotically behaviors:
\begin{equation}\label{asymp1}
z^1=\zeta^1+O(\rho^{1+\eta}),\; z^j=\zeta^j+O(\rho^{\eta})\;\;
\Longleftrightarrow\;\;
\mathfrak{z}^1=O(\rho^{1+\eta}),\; \mathfrak{z}^k = O(\rho^{\eta}).
\end{equation}
Here and in the following, we still denote $\rho=|\zeta^1|$ since $|\zeta^1|$ and $|z^1|$ is comparable with this prescription. If we denote $h$ the index $\{2,\cdots, n\}$, 
then the precise meaning of \eqref{asymp1} is the following
\begin{equation}\label{eq-usualHolder}
\left|\partial_{\zeta^1}^{l_1}\partial_{\zeta^h}^{l_2} (z^1-\zeta^1)\right|\le C(l_1,l_2) \left|\zeta^1\right|^{1+\eta-l_1},\quad \left|\partial_{\zeta^1}^{l_1}\partial_{\zeta^h}^{l_2}(z^h-\zeta^h)\right|\le C(l_1,l_2)\left|\zeta^1\right|^{\eta-l_1}, 
\mbox{ for all } l_1, l_2\ge 0.
\end{equation}
However to carry out the argument in \cite{NeNi}, we need first define the space of functions which have only ``mixed" higher order derivatives. So we will first consider the functions $\{z^i; i=1,\dots, n\}$ satisfying:
\begin{equation}\label{mixasymp1}
z^1=\zeta^1+\tilde{O}(\rho^{1+\eta}),\; z^j=\zeta^j+\tilde{O}(\rho^{\eta})\;\;
\Longleftrightarrow\;\;
\mathfrak{z}^1=\tilde{O}(\rho^{1+\eta}),\; \mathfrak{z}^k = \tilde{O}(\rho^{\eta}),
\end{equation}
which means the following estimates hold:
\begin{equation}
\left|\partial'^{l_1}_{\zeta^1}\partial'^{l_2}_{\zeta^h} (z^1-\zeta^1)\right|\le C(l_1,l_2) \left|\zeta^1\right|^{1+\eta-l_1},\quad \left|\partial'^{l_1}_{\zeta^1}\partial'^{l_2}_{\zeta^h}(z^h-\zeta^h)\right|\le C(l_1,l_2)\left|\zeta^1\right|^{\eta-l_1},
\end{equation}
where $\partial'$ means we don't allow repeated derivatives with respect to any single variable (see section \ref{sec-multvar}).

Under this prescription, by using \eqref{dbaright} and the asymptotic behavior of $a^{i}_{\ov{p}}$, we first show (Lemma \ref{actF}) that
\begin{equation}\label{eq-rough}
\begin{array}{l}
(f^1_{\bar{1}}, f^1_{\ov{m}})
= (\tilde{O}(\rho^\eta+\rho^{2\eta}), \tilde{O}(\rho^{2\eta+1}+\rho^{\eta+1}) 
= (\tilde{O}(\rho^\eta), \tilde{O}(\rho^{\eta+1}), \\
(f^j_{\bar{1}}, f^j_{\ov{m}})
= (\tilde{O}(\rho^{\eta-1}+\rho^{2\eta-1}), \tilde{O}(\rho^{2\eta}+\rho^{\eta}))= (\tilde{O}(\rho^{\eta-1}), \tilde{O}(\rho^{\eta})).
\end{array}
\end{equation}
Then we show that (Lemma \ref{actTT}):
\[
\widetilde{\mathbb{T}}[F^1] = \tilde{O}(\rho^{\eta+1}), \quad \widetilde{\mathbb{T}}[F^k] = \tilde{O}(\rho^{\eta}) \mbox{ for } k\ge 2.
\]
This is compatible with the prescription in \eqref{mixasymp1} and should allow us to use the arguments in \cite{NeNi} to solve the system \eqref{fixedpteq}. However, to use the contraction-iteration principle (see Lemma \ref{relax}), we have to relax asymptotic behaviors in \eqref{mixasymp1} a little bit by replacing $\eta$ by a $\nu$ satisfying  
\begin{equation}
0<\nu<\eta, \quad  \lceil \nu\rceil =\lceil \eta\rceil \quad \text{ and }\quad \nu\not\in \mathbb{N}.
\end{equation}
Although replacing $\eta$ by $\nu$ might seem a loss of derivative, we will gain this $\epsilon$ back using the analyticity of transition functions. 

More precisely, in the next subsection, we will introduce weighted multiple H\"{o}lder norm $\|\cdot\|_{n+n\alpha,(\nu+1,\nu)}$ and show in Theorem \ref{yasuo} that, for any $\fz$, $\tilde{\fz}$ satisfying that when $R$ is sufficiently small and
$\left\|\fz\right\|_{n+n\alpha,(\nu+1,\nu)}\le R$, $\left\|\tilde{\fz}\right\|_{n+n\alpha,(\nu+1,\nu)}\le R$, then the following estimates hold:
\begin{enumerate}
\item
\begin{equation}\label{yasuo1}
\left\|\mathfrak{J}[\mathfrak{z}]\right\|_{n+n\alpha,(\nu+1,\nu)}\le R.
\end{equation}
\item
\begin{equation}\label{yasuo2}
\left\|\mathfrak{J}[\mathfrak{z}]-\mathfrak{J}[\tilde{\mathfrak{z}}]\right\|_{n+n\alpha, (\nu+1,\nu)}\le \frac{1}{2}\left\|\mathfrak{z}-\tilde{\mathfrak{z}}\right\|_{n+n\alpha,(\nu+1,\nu)}.
\end{equation}
\end{enumerate}
By standard iteration, there is a unique solution to the system \eqref{fixedpteq} such that:
\begin{equation}\label{asymp2}
\mathfrak{z}^1= \tilde{O}(\rho^{1+\nu}),\; \mathfrak{z}^j= \tilde{O}(\rho^{\nu}), \mbox{ or equivalently }
z^1=\zeta^1+\tilde{O}(\rho^{1+\nu}),\; z^j=\zeta^j+\tilde{O}(\rho^{\nu}).
\end{equation}

In the following $\bB_R=\{\zeta\in\mathbb{C}; |\zeta|\le R\}$ denotes the closed disc of radius R with center 0, and
$\bD_R^{*}=\{\zeta\in\mathbb{C}; 0<|\zeta|\le R\}$ denotes the punctured closed disc. We need to show that the map $\zeta\mapsto z$ gives a coordinate chart for $\zeta\in\bD_R^n$ when $R$ is sufficiently small.  First note that $\{z^i(\zeta)\}$ is identity for $\zeta^1=0$ and is H\"{o}lder continuous on $\{\zeta^1=0\}$.  Secondly on $\mathbb{U}_R=\bD_R^*\times\bD_R^{n-1}$, consider the Jacobian
\[
\mathbb{J}=\left(
\frac{\partial (z^i, \ov{z}^i)}{\partial (\zeta^j, \ov{\zeta}^j)}
\right).
\]
By the similar argument for obtaining \eqref{invertible}, it's easy to see that $\mathbb{J}$ is invertible if $R$ is very small. So on $\mathbb{U}_R$, $\zeta\mapsto z$ is a local diffeomorphism to its image. We just need to show that it's an injective map and hence a homeomorphism. 

To do this, we decompose the coordinate change in \eqref{asymp1} into two steps. First we let
\begin{equation}
y^1=z^1(\zeta)=\zeta^1+\tilde{O}(|\zeta^1|^{1+\nu}),\quad y^k=\zeta^k \mbox{ for } k\ge 2.
\end{equation}
Since the Jacobian matrix is invertible and $C^{\nu}$, the map is a $C^{1,\nu}$-diffeomorphism and is clearly a change of coordinates. We can express $\zeta$ in terms of $y$ to get:
\[
\zeta^1=y^1+\tilde{O}(|y^1|^{1+\nu}),\quad \zeta^k=y^k \mbox{ for } k\ge 2.
\]
Now we can write the map in \eqref{asymp1} as:
\[
z^1=y^1, \quad z^k=y^k+\tilde{O}(|y^1|^\nu) \mbox{ for } k\ge 2.
\]
We just need to show this is injective. We assume $z(y)=z(\tilde{y})$. Then 
$y^1=\tilde{y}^1$, and $z^j(y)=z^j(\tilde{y})$. On the slice $y^1=\tilde{y}^1$, we connect $y$ and $\tilde{y}$ by $y_t=(1-t)y+t\tilde{y}$, then we have
\begin{eqnarray*}
0=\|z(\tilde{y})-z(y)\|&=&\sum_{j=1}^n \left|\int_0^1 \sum_{k=1}^n (\partial_{y^k} z^j)(y_t)\cdot (\tilde{y}^k-y^k) dt\right|\\
&=&\sum_{j=2}^n \left|\int_0^1 \sum_{k=2}^n (\delta^{j}_{k}+\tilde{O}(|y^1|^{\nu}))(\tilde{y}^k-y^k) dt \right|\\
&\ge& C(1-R^\nu)\|\tilde{y}-y\|.
\end{eqnarray*}
So if $R$ is sufficiently small, we indeed have $\tilde{y}=y$. 

To get all higher order estimates for the functions as stated in Proposition \ref{prop-coordinates}, i.e.
\begin{equation}\label{asymp3}
\mathfrak{z}^1= O(\rho^{1+\nu}),\; \mathfrak{z}^j=O(\rho^{\nu}), \mbox{ or equivalently }
z^1=\zeta^1+O(\rho^{1+\nu}),\; z^j=\zeta^j+O(\rho^{\nu}),
\end{equation}
we need to apply similar arguments as in \cite[6]{NeNi}
involving regularity theorems for elliptic equations \eqref{zzetaeq}.
Since this part of argument is now standard, we will be brief and refer to \cite[6]{NeNi} for references on differentiability theorems. By \eqref{asymp2}, we know that $z^1,\dots, z^n$ are $C^{1+\alpha}_{1+\nu, \nu}$ functions of $\zeta^j, \bar{\zeta}^j$ under the weighted H\"{o}lder norm. Because \eqref{zzetaeq} is first order elliptic,  we infer that $z^k$ are $C^{2+\alpha}_{1+\nu,\nu}$ with respect to the variables $\zeta^j, \bar{\zeta}^j$. Combining this with the ``mixed" second derivatives from \eqref{asymp2}, we see that $z^k$ are of class $C^{2+\alpha}_{1+\nu, \nu}$ whose norm is defined using all derivatives (including repeated derivatives) with respect to $\zeta^1,\dots, \zeta^n$. The higher order regularity follows from differentiating equations \eqref{zzetaeq} and improving the derivatives by standard bootstrapping argument. See \cite{CH3} for a different proof of the higher order estimates using gauge fixing method and regularity theorems.

This completes the sketch of the proof of Proposition \ref{prop-coordinates}, we will now explain the comfortable order of the divisor in the last statement in Theorem \ref{thm-anacpt}. Note that the transition function on the bundle $N_D\rightarrow D$ in terms of $\{z_\alpha^i\}$ are standard ones:
\[
z_\beta^1=a_{\beta\alpha}(z'')z_\alpha^1, \quad z_\beta^k=\phi_{\beta\alpha}^k(z_\alpha'') \mbox{ for } k\ge 2.
\]
By the asymptotical behavior \eqref{asymp3} and its inverse, we see that the transition functions in the $\zeta$-coordinates have the shape:
\[
\zeta_\beta^1=a_{\beta\alpha}(\zeta_\alpha'')\zeta_\alpha^1+O(|\zeta_\alpha^1|^{\nu+1}),\quad \zeta_\beta^k=\phi^k_{\beta\alpha}(\zeta_\alpha'')+O(|\zeta_\alpha^1|^{\nu}).
\]
We know that $\zeta_\beta^i$, for any $1\le i\le n$, is a holomorphic function of $\zeta_\alpha$ outside $D$, and from above expressions it's H\"{o}lder continuous across $D=\{\zeta_\alpha^1=0\}$. So we see that $\zeta_\beta^i$ is holomorphic across $D$ and hence is a holomorphic function of $\zeta_\alpha$. Denote $m=\lceil \nu \rceil=\lceil \eta\rceil=\lceil \lambda\delta \rceil$ (Recall that $\eta=\lambda\delta$ and $\nu=\eta-\epsilon$ for small $\epsilon$). Then the analyticity of holomorphic functions clearly implies that we must have the following improved transition:
\[
\zeta_\beta^1=a_{\beta\alpha}(\zeta_\alpha'')\zeta_\alpha^1+R_{m+1}^1,\quad \zeta_\beta^k=\phi^k_{\beta\alpha}(\zeta_\alpha'')+R_{m}^k,
\]
where $R_{m+1}^1\in \mI_D^{m+1}$, $R_{m}^k\in \mI_D^{m}$, where $\mI_D$ is the ideal sheaf of $D$ generated by $\{\zeta_\alpha^1\}$. By Theorem \ref{coordinate} (see also \eqref{cmftcoord}),  we see that in the compactification, the divisor $D$ is indeed $(m-1)$-comfortably embedded. In this way, we prove theorem \ref{thm-anacpt}.

\subsection{Estimates for the proof of Proposition \ref{prop-coordinates}}\label{sec-NNproof}

Suppose $f$ is a complex-valued function defined on $\bD_R^*\times \bD_R^{n-1}$. Denote $D_j$ either of the differential operators $\frac{\partial}{\partial\zeta^j}$, $\frac{\partial}{\partial\ov{\zeta}^j}$. $D^k$ will denote a general $k$-th order derivative $D^k=D_{i_1}\dots D_{i_k}$ with $i_1,\dots, i_k$ distinct (i.e. we consider only ``mixed'' derivatives). $D^{k,j}=D_{i_1}\dots D_{i_k}$ (resp. $D^{k,\{1,j\}}$) will denote such a derivative with the $i_1, \dots, i_k$ distinct and different from $j$ (resp. $\{1,j\}$). For a fixed positive $\alpha<1$, we denote the difference quotient operators:
\[
\delta_1 f=\frac{f(\tilde{\zeta}^1, \zeta^2, \cdots, \zeta^n)-f(\zeta^1, \zeta^2, \cdots, \zeta^n)}{|\tilde{\zeta}^1-\zeta^1|^{\alpha}} \mbox{ for } 0<|\zeta^1|\le R, 0<|\tilde{\zeta}^1|\le R, \zeta^1\neq \tilde{\zeta}^1.
\]
\[
\delta_i f=\frac{f(\zeta^1,\dots, \tilde{\zeta}^i,\dots,\zeta^n)-f(\zeta^1, \dots, \zeta^i, \dots, \zeta^n) }{|\tilde{\zeta}^i-\zeta^i|^{\alpha}} \mbox{ for } i>1, |\zeta^i|<R,  |\tilde{\zeta}^i|<R, \zeta^i\neq \tilde{\zeta}^i.
\]
Denote $\delta^m=\delta_{j_1}\cdots \delta_{j_m}$ for $0\le m\le n$ and $j_1, \dots, j_m$ distinct; $\delta^0$ will denote the identity operator; $\delta^{m,1}$ will denote such a difference quotient with $j_1,\dots, j_m$ distinct and different from 1. 
\subsubsection{Single-variable estimates}
The following is the standard Schauder estimate for the elliptic operator $\bar{\partial}$ for a single variable. 
\begin{lem}\label{stSchauder}
Assume $\alpha\in (0,1)$ is fixed. There exists a constant $c>0$ such that, if $w\in C^{1,\alpha}(\bD_1(0))$ satisfies $\frac{\partial w}{\partial\ov{\zeta}}=f$ in $\bD_1$ and if $f\in C^{0,\alpha}(\bD_1(0))$, then
\begin{equation}\label{Schauderest}
\|w\|_{C^{1,\alpha}(\bD_{1/2})}\le c\left(\|w\|_{L^{\infty}(\bD_1)}+\|f\|_{C^{0,\alpha}(\bD_1)}\right).
\end{equation}
\end{lem}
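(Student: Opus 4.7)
The plan is to combine the Cauchy--Pompeiu representation formula with the classical interior regularity theory of the Cauchy transform. Fix an intermediate radius, say $\rho=3/4$. For any $\zeta\in\mathbb{D}_\rho$ one has the identity
\[
w(\zeta)=\frac{1}{2\pi\sqrt{-1}}\oint_{|\tau|=\rho}\frac{w(\tau)}{\tau-\zeta}\,d\tau\;+\;\frac{1}{2\pi\sqrt{-1}}\iint_{|\tau|<\rho}\frac{f(\tau)}{\tau-\zeta}\,d\ov{\tau}\wedge d\tau\;=:\;h(\zeta)+(Tf)(\zeta),
\]
obtained by applying Stokes to the one-form $\frac{w(\tau)}{\tau-\zeta}\,d\tau$ on $\mathbb{D}_\rho\setminus B_\epsilon(\zeta)$ and letting $\epsilon\to 0$. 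Here $h$ is manifestly holomorphic on $|\zeta|<\rho$, and $Tf$ is (up to normalization) the Cauchy transform of $f|_{\mathbb{D}_\rho}$; it satisfies $\partial_{\ov{\zeta}}(Tf)=f$ in the interior.

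For the holomorphic piece $h$, differentiating under the integral sign on $\mathbb{D}_{1/2}$ (where ${\rm dist}(\zeta,\partial\mathbb{D}_\rho)\ge 1/4$) immediately gives
\[
\|h\|_{C^{k}(\mathbb{D}_{1/2})}\le C(k)\,\|w\|_{L^\infty(\partial\mathbb{D}_\rho)}\le C(k)\,\|w\|_{L^\infty(\mathbb{D}_1)}
\]
for every $k\ge 0$, so in particular the full $C^{1,\alpha}$-norm of $h$ on $\mathbb{D}_{1/2}$ is controlled by $\|w\|_{L^\infty(\mathbb{D}_1)}$. For the Cauchy-transform piece $Tf$, I would invoke the classical Vekua / Calder\'on--Zygmund estimate $\|Tf\|_{C^{1,\alpha}(\mathbb{D}_{1/2})}\le C(\alpha)\|f\|_{C^{0,\alpha}(\mathbb{D}_\rho)}$; adding the two bounds yields \eqref{Schauderest}.

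The only nontrivial ingredient, and hence the main obstacle, is the $C^{0,\alpha}\to C^{0,\alpha}$ continuity of the Beurling singular-integral transform
\[
\mathcal{B}f(\zeta)=\frac{1}{\pi}\,{\rm p.v.}\!\iint\frac{f(\tau)}{(\tau-\zeta)^2}\,dA(\tau),
\]
which equals $\partial_\zeta(Tf)$ and upgrades the easy $L^\infty$-control of $Tf$ to H\"older control of its $\partial_\zeta$-derivative; the identity $\partial_{\ov{\zeta}}(Tf)=f$ is automatic from the representation. This is where the restriction $\alpha\in(0,1)$ is essential, as the analogous endpoint bound $L^\infty\to L^\infty$ fails. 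A fully self-contained variant avoiding the boundary term is to cut off $w$ by a bump $\chi\in C_c^\infty(\mathbb{D}_1)$ with $\chi\equiv 1$ on $\mathbb{D}_{1/2}$, observe that $\ov{\partial}(\chi w)=\chi f+(\ov{\partial}\chi)w$ has compact support in $\mathbb{D}_1$, apply the global Cauchy transform, and treat the piece coming from $(\ov{\partial}\chi)w$ as a smooth contribution on $\mathbb{D}_{1/2}$ bounded by $\|w\|_{L^\infty(\mathbb{D}_1)}$ while the piece $T(\chi f)$ is handled by the same Beurling-transform estimate.
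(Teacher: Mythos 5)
Your proposal is correct and follows essentially the same route as the paper: decompose $w$ via the Cauchy--Pompeiu formula into a holomorphic boundary integral plus the Cauchy transform $Tf$, control the holomorphic piece on the smaller disk by $\|w\|_{L^\infty}$ via interior estimates, and control $Tf$ by the classical $C^{0,\alpha}\to C^{1,\alpha}$ bound on the Cauchy transform (the paper cites Chern's Main Lemma for exactly the Beurling-transform estimate you isolate as the key ingredient).
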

\begin{proof}
In the following proof, the constant $c$ may change but does not depend on $f \in C^{0, \alpha}(\bB(0))$.
Denote operators:
\begin{eqnarray*}
T f(\zeta)&=&\frac{1}{2\pi i}\iint_{\bD_1} \frac{f(\tau)}{\tau-\zeta}d\tau\wedge d\bar{\tau}, \quad
Sw(\zeta)=\frac{1}{2\pi i}\int_C\frac{w(\tau)}{\tau-\zeta} d\tau. 
\end{eqnarray*}
Then $w\in C^{1,\alpha}(\bD_1)$ satisfies:
\[
w=T\partial_{\bar{\zeta}} w+S w=T f+ Sw .
\]
By Chern \cite[Main Lemma]{Chern}, we have
\[
\|Tf\|_{C^{1,\alpha}(\bD_1)}\le c \|f\|_{C^{0,\alpha}(\bD_1)}
\]
On the other hand, because $Sw=w-Tf$ is holomorphic, we have:
\[
\|Sw\|_{C^{1,\alpha}(\bD_{1/2})}\le c \|Sw\|_{L^{\infty}(\bD_1)}\le c(\| w\|_{L^\infty(\bD_1)}+\|Tf\|_{L^{\infty}(\bD_1)})\le c \|w\|_{L^\infty(\bD_1)}+c \|f\|_{L^{\infty}(\bD_1)}.
\]
\end{proof}
We need to extend the above Schauder estimate to the weighted H\"{o}lder space. We follow \cite[Chapter 2]{PaRi} to define the weighted H\"{o}lder norm for functions on the punctured disks. 
For any $s>0$, denote the annulus $\{\zeta^1\in\mathbb{C}; s<|\zeta^1|<2s\}$ by $A(s,2s)$. First we define the norm on the annulus: 
\begin{eqnarray*}
[w]_{1,\alpha,s}&:=&\sup_{A(s,2s)} |w|+s \sup_{A(s,2s)}|D_1 w|+s^{\alpha}\sup_{x,y\in A(s,2s)}\frac{|w(x)-w(y)|}{|x-y|^\alpha}+\\
&&\quad\quad +s^{1+\alpha}\sup_{x,y\in A(s,2s)}\frac{|D_1w(x)-D_1w(y)|}{|x-y|^\alpha}.
\end{eqnarray*}
The following is the scaling invariant weighted H\"{o}lder norm for functions on the punctured disk of radius $R$:
\begin{eqnarray*}
\|w\|_{C^{1,\alpha}_\nu(\bD_R(0))}=\sup_{s\in (0, R/2]}  s^{-\nu}[w]_{1,\alpha,s}, 
\end{eqnarray*}
As pointed out in \cite[Corollary 2.1]{PaRi}, the following Lemma is important for deriving the rescaled Schauder estimate in Lemma \ref{potential}.

Denote $m=\lceil \nu\rceil=\lceil \eta\rceil$ and the area form $\frac{d\tau\wedge d\bar{\tau}}{2\pi \sqrt{-1}}$ by $dV$, or $dV(\tau)$ if we want to emphasize the integration variable. 
For any $f\in C^{1, \alpha}_{\nu-1}(\bB_R)$, define: 
\begin{eqnarray}\label{eq-tildeT}
\tilde{T} f(\zeta)&=&Tf(\zeta)-Tf(0)-\sum_{k=1}^{m-1} (Tf)^{(k)}(0)\frac{\zeta^k}{k!}\nonumber\\
&=&\frac{1}{2\pi i}\left(\iint_{\bB_R}\frac{f(\tau)}{\tau-\zeta}d\tau\wedge d\bar{\tau}-\sum_{k=0}^{m-1} \iint_{\bB_R} \frac{f(\tau)\zeta^k}{\tau^{k+1}}d\tau\wedge d\bar{\tau} \right)\nonumber\\
&=&\frac{1}{2\pi i}\iint_{\bB_R} \frac{f(\tau)\zeta^{m}}{(\tau-\zeta) \tau^{m}}d\tau\wedge d\bar{\tau}.
\end{eqnarray}

\begin{lem}\label{toresch}
Denote $\rho=|\zeta|$ for any $\zeta\in \bB_R^*$. Then there exists a positive constant $C$ independent of $R$, such that
for any $f\in C^{1,\alpha}_{\nu-1}(\bD_R)$, we have:
\begin{equation}\label{toscale}
\|\rho^{-\nu}\tilde{T}f\|_{L^{\infty}(\bD_R)}\le C\|\rho^{1-\nu} f\|_{L^{\infty}(\bD_R)}.
\end{equation}
\end{lem}
\begin{proof}
We can first estimate:
\begin{eqnarray*}
\left|\rho^{-\nu}\tilde{T}f\right|&=&
|\zeta|^{-\nu}\left| \iint_{\bD_R} \frac{f(\tau)\zeta^m}{(\tau-\zeta)\tau^m}dV\right|\\
&\le&\left\||\rho|^{1-\nu}f\right\|_{L^{\infty}} |\zeta|^{m-\nu} \iint_{\bD_R(0)}\frac{dV}{|\tau-\zeta||\tau|^{m+1-\nu}} 
\end{eqnarray*}
We split the integral into three parts:
\[
\iint_{\bD_R(0)}=\iint_{\bD_{\rho/2}(0)}+\iint_{\bD_{\rho/2}(\zeta)}+\iint_{\bD_R(0)\setminus (\bD_{\rho/2}(0)\cup \bD_{\rho/2}(\zeta))}={\bf I+II+III}.
\]
The inequality \eqref{toscale} follows from the following estimates:
\[
{\bf I}\le C\int_0^{\rho/2}\frac{ds}{s^{m-\nu}\rho/2}\le C\rho^{\nu-m}, \quad
{\bf II}\le C\int_0^{\rho/2}\frac{ds}{\rho^{m+1-\nu}}\le C\rho^{\nu-m}.
\]
To estimate part ${\bf III}$, it's easy to see that $|\tau-\zeta|\ge \frac{|\tau|}{4}$ for $\tau\in \bB_R(0)\setminus \bD_{\rho/2}(\zeta)$. So we can estimate for any $\nu<m$:
\[
{\bf III}\le C\int_{\rho/2}^R\frac{ds}{s^{m+1-\nu}}\le \frac{C}{m-\nu}\left(\left(\frac{\rho}{2}\right)^{\nu-m}-R^{\nu-m}\right)\le C \rho^{\nu-m}.
\]
It's clear that \eqref{toscale} follows by combining the above estimates. 
\end{proof}

\begin{lem}\label{potential}
If $f\in C^{0,\alpha}_{\nu-1}(\bD_R)$, then $\tilde{T}f\in C^{1,\alpha}_{\nu}(\bD_R)$ and satisfies:
\[
\|\tilde{T}f\|_{C^{1,\alpha}_\nu (\bD_R)}\le C \|f\|_{C^{0,\alpha}_{\nu-1}(\bD_R)}.
\]
\end{lem}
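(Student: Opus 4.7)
The strategy is a standard rescaling argument that reduces the weighted estimate to the classical Schauder estimate on the unit disk (Lemma \ref{stSchauder}), combined with the global $L^\infty$ bound already furnished by Lemma \ref{toresch}. Fixing a scale $s\in(0,R/2]$, the plan is to bound each of the four pieces comprising $s^{-\nu}[\tilde{T}f]_{1,\alpha,s}$ by $\|f\|_{C^{0,\alpha}_{\nu-1}(\mathbb{D}_R)}$ with a constant independent of $s$, and then take the supremum over $s$.

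The $L^\infty$ piece is immediate from Lemma \ref{toresch}:
\[
\sup_{A(s,2s)}|\tilde{T}f|\le (2s)^\nu\|\rho^{-\nu}\tilde{T}f\|_{L^\infty(\mathbb{D}_R)}\le Cs^\nu\|\rho^{1-\nu}f\|_{L^\infty(\mathbb{D}_R)}\le Cs^\nu\|f\|_{C^{0,\alpha}_{\nu-1}(\mathbb{D}_R)}.
\]
For the derivative pieces, fix $\zeta_0\in A(s,2s)$ and set $r=s/c_0$ with $c_0$ large enough that $B_r(\zeta_0)\subset A(s/2,3s)\subset\mathbb{D}_R^\ast$ for all admissible $s$ (this imposes only a harmless shrinkage of the effective outer radius, which does not affect the final statement). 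Since $\partial_{\bar\zeta}\tilde{T}f=f$ classically on $B_r(\zeta_0)$ by \eqref{commute}, the rescaling $\tilde\zeta=(\zeta-\zeta_0)/r$ with $\tilde w(\tilde\zeta)=\tilde{T}f(\zeta)$ and $\tilde f(\tilde\zeta)=rf(\zeta)$ produces $\partial_{\bar{\tilde\zeta}}\tilde w=\tilde f$ on $\mathbb{D}_1$, so Lemma \ref{stSchauder} gives $\|\tilde w\|_{C^{1,\alpha}(\mathbb{D}_{1/2})}\le c\bigl(\|\tilde w\|_{L^\infty(\mathbb{D}_1)}+\|\tilde f\|_{C^{0,\alpha}(\mathbb{D}_1)}\bigr)$.

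Unwinding the rescaling and using $\sup_{A(s/2,3s)}|f|\le Cs^{\nu-1}\|f\|_{C^{0,\alpha}_{\nu-1}}$, $[f]_{\alpha,A(s/2,3s)}\le Cs^{\nu-1-\alpha}\|f\|_{C^{0,\alpha}_{\nu-1}}$, and the $L^\infty$ bound on $\tilde{T}f$ just obtained, one concludes
\[
r\sup_{B_{r/2}(\zeta_0)}|D\tilde{T}f|+r^{1+\alpha}[D\tilde{T}f]_{\alpha,B_{r/2}(\zeta_0)}\le Cs^\nu\|f\|_{C^{0,\alpha}_{\nu-1}(\mathbb{D}_R)}.
\]
A uniformly finite covering of $A(s,2s)$ by such half-balls, combined with a standard chaining argument, promotes this to $s\sup_{A(s,2s)}|D\tilde{T}f|+s^{1+\alpha}[D\tilde{T}f]_{\alpha,A(s,2s)}\le Cs^\nu\|f\|_{C^{0,\alpha}_{\nu-1}}$, and the remaining piece $s^\alpha[\tilde{T}f]_{\alpha,A(s,2s)}$ is handled by the mean-value interpolation $[\tilde{T}f]_{\alpha,A(s,2s)}\le Cs^{1-\alpha}\sup_{A(s,2s)}|D\tilde{T}f|$ (again via chaining). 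Summing the four bounds and taking the supremum over $s\in(0,R/2]$ yields the assertion. I do not expect a genuine conceptual obstacle: the proof is essentially an exercise in tracking scaling exponents, which match up because the extra factor $r\sim s$ absorbed into $\tilde f=rf$ converts the $\rho^{1-\nu}$ weight on $f$ into the $\rho^{-\nu}$ weight on $\tilde{T}f$. The only mildly delicate point is ensuring that the covering balls lie uniformly inside $\mathbb{D}_R^\ast$, which is arranged by fixing the ratio $r/s$ small once and for all.
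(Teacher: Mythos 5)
Your treatment of the interior region is exactly the paper's: the sup bound from Lemma \ref{toresch} plus the rescaled Schauder estimate of Lemma \ref{stSchauder} on balls $B_{s/c_0}(\zeta_0)$, which is the argument of \cite[Corollary 2.1]{PaRi} that the paper invokes verbatim for $\mathbb{D}_{R/2}$. The scaling bookkeeping ($\tilde f = rf$ converting the weight $\rho^{1-\nu}$ into $\rho^{-\nu}$) is correct.

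The gap is the outer annulus. For $s$ comparable to $R/2$ the annulus $A(s,2s)$ contains points $\zeta_0$ arbitrarily close to $\partial\mathbb{D}_R$, and no ball $B_{s/c_0}(\zeta_0)$ of radius proportional to $s$ fits inside $\mathbb{D}_R$ there, no matter how $c_0$ is fixed; the interior Schauder estimate therefore degenerates as $\mathrm{dist}(\zeta_0,\partial\mathbb{D}_R)\to 0$. Your remedy --- ``harmless shrinkage of the effective outer radius'' --- does not prove the lemma as stated: it only controls $\sup_{s\le R/(2c_1)}s^{-\nu}[\tilde Tf]_{1,\alpha,s}$, i.e.\ the norm on a strictly smaller disk, whereas the statement (and its use in Lemmas \ref{actT}--\ref{actTT}, where the operators $\tilde T^{j}$ integrate over $\mathbb{D}_R$ and their outputs are fed back into further operators on the \emph{same} disk) requires input and output norms on the same $\mathbb{D}_R$. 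The paper spends the second half of its proof precisely on the region $R/8\le|\zeta|\le R$, where it abandons rescaling and instead estimates the explicit kernel representation
\[
F_\zeta=\frac{1}{2\pi\sqrt{-1}}\iint_{\mathbb{D}_R(0)}\frac{f(\tau)-f(\zeta)}{(\tau-\zeta)^2}\,d\overline{\tau}\,d\tau
\]
directly, splitting the integral at $|\tau|=\rho/2$ so that the weighted sup of $f$ controls the contribution near the puncture and the H\"older seminorm of $f$ controls the rest; this works up to $\partial\mathbb{D}_R$ because it never needs the equation to hold on a ball extending outside the domain. You would need to supply this (or an equivalent boundary estimate) to close the argument.
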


\begin{proof}
Let $F(\zeta)=\tilde{T}f(\zeta)$. Let $\rho=|\zeta|$. By Lemma \ref{stSchauder}, Lemma \ref{toresch} and standard rescaling argument as in \cite[Corollary 2.1]{PaRi}, we have:
\[
\|\tilde{T}f\|_{C^{1,\alpha}_\nu(\bD_{R/2})}\le  C\|f\|_{C^{0,\alpha}_{\nu-1}(\bD_R)}.
\]
To get estimate on $\bD_R\backslash \bD_{R/2}$, we use the explicit formula of $\tilde{T}$.
As in \cite[(18), (26)]{Chern}, we have:
\begin{eqnarray*}
F_{\ov{\zeta}}=f(\zeta), \quad F_{\zeta}=\frac{1}{2\pi \sqrt{-1}}\iint_{\bD_R(0)}\frac{f(\tau)-f(\zeta)}{(\tau-\zeta)^2}d\tau d\bar{\tau}-\frac{1}{2\pi \sqrt{-1}}\sum_{k=1}^{m-1}k \zeta^{k-1} \iint_{\bD_R(0)}\frac{f(\tau)}{\tau^{k+1}}d\tau\wedge d\bar{\tau}.
\end{eqnarray*}
So that
\begin{eqnarray*}
\left|\frac{|F_{\zeta}|}{|\zeta|^{\nu-1}}\right|&\le &\frac{1}{2\pi}\frac{1}{|\zeta|^{\nu-1}}\iint_{\bD_R(0)}\frac{|f(\tau)-f(\zeta)|}{|\tau-\zeta|^2}dV(\tau)+
\sum_{k=1}^{m-1} k R^{k-\nu} \|\rho^{1-\nu}f\|_{\infty} \int_0^R \frac{ds}{s^{k-\nu+1}}.
\end{eqnarray*}
The second term on the right-hand-side of the above identity is uniformly bounded by $C\|\rho^{1-\nu}f\|_{\infty}$.
To estimate the first integral term, we split it into two parts:
\[
\iint_{\bD_R(0)}=\iint_{\bD_{\rho/2}(0)}+\iint_{\bD_R(0)\backslash \bD_{\rho/2}(0)}={\bf I+II}.\\
\]
Here we need to separate the integral over $\bD_{\rho/2}(0)$ from each estimate since
we only have H\"{o}lder estimate for $x$ and $y$ of comparable lengths. 
Notice that we can assume $R/8\le |\zeta|\le R$ and estimate:
\begin{eqnarray*}
{\bf I}&\le& \frac{1}{2\pi}\iint_{\bD_{\rho/2}(0)}\frac{1}{(|\zeta|-|\tau|)^2}\left(|\tau|^{1-\nu}|f(\tau)|\frac{|\zeta|^{1-\nu}}{|\tau|^{1-\nu}}+|\zeta|^{1-\nu} |f(\zeta)|\right)dV(\tau)\\
&\le& C\|\rho^{1-\nu}f\|_{L^\infty(\bD_R)}\frac{1}{R^{2}}\int_{0}^{R/2} (R^{1-\nu} s^{\nu-1}+1) sds\le C \|\rho^{1-\nu}f\|_{L^{\infty}(\bD_R(0))}.
\end{eqnarray*}
\[
{\bf II}\le C\iint_{\bD_R(0)\backslash \bD_{\rho/2}(0)}\frac{\|f\|_{C^{0,\alpha}_{\nu-1}} |\tau-\zeta|^{\alpha} R^{-\alpha}}{|\tau-\zeta|^2}dV\\
\le C \|f\|_{C^{0,\alpha}_{\nu-1}}R^{-\alpha} \int_{0}^{2R} s^{\alpha-2+1}ds\le C\|f\|_{C^{0,\alpha}_{\nu-1}} .
\]
So we get $\|\rho^{1-\nu} D_1 \tilde{T}f\|_{L^{\infty}}\le C \|f\|_{C^{0,\alpha}_{\nu-1}}$, i.e. the $C^1$-estimate. This implies the $C^{0, \alpha}$ estimate:
\[
R^{\alpha} \sup_{x,y\in A(R/8,R)}\frac{|w(x)-w(y)|}{|x-y|^\alpha} \le C \|f\|_{C^{0, \alpha}_{\nu-1}}. 
\]
Similarly, one can prove that:
\[
R^{1+\alpha}\sup_{x,y\in A(R/8, R)}\frac{|D_1w(x)-D_1w(y)|}{|x-y|^{\alpha}}\le C \|f\|_{C^{0,\alpha}_{\nu-1}},
\]
with $w=\tilde{T}(f)$. In fact, we can prove the inequality as in \cite[Section 6.1e]{NiWo}, again the only difference is that we need to separate the integral over $\bD_{\rho/2}(0)$ from each estimate since
we only have H\"{o}lder estimate for $x$ and $y$ of comparable lengths. 
\end{proof}
\subsubsection{Multi-variable estimates}\label{sec-multvar}

Similarly to \cite[(3.1)-(3.3)]{NeNi}, we introduce the weighted multiple-H\"{o}lder space by incorporating the weighted 1st order H\"{o}lder space for $\zeta^1$ and the usual 1st order H\"{o}lder spaces for the other variables.  Formally, we introduce various norms:
\begin{enumerate}
\item (Integral part )
\[
\|u\|_{n,\nu}= \sum_{k=0}^{n-1}\left(\frac{R^{k}}{k!}\sup_{\bD_R(0)^*\times \bD_R(0)^{n-1}}\left(\frac{|D^{k,1}u|}{|\zeta^1|^{\nu}}\right)+\frac{R^{k+1}}{(k+1)!}\sup_{\bD_R(0)^*\times
\bD_R(0)^{n-1}}\left(\frac{|D_1D^{k,1}u|}{|\zeta^1|^{\nu-1}}\right)\right). 
\]
\item 
(Fractional part i.e. difference quotient part):
\begin{eqnarray*}
[u]_{n\alpha,\nu}&=&\sum_{m=1}^{n-1} \left(\frac{R^{m\alpha}}{m!}\sup_{\bD_R(0)^*\times \bD_R(0)^{n-1}}\left(\frac{|\delta^{m,1}u|}{|\zeta^1|^{\nu}}\right)+\frac{R^{(m+1)\alpha}}{(m+1)!}
\sup_{s\in (0,R/2)}s^{\alpha-\nu}\sup_{\{\zeta^1,\tilde{\zeta}^1\in A(s,2s)\}}|\delta_1 \delta^{m,1} u|\right).
\end{eqnarray*}
\item (0th-order weighted multiple H\"{o}lder norm)
\begin{eqnarray*}
\|u\|_{n\alpha,\nu}=\tilde{H}_{\alpha,\nu}[u]&=&\sup_{\bD_R(0)^*\times \bD_R(0)^{n-1}}\frac{|u|}{|\zeta^1|^{\nu}}+[u]_{n\alpha,\nu}\\
\end{eqnarray*}
\item (1st-order weighted multiple H\"{o}lder norm)
\begin{eqnarray*}
\|u\|_{n+n\alpha, \nu}&=&\|u\|_{n,\nu}+\sum_{k=0}^{n-1}\left(\frac{R^k}{k!}[D^{k,1}u]_{n\alpha, \nu}+\frac{R^{k+1}}{(k+1)!}[D_1 D^{k,1}u]_{n\alpha,\nu-1}\right)\\
&=&\sum_{k=0}^{n-1}\left( \frac{R^{k}}{k!}\tilde{H}_{\alpha,\nu}[D^{k,1} u]+\frac{R^{k+1}}{(k+1)!}\tilde{H}_{\alpha,\nu-1}[D_1 D^{k,1} u]\right).
\end{eqnarray*}
\item (Partial 1st-order weighted multiple H\"{o}lder norm)
\[
\|u\|^1_{n-1+n\alpha,\nu}=\sum_{k=0}^{n-1}\frac{R^k}{k!}\sup_{\bD_R(0)^*\times \bD_R(0)^{n-1}}\tilde{H}_{\alpha,\nu}[D^{k,1}u].
\]
\[
\|u\|^j_{n-1+n\alpha,\nu}=\sum_{k=0}^{n-2}\left(\frac{R^k}{k!}\sup_{\bD_R(0)^*\times \bD_R(0)^{n-1}} \tilde{H}_{\alpha,\nu}[D^{k,\{1,j\}}u]+\frac{R^{k+1}}{(k+1)!}\tilde{H}_{\alpha,\nu-1}[D_1D^{k,\{1,j\}}u]\right) \mbox{ for } j\ge 2.
\]
\item (Anisotropically-weighted norm for vector of functions) Denote $\mathfrak{z}=(\mathfrak{z}^1(\zeta), \cdots, \mathfrak{z}^n(\zeta))$, $F=(f_{\ov{1}},\cdots, f_{\ov{n}})$. Denote:
\[
\|\mathfrak{z}\|_{n+n\alpha,(\nu+1,\nu)}=\|\mathfrak{z}^1\|_{n+n\alpha,\nu+1}+\sum_{j=2}^n \|\mathfrak{z}^j\|_{n+n\alpha,\nu}.
\]
\[
\quad \|F\|_{n-1+n\alpha,(\nu,\nu+1)}=\|f_{\ov{1}}\|^{1}_{n-1+n\alpha,\nu}+\sum_{j=2}^n \|f_{\ov{j}}\|^{j}_{n-1+n\alpha,\nu+1}.
\]
\end{enumerate}
Now we come back to solve the system \eqref{fixedpteq} which is equivalent to:
\begin{equation}\label{fixedpteq2}
\mathfrak{z}^i=\widetilde{\mathbb{T}}(F^i(\zeta+\mathfrak{z}))=\mathfrak{J}^i[\mathfrak{z}], \mbox{ where }
F^i=\left(f^i_{\ov{l}}\right)=\left(-\sum_{p=1}^n a^i_{\ov{p}}\frac{\partial\ov{z}^p}{\partial\ov{\zeta}^l}\right).
\end{equation}
Arguing as in \cite{NeNi}, the following lemma is a consequence of definitions of above norms and Lemma \ref{potential}.
\begin{lem}[{cf. \cite[(3.4), Lemma 4.1, Lemma 4.3]{NeNi}}]\label{actT}
We have the following estimates:

\begin{equation}\label{actT1}
\begin{array}{l}
\|D_jf\|^j_{n-1+n\alpha, \nu}\le \frac{c}{R}\|f\|^{}_{n+n\alpha, \nu}, \quad j=1, \cdots, n; \\
\left\|\tilde{T}^jD_j f\right\|^{l}_{n-1+n\alpha,\nu}\le c \left\|f\right\|^{l}_{n-1+n\alpha,\nu}, \quad j,l=1,\cdots, n, j\neq l; \\
\left\|\tilde{T}^1 f\right\|_{n+n\alpha,\nu+1} \le c R \left\|f\right\|^{1}_{n-1+n\alpha,\nu};\\
\left\|\tilde{T}^j f\right\|_{n+n\alpha,\nu}\le c R \left\| f\right\|^{j}_{n-1+n\alpha,\nu} \mbox{ for } j\ge 2.
\end{array}
\end{equation}
\end{lem}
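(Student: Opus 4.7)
The proof follows the inductive template of Newlander--Nirenberg's Lemma 4.1 and Lemma 4.3, with two modifications to accommodate our setting: the puncture at $\{\zeta^1=0\}$ and the weighted H\"{o}lder norm in the $\zeta^1$ direction. The three key inputs are: (i) the commutation identities in \eqref{commute}, which let us move a derivative $D_i$ past $\tilde{T}^j$ whenever $i\neq j$; (ii) the weighted one-variable Schauder estimate in Lemma \ref{potential}, which provides $\tilde{T}^1\colon C^{0,\alpha}_{\nu-1}\to C^{1,\alpha}_\nu$; and (iii) the standard Schauder estimate in Lemma \ref{stSchauder}, applied slicewise in $\zeta^j$ for $j\ge 2$.

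For estimate (1), fix $j\neq l$ and observe that every derivative $D^{k,l}$ (or $D^{k,\{1,l\}}$ together with $D_1$) appearing in the partial norm $\|\cdot\|^l_{n-1+n\alpha,\nu}$ does not involve $D_j$. These derivatives and the associated difference quotients therefore commute with $\tilde{T}^j$ by \eqref{commute}, reducing the matter to a one-variable statement: the operator $\tilde{T}^j D_j$ is bounded on the appropriate single-variable H\"{o}lder space (weighted when $j=1$ by Lemma \ref{potential}, standard when $j\ge 2$ by Lemma \ref{stSchauder}), with operator norm controlled by $1$ as stated. For estimate (2), I would expand $\|\tilde{T}^1 f\|_{n+n\alpha,\nu+1}$ into its constituent terms $\tilde{H}_{\alpha,\nu+1}[D^{k,1}(\tilde{T}^1 f)]$ and $\tilde{H}_{\alpha,\nu}[D_1 D^{k,1}(\tilde{T}^1 f)]$ for $k=0,\dots,n-1$. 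The mixed derivatives $D^{k,1}$ involve indices $\geq 2$ only, so by \eqref{commute} they commute with $\tilde{T}^1$, producing $\tilde{T}^1(D^{k,1}f)$; I then apply Lemma \ref{potential} slicewise in $\zeta^1$, noting that the correction term $-T^1 f(0,\zeta^2,\dots,\zeta^n)$ in the definition of $\tilde{T}^1$ is $\zeta^1$-independent and hence invisible to $D_1$ derivatives and to $\delta_1$ difference quotients. The $D_1 D^{k,1}$ terms reduce via $\bar{\partial}_1 \tilde{T}^1 = \mathrm{id}$ and a Cauchy estimate for the holomorphic part to bounds already controlled in $\|\cdot\|^1_{n-1+n\alpha,\nu}$. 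Estimate (3) is analogous with $\tilde{T}^j = T^j$ a standard Cauchy transform in $\zeta^j$, where $|\zeta^1|^\nu$ now behaves as an inert multiplier since integration is in a different variable.

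The main obstacle will be the careful bookkeeping of the H\"{o}lder difference quotients $\delta_1$ on the dyadic annuli $A(s,2s)$ that enter the weighted norm in part (2). Concretely, one must verify that the estimates in Lemma \ref{potential} apply uniformly on each such annulus with constants independent of $s$, so that summing over $s\in(0,R/2]$ yields the full weighted norm $\|\tilde{T}^1 f\|_{n+n\alpha,\nu+1}$. This relies on the scaling-invariant definition of $C^{1,\alpha}_\nu$ adopted from \cite{PaRi}, together with the structural fact that the puncture-correction in $\tilde{T}^1$ does not propagate through any derivative $D_1$ or difference quotient $\delta_1$ that appears in the norm. Once this is pinned down, the three estimates in \eqref{actT1} follow by summing the term-by-term bounds obtained above.
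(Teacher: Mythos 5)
Your proposal is correct and follows essentially the same route as the paper, which itself gives no written proof beyond the remark that the lemma follows from Lemma \ref{potential} by arguing as in Newlander--Nirenberg: namely, commute the mixed derivatives and difference quotients past $\tilde{T}^j$ via \eqref{commute}, and reduce to the one-variable weighted Schauder estimate of Lemma \ref{potential} for $j=1$ and the standard estimate of Lemma \ref{stSchauder} slicewise for $j\ge 2$. Your identification of the remaining bookkeeping issue (uniformity of the annulus estimates over $s$, and the fact that the puncture-correction term in $\tilde{T}^1$ is annihilated by $D_1$ and $\delta_1$) is exactly the point that makes the scaling-invariant norm of \cite{PaRi} the right choice here.
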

\begin{rem}\label{rem-mor}
Note that the moral of the above estimates are:
\begin{enumerate}
\item Differentiation with respect to $z^j$ for $j\neq 1$ keeps the weight unchanged and produces an $R^{-1}$ factor under appropriate norms.  $\tilde{T}^j$ for $j\neq 1$ keeps the weight unchanged and produces an $R$ factor. 
\item Differentiation with respect to $z^1$ decreases the weight and produces an $R^{-1}$ factor. $\tilde{T}^1$ improves the weight by $1$ and produces an extra $R$ factor.
\end{enumerate}
\end{rem}
Packing these estimates for components of $F^1$, $F^j$, the above Lemma implies:
\begin{lem}[{cf. \cite[Theorem 4.1]{NeNi}}]\label{actTT}
\[
\left\|\widetilde{\mathbb{T}}(F^1)\right\|_{n+n\alpha,\nu+1}\le c R \|F^1\|_{n-1+n\alpha,(\nu,\nu+1)};
\quad
\left\|\widetilde{\mathbb{T}}(F^j)\right\|_{n+n\alpha,\nu}\le c R \|F^j\|_{n-1+n\alpha,(\nu-1,\nu)}, \mbox{ for } j\ge 2.
\] 
\end{lem}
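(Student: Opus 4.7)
The plan is to follow the structure of \cite[Theorem 4.1]{NeNi}, expanding $\widetilde{\mathbb{T}}(F^i)$ according to its definition into a finite sum of terms
\[
\tilde{T}^{j_1}\ov{\partial}_{j_1}\cdots \tilde{T}^{j_s}\ov{\partial}_{j_s}\cdot \tilde{T}^k f^i_{\ov{k}}, \qquad j_1,\dots,j_s,k \text{ distinct},
\]
and bounding each summand by iterated application of Lemma \ref{actT} from the outside inward. The essential point is to track precisely where the weight gain by $+1$ occurs — by Lemma \ref{actT}, such a gain is available only by applying $\tilde{T}^1$ once, whereas $\tilde{T}^j$ for $j\ge 2$ and all composite operators $\tilde{T}^{j_i}\bar\partial_{j_i}$ simply preserve the appropriate partial $l$-H\"{o}lder norm.

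For the first estimate (target weight $\nu+1$): look at the innermost factor $\tilde{T}^k f^1_{\ov{k}}$. If $k=1$, the second line of Lemma \ref{actT} consumes the contribution $\|f^1_{\ov{1}}\|^1_{n-1+n\alpha,\nu}$ and raises the weight from $\nu$ to $\nu+1$. If $k\ge 2$, the third line consumes $\|f^1_{\ov{k}}\|^k_{n-1+n\alpha,\nu+1}$ and already has the target weight. Since these are exactly the summands of $\|F^1\|_{n-1+n\alpha,(\nu,\nu+1)}$, the innermost factor always lies in a full weighted space with weight $\nu+1$ in $\zeta^1$. Now one peels off the intermediate composites $\tilde{T}^{j_i}\ov{\partial}_{j_i}$ one by one; because their indices are distinct from $k$ and from one another, the first line of Lemma \ref{actT} applies to preserve the relevant partial $l$-norm at each stage. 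Summing a bounded number of such contributions yields the claim with a uniform constant $c=c(n,\alpha)$.

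The second estimate, on $F^j$ for $j\ge 2$ with target weight $\nu$, is formally identical after shifting all weights down by one: $\tilde{T}^1$ on $f^j_{\ov{1}}$ raises weight $\nu-1$ to $\nu$, and $\tilde{T}^k$ on $f^j_{\ov{k}}$ for $k\ge 2$ preserves weight $\nu$. These are precisely the summands appearing in $\|F^j\|_{n-1+n\alpha,(\nu-1,\nu)}$, and again the intermediate $\tilde{T}^{j_i}\bar\partial_{j_i}$ preserve the partial norms.

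The main technical subtlety I anticipate is the bookkeeping of distinct indices so that each appeal to Lemma \ref{actT} is to a partial norm indexed differently from the operator being applied, and the verification that the modified operator $\tilde{T}^1$ (rather than $T^1$) is the right object for these anisotropic weighted norms — that is why the commutation identities \eqref{commute} and the Potential Lemma \ref{potential}, which have been set up precisely on the punctured polydisk $\mathbb{D}_R^*\times \mathbb{D}_R^{n-1}$, are needed. Once these pieces are in place, the proof is a straightforward assembly of finitely many estimates absorbing into the combinatorial constant $c$, and parallels \cite[Theorem 4.1]{NeNi} verbatim in structure.
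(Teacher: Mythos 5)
Your proposal is correct and follows essentially the same route as the paper: the paper gives no separate argument for this lemma beyond observing that $\tilde{T}^1$ improves the weight by $1$ and ``packing'' the component estimates of Lemma \ref{actT} into the definition of $\widetilde{\mathbb{T}}$ and of the anisotropic norms $\|\cdot\|_{n-1+n\alpha,(\nu,\nu+1)}$, exactly as you do, with the index bookkeeping deferred to the scheme of \cite[Theorem 4.1]{NeNi}. Nothing further is needed.
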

The following lemma follows from the decay rate of $(a^{i}_{\bar{j}})$ in identity \ref{invertible} and the definition of norms defined above. It shows the reason to relax the asymptotics by replacing $\eta$ by $\nu$.
\begin{lem}[cf. {\cite[Lemma 3.1]{NeNi}}]\label{relax}
Suppose $\|\mathfrak{z}\|_{n+n\alpha,(\nu+1,\nu)}\le 1$, then 
\[
\|a^{1}_{\ov{1}}(\zeta+\mathfrak{z})\|_{n-1+n\alpha,\nu}\le K R^{\eta-\nu}(1+R^\nu \|\mathfrak{z}\|_{n+n\alpha,(\nu+1,\nu)}), \quad \|a^{1}_{\ov{k}}(\zeta+\mathfrak{z})\|_{n-1+n\alpha,\nu+1}\le K R^{\eta-\nu}(1+R^\nu \|\mathfrak{z}\|_{n+n\alpha,(\nu+1,\nu)}).
\]
\[
\|a^{k}_{\ov{1}}(\zeta+\mathfrak{z})\|_{n-1+n\alpha,\nu-1}\le K R^{\eta-\nu}(1+R^{\nu}\|\mathfrak{z}\|_{n+n\alpha,(\nu+1,\nu)}), \quad \|a^{j}_{\ov{k}}(\zeta+\mathfrak{z})\|_{n-1+n\alpha,\nu}\le K R^{\eta-\nu}(1+R^{\nu}\|\mathfrak{z}\|_{n+n\alpha,(\nu+1,\nu)}).
\]
\end{lem}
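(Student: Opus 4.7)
The plan is to reduce everything to the pointwise decay estimates of $a^i_{\bar j}$ from Lemma \ref{invertible} plus a careful chain-rule bookkeeping, organized around two basic observations. First, the hypothesis $\|\mathfrak{z}\|_{n+n\alpha,(\nu+1,\nu)}\le 1$ forces $z=\zeta+\mathfrak{z}$ to be a near-identity perturbation with $|z^1|$ comparable to $\rho=|\zeta^1|$, because $|\mathfrak{z}^1|\le C|\zeta^1|^{\nu+1}\ll |\zeta^1|$ on the small polydisk $\mathbb{D}_R^n$ (using $\nu>0$); in particular the anisotropic weights in the target space are unambiguous. Second, the factor $R^{\eta-\nu}$ in the conclusion is the universal ``weight-gap'' factor
\[
\rho^{\eta}=\rho^{\eta-\nu}\cdot\rho^{\nu}\le R^{\eta-\nu}\rho^{\nu},
\]
so any estimate that naturally lives with weight $\eta$ (the genuine decay of $a^i_{\bar j}$) can be absorbed into a weight-$\nu$ bound at the price of the claimed factor $R^{\eta-\nu}$.

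First I would dispatch the pointwise parts of each of the four norms. From Lemma \ref{invertible} one has
$|a^1_{\bar 1}(z)|\le C|z^1|^{\eta}$, $|a^1_{\bar k}(z)|\le C|z^1|^{\eta+1}$, $|a^j_{\bar 1}(z)|\le C|z^1|^{\eta-1}$, $|a^j_{\bar k}(z)|\le C|z^1|^{\eta}$. Composing with $\zeta+\mathfrak{z}$ and combining observations (a)-(b) above gives each supremum bound immediately, with a constant of the shape $KR^{\eta-\nu}$ which already accounts for the ``$1$'' in the factor $1+\|\mathfrak{z}\|$. The derivative bounds on $a^i_{\bar j}$ themselves, used below, come from the derivative estimates \eqref{compdecay} (differentiating in the fiber variable costs $|z^1|^{-1}$, differentiating in a base variable is free), and they match the weights of the target norm $\|\cdot\|_{n+n\alpha,\nu}$, $\|\cdot\|_{n+n\alpha,\nu\pm 1}$ block by block in precisely the same way.

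Next I would handle the mixed derivatives $D^{k,1}$ and $D_1 D^{k,1}$ by the chain rule applied to $a^i_{\bar j}\circ(\zeta+\mathfrak{z})$: each application of $D_l$ produces a sum of terms of the form (derivative of $a^i_{\bar j}$ evaluated at $z$) times a product of derivatives of $\zeta+\mathfrak{z}^\cdot$. The factors of the first type are controlled by the decay estimates above; the factors of the second type are controlled by $\|\mathfrak{z}\|_{n+n\alpha,(\nu+1,\nu)}\le 1$, which provides the ``linear'' term $\|\mathfrak{z}\|_{n+n\alpha,(\nu+1,\nu)}$ in the claimed bound $1+\|\mathfrak{z}\|_{n+n\alpha,(\nu+1,\nu)}$. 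The difference-quotient parts $\delta^{m,1}$ and $\delta_1\delta^{m,1}$ are then reduced to the same type of bounds by applying the Mean Value Theorem in one coordinate at a time, after decomposing the $\zeta^1$-quotient on the annular regions $A(s,2s)$ built into the definition of the weighted H\"{o}lder norm.

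The main obstacle is purely one of bookkeeping: one must track the fact that $D_1\mathfrak{z}^1$ has weight $\nu$ (not $\nu+1$), while $D_1\mathfrak{z}^k$ has weight $\nu-1$ for $k\ge 2$, and so on, and verify that in every chain-rule summand the total weight exponent matches what the target norm demands. A clean way to organize this is to treat each of the four blocks $(1,\bar 1)$, $(1,\bar k)$, $(j,\bar 1)$, $(j,\bar k)$ separately by the same template: one checks that the weight contributions from the derivatives of $a^i_{\bar j}$ and from $\mathfrak{z}$ always sum to exactly the weight of the target norm plus $\eta-\nu$, so that every summand is bounded by $KR^{\eta-\nu}(1+\|\mathfrak{z}\|_{n+n\alpha,(\nu+1,\nu)})$. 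This is exactly the argument of \cite[Lemma 3.1]{NeNi} adapted to the anisotropic weighted norms $\|\cdot\|_{n+n\alpha,(\nu+1,\nu)}$ introduced here.
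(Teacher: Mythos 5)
Your proposal is correct and follows exactly the route the paper intends: the paper itself gives no written-out proof of Lemma \ref{relax}, asserting only that it ``follows from the decay rate of $(a^i_{\bar j})$ in Lemma \ref{invertible} and the definition of norms,'' in the spirit of \cite[Lemma 3.1]{NeNi}, and your plan (pointwise decay from Lemma \ref{invertible}, the weight-gap factor $\rho^{\eta}\le R^{\eta-\nu}\rho^{\nu}$, chain-rule bookkeeping block by block, comparability of $|z^1|$ and $|\zeta^1|$, and mean-value-theorem treatment of the difference quotients on the annuli $A(s,2s)$) is precisely that argument filled in. The weight accounting in all four blocks checks out against the definitions of $\|\cdot\|_{n+n\alpha,\nu}$ and \eqref{compdecay}.
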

The following lemma is the precise formulation of the estimates in \eqref{eq-rough}. Notice that if $\eta=1$, then we get back the estimate in \cite[Lemma 5.1]{NeNi}.
\begin{lem}[cf. {\cite[Lemma 5.1]{NeNi}}]\label{actF}
If $\|\mathfrak{z}\|_{n+n\alpha,(\nu+1,\nu)}\le R$. Then 
\begin{equation}
\|F^1\|_{n-1+n\alpha,(\nu,\nu+1)}\le C R^{\eta-\nu}(1+R^\nu \|\mathfrak{z}\|_{n+n\alpha, (\nu+1,\nu)}), \;
\left\|F^1[\mathfrak{z}]-F^1[\tilde{\mathfrak{z}}]\right\|_{n+n\alpha,(\nu,\nu+1)}\le C R^{\eta-1} \left\|\mathfrak{z}-\tilde{\mathfrak{z}}\right\|_{n+n\alpha,(\nu+1,\nu)}.
\end{equation}
For $j\ge 2$, we have:
\begin{equation}
\|F^j\|_{n-1+n\alpha, (\nu-1,\nu)}\le C R^{\eta-\nu} (1+R^\nu\|\mathfrak{z}\|_{n+n\alpha,(\nu+1,\nu)}), \;
\left\|F^j[\mathfrak{z}]-F^j[\tilde{\mathfrak{z}}]\right\|_{n+n\alpha,(\nu-1,\nu)}\le C R^{\eta-1} \left\|\mathfrak{z}-\tilde{\mathfrak{z}}\right\|_{n+n\alpha,(\nu+1,\nu)}.
\end{equation}
\end{lem}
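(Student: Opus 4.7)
The plan is to expand $f^i_{\bar l}$ using the definition in \eqref{dbaright} and reduce the estimates to applications of Lemma \ref{invertible}, Lemma \ref{relax}, and a product rule for the weighted multiple H\"older norms $\|\cdot\|^j_{n-1+n\alpha,\ast}$. Writing $z^p = \zeta^p + \mathfrak{z}^p$, we split
\[
f^i_{\bar l} \;=\; -a^i_{\bar l}(\zeta+\mathfrak{z}) \;-\; \sum_{p=1}^n a^i_{\bar p}(\zeta+\mathfrak{z})\,\partial_{\bar\zeta^l}\bar{\mathfrak{z}}^p.
\]
The first summand is controlled directly by Lemma \ref{relax}; all other terms are products of an $a^i_{\bar p}$ factor with a $\partial_{\bar\zeta^l}\bar{\mathfrak{z}}^p$ factor whose weighted norm is already bounded by the definition of $\|\mathfrak{z}\|_{n+n\alpha,(\nu+1,\nu)}\le 1$. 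Hence, granted a product estimate, the target bound will follow by accounting for weights.

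First I would record the following product rule for the anisotropic norms: if $u_1$ has weighted norm bounded in weight $\mu_1$ and $u_2$ in weight $\mu_2$ (with respect to the same $\|\cdot\|^j_{n-1+n\alpha,\ast}$), then the Leibniz expansion of $D^{k,j}$ and $D_1 D^{k,j}$ (or their difference quotients $\delta^{m,j}$, $\delta_1\delta^{m,j}$) yields
\[
\|u_1 u_2\|^j_{n-1+n\alpha,\mu_1+\mu_2} \;\le\; C \, \|u_1\|^j_{n-1+n\alpha,\mu_1}\,\|u_2\|^j_{n-1+n\alpha,\mu_2}.
\]
Since $|\zeta^1|^{\mu_1+\mu_2}\le R^{\mu_2}|\zeta^1|^{\mu_1}$ whenever $\mu_2\ge 0$, this allows a product of weights $\mu_1$ and $\mu_2$ to be re-absorbed in weight $\mu_1$ at the cost of a factor $R^{\mu_2}$. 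I would then go through the cases one by one. For instance, to bound the $\bar 1$-component of $F^1$ in weight $\nu$: Lemma \ref{relax} gives $\|a^1_{\bar 1}(\zeta+\mathfrak{z})\|^1_{n-1+n\alpha,\nu}\le KR^{\eta-\nu}$, while the definition of $\|\mathfrak{z}^1\|_{n+n\alpha,\nu+1}$ controls $\partial_{\bar\zeta^1}\bar{\mathfrak{z}}^1$ in weight $\nu$, and $\|a^1_{\bar p}(\zeta+\mathfrak{z})\|$ in weight $\nu+1$ paired with $\partial_{\bar\zeta^1}\bar{\mathfrak{z}}^p$ in weight $\nu-1$ yields combined weight $2\nu$, which re-absorbs into weight $\nu$ with an extra $R^\nu$. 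The other components of $F^1$ and $F^j$ are handled identically, and in every case the final constant is bounded by $CR^{\eta-\nu}(1+\|\mathfrak{z}\|)$.

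For the Lipschitz half, I would use the algebraic identity
\[
f^i_{\bar l}[\mathfrak{z}] - f^i_{\bar l}[\tilde{\mathfrak{z}}] \;=\; -\sum_p \bigl(a^i_{\bar p}(\zeta+\mathfrak{z})-a^i_{\bar p}(\zeta+\tilde{\mathfrak{z}})\bigr)\,\partial_{\bar\zeta^l}(\bar\zeta^p+\bar{\mathfrak{z}}^p) \;-\; \sum_p a^i_{\bar p}(\zeta+\tilde{\mathfrak{z}})\,\partial_{\bar\zeta^l}(\bar{\mathfrak{z}}^p-\bar{\tilde{\mathfrak{z}}}^p).
\]
The second sum is again a product and handled as above, with $\partial_{\bar\zeta^l}(\bar{\mathfrak{z}}^p-\bar{\tilde{\mathfrak{z}}}^p)$ bounded by $\|\mathfrak{z}-\tilde{\mathfrak{z}}\|_{n+n\alpha,(\nu+1,\nu)}$. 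For the first sum I write $a^i_{\bar p}(\zeta+\mathfrak{z})-a^i_{\bar p}(\zeta+\tilde{\mathfrak{z}}) = \int_0^1 \bigl\langle (\nabla a^i_{\bar p})(\zeta+\mathfrak{z}_s),\mathfrak{z}-\tilde{\mathfrak{z}}\bigr\rangle ds$ where $\mathfrak{z}_s = (1-s)\tilde{\mathfrak{z}}+s\mathfrak{z}$; Lemma \ref{invertible} together with a composition/chain-rule variant of Lemma \ref{relax} gives the weighted bounds on $\nabla a^i_{\bar p}$ along the segment, and the product rule then closes the estimate. The extra factor of $R^\nu$ absorbing the mismatched weight now combines with the $R^{\eta-\nu}$ from Lemma \ref{relax} to give the net $R^\eta$ improvement stated.

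The main obstacle is the bookkeeping of the anisotropic weight structure: different components of $F^i$ must be controlled in different weights, and one must verify that every elementary product of factors that arises produces a total weight at least equal to the target weight, so the small factor $R^{\eta-\nu}$ (or $R^\eta$) is never lost. Technically, this requires establishing the product rule for the norms $\|\cdot\|^j_{n-1+n\alpha,\ast}$, which is straightforward but involves carefully expanding $D^{k,j}$ and $\delta_1\delta^{m,j}$ on products and checking that the characteristic $|\zeta^1|$-weight redistributes correctly across each Leibniz term; once this is in hand, the proof is largely a repetition of the argument pattern in \cite[Lemma 5.1]{NeNi}, adapted to the anisotropic setting dictated by Lemma \ref{invertible}.
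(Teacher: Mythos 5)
Your proposal is correct and follows essentially the same route as the paper: expand $f^i_{\bar l}$ as $a^i_{\bar p}(\zeta+\mathfrak{z})$ times $\delta^p_l+\partial_{\bar\zeta^l}\bar{\mathfrak{z}}^p$, control the $a$-factors by Lemma \ref{relax} and the derivative factors by the definition of $\|\mathfrak{z}\|_{n+n\alpha,(\nu+1,\nu)}$, absorb excess weight into powers of $R$, and handle the Lipschitz half via the difference-of-products identity together with a mean-value bound on $a^i_{\bar p}(\zeta+\mathfrak{z})-a^i_{\bar p}(\zeta+\tilde{\mathfrak{z}})$ using the decay of $\nabla a^i_{\bar p}$ from Lemma \ref{invertible}. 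The only cosmetic differences are that you state the product rule as a separate lemma and use the integral form of the mean value theorem along the segment $\mathfrak{z}_s$, whereas the paper varies one block of coordinates at a time; both yield the same weight bookkeeping.
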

\begin{proof}
We prove the first two estimates for $F^1=(f^1_{\bar{1}}, f^1_{\bar{m}})$. We first deal with $f^1_{\bar{1}}$:
\begin{equation}\label{eq-f1b1}
f^1_{\bar{1}}=a^1_{\bar{1}}\frac{\partial \bar{z}^1}{\partial\bar{\zeta}^1}+\sum_{m>1} a^1_{\bar{m}}\frac{\partial \bar{z}^m}{\partial \bar{\zeta}^1}. 
\end{equation}
For the first term on the right-hand-side of \eqref{eq-f1b1}, we have the following estimate.
\begin{eqnarray*}
\left\|a^{1}_{\ov{1}}\frac{\partial \ov{z}^1}{\partial\ov{\zeta}^1}\right\|^1_{n-1+n\alpha, \nu} &\le& \|a^{1}_{\ov{1}}\|^1_{n-1+n\alpha,\nu} \left(1+\left\|\frac{\partial\ov{\mathfrak{z}}^1}{\partial \ov{\zeta}^1}\right\|^1_{n-1+n\alpha,0}\right)\\
&\lesssim& KR^{\eta-\nu} (1+R^\nu \|\mathfrak{z}\|_{n+n\alpha,(\nu+1,\nu)})
(1+\left\|\mathfrak{z}^1\right\|_{n+n\alpha,\nu+1}R^{\nu-1})\\
&\lesssim& R^{\eta-\nu}(1+R^\nu \|\mathfrak{z}\|_{n+n\alpha, (\nu+1, \nu)}),
\end{eqnarray*}
where we estimated $\|a^1_{\bar{1}}\|^1_{n-1+n\alpha, \nu}$ using Lemma \ref{relax}. Using Remark \ref{rem-mor}, we can estimate: 
\begin{eqnarray*}
\left\|a^1_{\ov{1}}(\zeta+\mathfrak{z})\frac{\partial\ov{z}^1}{\partial\ov{\zeta}^1}-a^1_{\ov{1}}(\zeta+\tilde{\mathfrak{z}})\frac{\partial\ov{\tilde{z}}^1}{\partial\ov{\zeta}^1}\right\|^1_{n-1+n\alpha,\nu}&\le&
\|a^1_{\ov{1}}(\zeta+\mathfrak{z})-a^1_{\ov{1}}(\zeta+\tilde{\mathfrak{z}})\|^1_{n-1+n\alpha,\nu}\left\|\frac{\partial \ov{z}^1}{\partial\ov{\zeta}^1}\right\|^1_{n-1+n\alpha,0}\\
&&\quad+\|a^1_{\ov{1}}(\zeta+\tilde{\mathfrak{z}})\|_{n-1+n\alpha,\nu}\left\|\frac{\partial(\fz^1-\tilde{\fz}^1)}{\partial\ov{\zeta}^1}\right\|^1_{n-1+n\alpha,0}\\
&\le&K R^{\eta-1} \|\fz-\tilde{\fz}\|_{n+n\alpha,(\nu+1,\nu)}.
\end{eqnarray*}
\noindent
In the above estimates, similar with the method in our proof that $\zeta\mapsto z$ gives coordinate charts, we have estimated the difference of $a^{1}_{\ov{1}}(z)-a^{1}_{\ov{1}}(\tilde{z})$ by decomposing into two parts and then uses mean value theorem to get the above estimate (cf. \cite[Page 401]{NeNi}):
\begin{eqnarray*}
\|a^{1}_{\ov{1}}(\zeta+\fz)-a^1_{\ov{1}}(\zeta+\tilde{\fz})\|^1_{n-1+n\alpha, \nu}&=&\|a^1_{\ov{1}}(\zeta+\fz)-a^1_{\ov{1}}(\zeta^1+\tilde{\fz}^1, \zeta''+\fz'')\|_{n-1+n\alpha,\nu}\\
&&\hskip 12mm+\|a^1_{\ov{1}}(\zeta^1+\tilde{\fz}^1, \zeta''+\fz'')-a^1_{\ov{1}}(\zeta^1+\tilde{\fz}^1, \zeta''+\tilde{\fz}'')\|_{n-1+n\alpha,\nu}\\
&\lesssim& R^{\eta-1}\|\fz^1-\tilde{\fz}^1\|_{n+n\alpha, \nu+1}+R^{\eta-1}\|\fz''-\tilde{\fz}''\|_{n+n\alpha, \nu}.
\end{eqnarray*}
The following estimates deal with the second part on the right-hand-side of \eqref{eq-f1b1}.
\begin{eqnarray*}
\left\|a^{1}_{\ov{m}}\frac{\partial \ov{z}^m}{\partial\ov{\zeta}^1}\right\|^1_{n-1+n\alpha, \nu} &\le& \|a^{1}_{\ov{m}}\|^1_{n-1+n\alpha,\nu+1} \left\|\frac{\partial\ov{\mathfrak{z}}^m}{\partial \ov{\zeta}^1}\right\|^1_{n-1+n\alpha,-1}
\lesssim KR^{\eta-\nu}(1+R^\nu \|\mathfrak{z}\|_{n+n\alpha, (\nu+1, \nu)})
\left\|\mathfrak{z}^m\right\|_{n+n\alpha,\nu}R^{\nu-1}\\
&\lesssim& R^{\eta-\nu}(1+R^{\nu}\|\mathfrak{z}\|_{n+n\alpha, (\nu+1, \nu)}).
\end{eqnarray*}
In the last inequality, we used $\|\mathfrak{z}^m\|_{n+n\alpha, \nu}\le R$.

\begin{eqnarray*}
\left\|a^1_{\ov{m}}(\zeta+\mathfrak{z})\frac{\partial\ov{z}^m}{\partial\ov{\zeta}^1}-a^1_{\ov{m}}(\zeta+\tilde{\mathfrak{z}})\frac{\partial\ov{\tilde{z}}^m}{\partial\ov{\zeta}^1}\right\|^1_{n-1+n\alpha,\nu}&\le&
\|a^1_{\ov{m}}(\zeta+\mathfrak{z})-a^1_{\ov{m}}(\zeta+\tilde{\mathfrak{z}})\|^1_{n-1+n\alpha,\nu+1}\left\|\frac{\partial \ov{z}^m}{\partial\ov{\zeta}^1}\right\|^1_{n-1+n\alpha,-1}\\
&&\quad+\|a^1_{\ov{m}}(\zeta+\tilde{\mathfrak{z}})\|_{n-1+n\alpha,\nu+1}\left\|\frac{\partial(\fz^m-\tilde{\fz}^m)}{\partial\ov{\zeta}^1}\right\|^1_{n-1+n\alpha,-1}\\
&\le&K R^{\eta-1} \|\fz-\tilde{\fz}\|_{n+n\alpha,(\nu+1,\nu)}.
\end{eqnarray*}
We used the estimate:
\begin{eqnarray*}
\|a^{1}_{\ov{m}}(\zeta+\fz)-a^1_{\ov{m}}(\zeta+\tilde{\fz})\|^1_{n-1+n\alpha, \nu+1}&=&\|a^1_{\ov{m}}(\zeta+\fz)-a^1_{\ov{m}}(\zeta^1+\tilde{\fz}^1, \zeta''+\fz'')\|_{n-1+n\alpha,\nu+1}\\
&&\hskip 10mm+\|a^1_{\ov{m}}(\zeta^1+\tilde{\fz}^1, \zeta''+\fz'')-a^1_{\ov{m}}(\zeta^1+\tilde{\fz}^1, \zeta''+\tilde{\fz}'')\|_{n-1+n\alpha,\nu+1}\\
&\lesssim& R^{\eta-1}\|\fz^1-\tilde{\fz}^1\|_{n+n\alpha, \nu+1}+R^{\eta-1}\|\fz''-\tilde{\fz}''\|_{n+n\alpha, \nu}.
\end{eqnarray*}
In the same way, one can verify the other estimates. 

\end{proof}

\subsubsection{Completion of the proof of Proposition \ref{prop-coordinates}}
Combining Lemma \ref{actTT} and \ref{actF}, we get:
\begin{thm}\label{yasuo}
For any $\fz$, $\tilde{\fz}$ satisfying $\|\fz\|_{n+n\alpha, (\nu+1,\nu)}\le R$, $\|\tilde{\fz}\|_{n+n\alpha, (\nu+1,\nu)}\le R$ with $R$ sufficiently small, we have 
\[
\left\|\mathfrak{J}(\fz)\right\|_{n+n\alpha,(\nu+1,\nu)}\le c R^{\eta-\nu}(1+R^\nu \|\fz\|_{n+n\alpha,(\nu+1,\nu)});
\] 
\[
\left\|\mathfrak{J}(\tilde{\fz})-\mathfrak{J}(\fz)\right\|_{n+n\alpha,(\nu+1,\nu)}\le 
cR^{\eta} \left\|\tilde{\fz}-\fz\right\|_{n+n\alpha,(\nu+1,\nu)}.
\] 
\end{thm}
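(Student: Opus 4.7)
The plan is to derive the theorem as a direct concatenation of Lemma \ref{actTT} (mapping properties of the modified homotopy operator $\widetilde{\mathbb{T}}$) with Lemma \ref{actF} (mapping properties of the nonlinear vector $F^i=(f^i_{\ov{l}})$). By the very definition of $\mathfrak{J}$, the $i$-th component is $\mathfrak{J}[\fz]^i=\widetilde{\mathbb{T}}(F^i(\zeta+\fz))$, and the anisotropic norm on the left of the first inequality splits according to
\[
\left\|\mathfrak{J}[\fz]\right\|_{n+n\alpha,(\nu+1,\nu)}=\left\|\widetilde{\mathbb{T}}(F^1)\right\|_{n+n\alpha,\nu+1}+\sum_{j=2}^{n}\left\|\widetilde{\mathbb{T}}(F^j)\right\|_{n+n\alpha,\nu}.
\]
I would treat the $i=1$ and $i\ge 2$ components separately. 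For $i=1$, Lemma \ref{actTT} yields $\|\widetilde{\mathbb{T}}(F^1)\|_{n+n\alpha,\nu+1}\le c\|F^1\|_{n-1+n\alpha,(\nu,\nu+1)}$, and then Lemma \ref{actF} gives $\|F^1\|_{n-1+n\alpha,(\nu,\nu+1)}\le CR^{\eta-\nu}(1+\|\fz\|_{n+n\alpha,(\nu+1,\nu)})$. The analogous two-step bound for $i\ge 2$ uses the second inequalities of Lemmas \ref{actTT} and \ref{actF}. Summing these produces the first desired estimate.

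For the Lipschitz estimate, the key observation is that $\widetilde{\mathbb{T}}$ is a linear operator in $F$ (whereas $F^i$ itself depends nonlinearly on $\fz$), so
\[
\mathfrak{J}[\fz]^i-\mathfrak{J}[\tilde{\fz}]^i=\widetilde{\mathbb{T}}\!\left(F^i(\zeta+\fz)-F^i(\zeta+\tilde{\fz})\right).
\]
Applying Lemma \ref{actTT} componentwise and then the Lipschitz half of Lemma \ref{actF} gives the desired bound with the factor $R^{\eta}$ rather than $R^{\eta-\nu}$; the improvement by $R^{\nu}$ comes precisely because the difference $\fz-\tilde{\fz}$ itself carries the weight $\nu$ in the norm on the right, so we do not pay the extra $R^{-\nu}$ that arose in the boundedness step from comparing the unweighted normalization $\|\fz\|\le 1$ against the weighted norm of $a^i_{\bar p}(\zeta+\fz)$.

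The proof is in this sense essentially bookkeeping, and the main obstacle is not conceptual but notational: one has to verify that the weight conventions align properly under the asymmetric scheme in which $\tilde{T}^1$ improves the weight by one unit while $\tilde{T}^j$ for $j\ge 2$ does not, and to track that $F^1$ and $F^j$ have different decay in their $\bar{1}$- versus $\bar{m}$-components (as recorded in Lemma \ref{actF}). Once one observes that the anisotropic norm $\|\cdot\|_{n+n\alpha,(\nu+1,\nu)}$ was designed so that $\mathfrak{J}$ preserves the weight pattern $(\nu+1,\nu)$, both inequalities follow by a single application of each of the two preceding lemmas, with the small gain $R^{\eta-\nu}$ (respectively $R^{\eta}$) providing the contraction that will be used in the iterative construction of the solution to the fixed-point equation \eqref{fixedpteq2}.
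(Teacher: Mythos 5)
Your proposal is correct and follows exactly the route the paper takes: the paper derives Theorem \ref{yasuo} in one line by "combining Lemma \ref{actTT} and Lemma \ref{actF}," which is precisely the concatenation you carry out (applying the mapping bounds for $\widetilde{\mathbb{T}}$ componentwise and then the boundedness and Lipschitz halves of Lemma \ref{actF}, using linearity of $\widetilde{\mathbb{T}}$ for the difference estimate). Your write-up simply makes explicit the bookkeeping that the paper leaves implicit.
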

So for $R$ sufficienty small, we indeed get the desired inequalities \eqref{yasuo1} and \eqref{yasuo2} to apply the contraction-iteration principle to get a solution 
to the system \eqref{fixedpteq2}. 
\begin{lem}\label{actsol}
If $\fz$ is a solution to the system \eqref{fixedpteq2}, then $\fz$ is a solution to \eqref{zzetaeq}, i.e.
\begin{equation}\label{zzetaeq3}
g^{i}_{\ov{j}}=\frac{\partial z^i}{\partial \ov{\zeta}^j}+\sum_{p=1}^n a^{i}_{\ov{p}}(z)\frac{\partial \ov{z}^p}{\partial\ov{\zeta}^j}=0,\quad i, j=1,\dots, n.
\end{equation}
\end{lem}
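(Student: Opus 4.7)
The plan is to adapt the integrability argument of Newlander--Nirenberg to the present weighted setting. Starting from the fixed point identity $\mathfrak{z}^i=\widetilde{\mathbb{T}}(F^i)$, I would apply $\bar{\partial}_l$ to both sides and invoke the commutator formula \eqref{partialrel} (valid for the modified operators $\tilde{T}^j$ by \eqref{commute}) to write
\[
g^{i}_{\bar l}\;=\;\bar{\partial}_l z^i-f^i_{\bar l}\;=\;\sum_{s=0}^{n-2}\frac{(-1)^s}{(s+2)!}\sum{}^{l}\;\tilde{T}^{j_1}\bar{\partial}_{j_1}\cdots\tilde{T}^{j_s}\bar{\partial}_{j_s}\cdot\tilde{T}^{k}\bigl(\bar{\partial}_l f^i_{\bar k}-\bar{\partial}_k f^i_{\bar l}\bigr).
\]
So the problem reduces to rewriting $\bar{\partial}_l f^i_{\bar k}-\bar{\partial}_k f^i_{\bar l}$ as a linear expression in the unknowns $g^{m}_{\bar\cdot}$, so that the displayed equation becomes homogeneous and linear in $g$.

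For the algebraic step, recall $f^i_{\bar l}=-a^i_{\bar p}(z,\bar z)\bar{\partial}_l\bar z^p$. Direct differentiation by the chain rule, after the pure second $\bar{\partial}$-derivatives of $\bar z$ cancel by commutativity of partials, yields
\[
\bar{\partial}_l f^i_{\bar k}-\bar{\partial}_k f^i_{\bar l}=-\partial_{z^m}a^i_{\bar p}\bigl(\bar{\partial}_l z^m\,\bar{\partial}_k\bar z^p-\bar{\partial}_k z^m\,\bar{\partial}_l\bar z^p\bigr)-\partial_{\bar z^m}a^i_{\bar p}\bigl(\bar{\partial}_l\bar z^m\,\bar{\partial}_k\bar z^p-\bar{\partial}_k\bar z^m\,\bar{\partial}_l\bar z^p\bigr).
\]
I would then substitute $\bar{\partial}_l z^m=g^m_{\bar l}+f^m_{\bar l}=g^m_{\bar l}-a^m_{\bar q}\bar{\partial}_l\bar z^q$, splitting the right-hand side into a piece $\mathcal{G}$ that is linear in $g$ with coefficients of type $\partial_z a\cdot\bar{\partial}\bar z$, plus a residual piece $\mathcal{N}$ that is purely quadratic in $\bar{\partial}\bar z$ with coefficients built from $\partial_{\bar z}a$ and $a\,\partial_z a$.

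The crucial point is that $\mathcal{N}\equiv 0$, and this is where the integrability of $J$ is used. The normalized $(0,1)$-vectors of $J$ are $\tilde L_i=\partial_{\bar z^i}-a^k_{\bar i}\partial_{z^k}$, the minus sign being forced by the definition of $a^k_{\bar i}$ in Lemma \ref{invertible}, and they span $T^{0,1}_J$. Integrability means $[\tilde L_i,\tilde L_j]\in T^{0,1}_J$; but the bracket manifestly has no $\partial_{\bar z}$-component, so in fact $[\tilde L_i,\tilde L_j]=0$, which is the Maurer--Cartan identity
\[
\partial_{\bar z^j}a^k_{\bar i}-\partial_{\bar z^i}a^k_{\bar j}=a^l_{\bar j}\partial_{z^l}a^k_{\bar i}-a^l_{\bar i}\partial_{z^l}a^k_{\bar j}.
\]
Antisymmetrizing the coefficient of $\mathcal{N}$ in its two covector dummy indices $(q,p)$ and applying this identity collapses the coefficient identically to zero, so $\mathcal{N}$ vanishes.

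Consequently $g^i_{\bar l}$ satisfies a homogeneous linear integral equation $g=\mathcal{K}(g)$, whose kernel $\mathcal{K}$ is the composition of the operators $\tilde T^{j_s}\bar{\partial}_{j_s}$ from \eqref{partialrel} with multiplication by the bounded quantities $\partial_z a$ and $\bar{\partial}\bar z$. Estimating $\mathcal{K}$ on the weighted space $\|\cdot\|_{n+n\alpha,(\nu+1,\nu)}$ exactly as in Lemma \ref{actF} (for the multiplication factor) and Lemma \ref{actTT} (for the integral factor), one obtains $\|\mathcal{K}\|_{\rm op}\le cR^{\eta}$; for $R$ small this is a contraction, so $g\equiv 0$ on $\mathbb{U}_R=\mathbb{D}_R^{*}\times\mathbb{D}_R^{n-1}$, and continuity of $z$ and its derivatives across $\{\zeta^1=0\}$ (granted by the weighted Hölder control) extends $g\equiv 0$ to all of $\mathbb{D}_R^n$, proving \eqref{zzetaeq3}. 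The hard part is the algebraic cancellation $\mathcal{N}\equiv 0$: it relies crucially on the correct sign convention of $a^k_{\bar i}$ from Lemma \ref{invertible} together with the $(q,p)$-antisymmetrization; without these the purely-$\bar{\partial}\bar z$-quadratic remainder would not cancel and the derived equation for $g$ would fail to be homogeneous.
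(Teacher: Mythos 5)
Your proposal follows essentially the same route as the paper: both derive the homogeneous linear integral identity for $g^{i}_{\ov{l}}$ from \eqref{partialrel} combined with the integrability of $J$ (the paper cites the Newlander--Nirenberg computation for the cancellation you carry out explicitly via the Maurer--Cartan identity $[\tilde L_i,\tilde L_j]=0$), and both conclude that $G\equiv 0$ by a smallness estimate in the weighted norms for $R$ small. The one caveat is your phrase ``multiplication by the bounded quantities $\partial_z a$ and $\bar\partial\bar z$'': the components $a^{j}_{\ov{1}}\sim\rho^{\eta-1}$ and their derivatives are not bounded when $\eta\le 1$, which is exactly why the paper's proof spends most of its length on the case-by-case verification that each product $\mathfrak{G}^{i}_{\ov{j}\ov{k}}$ nevertheless lands in the correct anisotropically weighted space; your appeal to Lemma \ref{actF} and Lemma \ref{actTT} points at the right mechanism but elides that check.
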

\begin{proof}
We follow the argument in \cite[Page 403]{NeNi}. Using the formula \eqref{partialrel} and calculating as in \cite[(2.11-2.12)]{NeNi} (see also \cite[4.1.2]{NiWo}) we get the following identity
\begin{equation}\label{waitzero}
g^{i}_{\ov{j}}=\sum_{s=0}^{n-2}\frac{(-1)^s}{(s+2)!}\sum{}^j\;\;\tilde{T}^{j_1}\ov{\partial}_{j_1}\cdots \tilde{T}^{j_s}\ov{\partial}_{j_s}\cdot \tilde{T}^k 
[(\partial_{p} a^{i}_{\ov{m}})(\zeta)(\ov{\partial}_{j} \ov{z}^m \cdot g^{p}_{\ov{k}}-\ov{\partial}_{k} \ov{z}^m\cdot g^{p}_{\ov{j}})]
\end{equation}
where $\sum{}^j$ denotes the summation over all $(s+1)$-tuples with $j_1, \dots, j_s, k$ distinct and different from $j$. We claim that from \eqref{waitzero} the following holds:
\begin{eqnarray}\label{lastest}
\|G^1\|_{n-1+n\alpha,(\nu,\nu+1)}+\|G^j\|_{n-1+n\alpha,(\nu-1,\nu)}\le C R^{\eta+\nu} (\|G^1\|_{n-1+n\alpha,(\nu,\nu+1)}+\|G^j\|_{n-1+n\alpha,(\nu-1,\nu)}),
\end{eqnarray}
where we denote $G^{i}=(g^i_{\ov{1}},\cdots, g^i_{\ov{n}})$. Assuming \eqref{lastest} holds, then when $R$ is sufficiently small we have $G^i=0$ and so we indeed get the solution to \eqref{zzetaeq3}. 
To verify the claim, we need to estimate the term in the bracket:
\[
\mathfrak{G}^i_{\ov{j}\ov{k}}:=\sum_{p,m} (\partial_{p} a^{i}_{\ov{m}})(\zeta)(\ov{\partial}_{j} \ov{z}^m \cdot g^{p}_{\ov{k}}-\ov{\partial}_{k} \ov{z}^m\cdot g^{p}_{\ov{j}})=:\sum_{m,p} \frak{G}^i_{\bar{j}\bar{k}mp}.
\]
We will estimate it for different cases of indices. 
\begin{enumerate}
\item ($i=1$, $j=1$) In this case $k\ge 2$ (since $k\neq j$ in $\sum{}^{j}$). 
\begin{enumerate}
\item ($p=1,m=1$) 
Note that $\partial_1 a^1_{\bar{1}}=\tilde{O}(R^{\eta-1})$, $\bar{\partial}_1\bar{z}^1=\tilde{O}(1+R^\nu)$, $\|g^1_{\bar{k}}\|_{n-1+n\alpha,\nu+1}=\tilde{O}(R^{\nu+1})$, $\bar{\partial}_k\bar{z}^1=\bar{\partial}_k\mathfrak{z}^1=\tilde{O}(R^{\nu+1})$, $\|g^1_{\bar{j}}\|_{n-1+n\alpha,\nu}=\tilde{O}(R^\nu)$. So we can estimate the summand as:
\begin{eqnarray*}
\|\mathfrak{G}^1_{\ov{1}\ov{k}mp}\|_{n-1+n\alpha,(\nu,\nu+1)}
&\le& R^{\eta-1} ((1+R^{\nu})\cdot R^{\nu+1} \|G^1\|_{n-1+n\alpha,(\nu,\nu+1)}\\
&&\quad
+R^{\nu+1}\|\bar{\partial}_k\bar{\mathfrak{z}}^1\|_{n-1+n\alpha,\nu+1}\cdot R^{\nu} \|G^1\|_{n-1+n\alpha,(\nu,\nu+1)}) \\
&\le & R^{\eta+\nu} \left(\|G^1\|_{n-1+n\alpha,(\nu,\nu+1)}+\|G^j\|_{n-1+n\alpha,(\nu-1,\nu)} \right).
\end{eqnarray*}
For convenience, we will just write formally that the following holds: 
\begin{equation*}
\mathfrak{G}^1_{\ov{1}\ov{k}mp}=\tilde{O}(\rho^{\eta-1}(\rho^{0+\nu+1}+\rho^{\nu+1+\nu}))=\tilde{O}(\rho^{\eta}\rho^{\nu}).
\end{equation*} 
By the same reason, we can estimate the other summands:
\item ($p\ge 2, m=1$) $\mathfrak{G}^1_{\ov{1}\ov{k}mp}=\tilde{O}(\rho^{\eta}(\rho^{0+\nu}+\rho^{\nu+1+\nu-1}))=\tilde{O}(\rho^{\eta}\rho^{\nu})$.
\item ($p=1, m\ge 2$) $\mathfrak{G}^1_{\ov{1}\ov{k}mp}=\tilde{O}(\rho^{\eta}(\rho^{\nu-1+\nu+1}+\rho^{0+\nu}))=\tilde{O}(\rho^{\eta}\rho^{\nu}) $.
\item ($p\ge 2, m\ge 2$) $\mathfrak{G}^1_{\ov{1}\ov{k}mp}=\tilde{O}(\rho^{\eta+1}(\rho^{\nu-1+\nu}+\rho^{0+\nu-1}))=\tilde{O}( \rho^{\eta}\rho^{\nu})$.
\end{enumerate}
The above four estimates (a)-(d) combined to give:
\begin{eqnarray*}
\|\mathfrak{G}^1_{\bar{1}\bar{k}}\|_{n-1+n\alpha, \nu}
&\lesssim& R^{\eta+\nu} (\|G^1\|_{n-1+n\alpha, (\nu, \nu+1)}+\|G^j\|_{n-1+n\alpha, (\nu-1, \nu)}).
\end{eqnarray*}
The same remark applies to the notations in the following estimates:
\item ($i=1, j\ge 2$) In this case $k$ can be $1$. 
\begin{enumerate}
\item ($k=1$)  We estimate norm $\|\mathfrak{G}^1_{\bar{j}\bar{1}}\|_{n-1+n\alpha, \nu}$:
\begin{enumerate}
\item ($p=1,m=1$) $\mathfrak{G}^1_{\ov{j}\ov{1}mp}=\tilde{O}(\rho^{\eta-1}(\rho^{\nu+1+\nu}+\rho^{0+\nu+1}))=\tilde{O}(\rho^{\eta}\rho^{\nu})$.
\item ($p\ge 2, m=1$) $\mathfrak{G}^1_{\ov{j}\ov{1}mp}=\tilde{O}(\rho^{\eta}(\rho^{\nu+1+\nu-1}+\rho^{0+\nu}))=\tilde{O}(\rho^{\eta}\rho^{\nu})$.
\item ($p=1, m\ge 2$) $\mathfrak{G}^1_{\ov{j}\ov{1}mp}=\tilde{O}(\rho^{\eta}(\rho^{0+\nu}+\rho^{\nu-1+\nu+1}))=\tilde{O}(\rho^{\eta}\rho^{\nu})$.
\item ($p\ge 2, m\ge 2$) $\mathfrak{G}^1_{\ov{j}\ov{1}mp}=\tilde{O}(\rho^{\eta+1}(\rho^{0+\nu-1}+\rho^{\nu-1+\nu}))=\tilde{O}( \rho^{\eta}\rho^{\nu})$.
\end{enumerate}
\item ($k\ge 2$) We use the norm $\|\mathfrak{G}^1_{\bar{j}\bar{k}}\|_{n-1+n\alpha, \nu+1}$.
\begin{enumerate}
\item ($p=1,m=1$) $\mathfrak{G}^1_{\ov{j}\ov{k}mp}=\tilde{O}(\rho^{\eta-1}(\rho^{\nu+1+\nu+1}+\rho^{\nu+1+\nu+1}))=\tilde{O}(\rho^{\eta+\nu}\rho^{\nu+1})$.
\item ($p\ge 2, m=1$) $\mathfrak{G}^1_{\ov{j}\ov{k}mp}=\tilde{O}(\rho^{\eta}(\rho^{\nu+1+\nu}+\rho^{\nu+1+\nu}))=\tilde{O}(\rho^{\eta+\nu}\rho^{\nu+1})$.
\item ($p=1, m\ge 2$) $\mathfrak{G}^1_{\ov{j}\ov{k}mp}=\tilde{O}(\rho^{\eta}(\rho^{0+\nu+1}+\rho^{0+\nu+1}))=\tilde{O}(\rho^{\eta}\rho^{\nu+1})$.
\item ($p\ge 2, m\ge 2$) $\mathfrak{G}^1_{\ov{j}\ov{k}mp}=\tilde{O}(\rho^{\eta+1}(\rho^{0+\nu}+\rho^{0+\nu}))=\tilde{O}(\rho^{\eta}\rho^{\nu+1})$.
\end{enumerate}
\end{enumerate}
\item ($i\ge 2, j=1$) In this case $k\ge 2$. From the expression of $\mathfrak{G}^i_{\ov{j}\ov{k}}$, we see that the only difference from the 
case $i=1, j=1$ lies in the term $\partial_p a^i_{\ov{m}}$. We just need to decrease each order by $1$ to get
\[
\mathfrak{G}^i_{\ov{1}\ov{k}}=\tilde{O}(\rho^{\eta}\rho^{\nu-1}),
\]
or equivalently:
\begin{eqnarray*}
\|\mathfrak{G}^i_{\bar{1}\bar{k}}\|_{n-1+n\alpha,\nu}\le R^{\eta+\nu-1}\left(\|G^1\|_{n-1+n\alpha,(\nu,\nu+1)}+\|G^{j}\|_{n-1+n\alpha,(\nu-1,\nu)}\right).
\end{eqnarray*}

\item ($i\ge 2$, $j\ge 2$) In this case, $k$ can be $1$. Again, we see that the only difference with the case $i=1, j\ge 2$ lies in the term 
$\partial_p a^i_{\ov{m}}$. So we just need to decrease each order by $1$ to get
\[
\mathfrak{G}^i_{\ov{j}\ov{1}}=\tilde{O}(\rho^{\eta}\rho^{\nu-1}), \mbox{ and } \quad \mathfrak{G}^i_{\ov{j}\ov{k}}=\tilde{O}(\rho^{\eta}\rho^{\nu}).
\]

\end{enumerate}
Now from item 1, we have that:
\begin{eqnarray*}
\|g^1_{\ov{1}}\|^1_{n-1+n\alpha,\nu} &\le& C \sum_{k\ge 2}\|\tilde{T}^k \mathfrak{G}^1_{\ov{1}\ov{k}}\|_{n-1+n\alpha, \nu} \\
&\le& C R^{\eta+\nu} (\|G^1\|_{n-1+n\alpha, (\nu,\nu+1)}+\|G^j\|_{n-1+n\alpha,(\nu-1,\nu)}).
\end{eqnarray*}
From item 2, we have for $j\ge 2$, 
\begin{eqnarray*}
\|g^1_{\ov{j}}\|^j_{n-1+n\alpha,\nu} &\le& C(\|\tilde{T}^1\mathfrak{G}^1_{\ov{j}\ov{1}}\|^j_{n-1+n\alpha,\nu+1}+\sum_{k\ge 2}\|\tilde{T}^k\mathfrak{G}^1_{\ov{j}\ov{k}}\|^j_{n-1+n\alpha,\nu+1})\\
&\le& C R^{\eta+\nu} (\|G^1\|_{n-1+n\alpha, (\nu,\nu+1)}+\|G^j\|_{n-1+n\alpha,(\nu-1,\nu)}).
\end{eqnarray*}
Note that we have used the fact from \eqref{actT} that the operator $\tilde{T}^1$ improves the weight from $\nu$ to $\nu+1$. The same argument apply to item 3 and 4 too. So we indeed get the estimate
\eqref{lastest}. 
\end{proof}



\section{Appendices}

\subsection{Neighborhoods of complex submanifold after Grauert-Abate-Bracci-Tovena}\label{linearizable}

Assume $S$ is a smooth complex submanifold of $X$. In the introduction, we have recalled the definition of $S(k)$ and the concept of linearizability. Grauert \cite{Grau} showed that the obstruction for extending an isomorphism $S(k-1)\rightarrow S_N(k-1)$ to an isomorphism $S(k)\rightarrow S_N(k)$ lies in the cohomology group $H^1(S, \Theta_X|_S\otimes \mathcal{I}_S^{k}/\mathcal{I}_S^{k+1})$. He also pointed out that this obstruction consists of two parts. To see this, consider the exact sequence:
\[
0\rightarrow \Theta_S\otimes\mI_S^{k}/\mI_S^{k+1} \rightarrow \Theta_{X}|_{S}\otimes \mI_S^{k}/\mI_S^{k+1}\rightarrow N_{S}\otimes\mI_S^{k}/\mI_S^{k+1}\rightarrow 0,
\]
from which we get the long exact sequence:
\[
\dots \rightarrow H^1(S, \Theta_S\otimes\mI_S^k/\mI_S^{k+1})\rightarrow H^1(S, \Theta_X|_S\otimes \mI_S^{k}/\mI_S^{k+1})\rightarrow H^1(S,N_S\otimes \mI_S^k/\mI_S^{k+1})\rightarrow\dots
\]
So roughly speaking, the obstruction comes from two parts, one from $H^1(S, N_S\otimes \mI_S^{k}/\mI_S^{k+1})$ and the other from $H^1(S, \Theta_S\otimes\mI_S^k/\mI_S^{k+1})$. In \cite{ABT}, Abate-Bracci-Tovena explicitly described these two cohomological obstruction classes, and introduced the notion of {\it k-splitting} and {\it k-comfortably embedded} such that
$k$-linearizable=$k$-splitting+$(k-1)$-comfortably embedded with respect to the induced $(k-1)$-th order lifting. For references in the main paper, we record Abate-Bracci-Tovena's results in this section.
\begin{defn}[{\cite[Definition 2.1,2.2]{ABT}}]\label{def-split}
\begin{enumerate}
\item
$S$ is $k$-splitting into $X$ (for some $k\ge 1$) if the exact sequence 
\[
0\longrightarrow \mathcal{I}_S/\mathcal{I}_S^{k+1}\longrightarrow \mathcal{O}_X/\mathcal{I}_S^{k+1}\longrightarrow\mathcal{O}_S\rightarrow 0
\]
splits as a sequence of sheaves of rings. 
\item
A $k$-splitting atlas for $S\subset X$ is an atlas $\{(V_\alpha, z_\alpha)\}$ of $X$ adapted to $S$ (that is, $V_\alpha\cap S\neq \emptyset$ implies $V_\alpha\cap S=\{z_\alpha^1=\cdots=z^m_{\alpha}=0\}$) such that
\[
\left.\frac{\partial^k z^p_\beta}{\partial z^{r_1}\cdots \partial z^{r_k}_\alpha}\right|_S\equiv 0, 
\]
for all $r_1,\dots, r_k=1,\dots, m$, all $p=m+1,\dots, n$, and all indices $\alpha, \beta$ such that $V_\alpha\cap V_\beta\cap S\neq \emptyset$.
\item 
An atlas $\{(V_\alpha, z_\alpha)\}$ adapted to $S$ is adapted to a $k$-th order lifting $\rho: \mathcal{O}_S\rightarrow \mathcal{O}_M/\mI_S^{k+1}$ if
\begin{equation}
\rho[f]_1=\sum_{l=0}^{k}(-1)^l\left[\frac{\partial^l f}{\partial z_\alpha^{r_1}\cdots \partial z^{r_l}_\alpha}z^{r_1}_\alpha\cdots z^{r_l}_\alpha\right]_{k+1},
\end{equation}
for every $f\in \mathcal{O}(V_\alpha)$ and all indices $\alpha$ such that $V_\alpha\cap S\neq \emptyset$.
\end{enumerate}
\end{defn}
In the following, if $S$ is $k$-splitting, we will fix a lifting: $\rho_k: \mO_S\rightarrow \mO_X/\mI_S^{k+1}$. We also denote by $\phi_{h,k}$ the natural map
\begin{equation}\label{defphi}
\phi_{h,k}: \mO_X/\mI_S^{h+1}\rightarrow \mO_X/\mI_S^{k+1}, \mbox{ for } h\ge k.
\end{equation}
 \begin{prop}[{\cite[Proposition 2.2]{ABT}}]\label{obsplit}
Assume that $S$ is $(k-1)$-splitting in $X$; let $\rho_{k-1}: \mathcal{O}_S\rightarrow \mathcal{O}_X/\mathcal{I}_S^{k}$ be a $(k-1)$-th order lifting, and $\mathfrak{V}=\{(V_\alpha, \phi_\alpha)\}$ a $(k-1)$-splitting atlas adapted to $\rho_{k-1}$. Let $\mathfrak{g}^{\rho_{k-1}}_k\in H^1(S, Hom(\Omega_S, \mathcal{I}_S^k/\mathcal{I}_S^{k+1}))$ be the \v{C}ech cohomology class represented by a 1-cocycle $\{(\mathfrak{g}_k^{\rho_{k-1}})_{\beta\alpha}\}$ $\in H^1(\mathfrak{V}_S, Hom(\Omega_S, \mathcal{I}_S^k/\mathcal{I}_S^{k+1}))$ given by
 \begin{equation}\label{spltccl}
 (\mathfrak{g}_k^{\rho_{k-1}})_{\beta\alpha}=-\frac{1}{k!}\left.\frac{\partial^k z^p_\alpha}{\partial z^{r_1}_\beta\dots \partial z^{r_k}_\beta}\right|_S\frac{\partial}{\partial z^p_\alpha}\otimes [z^{r_1}_\beta\dots z^{r_k}_\beta]_{k+1}\in H^0(V_\alpha\cap V_\beta\cap S, \Theta_S\otimes
 \mathcal{I}_S^{k}/\mathcal{I}_S^{k+1}).
 \end{equation}
 Then there exists a k-th order lifting $\rho_{k}:\mathcal{O}_S\rightarrow\mathcal{O}_X/\mathcal{I}_S^{k+1}$ such that $\rho_{k-1}=\phi_{k, k-1}\circ\rho_k$ if and only if $\mathfrak{g}^{\rho_{k-1}}_k=0$. We call 
 this $\mathfrak{g}^{\rho_{k-1}}_k$ the obstruction to k-splitting relative to $\rho_{k-1}$.
 \end{prop}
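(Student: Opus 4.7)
The plan is to follow the standard pattern of obstruction theory for extensions of ring homomorphisms. I want to show that the local obstruction to extending $\rho_{k-1}$ to $\rho_k$ assembles into a global $\check{\mathrm{C}}$ech cocycle, and that this cocycle is precisely $\mathfrak{g}_k^{\rho_{k-1}}$ as given in \eqref{spltccl}.

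First I would verify that \eqref{spltccl} defines a genuine $\check{\mathrm{C}}$ech 1-cocycle with values in $\Theta_S \otimes \mathcal{I}_S^k/\mathcal{I}_S^{k+1}$. This is done by differentiating the cocycle identity $z_\alpha^p = z_\alpha^p(z_\gamma(z_\beta))$ exactly $k$ times in the fiber directions $z^{r_1}_\beta,\dots,z^{r_k}_\beta$ and restricting to $S$. Because the atlas is $(k-1)$-splitting, all lower-order mixed derivatives of $z^p_\alpha$ in fiber directions vanish on $S$, so the chain rule collapses and produces the additive cocycle relation $(\mathfrak{g}_k^{\rho_{k-1}})_{\gamma\alpha} = (\mathfrak{g}_k^{\rho_{k-1}})_{\gamma\beta} + (\mathfrak{g}_k^{\rho_{k-1}})_{\beta\alpha}$. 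A similar computation, comparing two $(k-1)$-splitting atlases adapted to the same $\rho_{k-1}$, shows that the cohomology class is independent of the chosen atlas.

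Next I would produce the local liftings. On each chart $V_\alpha$ of the splitting atlas, the adapted coordinates provide a holomorphic retraction $\pi_\alpha\colon V_\alpha \to V_\alpha\cap S$ sending $(z^1_\alpha,\dots,z^n_\alpha) \mapsto (0,\dots,0,z^{m+1}_\alpha,\dots,z^n_\alpha)$, and hence a local $k$-th order lifting
\[
\rho_k^\alpha\colon \mathcal{O}_S|_{V_\alpha\cap S} \longrightarrow \mathcal{O}_X/\mathcal{I}_S^{k+1}\bigl|_{V_\alpha}, \qquad f \longmapsto [f\circ \pi_\alpha]_{k+1}.
\]
Since the atlas is adapted to $\rho_{k-1}$, one checks $\phi_{k,k-1}\circ \rho_k^\alpha = \rho_{k-1}|_{V_\alpha}$. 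On an overlap $V_\alpha\cap V_\beta$ set
\[
\delta_{\beta\alpha} := \rho_k^\beta - \rho_k^\alpha.
\]
Because $\phi_{k,k-1}\circ \rho_k^\alpha = \phi_{k,k-1}\circ \rho_k^\beta = \rho_{k-1}$, the map $\delta_{\beta\alpha}$ takes values in $\mathcal{I}_S^k/\mathcal{I}_S^{k+1}$, and the identity $\rho_k^\alpha(fg) = \rho_k^\alpha(f)\rho_k^\alpha(g)$ together with the vanishing $\mathcal{I}_S^k \cdot \mathcal{I}_S = 0$ in $\mathcal{O}_X/\mathcal{I}_S^{k+1}$ forces $\delta_{\beta\alpha}$ to be an $\mathcal{O}_S$-linear derivation, i.e.\ a section of $\mathcal{H}om(\Omega_S,\mathcal{I}_S^k/\mathcal{I}_S^{k+1}) \cong \Theta_S\otimes \mathcal{I}_S^k/\mathcal{I}_S^{k+1}$. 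Evaluating $\delta_{\beta\alpha}$ on the coordinates $z^p_\alpha$ and Taylor-expanding $z^p_\alpha = z^p_\alpha(z_\beta)$ around $S$ produces exactly the formula \eqref{spltccl}; again $(k-1)$-splitting kills all intermediate terms and leaves only the $k$-th fiber derivative.

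Finally I would close the obstruction argument. If a global $\rho_k$ extending $\rho_{k-1}$ exists, then $\eta_\alpha := \rho_k^\alpha - \rho_k$ are local derivations into $\mathcal{I}_S^k/\mathcal{I}_S^{k+1}$ and $\delta_{\beta\alpha} = \eta_\beta - \eta_\alpha$, so $\mathfrak{g}_k^{\rho_{k-1}} = 0$ in cohomology. Conversely, if $\{\delta_{\beta\alpha}\} = \{\eta_\beta - \eta_\alpha\}$ is a coboundary for some local derivations $\eta_\alpha$, then the modified local liftings $\tilde{\rho}_k^\alpha := \rho_k^\alpha - \eta_\alpha$ still reduce to $\rho_{k-1}$ modulo $\mathcal{I}_S^k$ (because $\eta_\alpha$ takes values in $\mathcal{I}_S^k/\mathcal{I}_S^{k+1}$) and now agree on overlaps, hence glue to the desired global $\rho_k$. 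The main obstacle, and the only place where honest computation is needed, is the bookkeeping in the third step: one must carefully expand the transition functions and verify that the $k$-th order term of $\delta_{\beta\alpha}(z^p_\alpha)$ reproduces the explicit formula in \eqref{spltccl}, using the $(k-1)$-splitting hypothesis to discard all lower-order contributions from the chain rule.
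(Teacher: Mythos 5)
This proposition is one the paper never proves: it is quoted verbatim from \cite[Proposition 2.2]{ABT} in Appendix II, so there is no in-paper argument to compare yours against; the relevant benchmark is ABT's original coordinate-based proof, and the way the paper itself manipulates these objects (e.g.\ the Taylor-expansion computations in the proof of Proposition \ref{embvsdef} and the explicit cocycles in Proposition \ref{counterexample}). Measured against that, your proof is correct and is in substance the same argument: local $k$-th order liftings $\rho_k^\alpha$ induced by the coordinate retractions of the splitting atlas, the observation that a difference of two liftings of $\rho_{k-1}$ is a derivation valued in the square-zero ideal $\mathcal{I}_S^k/\mathcal{I}_S^{k+1}$ (using $\mathcal{I}_S\cdot\mathcal{I}_S^k=0$ in $\mathcal{O}_X/\mathcal{I}_S^{k+1}$), Taylor expansion of the transition functions together with the $(k-1)$-splitting condition to identify $\rho_k^\beta-\rho_k^\alpha$ with the explicit cocycle \eqref{spltccl} (the signs do match with your convention $\delta_{\beta\alpha}=\rho_k^\beta-\rho_k^\alpha$), and the standard coboundary argument in both directions.

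Two small remarks, neither of which is a gap. First, your Step 1 is logically redundant once Steps 2--3 are in place: a cochain of the form $\rho_k^\beta-\rho_k^\alpha$ automatically satisfies the \v{C}ech cocycle identity on triple overlaps, and independence of the atlas is likewise automatic (two atlases adapted to the same $\rho_{k-1}$ give local liftings differing by derivations into $\mathcal{I}_S^k/\mathcal{I}_S^{k+1}$, hence cocycles differing by a coboundary); the Fa\`a di Bruno computation is only needed if one insists on verifying closedness intrinsically. Second, in the converse direction you should state explicitly why $\tilde{\rho}_k^\alpha=\rho_k^\alpha-\eta_\alpha$ is still a morphism of sheaves of \emph{rings}: this uses once more that $\eta_\alpha$ is a derivation into a square-zero ideal, so the cross terms $\eta_\alpha(f)\eta_\alpha(g)$ and $(\rho_k^\alpha(f)-f)\eta_\alpha(g)$ vanish in $\mathcal{O}_X/\mathcal{I}_S^{k+1}$ --- the same algebra you already invoked for the derivation property, so this is a one-line addition.
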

 \begin{prop}[{\cite[Proposition 3.2]{ABT}}]\label{ODmodule}
Assume $S$ is $k$-splitting in $X$ and let $\rho: \mathcal{O}_S\rightarrow\mathcal{O}_X/\mathcal{I}_S^{k+1}$ be a k-th order lifting, with $k\ge 0$. Then for any $1\le h\le k+1$, the lifting $\rho$ induces a structure of locally $\mathcal{O}_S$-free module on $\mathcal{I}_S/\mathcal{I}_S^{h+1}$ for $1\le h\le k+1$ in such a way that the 
sequence
\begin{equation}\label{splitmodule}
0\longrightarrow\mathcal{I}_S^h/\mathcal{I}_S^{h+1}\longrightarrow \mathcal{I}_S/\mathcal{I}_S^{h+1}\longrightarrow\mathcal{I}_S/\mathcal{I}_S^h\longrightarrow 0
\end{equation}
becomes an exact sequence of locally $\mathcal{O}_S$-free modules. 
 \end{prop}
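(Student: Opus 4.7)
The plan is to exploit the fact that the $k$-th order lifting $\rho$ is a ring-homomorphism section of the projection $\phi_{k,0}\colon \mathcal{O}_X/\mathcal{I}_S^{k+1}\to\mathcal{O}_S$, so it endows $\mathcal{O}_X/\mathcal{I}_S^{k+1}$ with the structure of an $\mathcal{O}_S$-algebra. First I would observe that for every $h$ with $1\le h\le k+1$, each of the three sheaves $\mathcal{I}_S^h/\mathcal{I}_S^{h+1}$, $\mathcal{I}_S/\mathcal{I}_S^{h+1}$, and $\mathcal{I}_S/\mathcal{I}_S^h$ is annihilated by the ideal $\mathcal{I}_S^{k+1}$ of $\mathcal{O}_X/\mathcal{I}_S^{k+1}$, because $\mathcal{I}_S^{k+1}\cdot\mathcal{I}_S\subset\mathcal{I}_S^{k+2}\subset\mathcal{I}_S^{h+1}$. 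Each is therefore naturally an $\mathcal{O}_X/\mathcal{I}_S^{k+1}$-module, and pull-back along $\rho$ makes it an $\mathcal{O}_S$-module. The inclusion and projection in the sequence are obviously $\mathcal{O}_X/\mathcal{I}_S^{k+1}$-linear, hence $\mathcal{O}_S$-linear via $\rho$, and exactness is inherited from exactness as a sequence of abelian sheaves.

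Next I would check a compatibility point that makes the conclusion non-vacuous. The outer terms $\mathcal{I}_S^h/\mathcal{I}_S^{h+1}$ and $\mathcal{I}_S/\mathcal{I}_S^h$ carry canonical $\mathcal{O}_S$-module structures — the first because it is already killed by $\mathcal{I}_S$, the second coming from the lower-order lifting $\phi_{k,h-1}\circ\rho\colon\mathcal{O}_S\to\mathcal{O}_X/\mathcal{I}_S^h$ — and one must verify that the $\rho$-induced structures recover these. For $\mathcal{I}_S^h/\mathcal{I}_S^{h+1}$ this is because the $\mathcal{O}_X/\mathcal{I}_S^{k+1}$-action automatically factors through $\mathcal{O}_X/\mathcal{I}_S=\mathcal{O}_S$ and the identity $\phi_{k,0}\circ\rho=\mathrm{id}_{\mathcal{O}_S}$ matches the two actions. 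For $\mathcal{I}_S/\mathcal{I}_S^h$ it is because the $\mathcal{O}_X/\mathcal{I}_S^{k+1}$-action factors through $\mathcal{O}_X/\mathcal{I}_S^h$ and $\rho$ descends to $\phi_{k,h-1}\circ\rho$. This guarantees that the three module structures assemble coherently as $h$ varies.

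The substantive step is local freeness. I would work in a $k$-splitting atlas $\{(V_\alpha,z_\alpha)\}$ adapted to $\rho$, whose existence is guaranteed by the coordinate characterisation in Theorem~\ref{coordinate}; in such charts $\rho$ is expressed in coordinates as $\rho(g|_S)=[g(z_\alpha^{m+1},\ldots,z_\alpha^n)]_{k+1}$. A Taylor expansion in the transverse variables then shows that every germ in $\mathcal{I}_S/\mathcal{I}_S^{h+1}$ over $V_\alpha$ admits a unique representation
\[
\sum_{1\le|I|\le h}\rho\bigl(g_I|_S\bigr)\cdot[z_\alpha^I]_{h+1},
\]
where $I=(r_1,\ldots,r_j)$ ranges over ordered multi-indices in $\{1,\ldots,m\}$ of lengths between $1$ and $h$ and each $g_I$ is holomorphic in $(z_\alpha^{m+1},\ldots,z_\alpha^n)$. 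This displays $\mathcal{I}_S/\mathcal{I}_S^{h+1}$ as a free $\mathcal{O}_S$-module with basis $\{[z_\alpha^I]_{h+1}\}$, and the analogous bases for $\mathcal{I}_S^h/\mathcal{I}_S^{h+1}$ (monomials of length exactly $h$) and $\mathcal{I}_S/\mathcal{I}_S^h$ (monomials of length at most $h-1$) are visibly compatible with the inclusion and projection.

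I expect the main difficulty to be psychological rather than computational: one must keep in mind that the middle term's $\mathcal{O}_S$-module structure genuinely depends on the choice of $\rho$, whereas the outer terms carry canonical structures, and then verify that the three glue coherently. Once the data is organised via an atlas adapted to $\rho$, the freeness reduces to identifying $\rho$-coefficients with Taylor coefficients, which is routine; the only slightly delicate case is the endpoint $h=k+1$, where one relies on the annihilation $\mathcal{I}_S^{k+1}\cdot\mathcal{I}_S/\mathcal{I}_S^{k+2}=0$ to legitimise the $\mathcal{O}_X/\mathcal{I}_S^{k+1}$-action on $\mathcal{I}_S/\mathcal{I}_S^{k+2}$.
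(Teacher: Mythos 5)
The paper does not actually prove this proposition---it is recalled verbatim from \cite{ABT} (their Proposition 3.2) in Appendix \ref{linearizable}---so there is no internal proof to compare against; judged on its own terms, your argument is correct and is essentially the standard one. You correctly isolate the two essential points: first, the annihilation $\mathcal{I}_S^{k+1}\cdot\mathcal{I}_S\subset\mathcal{I}_S^{k+2}\subset\mathcal{I}_S^{h+1}$, valid exactly in the range $1\le h\le k+1$, which makes each term of \eqref{splitmodule} a module over $\mathcal{O}_X/\mathcal{I}_S^{k+1}$ and hence, via the ring homomorphism $\rho$, over $\mathcal{O}_S$ (this is indeed what makes the endpoint $h=k+1$ work and explains the range of $h$); second, the identification of $\rho$-coefficients with transverse Taylor coefficients in an atlas adapted to $\rho$, which exhibits monomial bases for all three terms and shows that the inclusion and projection respect these bases. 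Two small points to tighten. The basis of $\mathcal{I}_S/\mathcal{I}_S^{h+1}$ over $V_\alpha\cap S$ should be indexed by monomials $z_\alpha^I$ with $1\le|I|\le h$ (equivalently, non-decreasing index tuples), not by arbitrary ordered tuples, since otherwise the claimed uniqueness of the representation fails. Also, the existence of a splitting atlas adapted to the \emph{given} lifting $\rho$ is slightly more than what Theorem \ref{coordinate} literally asserts (which only produces some $k$-splitting atlas); the refined statement is part of \cite[Theorems 2.1 and 3.5]{ABT} and is used implicitly elsewhere in the paper (e.g.\ Proposition \ref{obsplit} speaks of an atlas adapted to $\rho_{k-1}$), so that is what you should invoke.
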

 \begin{defn}[{\cite[Definition 3.1, 3.2]{ABT}}]\label{def-cft}
 \begin{enumerate}
 \item
If $S$ is $k$-splitting in $X$ and the sequence \eqref{splitmodule} splits for $1\le h\le k+1$, $S$ is called to be k-comfortably embedded in $X$. Denote by $\nu_{h-1,h}:\mI_S/\mI_S^{h}\rightarrow \mI_S/\mI_S^{h+1}$ the  splitting $\mO_S$-morphism of the sequence \eqref{splitmodule} and the comfortable splitting sequence ${\boldsymbol \nu}_{k}=(\nu_{0,1}, \dots, \nu_{k,k+1})$. 
\item
A  k-comfortable atlas is an atlas $\{(V_\alpha, z_\alpha)\}$ adapted to $S$ such that
\[
\frac{\partial z_\beta^p}{\partial z_\alpha^r}\in \mI_S^k, \mbox{ and } \frac{\partial^2 z_\beta^r}{\partial z^{s_1}_\alpha\partial z^{s_2}_\alpha}\in \mI_S^k \quad
\Longleftrightarrow\quad
\left.\frac{\partial^k z^p_{\beta}}{\partial z^{r_1}_{\alpha}\dots\partial z^{r_k}_\alpha}\right|_S\equiv 0, \mbox{ and } \left.\frac{\partial^{k+1}z_\beta^s}{\partial z^{r_1}_\alpha\dots\partial z^{r_{k+1}}_\alpha}\right|_S\equiv 0,
\]
for all $r_1,\dots, r_k=1,\dots, m$, all $p=m+1,\dots, n$, and all indices $\alpha, \beta$ such that $V_\alpha\cap V_\beta\cap S\neq \emptyset$.
\end{enumerate}
\end{defn}
\begin{rem}\label{cftrem}
Any submanifold $S$ is always $0$-comfortably embedded and $0$-linearizable, but is not always $1$-linearizable (which is equivalent to having a splitting tangent sequence). If $S$ is $k$-comfortably embedded, then $S$ is also $k$-splitting.
\end{rem}
\begin{thm}[{\cite[Corollary 3.6]{ABT}}]\label{comfortob}
 Assume there exists a k-th order lifting $\rho_k: \mathcal{O}_S\rightarrow \mathcal{O}_X/\mathcal{I}_S^{k+1}$ such that $S$ is $(k-1)$-comfortably embedded in $X$ with respect to $\rho_{k-1}=\phi_{k, k-1}\circ\rho_k$. Fix a $(k-1)$-comfortable pair $(\rho_{k-1},{\boldsymbol \nu}_{k-1})$, and let $\mathfrak{V}=\{(V_\alpha, z_\alpha)\}$ be a projectable atlas adapted to $\rho_k$ and $(\rho_{k-1},{\boldsymbol\nu}_{k-1})$. Then the cohomology class $\mathfrak{h}^{\rho_k}$ associated to the exact sequence \eqref{splitmodule} is represented by 1-cocycle $\{\mathfrak{h}^{\rho_k}_{\beta\alpha}\}\in H^1(\mathfrak{V}_S, \mathcal{N}_S\otimes\mathcal{I}_S^{k+1}/\mathcal{I}_{S}^{k+2})$ given by
 \[
\mathfrak{h}^{\rho_k}_{\beta\alpha}=-\frac{1}{(k+1)!}\frac{\partial z^{s_1}_\beta}{\partial z^{r_1}_\alpha}\dots\frac{\partial z^{s_{k+1}}_\beta}{\partial z_\alpha^{r_{k+1}}}
 \left.\frac{\partial^{k+1} z^t_\alpha}{\partial z^{s_1}_\beta\dots \partial z^{s_{k+1}}_\beta}\right|_S\partial_{z^t_\alpha}
 \otimes [z^{r_1}_\alpha\dots z^{r_{k+1}}_{\alpha}]_{k+2}.
 \]
 \end{thm}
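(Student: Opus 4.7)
The plan is to represent the abstractly-defined obstruction class $\mathfrak{h}^{\rho_k}\in H^1(S,\mathcal{N}_S\otimes\mathcal{I}_S^{k+1}/\mathcal{I}_S^{k+2})$ as the \v{C}ech coboundary of explicit local $\mathcal{O}_S$-linear splittings of \eqref{splitmodule} (with $h=k+1$), and then to Taylor-expand the transition functions to recognize the stated formula. On each chart $(V_\alpha,z_\alpha)$ of the projectable atlas $\mathfrak{V}$, I will define the canonical local splitting
\[
\sigma_\alpha: \mathcal{I}_S/\mathcal{I}_S^{k+1}\longrightarrow \mathcal{I}_S/\mathcal{I}_S^{k+2}, \qquad [z^{r_1}_\alpha\cdots z^{r_\ell}_\alpha]_{k+1}\longmapsto [z^{r_1}_\alpha\cdots z^{r_\ell}_\alpha]_{k+2},
\]
extended $\mathcal{O}_S$-linearly, where the $\mathcal{O}_S$-module structures on source and target (via Proposition \ref{ODmodule}) are those induced by $\rho_k$. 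Well-definedness as an $\mathcal{O}_S$-morphism uses the adaptation of $\mathfrak{V}$: adaptation to $\rho_k$ gives $\rho_k(z^p_\alpha|_S)=[z^p_\alpha]_{k+1}$ for horizontal coordinates, while adaptation to $\boldsymbol{\nu}_{k-1}$ realizes each splitting $\nu_{h-1,h}$ for $h\le k$ by the analogous lower-order rule, so that $\sigma_\alpha$ is compatible with all of these structures.

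The core computation is of $(\sigma_\beta-\sigma_\alpha)([z^r_\beta]_{k+1})$ on overlaps, carried out by Taylor-expanding $z^r_\beta=f^r_{\beta\alpha}(z_\alpha)$ in the vertical variables $z'_\alpha=(z^1_\alpha,\ldots,z^m_\alpha)$. The $(k-1)$-comfortable atlas conditions $\partial z^p_\beta/\partial z^r_\alpha\in\mathcal{I}_S^{k-1}$ and $\partial^2 z^s_\beta/\partial z^{r_1}_\alpha\partial z^{r_2}_\alpha\in\mathcal{I}_S^{k-1}$ annihilate every horizontal-to-vertical term and every intermediate pure-vertical Taylor term, so that modulo $\mathcal{I}_S^{k+2}$ only the linear vertical term $\sum_s(\partial z^s_\beta/\partial z^r_\alpha)|_S\,z^s_\alpha$ and a single $(k+1)$-th pure-vertical term survive, yielding
\[
[z^r_\beta]_{k+2}-\sum_s\frac{\partial z^s_\beta}{\partial z^r_\alpha}\bigg|_S [z^s_\alpha]_{k+2}=\frac{1}{(k+1)!}\frac{\partial^{k+1}z^r_\beta}{\partial z^{r_1}_\alpha\cdots\partial z^{r_{k+1}}_\alpha}\bigg|_S [z^{r_1}_\alpha\cdots z^{r_{k+1}}_\alpha]_{k+2}.
\]
Tensoring with $\partial_{z^t_\alpha}$, the $\alpha$-frame generator of $\mathcal{N}_S$, and converting from $\beta$-derivatives to $\alpha$-derivatives via the Jacobian produces a cocycle in $\mathcal{N}_S\otimes\mathcal{I}_S^{k+1}/\mathcal{I}_S^{k+2}$. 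Passing to the dual symmetric form stated in the theorem is then accomplished by differentiating the inverse identity $z^t_\alpha=g^t_{\alpha\beta}(f_{\beta\alpha}(z_\alpha))$ exactly $(k+1)$ times and restricting to $S$; the comfortable conditions annihilate every chain-rule term except the one pairing the first-order Jacobians $\partial z^{s_i}_\beta/\partial z^{r_i}_\alpha$ with the pure $(k+1)$-th derivative of the inverse transition, producing both the stated sign and the symmetric product.

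The principal technical obstacle is the bookkeeping in this last conversion: the Fa\`a di Bruno chain rule for $(k+1)$-th derivatives generates many intermediate terms, and one must systematically use the $(k-1)$-comfortable vanishings to eliminate all but a single symmetric contraction. A secondary point is verifying that this coboundary genuinely represents the abstract obstruction $\mathfrak{h}^{\rho_k}$ of \eqref{splitmodule}: since each $\sigma_\alpha$ is a bona fide local $\mathcal{O}_S$-module splitting, the differences $\sigma_\beta-\sigma_\alpha$ land in $\mathrm{Hom}_{\mathcal{O}_S}(\mathcal{I}_S/\mathcal{I}_S^{k+1},\mathcal{I}_S^{k+1}/\mathcal{I}_S^{k+2})$, and the explicit expansion above shows that their only nonzero components pair vertical tangent directions with vertical monomial coefficients, identifying the target with $\mathcal{N}_S\otimes\mathcal{I}_S^{k+1}/\mathcal{I}_S^{k+2}$ and confirming agreement with the canonical splitting obstruction.
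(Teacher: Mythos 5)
The paper itself gives no proof of Theorem \ref{comfortob}: it is imported verbatim from \cite[Corollary 3.6]{ABT} in Appendix \ref{linearizable}, so your proposal can only be measured against the atlas-based computation used in \cite{ABT} -- and that is indeed the strategy you adopt: monomial local splittings $\sigma_\alpha$ of \eqref{splitmodule} (for $h=k+1$) in a projectable adapted atlas, the \v{C}ech difference $\sigma_\beta-\sigma_\alpha$, vertical Taylor expansion, and the differentiated inverse-transition identity to reach the stated symmetric form with its sign. Your central identity is also correct once a transposed index is repaired (the linear coefficient must be $\left.\partial z^r_\beta/\partial z^s_\alpha\right|_S$, not $\left.\partial z^s_\beta/\partial z^r_\alpha\right|_S$): in a $(k-1)$-comfortable atlas one has $z^r_\beta=\sum_s a^r_s(z''_\alpha)z^s_\alpha+R^r_{k+1}$ with $R^r_{k+1}\in\mathcal{I}_S^{k+1}$, so the vertical expansion of $z^r_\beta$ has no terms of degrees $2,\dots,k$, and the difference of splittings on a degree-one generator is exactly the degree-$(k+1)$ Taylor term.

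There is, however, one genuine gap. You never verify that the coboundary $\sigma_\beta-\sigma_\alpha$, which a priori takes values in $\mathrm{Hom}_{\mathcal{O}_S}(\mathcal{I}_S/\mathcal{I}_S^{k+1},\mathcal{I}_S^{k+1}/\mathcal{I}_S^{k+2})$, actually lies in the summand $\mathcal{N}_S\otimes\mathcal{I}_S^{k+1}/\mathcal{I}_S^{k+2}=\mathrm{Hom}_{\mathcal{O}_S}(\mathcal{I}_S/\mathcal{I}_S^{2},\mathcal{I}_S^{k+1}/\mathcal{I}_S^{k+2})$; concretely, that it annihilates the monomial generators $[z^{s_1}_\beta\cdots z^{s_\ell}_\beta]_{k+1}$ of every degree $2\le\ell\le k$. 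This is not a formality: it is precisely what makes the statement meaningful (the class lives in the smaller group $H^1(S,\mathcal{N}_S\otimes\mathcal{I}_S^{k+1}/\mathcal{I}_S^{k+2})$ rather than in the full $\mathrm{Hom}$-group), and it is the second place where the $(k-1)$-comfortable adaptedness enters. The missing computation is short: for $\ell\ge 2$,
\[
z^{s_1}_\beta\cdots z^{s_\ell}_\beta=\prod_{i=1}^{\ell}\Bigl(\sum_{t}a^{s_i}_t(z''_\alpha)z^t_\alpha+R^{s_i}_{k+1}\Bigr)
\equiv\prod_{i=1}^{\ell}\Bigl(\sum_{t}a^{s_i}_t(z''_\alpha)z^t_\alpha\Bigr)\pmod{\mathcal{I}_S^{k+2}},
\]
since every cross term contains one factor in $\mathcal{I}_S^{k+1}$ and at least one further factor in $\mathcal{I}_S$; the right-hand side has pure vertical degree $\ell\le k$, so $\sigma_\beta$ and $\sigma_\alpha$ both reproduce it and their difference vanishes. (Note the argument fails exactly at $\ell=1$, where the error is only in $\mathcal{I}_S^{k+1}$ -- which is why a nonzero conormal-valued obstruction survives there.) Your closing sentence asserts this concentration, but the "explicit expansion above" that you appeal to treats only $\ell=1$, so as written the identification of your coboundary with the $\mathcal{N}_S$-valued cocycle $\mathfrak{h}^{\rho_k}_{\beta\alpha}$, and hence with the displayed formula, is unsupported. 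Once this step is supplied, the remaining Fa\`a di Bruno bookkeeping works as you describe: differentiating $z^t_\alpha=g^t_{\alpha\beta}(f_{\beta\alpha}(z_\alpha))$ a total of $(k+1)$ times and restricting to $S$, the comfortable vanishings kill all blocks of intermediate size and all horizontal contributions, leaving only the all-singletons term and the single $(k+1)$-block term, which produces the stated Jacobian contractions and the minus sign.
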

 \begin{rem}\label{psdiv}
 If $D$ is a smooth divisor, then the obstruction to $k$-comfortable embedding lies in 
 $H^1(D, N_D\otimes \mI_D^{k+1}/\mI_D^{k+2})=H^1(D, (N_D)^{-k})$. If we assume the normal bundle $N_D$ is ample on $D$ and $n-1={\rm dim}D\ge 2$, then the Kodaira-Nakano vanishing theorem
 gives $H^1(D, (N_D)^{-k})=0$ for any $k\ge 1$. So in this case, there is no obstruction to passing from $(k-1)$-comfortable embedding to $k$-comfortable embedding (with respect to any $k$-splitting). Note that $D$ is always $0$-comfortably embedded. So we obtain that, if $N_D$ is ample on $D$ and ${\rm dim} X\ge 3$, then $D$ is $k$-comfortably embedded, if and only if $D$ is $k$-splitting, and if and only if $D$ is $k$-linearizable (see Theorem \ref{crilinear}). 
 \end{rem}
\begin{thm}[{\cite[Theorem 2.1, Theorem 3.5]{ABT}}]\label{coordinate}
$S$ is $k$-splitting in $X$ if and only if there is a $k$-splitting atlas $\mathfrak{V}=\{(V_\alpha, z_\alpha)\}$ of $X$, that is an atlas adapted to $S$ such that
\[
\left\{
\begin{array}{ccll}
z_\beta^r&=&\sum_{s=1}^m (a_{\beta\alpha})^r_s(z_\alpha)z_\alpha^s, & \mbox{ for } r=1,\dots, m, \\
&&&\\
z_\beta^p&=&\phi_{\beta\alpha}^p(z''_\alpha)+R^p_{k+1}, & \mbox{ for } p=m+1,\dots, n,
\end{array}
\right.
\] 
where $z_\alpha''=(z_\alpha^{m+1}, \cdots, z_\alpha^n)$ are local coordinates on $S$, and $R^p_{k+1}$ denotes a term belonging to $\mathcal{I}_S^{k+1}$.
Furthermore, $S$ is k-comfortably embedded in $X$ if and only if there is a k-comfortable atlas $\mathfrak{V}=\{(V_\alpha, z_\alpha)\}$, that is an atlas adapted to $S$ such that 
\[
\left\{
\begin{array}{ccll}
z_\beta^r&=&\sum_{s=1}^m (a_{\beta\alpha})^r_s(z''_\alpha)z_\alpha^s+R^r_{k+2}, & \mbox{ for } r=1,\dots, m, \\
&&&\\
z_\beta^p&=&\phi_{\beta\alpha}^p(z''_\alpha)+R^p_{k+1}, & \mbox{ for } p=m+1,\dots, n,
\end{array}
\right.
\] 
where $R^r_{k+2}\in\mathcal{I}_S^{k+2}$ and $R^p_{k+1}\in \mathcal{I}_S^{k+1}$.
\end{thm}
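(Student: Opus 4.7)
The proof has two equivalences, which I would treat in turn. The common pattern is that the abstract splitting data (a ring homomorphism $\rho_k$ for $k$-splitting, or the additional $\mathcal{O}_S$-module splittings $\boldsymbol{\nu}_k$ for $k$-comfortable embedding) prescribe how to pick coordinates compatibly, while conversely coordinates of the stated form produce the splittings by direct definition whose coherence on overlaps follows from the specified transition formulas.

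For the $k$-splitting equivalence, I would fix a lifting $\rho_k:\mathcal{O}_S\to\mathcal{O}_X/\mathcal{I}_S^{k+1}$ and, on any adapted chart $(V_\alpha,w_\alpha)$, pick holomorphic representatives $z_\alpha^p\in\mathcal{O}_X(V_\alpha)$ of $\rho_k(w_\alpha^p|_S)$ for $p>m$, keeping $z_\alpha^r=w_\alpha^r$ for $r\le m$. Because $z_\alpha^p-w_\alpha^p\in\mathcal{I}_S$, this remains a holomorphic coordinate system after shrinking $V_\alpha$. The transition $z_\beta^r=\sum_s(a_{\beta\alpha})_s^r(z_\alpha)z_\alpha^s$ is automatic since any transition preserves $\mathcal{I}_S$, and $\mathcal{I}_S$ is locally generated by $z_\alpha^1,\dots,z_\alpha^m$. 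The substantive identity comes from the ring-homomorphism property of $\rho_k$: writing the base-coordinate transition on $S$ as $w_\beta^p|_S=\phi_{\beta\alpha}^p(w_\alpha''|_S)$, one has
\[
[z_\beta^p]_{k+1}=\rho_k(w_\beta^p|_S)=\phi_{\beta\alpha}^p\bigl(\rho_k(w_\alpha''|_S)\bigr)=\phi_{\beta\alpha}^p\bigl([z_\alpha'']_{k+1}\bigr),
\]
which is precisely $z_\beta^p-\phi_{\beta\alpha}^p(z_\alpha'')\in\mathcal{I}_S^{k+1}$. Conversely, from a $k$-splitting atlas I would define a local lifting $\rho_k^\alpha:\mathcal{O}_{V_\alpha\cap S}\to\mathcal{O}_{V_\alpha}/\mathcal{I}_S^{k+1}$ by $f(z_\alpha''|_S)\mapsto[f(z_\alpha'')]_{k+1}$; the transition form $z_\beta^p\equiv\phi_{\beta\alpha}^p(z_\alpha'')\pmod{\mathcal{I}_S^{k+1}}$ forces $\rho_k^\alpha=\rho_k^\beta$ on overlaps, so these glue to a global ring-splitting $\rho_k$.

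For the $k$-comfortable equivalence, I would induct on $k$, starting from the trivial case $k=0$. For the forward direction, assume a comfortable pair $(\rho_k,\boldsymbol{\nu}_k)$ and start with a $(k-1)$-comfortable atlas supplied by induction; this is in particular a $k$-splitting atlas by Part 1. The remaining task is to modify the fiber coordinates so that $(a_{\beta\alpha})_s^r$ depends only on $z_\alpha''$ modulo $\mathcal{I}_S^{k+2}$. The discrepancy between $(a_{\beta\alpha})_s^r(z_\alpha)$ and $(a_{\beta\alpha})_s^r(z_\alpha'')$, read off modulo the appropriate fiber-higher-order terms, defines a \v{C}ech $1$-cocycle whose class is the obstruction $\mathfrak{h}^{\rho_k}\in H^1(S,N_S\otimes\mathcal{I}_S^{k+1}/\mathcal{I}_S^{k+2})$ computed in Theorem \ref{comfortob}. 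The $\mathcal{O}_S$-module splitting $\nu_{k,k+1}$ provides by definition a coboundary of this class, and the corresponding $0$-cochain specifies fiber-coordinate changes of the form $z_\alpha^r\mapsto z_\alpha^r+\xi_\alpha^r$ with $\xi_\alpha^r\in\mathcal{I}_S^{k+1}$ that absorb the discrepancy; since $\xi_\alpha^r\in\mathcal{I}_S^{k+1}$, the perturbation of $z_\beta^p$ stays in $\mathcal{I}_S^{k+1}$ and does not spoil the $k$-splitting form already achieved. Conversely, given a $k$-comfortable atlas, the direct-sum decomposition $\mathcal{I}_S/\mathcal{I}_S^{h+1}=(\mathcal{I}_S^h/\mathcal{I}_S^{h+1})\oplus(\mathcal{I}_S/\mathcal{I}_S^h)$ read off from the transition formulas defines each splitting $\nu_{h-1,h}$, and the restricted dependence $(a_{\beta\alpha})_s^r(z_\alpha'')$ is precisely what forces these splittings to respect the $\mathcal{O}_S$-module structure induced by $\rho_k$.

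The main obstacle is the forward step of Part 2: identifying the $\mathcal{O}_S$-linearity of $\nu_{k,k+1}$ with the precise requirement that the Taylor expansion of $(a_{\beta\alpha})_s^r(z_\alpha)$ in the fiber directions be truncated to the correct order, and producing the explicit fiber-coordinate change $z_\alpha^r\mapsto z_\alpha^r+\xi_\alpha^r$ out of the abstract coboundary. This requires a careful bookkeeping of the $\mathcal{I}_S$-adic filtration, of the ring structure that $\rho_k$ puts on $\mathcal{O}_X/\mathcal{I}_S^{k+1}$, and of how the fiber perturbations feed back into the $z_\beta^p$-transition. Once this dictionary between fiber-Taylor coefficients and the data $(\rho_k,\boldsymbol{\nu}_k)$ is set up, the induction step is a direct application of Proposition \ref{obsplit} and Theorem \ref{comfortob}; the remainder is routine filtration arithmetic.
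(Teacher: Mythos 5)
First, a point of context: the paper offers no proof of Theorem \ref{coordinate} at all --- it is quoted from \cite{ABT} (Theorems 2.1 and 3.5 there) as background material in Appendix II --- so there is no in-paper argument to compare yours against, and your proposal must be judged on its own merits. Your Part 1 is essentially correct and is the standard argument: represent $\rho_k(w_\alpha^p|_S)$ by holomorphic functions, check the Jacobian along $S$, and use the ring-homomorphism property to get $[z_\beta^p]_{k+1}=\phi_{\beta\alpha}^p([z''_\alpha]_{k+1})$, with the converse given by gluing the chartwise liftings. Two things should be made explicit: the charts must be taken Stein so that sections of $\mathcal{O}_X/\mathcal{I}_S^{k+1}$ over $V_\alpha\cap S$ lift to $\mathcal{O}_X(V_\alpha)$, and the identity $\rho_k\bigl(g(w''_\alpha|_S)\bigr)=g\bigl(\rho_k(w''_\alpha|_S)\bigr)$ for holomorphic (not merely polynomial) $g$ needs the continuity of local homomorphisms of analytic algebras.

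Part 2, however, contains a genuine gap. The assertion that the $(k-1)$-comfortable atlas supplied by induction ``is in particular a $k$-splitting atlas by Part 1'' is false: such an atlas only gives $z_\beta^p=\phi_{\beta\alpha}^p(z''_\alpha)+R^p_k$ with $R^p_k\in\mathcal{I}_S^{k}$, whereas a $k$-splitting atlas requires the remainder to lie in $\mathcal{I}_S^{k+1}$. Your induction step must therefore first improve the base transitions by one order; this uses Proposition \ref{obsplit} and the vanishing of $\mathfrak{g}_k^{\rho_{k-1}}$, i.e.\ the hypothesis that $\rho_k$ lifts $\rho_{k-1}$, and it requires knowing that the lifting induced by the inductively constructed atlas is exactly $\rho_{k-1}=\phi_{k,k-1}\circ\rho_k$ rather than some other lifting --- not a cosmetic point, since Proposition \ref{counterexample} shows that comfortability genuinely depends on the choice of lifting. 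Second, the step you yourself defer as ``the main obstacle'' --- converting the existence of the $\mathcal{O}_S$-module splitting $\nu_{k,k+1}$ into the explicit fiber-coordinate change $z_\alpha^r\mapsto z_\alpha^r+\xi_\alpha^r$ that absorbs the discrepancy cocycle --- is the actual content of the theorem and is not carried out; moreover, appealing to Theorem \ref{comfortob} for it is delicate, because that statement presupposes a projectable atlas adapted to $\rho_k$ and $(\rho_{k-1},\boldsymbol{\nu}_{k-1})$, whose existence is part of what is being proved (in \cite{ABT} the cocycle formula is derived after, not before, the atlas characterization). As it stands, Part 2 is a plausible outline of the Abate--Bracci--Tovena strategy rather than a proof.
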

\begin{thm}[{\cite[Theorem 4.1]{ABT}}]\label{crilinear}
$S$ is k-linearizable if and only if $S$ is $k$-splitting into $X$ and (k-1)-comfortably embedded with respect to the $(k-1)$-th order lifting induced by the $k$-splitting, if and only if there is an atlas $\mathfrak{V}$ such that the changes of coordinates are of the form:
\[
\left\{
\begin{array}{ccll}
z_\beta^r&=&\sum_{s=1}^m (a_{\beta\alpha})^r_s(z''_\alpha)z_\alpha^s+R^r_{k+1}, & \mbox{ for } r=1,\dots, m, \\
&&&\\
z_\beta^p&=&\phi_{\beta\alpha}^p(z''_\alpha)+R^p_{k+1}, & \mbox{ for } p=m+1,\dots, n,
\end{array}
\right.
\] 
where $R^{r}_{k+1}, R_p^{k+1}\in \mathcal{I}_S^{k+1}$.
\end{thm}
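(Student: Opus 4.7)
The plan is to establish the three-way equivalence cyclically, using Theorem~\ref{coordinate} as the main computational tool. First I would handle the easy implication: if an atlas of the form (C) exists, then reducing the transition functions modulo $\mathcal{I}_S^{k+1}$ kills both remainder terms $R^r_{k+1}$ and $R^p_{k+1}$, leaving precisely the transition functions of the normal bundle $N_S\to S$ written in the coordinates $(z^r_\alpha, z^p_\alpha)$. This yields a canonical identification of ringed analytic spaces $S(k)\cong S_N(k)$, so $S$ is $k$-linearizable.

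For the implication ``$k$-linearizable $\Rightarrow$ $k$-splitting together with $(k-1)$-comfortable embedding relative to the induced lifting'', start from an isomorphism $\Psi\colon S(k)\xrightarrow{\sim}S_N(k)$. On the normal-bundle side the projection $\pi_S\colon N_S\to S$ supplies a canonical $k$-th order lifting $\pi_S^*\colon\mathcal{O}_S\to\mathcal{O}_N/\mathcal{I}_S^{k+1}$; transporting it through $\Psi$ gives a $k$-th order lifting $\rho_k\colon\mathcal{O}_S\to\mathcal{O}_X/\mathcal{I}_S^{k+1}$, which is exactly the $k$-splitting of $S$ in $X$. Setting $\rho_{k-1}=\phi_{k,k-1}\circ\rho_k$, the natural grading $\mathcal{I}_S/\mathcal{I}_S^{k+1}\cong\bigoplus_{j=1}^{k}\mathrm{Sym}^j N_S^*$ available on the normal-bundle side (using $\pi_S^*$) transports through $\Psi$ to a comfortable splitting sequence $\boldsymbol{\nu}_{k-1}$, showing $(k-1)$-comfortability with respect to $\rho_{k-1}$.

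The substantive implication is ``$k$-splitting and $(k-1)$-comfortable (relative to the induced lifting) $\Rightarrow$ atlas of the form (C)''. Here I would combine the two halves of Theorem~\ref{coordinate}. The $k$-splitting statement supplies an atlas in which $z^p_\beta=\phi^p_{\beta\alpha}(z''_\alpha)+R^p_{k+1}$, and the $(k-1)$-comfortable statement (applied with $k$ replaced by $k-1$) supplies an atlas in which $z^r_\beta=\sum_s(a_{\beta\alpha})^r_s(z''_\alpha)z^s_\alpha+R^r_{k+1}$. To realize both features in a single atlas, start from a $k$-splitting atlas adapted to $\rho_k$ and perform further changes of coordinates of the form $\tilde z^r_\alpha=z^r_\alpha+g^r_\alpha$ with $g^r_\alpha\in\mathcal{I}_S^{k+1}$. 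Such modifications do not affect the $p$-equations modulo $\mathcal{I}_S^{k+1}$, so the $k$-splitting shape is preserved; meanwhile, the compatibility hypothesis that the $(k-1)$-comfortable structure be the one \emph{induced} by the given $k$-splitting precisely identifies which $\{g^r_\alpha\}$ are to be used and produces a coboundary whose addition improves the leading obstruction in the $r$-equations from $\mathcal{I}_S^{k}$ to $\mathcal{I}_S^{k+1}$.

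The main obstacle is this final reconciliation step: a single atlas must simultaneously witness both structural features. The subtlety lies in verifying that the perturbations $\{g^r_\alpha\}$ constructed from the comfortable splitting sequence $\boldsymbol{\nu}_{k-1}$ are cocycle-compatible with the transitions on $N_S$, i.e.\ they transform as sections of $N_S\otimes\mathcal{I}_S^{k+1}/\mathcal{I}_S^{k+2}$ under $\{(a_{\beta\alpha})^r_s\}$; this is exactly what the hypothesis that $(\rho_{k-1},\boldsymbol{\nu}_{k-1})$ is induced from $\rho_k$ guarantees, as it makes the relevant $\check{H}^1$ obstruction a coboundary. Once the atlas is produced, the form (C) is automatic, closing the cycle.
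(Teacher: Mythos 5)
First, a remark on the ground rules of the comparison: the paper does \emph{not} prove this statement at all --- it is quoted verbatim from Abate--Bracci--Tovena \cite{ABT} (their Theorem 4.1), so your proposal can only be judged on its own merits and against the inductive coordinate-correction argument of that source.

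Two of your three legs are sound. The implication (C)$\Rightarrow$(A) is correct: reducing transition functions of the displayed form modulo $\mathcal{I}_S^{k+1}$ reproduces exactly the transition functions of $N_S$, and the local identifications $z^i_\alpha\mapsto w^i_\alpha$ glue to an isomorphism $S(k)\cong S_N(k)$. The implication (A)$\Rightarrow$(B) by transport is also correct in substance: any ring isomorphism $\Psi$ carries $\mathcal{I}_S/\mathcal{I}_S^{k+1}$ to $\mathcal{I}_{S_0}/\mathcal{I}_{S_0}^{k+1}$ (it is the nilradical), so the canonical lifting $\pi_S^*$ and the canonical fiberwise grading $\mathcal{O}_{N_S}/\mathcal{I}_{S_0}^{k+1}\cong\bigoplus_{j=0}^{k}\mathrm{Sym}^j N_S^*$ pull back to a $k$-splitting $\rho_k$ and to splitting morphisms $\nu_{h-1,h}$ for all $h\le k$ relative to $\rho_{k-1}=\phi_{k,k-1}\circ\rho_k$; you should only add the normalization that $\Psi$ can be taken to induce the identity on $S$.

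The genuine gap is in (B)$\Rightarrow$(C). You start from a $k$-splitting atlas and allow only coordinate changes $\tilde z^r_\alpha=z^r_\alpha+g^r_\alpha$ with $g^r_\alpha\in\mathcal{I}_S^{k+1}$. Such a change alters the transition functions only by terms in $\mathcal{I}_S^{k+1}$: substituting $z^s_\alpha=\tilde z^s_\alpha-g^s_\alpha$ and adding $g^r_\beta$ both contribute elements of $\mathcal{I}_S^{k+1}$. Hence it can never remove an error term of degree $\le k$. But in a $k$-splitting atlas the $r$-equations read $z^r_\beta=\sum_s (a_{\beta\alpha})^r_s(z_\alpha)z^s_\alpha$ with coefficients depending on \emph{all} of $z_\alpha$; freezing them to $(a_{\beta\alpha})^r_s(z''_\alpha)$ leaves an error that generically begins in degree $2$, not in degree $k$ as your final sentence presupposes. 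So your single perturbation step, with corrections of too high an order, cannot close the gap between the two atlases. What is needed is an induction over the degree $h=2,3,\dots,k$: at stage $h$ one perturbs by fiberwise-homogeneous corrections $g^r_\alpha\in\mathcal{I}_S^{h}$, which changes the degree-$h$ part of the $r$-error by the coboundary $\sum_s(a_{\beta\alpha})^r_s\,g^s_\alpha-g^r_\beta$; the degree-$h$ error $[R^r]\,\partial_{z^r_\beta}$ is a cocycle in $\check{H}^1(S,N_S\otimes\mathcal{I}_S^{h}/\mathcal{I}_S^{h+1})$, namely the obstruction $\mathfrak{h}_{h-1}$ of Theorem \ref{comfortob}, and it is a coboundary precisely because $S$ is $(h-1)$-comfortably embedded relative to the induced lifting --- which holds for every $h\le k$ under the hypothesis of $(k-1)$-comfortability. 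Each such correction touches only the fiber coordinates, so (as you correctly observe) the $p$-equations retain their $k$-splitting shape throughout; after the $k-1$ stages the $r$-error finally lies in $\mathcal{I}_S^{k+1}$, giving form (C). Your proposal invokes the right cohomological mechanism but collapses this staged induction into one step at the wrong order, and as written that step does nothing.
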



\subsection{Deformation of normal algebraic varieties}\label{appcn}

\subsubsection{First order deformations}
Assume $Z$ is an complex analytic variety in $\bC^N$. Choose any analytically open set $W$ of $\bC^N$ and assume that $\cI_Z(W)$ is generated by $\{f_1, \cdots, f_d\}$. Let $\cZ\rightarrow \bD$ be a flat deformation
of $Z$ with $\cZ_0=Z$, which is realized as an embedding deformation of $Z\rightarrow \bC^N$. Assume $\cI_{\cZ_t}(W)$ is generated by $\{f_1+t g_1, \cdots, f_d+t g_d\}$. Then 
by the flatness condition, $\{g_i\}$ induces a morphism:
\[
\bar{g}: \cI_Z/\cI_Z^2 \rightarrow \cO_{\bC^N}/\cI_Z=\cO_Z, \quad \sum_i [f_i h_i]\mapsto \sum_i g_i h_i|_{Z}.
\]
So we get: $\bar{g}\in Hom_{\mathcal{O}_Z}(\cI_Z/\cI_Z^2, \cO_Z)=H^0(Z, N_Z)$. To get the space of first order infinitesimal deformations of $Z$, one considers the conormal exact sequence:
\[
\mathcal{I}_Z/\mathcal{I}_Z^2\rightarrow \Omega_{\mathbb{C}^N}|_{Z}\rightarrow \Omega_{Z}\rightarrow 0,
\]
whose dual defines the sheaf $\mathcal{T}^1_Z$ (see \cite[1.2]{Schl1} and \cite[Proposition II1.25]{GLS07}):
\[
0\rightarrow \Theta_Z\rightarrow \Theta_{\mathbb{C}^N}|_Z\rightarrow N_{Z}\rightarrow \mathcal{T}_Z^1\rightarrow 0.
\]
Since we assumed $Z$ is embedding in $\mathbb{C}^N$, we get the exact sequence:
\begin{equation}\label{exact1}
0\rightarrow H^0(Z, \Theta_Z)\rightarrow H^0(Z, \Theta_{\bC^N}|_Z)\rightarrow H^0(Z, N_Z) \stackrel{\psi_Z}{\longrightarrow} \mathbf{T}_Z^1\rightarrow 0.
\end{equation}
In particular, $\mathbf{T}_Z^1$ is defined so that \eqref{exact1} becomes exact and is not equal to $H^1(Z, \Theta_Z)$ in general.
The image of $\bar{g}$ in ${\bf T}_Z^1$ is the first order information of the deformation $\cZ\rightarrow \bD$.
\begin{prop}[{\cite[Theorem 1]{Schl1}}, \cite{Schl2}]\label{prop-Sch}
Assume $Z$ has an isolated normal singularity $o$ and denote by $U=Z\setminus \{o\}$. Then there are exact sequences:
\begin{equation}\label{surjexact}
H^0(U, \Theta_{\mathbb{C}^N}|_U)\rightarrow H^0(U, N_U)\stackrel{\psi_U}{\longrightarrow} \mathbf{T}_Z^1\rightarrow 0
\end{equation}
\begin{equation}\label{injexact}
0 \longrightarrow \mathbf{T}_Z^1 \stackrel{\tau_U}{\rightarrow} H^1(U,\Theta_U)\rightarrow H^1(U, \Theta_{\mathbb{C}^N}|_U)
\end{equation}
\end{prop}
\begin{proof}
For the reader's convenience, we sketch the proof here.
Because $Z$ is normal, by Serre's criterion for normality, $Z$ has depth ${\rm depth}_{\underline{o}}Z\ge 2$ at its vertex. Because the first three sheaves in \eqref{exact1} are reflexive, by \cite[Lemma 1]{Schl2}, the depth
of each is $\ge 2$. So in \eqref{exact1} we can replace $H^0(Z, \cdot)$ by $H^0(U,\cdot)$ to get:
\begin{equation}\label{presurj}
 0\rightarrow H^0(U,\Theta_U)\rightarrow H^0(U,\Theta_{\mathbb{C}^N}|_U)\rightarrow H^0(U, N_U)\rightarrow \mathbf{T}_Z^1\rightarrow 0, 
\end{equation}
On the other hand, because $U$ is smooth and embedded into $\bC^N$, we have 
\[
0\rightarrow \Theta_U\rightarrow \left.\Theta_{\mathbb{C}^N}\right|_U\rightarrow N_U\rightarrow 0,
\]
which gives us the exact sequence:
\begin{equation}\label{preinj}
0\rightarrow H^0(U,\Theta_U)\rightarrow H^0(U,\Theta_{\mathbb{C}^N}|_U)\rightarrow H^0(U, N_U)\stackrel{\delta}{\longrightarrow} H^1(U,\Theta_U)\rightarrow H^1(U, \Theta_{\mathbb{C}^N}|_U).
\end{equation}
Combining \eqref{presurj} and \eqref{preinj}, we get \eqref{surjexact} and \eqref{injexact}.
\end{proof}


\subsubsection{Deformation of affine cones} \label{App-cone}
As an example of the above general theory, consider a projective manifold $D\subset \mathbb{P}^{N-1}$. We assume that $D$ is projectively normal in $\mathbb{P}^{N-1}$ so that the affine cone over $D$ is normal and is equal to $C=C(D,H)$ where $H$ is the hyperplane bundle of $\mathbb{P}^{N-1}$.
Then it's easy to verify that (see \cite{Schl1}, \cite{Artin}):
\[
H^0(U,\Theta_{\mathbb{C}^{N}}|_U)=\sum_{j=-\infty}^{+\infty}H^0(D, \mathcal{O}_D(j+1)),\quad H^0(U, N_U)=\sum_{j=-\infty}^{+\infty} H^0(D, N_D(j)).
\]
Decompose $\mathbf{T}_C^1=\sum_{j=-\infty}^{+\infty}\mathbf{T}^1_C(j)$ into weight spaces. Then by \eqref{surjexact} we have the exact sequence:
\begin{equation}\label{wdsurj}
H^0(D, \mathcal{O}_D(j+1))^{N}\stackrel{{\rm Jac}}{\longrightarrow} H^0(D, N_D(j))\longrightarrow \mathbf{T}_C^1(j)\rightarrow 0.
\end{equation}
\begin{exmp}[cf. {\cite[Section 4]{Artin}}, \cite{KS}]\label{exmpclint}
Assume $D^{n-1}\subset \mathbb{P}^{N-1}$ is a complete intersection
\[
D=\bigcap_{i=1}^{N-n}\{F_i=0\}\subset \mathbb{P}^{N-1},
\]
where $F_i$ is a homogeneous polynomial of degree $d_i$. We assume $\{Z_1,\cdots, Z_N\}$ are homogeneous coordinates of $\mathbb{P}^{N-1}$ and denote
\[
R(D, H)=\bigoplus_{m=0}^{+\infty} H^0(D, m H)\cong \mathbb{C}[Z_1,\cdots, Z_{N}]/\langle F_1,\cdots, F_{N-n}\rangle.
\]
Note that this is nothing but the affine coordinate ring of $C(D,H)$. Then
\[
H^0(D, \mO_D(j+1))=H^0(D, (j+1)H)=R(D,H)(j+1); 
\]
\[
H^0(D, N_D(j))=\bigoplus_{i=1}^{N-n} H^0(D, (d_i+j)H)=\bigoplus_{i=1}^{N-n} R(D, H)(d_i+j).
\]
The map 
\[
{\rm Jac}: R(D, H)(j+1)^N\rightarrow \bigoplus_{i=1}^{N-n}R(D,H)(d_i+j)
\] 
is given by the Jacobian matrix 
$\left(\partial F_k/\partial Z^l \right)_{k=1,\cdots, N-n}^{l=1,\cdots, N}$, with the quotient: 
\begin{equation}\label{eq-T1Cj}
\mathbf{T}_C^1(j)=\frac{\bigoplus_{i=1}^{N-n}R(D,H)(d_i+j)}{{\rm Jac}(R(D,H)(j+1)^{\oplus N})}.
\end{equation}
Now assume $\mathcal{G}=\{g_i=g_i(z_1,\cdots, z_{N}), i=1,\cdots, N-n\}$ consists of (not necessarily homogeneous) polynomials. 
We can consider the deformation of $C(D,H)\subset \mathbb{C}^N$ given by:
\[
\mathcal{C}_t=\bigcap_{i=1}^{N-n} \{F_i(z_1,\cdots, z_N)+t g_i=0\}\subset \mathbb{C}^{N}.
\]
If we assume image $[\mathcal{G}]$ in $\mathbf{T}^1_C$ is not zero, then by \eqref{wdsurj}, we see that the weight of this deformation is the weight of $[\mathcal{G}]$.  Note that the polynomials in the image of ${\rm Jac}$ have degree $\ge d_i-1$. So if $g_i$ is of degree $e_i\le d_i-2$, it's easy to see that the $[\mathcal{G}]$ is indeed not zero and the weight is equal to 
$\max\{e_i-d_i\}=-\min\{d_i-e_i\}$.
\begin{rem}
The reason that we assume 
the non vanishing of $[\mathcal{G}]$ is to guarantee the induced map $\mathbb{C}\rightarrow \mathbf{T}^1_C$ does not have a vanishing 1st order derivative. Otherwise, we can consider the reduced Kodaira-Spencer class as the following example shows:
\[
\{z_1^2+z_2^2+z_3^2=0\}\leadsto \{z_1^2+z_2^2+z_3^2+t z_3=0\}. 
\]
We have $\mathbf{T}_C^1=\mathbb{C}[z_1,z_2,z_3]/\langle z_1, z_2, z_3\rangle$. So $\mathcal{G}=(g=z_3)$ gives vanishing image $[\mathcal{G}]=0$. However, we have:
\[
\{z_1^2+z_2^2+z_3^2+t z_3=0\}=\{z_1^2+z_2^2+(z_3+t/2)^2-\frac{t^2}{4}=0\}\cong \{z_1^2+z_2^2+\tilde{z}_3^2-\frac{t^2}{4}=0\}.
\]
So by Definition \ref{def-ord2} and \eqref{eq-KSpsi}, we see that the order of the deformation is equal to $2$ and the weight of the deformation is equal to $-2$. 
\end{rem}
\end{exmp}
Finally we briefly recall Pinkham's results on deformation of isolated singularities with $\mathbb{C}^*$ actions. We state the result in our setting of affine cones. 
\begin{thm}[\cite{Pink, Pink2}]
\begin{enumerate}
\item
There exists a formal versal $\mathbb{C}^*$ equivariant deformation $\mathscr{C}\rightarrow V$ of $C$.
\item
Let $\mathscr{Y}\rightarrow T$ be any formal $\mathbb{C}^*$ equivariant deformation of $X$. Then there exists a $\mathbb{C}^*$ equivariant morphism $\phi: T\rightarrow V$ and a 
$\mathbb{C}^*$ equivariant isomorphism of the deformation $\mathscr{Y}\rightarrow T$ with the pull back $\mathscr{X}\times_{V}T\rightarrow T$.
\end{enumerate}
\end{thm}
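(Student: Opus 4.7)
The plan is to adapt Schlessinger's construction of formal versal deformations to the equivariant category, exploiting the grading that makes $\mathcal{O}_C$ a graded $\mathbb{C}$-algebra and endows $C$ with a $\mathbb{C}^*$-action having $\underline{o}$ as its unique fixed point.

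For part (1), I would first verify Schlessinger's conditions $(H_1)$--$(H_4)$ for the deformation functor $\mathrm{Def}_C$ on Artin local $\mathbb{C}$-algebras. Finiteness of $T^1_C$ follows from the exact sequences recalled in Appendix IV, since $C$ has a normal isolated singularity, so an abstract formal versal deformation $\mathscr{C}\to V=\mathrm{Spf}\,R$ exists. To build in $\mathbb{C}^*$-equivariance, I would restrict to the subfunctor $\mathrm{Def}_C^{\mathbb{C}^*}$ parameterized by graded Artinian $\mathbb{C}$-algebras. The key point is that both $T^1_C$ and the obstruction space $T^2_C$ are graded: their gradings are inherited from a $\mathbb{C}^*$-equivariant embedding $C\hookrightarrow\mathbb{A}^N$ together with Schlessinger's explicit description of these spaces as cokernels/kernels of maps of graded modules. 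Since $\mathbb{C}^*$ is linearly reductive, I pick a homogeneous basis $\{e_i\}$ of $T^1_C$ and take $R$ to be a quotient of the graded polynomial ring $\mathbb{C}[t_1,\ldots,t_r]$ with $\deg t_i=-\mathrm{wt}(e_i)$. The family is constructed inductively: given an order-$n$ equivariant deformation, the obstruction to lifting to order $n+1$ is a homogeneous element of $T^2_C$, which can be eliminated by imposing relations in $R$ homogeneous of the same weight. Reductivity ensures these equivariant lifts exist at every stage.

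For part (2), given an equivariant deformation $\mathscr{Y}\to T$ with $T=\mathrm{Spf}\,S$ for $S$ graded, I would first apply ordinary versality of $\mathscr{C}\to V$ to obtain a classifying morphism $\phi:T\to V$ and an isomorphism $\mathscr{Y}\cong\phi^*\mathscr{C}$. On tangent spaces the induced map $T^1_S\to T^1_R$ is canonically identified with a fixed map of graded vector spaces, so it is automatically $\mathbb{C}^*$-equivariant. I would then promote this to full equivariance by induction on the $\mathfrak{m}_T$-adic order: assuming $\phi$ is equivariant modulo $\mathfrak{m}_T^{n+1}$, the discrepancy between $\phi$ and its $\mathbb{C}^*$-conjugate at the next order defines a $\phi$-derivation with values in a $\mathbb{C}^*$-representation, which by complete reducibility decomposes into weight components. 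Using the automorphism freedom of the versal family (controlled at each order by $H^0(U,\Theta_U)$, itself a graded $\mathbb{C}^*$-module), I can absorb the non-equivariant piece and obtain $\phi$ equivariant modulo $\mathfrak{m}_T^{n+2}$. Passing to the $\mathfrak{m}_T$-adic limit produces the required equivariant morphism, together with a $\mathbb{C}^*$-equivariant refinement of the isomorphism $\mathscr{Y}\cong\phi^*\mathscr{C}$.

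The main obstacle will be the interplay between versality (which determines $\phi$ only up to an automorphism of the pulled-back family) and strict equivariance. At each inductive step I must make a consistent choice of modifying automorphism of $\phi^*\mathscr{C}$; reductivity of $\mathbb{C}^*$ provides a formal averaging procedure, but one must check that these choices patch together to a genuine formal automorphism and that the modified $\phi$ indeed sends $R$ into $S$ as a morphism of graded local rings, rather than merely preserving weights on each associated graded piece. Once this bookkeeping is in place, the statement follows from standard formal deformation theory arguments combined with the graded structure supplied by the cone.
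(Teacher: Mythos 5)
This theorem is one the paper does not prove: it is recalled verbatim from Pinkham's thesis and paper (the references \cite{Pink}, \cite{Pink2}), so there is no internal proof to compare your attempt against, and the relevant comparison is with the cited argument. Your outline reconstructs essentially that argument: run Schlessinger's construction of a hull inside the graded category, using that $T^1_C$ (finite-dimensional, since the cone singularity is normal and isolated) and the obstruction space are graded $\mathbb{C}^*$-modules, and that $\mathbb{C}^*$ is linearly reductive so every choice in the order-by-order construction can be made weight-by-weight; this is Pinkham's method, later axiomatized by Rim for arbitrary linearly reductive groups. Two points would need care in a full write-up. First, a ``$\mathbb{C}^*$-action on a formal deformation'' must be interpreted as a grading, i.e.\ a rational action on each Artinian quotient compatible with the projections, rather than a literal group action on a formal scheme; your choice to work with graded Artinian algebras is the correct framework, but the hull is then a complete local ring built from a graded one, and this must be tracked throughout. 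Second, in part (2) your route of first taking a non-equivariant classifying map and then correcting it is more delicate than proving versality directly in the equivariant category, because versal classifying maps are far from unique and the correction must adjust the map and the isomorphism of families simultaneously; the standard repair is exactly the one you gesture at, namely that at each order the set of lifts is a torsor under a rational $\mathbb{C}^*$-representation, and projecting onto the weight-zero component (the algebraic substitute for averaging over the non-compact group) produces a compatible equivariant lift. With these caveats your proposal is correct in outline and follows the same path as the cited proof.
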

Let $t_j$ be homogeneous generators of the maximal ideal of weigh $d(t_j)$. Let $J^{-}$ be the ideal in $\mathcal{O}_V$ generated by $\{t_j; d(t_j)<0\}$. Let $V^{-}$ be the subvariety defined 
by $J^{-}$.
\begin{thm}[{\cite[Theorem 2.9]{Pink2}}]\label{Pink2}
$\mathscr{C}^{-}\rightarrow V^{-}$ extends to a proper flat family $\overline{\mathscr{C}}^-\rightarrow V^-$ of deformations of $\bar{C}$. $\overline{\mathscr{C}}-\mathscr{C}\cong D_\infty\times V^{-}$ and $\overline{\mathscr{C}}^{-}\rightarrow V^{-}$ is a locally trivial deformation near $D_{\infty}$.
\end{thm}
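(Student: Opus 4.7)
The approach I would take follows Pinkham's original strategy: exploit the $\mathbb{C}^*$-equivariance of the versal deformation and reconstruct the projective compactification by a Proj construction carried out relatively over $V^-$. First, by equivariant versal deformation theory applied to the good contracting $\mathbb{C}^*$-action on $(C,\underline o)$, one produces a $\mathbb{C}^*$-action on $V$ and a compatible action on $\mathscr C\to V$, with the $t_j$ chosen homogeneous of weight $d(t_j)$. Writing the graded coordinate ring of $C$ from \eqref{conexp} as $A=\bigoplus_{m\ge 0}A_m$ with $A_m=H^0(D,L^m)$, the relative structure sheaf of $\mathscr C$ is realized as a graded $\mathcal O_V$-algebra $\widetilde A=\bigoplus_m\widetilde A_m$ whose restriction to $0\in V$ recovers $A$. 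Passing to $V^-$ cuts out exactly those parameters producing perturbations compatible with the cone's grading, so $\widetilde A^-:=\widetilde A\otimes_{\mathcal O_V}\mathcal O_{V^-}$ inherits a well-defined grading.

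With this in hand, form the relative Rees-type algebra
\[
\overline{\mathscr C}^-:=\mathrm{Proj}_{V^-}\Bigl(\bigoplus_{N\ge 0}\bigoplus_{r=0}^{N}\widetilde A^-_r\cdot x^{N-r}\Bigr),
\]
where $x$ is an outer-grading variable of degree $1$ that will cut out the divisor at infinity. Over the central fibre this specializes, by the very formula for $\overline C$ given in the introduction, to $\overline C=\mathrm{Proj}\bigl(\bigoplus_{N,r}A_r\cdot x^{N-r}\bigr)$; away from $\{x=0\}$, localizing $x$ recovers $\mathscr C^-\to V^-$ tautologically. Properness over $V^-$ is then immediate from the Proj construction; what remains to verify is finite generation of the bigraded algebra in each total degree and $\mathcal O_{V^-}$-flatness of every graded piece.

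The main obstacle — and the place where the negative-weight hypothesis is indispensable — is exactly this finite-generation/flatness step. A parameter $t_j$ of negative weight perturbs a homogeneous relation of weight $m$ by a term of strictly lower weight; homogenization with respect to $x$ absorbs the weight deficit into a positive power of $x$, so no negative powers of $x$ are ever needed and the Rees-type algebra remains bounded below and finitely generated. Were we to allow positive weights, a perturbation would raise weight, requiring negative powers of $x$ and destroying the Proj construction — this is why one must restrict to $V^-$. Flatness of each $\widetilde A^-_m$ over $\mathcal O_{V^-}$ follows from a standard cohomology-and-base-change argument, reducing as in Lemma \ref{normalem} to the vanishing of the relevant $H^1(D,L^j)$ groups.

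Finally, the structural claims about the boundary fall out of the construction. The complement $\overline{\mathscr C}^-\setminus\mathscr C^-=\{x=0\}$ inherits the quotient bigraded algebra, which by construction retains only the top-weight summand $\widetilde A^-_N$ in each total degree $N$. Since a strictly negative-weight perturbation cannot alter the top-weight piece, $\widetilde A^-_N\cong A_N\otimes_{\mathbb C}\mathcal O_{V^-}$ as graded $\mathcal O_{V^-}$-modules, whence $\{x=0\}\cong\mathrm{Proj}\bigl(\bigoplus_N A_N\bigr)\times V^-=D_\infty\times V^-$. Triviality of the deformation in these top pieces moreover implies that the formal neighbourhood of $D_\infty$ inside $\overline{\mathscr C}^-$ is the pullback of the formal neighbourhood of $D_\infty$ in $\overline C$, which is precisely the local triviality of $\overline{\mathscr C}^-\to V^-$ near $D_\infty$.
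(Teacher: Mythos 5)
This theorem is quoted background: the paper states it as Pinkham's result (\cite[Theorem 2.9]{Pink2}) in Appendix IV and supplies no proof of its own, so there is no internal argument to compare yours against; the correct benchmark is Pinkham's original proof, and your outline does track it. The compactification really is produced by homogenizing with respect to an auxiliary degree-one variable $x$ via a relative Proj/Rees construction, and the negativity of the weights is exactly what guarantees that only nonnegative powers of $x$ are needed and that the top-degree term of each relation is the unperturbed one, which is what yields the boundary $D_\infty\times V^-$ and the (formal) triviality near infinity.

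Two steps, however, are genuinely shaky as written. First, $\widetilde A^-$ is not a graded $\mathcal O_{V^-}$-algebra in the sense your Proj formula requires: the $\mathbb C^*$-action is nontrivial on $V^-$ itself (the parameters $t_j$ carry nonzero weights), so there is no fiberwise grading of $\mathcal O_{\mathscr C^-}$ with $\mathcal O_{V^-}$ in degree zero; what exists is a grading of the total coordinate ring in which the $t_j$ have their own weights, equivalently an increasing filtration $F_0\subset F_1\subset\cdots$ of $\mathcal O_{\mathscr C^-}$ by $\mathcal O_{V^-}$-submodules (``degree at most $r$''). The Rees construction must be performed for this filtration, the boundary $\{x=0\}$ is then the relative Proj of the associated graded algebra, and your identification $\widetilde A^-_N\cong A_N\otimes_{\mathbb C}\mathcal O_{V^-}$ is precisely the assertion that this associated graded is constant — that is the real content of the negative-weight hypothesis and must be argued at that level rather than assumed from the (nonexistent) fiberwise grading. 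Second, the flatness step is off target: Pinkham's theorem carries no cohomological hypotheses (in his surface case $D$ is a curve, where $H^1(D,L^j)$-vanishing generally fails), and Lemma \ref{normalem} concerns a different issue entirely, namely normality of the central fiber of the contracted deformation to the normal cone. Flatness of $\overline{\mathscr C}^-\rightarrow V^-$ should instead come from the filtration itself: once the graded pieces $F_r/F_{r-1}$ are identified with $A_r\otimes_{\mathbb C}\mathcal O_{V^-}$, hence flat, each $F_r$ and then the whole Rees algebra is flat over $\mathcal O_{V^-}$, with no base-change or vanishing input. With these two repairs your sketch becomes, in substance, Pinkham's proof.
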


\vskip 2mm
\noindent
Mathematics Department, Stony Brook University, Stony Brook NY, 11794-3651, USA \\

\noindent
{\it Curent address: }\\
Department of Mathematics, Purdue University, West Lafayette, IN, 47907-2067, USA \\
Email: li2285@purdue.edu

\end{document}